\newtheorem{theorem}{Theorem}[section]
\newtheorem{lemma}[theorem]{Lemma}
\newtheorem{definition}[theorem]{Definition}
\newtheorem{example}[theorem]{Example}
\newtheorem{proposition}[theorem]{Proposition}
\newtheorem{corollary}[theorem]{Corollary}
\newtheorem{remark}{Remark}
\def \<{\langle}
\def \>{\rangle}
\def \l{\lambda }
\newcommand{\bea}{\begin{eqnarray}}
\newcommand{\eea}{\end{eqnarray}}
\newcommand{\be}{\begin {equation}}
\newcommand{\ee}{\end{equation}}
\newcommand{\g}{\mathfrak g}
\newcommand{\Z}{\mathbb Z}
\newcommand{\N}{{\mathbb Z}_{\ge 0} }
\newcommand{\C}{\mathbb C}
\newcommand{\la}{\langle}
\newcommand{\ra}{\rangle}
\newcommand{\hf}{\mbox{$\frac{1}{2}$}}
\begin{document}

\keywords{Affine vertex algebras,  lattice vertex algebra, Virasoro vertex algebra, Neveu-Schwarz vertex algebra, minimal models, Whittaker modules, logarithmic modules, relaxed highest weight modules}
\title[ Realizations of simple  affine vertex algebras and their modules ]{ Realizations of simple  affine vertex algebras and their modules: the cases $\widehat{sl(2)}$ and $\widehat{osp(1,2)}$.}
\subjclass[2000]{ Primary 17B69, Secondary 17B67, 17B68, 81R10}
\author{Dra\v zen Adamovi\' c }
\curraddr{Faculty of Science, Department of Mathematics, University of Zagreb, Bijeni\v cka 30, 10 000
Zagreb, Croatia}
\email{adamovic@math.hr}

\date{}

\begin{abstract}
We study the embeddings of the  simple admissible  affine vertex algebras  $V_k(sl(2))$ and $V_k(osp(1,2))$, $k \notin {\mathbb Z}_{\ge 0}$,   into  the tensor product of rational  Virasoro and $N=1$ Neveu-Schwarz  vertex algebra with lattice vertex algebras.
% We prove that the admissible affine vertex algebra $V_k(sl(2))$ can be embedded into vertex algebra $L^{Vir} (c_{p,p'}, 0) \otimes \Pi(0)$ where
 %$L^{Vir} (c_{p,p'}, 0) $ is suitable minimal Virasoro vertex algebra and $\Pi(0)$ is a vertex algebra of lattice type.
 By  using these realizations we construct a family of weight, logarithmic and Whittaker $\widehat{sl(2)}$ and $\widehat{osp(1,2)}$--modules. As an application,  we construct  all  irreducible degenerate Whittaker modules for $V_k(sl(2))$.
\end{abstract}

\maketitle

\tableofcontents

\baselineskip=14pt
\newenvironment{demo}[1]{\vskip-\lastskip\medskip\noindent{\em#1.}\enspace
}{\qed\par\medskip}

%\markboth{Dra\v zen Adamovi\' c} { } \pagestyle{myheadings}

\section{Introduction}

Let $V^k(\g)$ denotes the universal affine vertex algebra of level $k$ associated to a simple finite-dimensional Lie super algebra $\g$. Let  $J^k(\g)$ be the maximal ideal in $V^k(\g)$ and $V_k(\g) = V^k(\g) / J^k(\g)$   its simple quotient. The representation theory of $V_k(\g)$   depends on  the structure of  the maximal ideal  $J^k(\g)$.
 One sees that a  $V^k(\g)$--module $M$  is  a  module for the simple vertex algebra $V_k(\g)$ if and only if  $J^k (\g) . M = 0$.
Such approach   can be applied for a construction and classification of modules in the category $\mathcal O$ and in the category of weight modules (cf. \cite{AdM-MRL}, \cite{Ara2}, \cite{AFR},  \cite{CR-2013}, \cite{RSW},   \cite{RW2}). But  it seems that  for a construction of logarithmic, indecomposable and Whittaker modules one needs different methods.

In this paper we explore the possibility that a simple affine vertex algebra can be realized as a vertex  subalgebra of the tensor product:
\bea \label{inclusion}  V_k(\g) \subset W(\g) \otimes \Pi_{\g} (0)\eea where $W(\g)$ is a $\mathcal W$-algebra associated to $\g$  and $\Pi_{\g}(0)$ is a lattice type vertex algebra. This can be treated as an inverse of the quantum Hamiltonian reduction (cf. \cite{Sem-inv}).

In this moment we can not prove that such  inclusion exists in general, but we  present a proof of  (\ref{inclusion})  in  the cases  $\g=sl(2)$ and $\g= osp(1,2)$. Let us describe our results in more details.
Let $V^{Vir} (d_{p,p'}, 0)$  and  $V^{ns} (c_{p,q}, 0)$ denote the universal  Virasoro and N=1 Neveu-Schwarz vertex algebras with central charges:
$d_{p,p'} = 1 - \frac{6 (p-p') ^2 }{ p p' }$ and $c_{p,q} = 3/2 - \frac{3 (p-q) ^2 }{p q}$. Their simple quotients are denoted by   $L^{Vir} (d_{p,p'}, 0)$ and  $L^{ns}(c_{p,q}, 0)$.  Let $\Pi(0)= M(1) \otimes {\C} [{\Z} c] $  and  $\Pi ^{1/2}(0)= M(1) \otimes {\C} [{\Z} \frac{c}{2} ] $  be the  vertex algebras of lattice type associated to the lattice of
$L= {\Z} c + {\Z} d$, with products $$ \langle  c, c\rangle = \langle d, d \rangle = 0, \quad \langle c, d \rangle = 2. $$
Let $F$ be the fermionic vertex algebra of central charge $c=1/2$ associated to a neutral fermion  field.

We prove:

\begin{theorem}
There are non-trivial homomorphisms of  simple admissible affine  vertex algebras:
 \item [(1)] $ \Phi_1  : V_k(sl(2) ) \rightarrow L^{Vir} (d_{p,p'}, 0) \otimes \Pi(0)$ where $k + 2 = \frac{p}{p'}$ such that $p, p' \ge 2$, $(p,p') =1$,

 \item[(2)] $\Phi _2  : V_k(osp(1,2) ) \rightarrow L^{ns} (c_{p,q}, 0) \otimes F \otimes  \Pi^{1/2} (0), $ where  $k + 3/2 = \frac{p}{2q}$, such that $p, q \in {\Z}$, $p, q \ge 2$, $(\frac{p-q}{2}, q) = 1$.
\end{theorem}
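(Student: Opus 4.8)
The plan is to construct the two homomorphisms by exhibiting explicit generating fields in the target vertex algebras that satisfy the defining operator product expansions of $\widehat{sl(2)}$ (resp. $\widehat{osp(1,2)}$) at the prescribed level, and then to check that the resulting map of the universal vertex algebra factors through the simple quotient. For part (1), I would write $\Pi(0) = M(1)\otimes \C[\Z c]$ as a module over the rank-one Heisenberg generated by a field $c(z)$, so that the "lattice" piece carries the screening-type fields $e^{c}(z)$ and $e^{-c}(z)$ together with the Heisenberg field; combining these with the Virasoro field $\omega^{Vir}$ of $L^{Vir}(d_{p,p'},0)$, one builds candidates $e(z), h(z), f(z)$ for the $\widehat{sl(2)}$ currents. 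Concretely $h$ will be (a multiple of) the Heisenberg field $c(z)$, $e$ will be proportional to $e^{c}(z)$, and $f$ will be a normally ordered combination of $e^{-c}(z)$ with the Virasoro field and with $\partial c$, $\,:c\,c:$, chosen so that the field $f$ has conformal weight $1$ and the OPEs $h(z)h(w)$, $h(z)e(w)$, $h(z)f(w)$, $e(z)f(w)$ close correctly with central term $k$. This is the Wakimoto-type / Friedan–Martinec–Shenker free-field realization repackaged through the $\mathcal W$-algebra, i.e. an inverse of quantum Hamiltonian reduction as indicated after \eqref{inclusion}; the analogous construction for $\widehat{osp(1,2)}$ uses in addition the neutral free fermion $F$ to produce the two odd currents, with the Neveu–Schwarz field $\tau(z)$ of $L^{ns}(c_{p,q},0)$ playing the role of $\omega^{Vir}$, and the half-lattice $\Pi^{1/2}(0)$ accommodating the fields $e^{\pm c/2}$ needed for the odd generators.

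Once the currents are written down, the first thing to verify is that they generate a homomorphic image of the \emph{universal} affine vertex algebra $V^k(\g)$: this is a finite OPE check, which for $sl(2)$ reduces to a handful of normal-ordered-product identities in $M(1)\otimes\C[\Z c]$ and is where the specific numerical coefficients and the value of $k$ enter. The level comes out as $k+2 = p/p'$ (resp. $k+3/2 = p/2q$) precisely because the Virasoro (resp. Neveu–Schwarz) central charge is $d_{p,p'}$ (resp. $c_{p,q}$): the Sugawara construction inside the image must reproduce $\omega^{Vir}$ up to a term built from the lattice part, and matching central charges forces the displayed relation between $k$ and $(p,p')$. I would record this as a lemma (the homomorphism $\Phi_i$ from the universal algebra exists for all such $k$), independent of simplicity.

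The substantive step — and the main obstacle — is to show that $\Phi_i$ kills the maximal ideal $J^k(\g)$, so that it descends to the \emph{simple} vertex algebra $V_k(\g)$, and that the descended map is still nonzero. Nonvanishing is easy (the image contains $e^{c}(z)\ne 0$). For the factorization through $V_k(\g)$, since $k$ is admissible, $J^k(sl(2))$ is generated by a single singular vector, and it suffices to prove that $\Phi_1$ annihilates that singular vector; I would do this by a conformal-weight and $h$-weight argument combined with a direct (or generating-function) computation showing the image of the singular vector lies in a weight space of $L^{Vir}(d_{p,p'},0)\otimes\Pi(0)$ that is too small to contain it — equivalently, by identifying $L^{Vir}(d_{p,p'},0)$ as the commutant/coset of the lattice part inside the image and invoking the fact that the relevant Virasoro module appearing there is already the simple one $L^{Vir}(d_{p,p'},0)$ rather than the Verma module, which automatically imposes the vanishing of the Malikov–Feigin–Fuks singular vector. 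For $osp(1,2)$ the same strategy applies with the Neveu–Schwarz minimal-model singular vector in place of the Virasoro one, using the coprimality condition $(\tfrac{p-q}{2},q)=1$ to guarantee we are in the correct minimal model $L^{ns}(c_{p,q},0)$. I expect the bookkeeping in the $osp(1,2)$ case — keeping track of the fermion $F$ and the half-integral lattice — to be the most delicate part of the write-up, but conceptually it parallels the $sl(2)$ argument.
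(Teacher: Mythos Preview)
Your outline correctly identifies the two-step structure (build a homomorphism from the universal algebra $V^k(\g)$, then show it factors through the simple quotient), and your description of the generators in terms of lattice and Virasoro/Neveu--Schwarz fields matches the paper's Proposition~\ref{prop-sl2-1} and Theorem~\ref{thm-reali-osp}. However, the argument you offer for the factorization step has a real gap.

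For part (1), you propose to kill the affine singular vector either by a weight-space count or by a coset argument. Neither works as stated. The relevant weight space in $L^{Vir}(d_{p,p'},0)\otimes\Pi(0)$ is not zero, so a dimension count alone cannot force vanishing; and invoking ``the Virasoro module appearing there is already simple'' is what you are trying to \emph{prove} descends to the affine side, so it is circular unless you can identify where $\Omega_k^{sl(2)}$ actually lands. The paper's mechanism is precise: both the affine singular vector $\Omega_k^{sl(2)}$ and the Virasoro singular vector $\Omega_{p,p'}^{Vir}$ are constructed by the \emph{same} screening operators (integrals of $Y(e^{\varphi/p'},z)$ over a common cycle), and after the change of Heisenberg basis one obtains literally $\Omega_k^{sl(2)} = \Omega_{p,p'}^{Vir}\otimes e^{(p'-1)c}$ (Corollary~\ref{posljedica}). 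This equality is what makes the map descend to $L^{Vir}\otimes\Pi(0)$, and it is not recoverable from weight considerations alone.

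For part (2), you suggest running the same strategy with the Neveu--Schwarz singular vector. That could in principle be done, but the paper avoids a second screening-operator analysis by a bootstrap: it uses the conformal embedding $L^{Vir}(d_{(p+q)/2,q},0)\hookrightarrow L^{ns}(c_{p,q},0)\otimes F$ (Proposition~\ref{ns-vir-min}) to see that $e,f,h$ already generate the \emph{simple} $V_k(sl(2))$ by part (1), and then observes that at an admissible level the maximal ideal of $V^k(osp(1,2))$ is simple, so the image is either $V^k$ or $V_k$; the former would force the $sl(2)$ subalgebra to be universal, a contradiction. This is both shorter and structurally different from what you sketch.
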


Let us discuss some application of previous theorem in the case $V_k(sl(2))$:
\begin{itemize}
\item We show in Section \ref{relaxed-realization} that all relaxed highest weight modules for the admissible vertex algebra  $V_k(sl(2))$ have the form
$$L^{Vir}(d_{p,p'},h ) \otimes \Pi_{(-1)} (\lambda) $$
where $L^{Vir}(d_{p,p'},h ) $ is an irreducible $L^{Vir} (d_{p,p'}, 0)$--module and $ \Pi_{(-1)} (\lambda) $ is a weight $\Pi(0)$--module. These modules were first detected in \cite{AdM-MRL} by using the theory of Zhu's algebras. We also show that the character of $L^{Vir}(d_{p,p'},h ) \otimes \Pi_{-1} (\lambda) $ coincides with the Creutzig-Ridout  character formula presented in \cite{CR-2013} and proved recently in \cite{KR}.

We should also say that a similar realization of  irreducible relaxed highest weight modules were presented in \cite[Section 9]{A-2007} in the case of critical level  for $A_1^{(1)}$ and in \cite[Corollary 7]{A-2014} in the case of affine Lie algebra $A_2 ^{(1)}$ at level $k =-3/2$.

\item We prove in Section \ref{Whittaker} that a family of degenerate Whittaker modules for $V_k(sl(2) )$ have the form
$$L^{Vir}(d_{p,p'},h ) \otimes \Pi_{\lambda} $$
where $\Pi_{\lambda}$ is a Whittaker $\Pi(0)$--modules. This result is the final step in the classification and realization of Whittaker $A_1^{(1)}$--modules (all other Whittaker $A_1^{(1)}$--modules were realized in \cite{ALZ-2016}). But our present result implies that affine admissible vertex algebra $V_k(sl(2))$ admits a family of Whittaker modules.  On can expect a similar result in general.

\item     In Section  \ref{logarithmic} we present a vertex-algebraic  construction of logarithmic modules by using the methods from \cite{AdM-selecta} and the expressions for screening operators from \cite[Section 5]{FFHST}.  We prove that the admissible vertex algebra $V_k(sl(2))$,  for arbitrary admissible  $k \notin {\Z}_{\ge 0}$,  admits logarithmic modules $\widetilde{ \mathcal M_{r,s} ^{\ell, \pm} (\lambda) }$  of nilpotent rank two (cf.  Corollaries \ref{cor-log-1}  and  \ref{cor-log-2}).   These logarithmic modules   were previously constructed  only for   levels $k=-1/2$ and $k=-4/3$ (cf. \cite{LMRS}, \cite{Ridout11}, \cite{AdM-selecta}, \cite{Gab}).

\item We present in Section \ref{real-N3} a realization of the simple  affine vertex algebra  $\mathcal W_{k'} (spo(2,3), f_{\theta})$  with central charge $c=-3/2$. It  is realized on the tensor product of the simple super-triplet vertex algebra $SW(1)$ (introduced  by the author and A. Milas in \cite{AdM-supertriplet})  and a rank one lattice vertex algebra. As a consequence we give a  direct proof that the parafermion vertex algebra $K(sl(2), -\frac{2}{3})$ is a ${\Z}_2$--orbifold of  a  super-singlet vertex algebra, also   introduced in \cite{AdM-supertriplet}.
We should  mention  that a different approach based on the  extension theory was  recently presented in \cite{ACR}.
\end{itemize}

Some applications in the case  $V_k(osp(1,2) )$ will be presented in our forthcoming paper \cite{A-new}. Let us note here that we have the following realization at the critical level.
 We introduce a  vertex algebra $ V^{ns} _{crit}  $ which is  freely generated by $G^{crit}$ and $T$, such that $T$ is central and the following $\lambda$--bracket  relation holds:
$$ [G ^{cri} _{\lambda} G^{cri} ] =  2 T  -  \lambda ^2. $$
We prove:

\begin{theorem}
Let $k=-3/2$. There is   non-trivial homomorphism of   vertex algebras:
$$\overline \Phi  : V^k(osp(1,2))  \rightarrow  V^{ns} _{crit}    \otimes F \otimes     \Pi^{1/2} (0). $$
such that $T$ is a central element of $V^k(osp(1,2))$.
\end{theorem}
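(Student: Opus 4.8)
This is the critical-level ($k=-h^\vee=-3/2$) analogue of Theorem 1.1(2) and should be viewed as an ``inverse'' of the quantum Hamiltonian reduction of $\widehat{osp(1,2)}$ at the critical level. As $k\to -3/2$ one has $p\to 0$ in the parametrization $k+3/2=\tfrac{p}{2q}$, so the Neveu--Schwarz central charge $c_{p,q}=\tfrac32-\tfrac{3(p-q)^2}{pq}$ tends to $-\infty$; after rescaling the Neveu--Schwarz generators so that the central term of the $G^{ns}$--$G^{ns}$ OPE stays bounded, the rational Neveu--Schwarz vertex algebra degenerates precisely to $V^{ns}_{crit}$, in which $T$ is a free central field and $[{G^{crit}}_\lambda G^{crit}]=2T-\lambda^2$. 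The plan is to realize this degenerate limit of $\Phi_2$ by explicit formulas and to verify them directly.

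I would define $\overline\Phi$ on the generators of $V^k(osp(1,2))$ --- the $sl(2)$-triple $e,f,h$ and the odd elements $G^{\pm}$ --- by explicit expressions built from $T$, $G^{crit}$, the neutral fermion $\psi$ generating $F$, and the Heisenberg fields and lattice exponentials of $\Pi^{1/2}(0)$: thus $\overline\Phi(e)$ is a nonzero multiple of $e^{c}$, $\overline\Phi(h)$ a Heisenberg field in $c,d$, $\overline\Phi(G^{+})$ a combination of $e^{c/2}$, $\psi$, $G^{crit}$ and Heisenberg fields (with $\overline\Phi(G^{-})$ the analogue involving $e^{-c/2}$), and $\overline\Phi(f)$ a normally ordered expression in $T$, $\psi$, the Heisenberg fields and $e^{-c}$. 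The $k$-dependent coefficients occurring in $\Phi_2$ must be reorganized so as to stay finite under the above rescaling at $k=-3/2$; this is possible exactly because the Neveu--Schwarz conformal vector, quadratic in $G^{ns}$ at generic level, is replaced by the free central generator $T$ of $V^{ns}_{crit}$.

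The core step is to verify that this assignment is a vertex-algebra homomorphism, i.e.\ that the images of $e,f,h,G^{\pm}$ satisfy all defining OPEs of $\widehat{osp(1,2)}$ at level $-3/2$. This is a finite $\lambda$-bracket computation inside $V^{ns}_{crit}\otimes F\otimes \Pi^{1/2}(0)$ that parallels the non-critical case. The relations involving only $e,h$ and the $h$-action are routine; the substantive checks are the $e$--$f$ OPE, which must reproduce $\overline\Phi(h)$ and the level $-3/2$, and above all the odd--odd OPE $[{G^{+}}_\lambda G^{-}]$, which must equal the correct linear combination of $\overline\Phi(h),\overline\Phi(e),\overline\Phi(f)$ plus the critical-level central term proportional to $k\lambda$. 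Here one uses the relation $[{G^{crit}}_\lambda G^{crit}]=2T-\lambda^2$ in place of the Neveu--Schwarz relation and checks that the $2T$ and $-\lambda^2$ terms combine with the fermionic and Heisenberg contributions to give exactly the required output. I expect this odd--odd relation, together with the bookkeeping needed to make the degeneration of all coefficients consistent, to be the main obstacle.

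Finally, $\overline\Phi$ is non-trivial because $\overline\Phi(e)$ is a nonzero multiple of $e^{c}$. For the last assertion, recall that at the critical level the Segal--Sugawara vector $\nu$ of $V^k(osp(1,2))$ (of conformal weight $2$) lies in the centre of $V^k(osp(1,2))$. One checks from the explicit formulas that $\overline\Phi$ carries $\nu$ to a nonzero multiple of $T\otimes\mathbf 1\otimes\mathbf 1$ --- morally, $\nu=(k+h^\vee)\,\omega^{\mathrm{sug}}$, and the vanishing prefactor $k+h^\vee$ kills the conformal vectors of the $F$- and $\Pi^{1/2}(0)$-factors in the image while keeping the rescaled $T$ --- so that $T$ is realized as the image of a central element of $V^k(osp(1,2))$; if in addition one verifies that $\overline\Phi$ is injective, then $V^{ns}_{crit}$ embeds and $T$ is literally a central element of $V^k(osp(1,2))$.
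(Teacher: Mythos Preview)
Your overall plan---write down explicit images of the generators and verify the $\widehat{osp(1,2)}$ relations by direct $\lambda$-bracket computation---is exactly what the paper does. The paper does not carry out a genuine limiting/rescaling argument; it simply posits the formulas at $k=-3/2$ (with $G^{cri}$ and $T$ in place of $G$ and $L$) and checks them, the key technical step being the computation of $\bar y(n)\bar y$ for $n=0,1,2$ (Lemma~\ref{pomoc1}), which goes through unchanged at the critical level because it does not use the NS relations at all. So your sketch is on the right track, but two details deserve correction.

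First, it is not quite true that ``the Neveu--Schwarz conformal vector is replaced by the free central generator $T$.'' In the non-critical realization the relevant conformal vector in the image of $f$ is the \emph{coset} Virasoro $\omega_{\frac{p+q}{2},q}\in V^{ns}(c_{p,q},0)\otimes F$, not the NS conformal vector itself. At $k=-3/2$ this specializes to a genuine Virasoro vector $\omega_{1,2}=T(-2)+G^{cri}(-\tfrac32)\Psi(-\tfrac12)+2\omega_F$ of central charge $-2$, and it is $(k+2)\omega_{1,2}$ that sits in $\overline\Phi(f)$, not $T$ alone. Getting this right is what makes the $sl(2)$-subalgebra close (via the $sl(2)$ critical-type realization with a Virasoro of central charge $d_{1,2}$), and it also feeds into the $y(0)y=-2f$ check.

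Second, for the centrality of $T$ the paper does not go through the Segal--Sugawara vector. Instead one shows directly that $T$ lies in the image of $\overline\Phi$ by an explicit identity among the generators (the critical-level analogue of $x(-1)y-\omega_{sug}^{sl_2}-\tfrac12 h(-2)=-\tfrac{p}{q}\,\omega_{p,\frac{p+q}{2}}$ from the non-critical case), and then observes that $T$ is central in $V^{ns}_{crit}$ and hence commutes with all of $V^{ns}_{crit}\otimes F\otimes\Pi^{1/2}(0)$, in particular with the image of $V^k(osp(1,2))$. Your heuristic ``$\nu=(k+h^\vee)\omega^{sug}$ kills the $F$ and $\Pi^{1/2}(0)$ conformal vectors but keeps $T$'' is suggestive but would need a careful computation to be made rigorous; the explicit-identity route is cleaner. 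Finally, note that the paper does \emph{not} claim injectivity of $\overline\Phi$ at the critical level: this (together with the identification of the full center) is stated only as an expectation, contingent on extending Arakawa's results to $osp(1,2)$.
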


In Remarks   \ref{ired-comment}, \ref{KR-sl} and  \ref{generalization-osp(1,2)}  we discuss how one prove irreducibility of modules of the type
\begin{itemize}
\item
$ L^{Vir} (d_{p,p'}, h) \otimes \Pi_{(-1)}(\lambda) $  for $V_k(sl(2))$,
\item $ L^{\mathcal R} (c_{p,q}, h) ^{\pm} \otimes M^{\pm} \otimes \Pi_{(-1)} ^{ 1/2} (\lambda)$ for $V_k(osp(1,2))$,
\end{itemize}
by using new results on characters of relaxed  highest weight modules from  \cite{KR}.

 In our forthcoming papers we plan to investigate a higher rank generalizations of the result discussed above.

\vskip 5mm
{\bf Acknowledgment.}
 The author is partially supported by the Croatian Science Foundation under the project 2634 and by the
QuantiXLie Centre of Excellence, a project cofinanced
by the Croatian Government and European Union
through the European Regional Development Fund - the
Competitiveness and Cohesion Operational Programme
(KK.01.1.1.01).
  We  would like to thank T. Creutzig,  A. Milas,   G. Radobolja, D. Ridout and S. Wood on valuable discussions.
Finally we thank the referee on careful reading of the paper and helpful  comments.

\section{Preliminaries}
\label{prelimin}
In the paper we assume that the reader is familiar with basic concepts in the vertex algebra theory such as modules, intertwining operators.
In this section we recall definition of the logarithmic modules for vertex operator algebras and construction of logarithmic modules. We also recall how we can extend vertex operator algebra $V$ by its module $M$ and get extended vertex algebra $\mathcal V = V \oplus M$, and construct $\mathcal V$--modules. This construction is important for the construction of logarithmic modules for affine vertex algebra $V_k(sl(2))$ in Section \ref{logarithmic}.

\subsection{Logarithmic modules} Let us   recall the  definition of  the  logarithmic module of a vertex operator algebra. More informations on the theory of logarithmic modules for vertex operator algebras can be found in the papers   \cite{AdM-2013}, \cite{CG},  \cite{HLL},  \cite{Mil}, \cite{Miy}.

Let $(V,Y,{\bf 1}, \omega)$ be a vertex operator algebra, and $(M, Y_M)$ be its weak module. Then  the components of the field
$$ Y_M(\omega, z) = \sum_{n \in {\Z}} L(n) z^{-n-2} $$
defines on $M$ the structure of a module for the Virasoro algebra.
\begin{definition}
A weak module     $(M, Y_M)$  for the vertex operator algebra $(V, Y , 1, {\omega})$  is called  a logarithmic module if it admits the following decomposition
   $$ M = \coprod_{r \in {\C}}  M_r, \quad  M_r = \{ v \in M \ \vert \ (L(0) -r) ^k = 0 \ \mbox{for some} \ k \in {\Z}_{>0} \}. $$

  If $M$ is a logarithmic module, we say that it has a  nilpotent  rank $m \in {\Z_{\ge 1} }$ if $$ (L(0)-L_{ss} (0) ) ^{m}  = 0, \quad (L(0)-L_{ss} (0) ) ^{m-1}   \ne 0,$$
where $L_{ss}(0)$ is the semisimple part of $L(0)$.
\end{definition}

Now we shall recall the construction of logarithmic modules from \cite{AdM-selecta}.
 Let $v \in
V$ such that
\bea
&& [v_n, v_m] = 0 \quad \forall n,m \in {\Z}, \label{rel-c-1} \\
&& L(n) v = \delta_{n,0} v \quad \forall \ n \in {\N},
\label{rel-c-2}
\eea
so that $v$ is of conformal weight one.

Define
$$ \Delta(v,z) = z^{v_0} \exp \left( \sum_{n=1} ^{\infty}
\frac{v_n}{-n}(-x)^{-n} \right). $$

The following result was proved in  \cite[Theorems 2.1  and 2.2]{AdM-selecta}
\begin{theorem} \cite{AdM-selecta} \label{AdM-selecta}
\label{gen-const-log} Assume that $V$ is a vertex operator
algebra and that $v \in V$ satisfies conditions
(\ref{rel-c-1}) and (\ref{rel-c-2}). Let $\overline{V}$ be the
vertex subalgebra of $V$ such that $\overline{V} \subseteq
\mbox{Ker}_V v_0$.

\item[(1)]
Assume that $(M,Y_M)$ is
a weak    $V$--module.
Define the pair $(\widetilde{M}, \widetilde{Y}_{\widetilde{M} })$
such that
$$\widetilde{M} = M \quad \mbox{as a vector space}, $$
$$ \widetilde{Y}_{\widetilde{M} } (a, x) = Y_{M} (
\Delta(v,x) a, x) \quad \mbox{for} \ a \in \overline{V}. $$
 Then  $(\widetilde{M},
\widetilde{Y}_{\widetilde{M} } )$ is a weak $
\overline{V}$--module.
\item[(2)] Assume that $(M,Y_M)$ is a $V$--module such that $L(0)$ acts
semisimply on $M$. Then $(\widetilde{M},
\widetilde{Y}_{\widetilde{M} })$ is logarithmic
$\overline{V}$--module if and only if $v_0$ does not act
semisimply on $M$. On   $\widetilde{M}$ we have:
$$ \widetilde{ L(0)} = L(0) + v_0$$
\end{theorem}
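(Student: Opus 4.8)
The construction is exactly that of H.~Li's $\Delta$--operator attached to a weight-one element, and I would carry out the proof along the split (1)/(2).

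For part (1), the plan is to extract from (\ref{rel-c-1})--(\ref{rel-c-2}) the structural properties of the series $\Delta(v,z)$ and then substitute. Write $E(v,z)=\exp\bigl(\sum_{n\ge 1}\tfrac{v_n}{-n}(-z)^{-n}\bigr)$. The relation $[v_n,v_m]=0$ makes the two factors $z^{v_0}$ and $E(v,z)$ commute, makes $\Delta(v,z)$ invertible with $\Delta(v,z)^{-1}=E(v,z)^{-1}z^{-v_0}$, and makes every mode $v_k$ commute with $\Delta(v,z)$; since $v_n\mathbf 1=0$ for $n\ge 0$ we get $\Delta(v,z)\mathbf 1=\mathbf 1$, hence $\widetilde Y_{\widetilde M}(\mathbf 1,z)=Y_M(\mathbf 1,z)=\mathrm{id}$. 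Differentiating term by term (using $L(0)v=v$, so that $v$ has conformal weight one) gives
$$\frac{d}{dz}\Delta(v,z)=\Bigl(z^{-1}v_0-\sum_{n\ge 1}v_n(-z)^{-n-1}\Bigr)\Delta(v,z),$$
the infinitesimal ``$\Delta$ moves the insertion point'' identity. Finally one proves the conjugation identity
$$\Delta_M(v,z_1)\,Y_M(a,z_2)\,\Delta_M(v,z_1)^{-1}=Y_M\bigl(\Delta(v,z_1+z_2)\Delta(v,z_1)^{-1}a,\;z_2\bigr)\qquad(a\in V)$$
($\Delta_M$ the same series acting on $M$, $z_1+z_2$ expanded in nonnegative powers of $z_2$), by commuting the modes $v_k$ through $Y_M(a,z_2)$ via $[v_k,Y_M(a,z)]=\sum_{j\ge 0}\binom kj z^{k-j}Y_M(v_ja,z)$ and resumming the resulting exponential. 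With these facts I would substitute $\widetilde Y_{\widetilde M}(a,z)=Y_M(\Delta(v,z)a,z)$ into the Jacobi identity for $Y_M$ and push the $\Delta$--factors past one another using the conjugation identity, leaving the formal $\delta$--function calculus unchanged. The hypothesis $\overline V\subseteq\mathrm{Ker}_V v_0$ is exactly what makes $\Delta(v,z)a=E(v,z)a$ a \emph{polynomial} in $z^{-1}$ with coefficients in $\overline V$ (the $z^{v_0}$ factor acts trivially on $\overline V$, and each $v_n$, $n\ge 1$, preserves $\overline V$ and strictly lowers weight), so $\widetilde Y_{\widetilde M}(a,z)u\in M((z))$ and $\widetilde Y_{\widetilde M}$ is an honest field; combined with the chain rule and the derivative formula this also yields the $\mathcal D$--derivative property for $\widetilde Y_{\widetilde M}$ on $\overline V$. (One notes that $\mathrm{Ker}_V v_0$ is a vertex subalgebra stable under $\mathcal D=L(-1)$, since $v_0$ is a derivation and $[L(-1),v_0]=(L(-1)v)_0=0$; the conditions $L(n)v=0$ for $n\ge 1$ are what let $\overline V$ carry a conformal vector compatible with the deformed operators, and they enter in part (2).)

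For part (2), let $\bar\omega$ be the conformal vector of $\overline V$. Since $\bar\omega$ has conformal weight two and $v_0\bar\omega=0$, the expansion from part (1) reads $\Delta(v,z)\bar\omega=\bar\omega+z^{-1}v_1\bar\omega+\gamma z^{-2}\mathbf 1$ for a scalar $\gamma$ (the coefficient of $z^{-2}$ lies in $\C\mathbf 1$), whence
$$\widetilde Y_{\widetilde M}(\bar\omega,z)=Y_M(\bar\omega,z)+z^{-1}Y_M(v_1\bar\omega,z)+\gamma z^{-2};$$
reading off the coefficient of $z^{-2}$ and inserting the explicit form of $\bar\omega$ supplied by the construction yields $\widetilde L(0)=L(0)+v_0$ as operators on $\widetilde M=M$. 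Now $[L(0),v_0]=0$ (from $[L(0),v_n]=-nv_n$), so on the $L(0)$--semisimple module $M$ the operator $v_0$ has a Jordan decomposition $v_0=s+N$ with $s$ semisimple, $N$ nilpotent, $[s,N]=0$, and $s,N$ commuting with $L(0)$; then $\widetilde L(0)=(L(0)+s)+N$ is the Jordan decomposition of $\widetilde L(0)$, so $\widetilde L(0)_{ss}=L(0)+s$ and $\widetilde L(0)-\widetilde L(0)_{ss}=N=v_0-s$. Therefore $\widetilde M$ is the direct sum of the generalized $\widetilde L(0)$--eigenspaces --- a logarithmic $\overline V$--module, the decomposition being inherited from the one for $M$ under $L(0)$ (in the applications $v_0$ acts locally finitely on $M$) --- and $\widetilde L(0)$ fails to be semisimple precisely when $N\ne 0$, i.e.\ precisely when $v_0$ does not act semisimply on $M$.

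I expect the main obstacle to be the conjugation identity for $\Delta(v,z)$ and the consequent verification of the Jacobi identity for $\widetilde Y_{\widetilde M}$ on $\overline V$: this is the technical core of the $\Delta$--operator formalism, whereas part (2) reduces, once part (1) is in hand, to Jordan--decomposition bookkeeping. A secondary point needing care is pinning down $\bar\omega$ and its relation to $\omega$, which fixes the constant in $\widetilde L(0)$; in all the situations where the theorem is applied here $\overline V$ and $\bar\omega$ are given concretely, so this causes no trouble.
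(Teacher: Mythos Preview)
The paper does not give its own proof of this theorem: it is quoted verbatim from \cite{AdM-selecta} (Theorems 2.1 and 2.2 there) and used as a black box, so there is nothing in the present paper to compare your argument against.

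Your outline is the standard one from \cite{AdM-selecta} and \cite{Li}, and it is essentially correct. Two small points are worth tightening. First, in your computation of $\Delta(v,z)\omega$ the quadratic term of the exponential contributes $\tfrac{1}{2}z^{-2}v_1(v_1\omega)=\tfrac{1}{2}z^{-2}v_1v$, so your scalar $\gamma$ is $\tfrac{1}{2}\langle v,v\rangle$ where $v_1v=\langle v,v\rangle{\bf 1}$; you should note explicitly that condition~(\ref{rel-c-1}) forces this to vanish, since $[v_1,v_{-1}]=(v_1v)_{-1}=\langle v,v\rangle\,\mathrm{id}$. Without this observation your formula reads $\widetilde L(0)=L(0)+v_0+\gamma$, not $L(0)+v_0$. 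Second, your parenthetical claim that each $v_n$, $n\ge 1$, preserves $\overline V$ is neither assumed nor needed: $\Delta(v,z)a$ is only required to lie in $V[z^{-1}]$ (not $\overline V[z^{-1}]$) for $\widetilde Y_{\widetilde M}(a,z)=Y_M(\Delta(v,z)a,z)$ to make sense, since $Y_M$ is defined on all of $V$. The Jacobi identity for $\widetilde Y_{\widetilde M}$ on $\overline V\otimes\overline V$ then follows from that for $Y_M$ via the conjugation identity you state, and this is indeed the technical heart of the proof.
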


\subsection{ Extended vertex algebra $\mathcal V = V \oplus M$}
\label{extended}
Let $(V, Y_V , {\bf 1}, \omega)$ be a vertex operator algebra and $(M, Y_M)$ a $V$--module having integral weights with respect to $L(0) $, where $L(n) = \omega_{n+1}$. Let $\mathcal V = V\oplus M$.
Define $$ Y_{\mathcal V} ( v_1 + w_1, z) ( v_2 + w_2) = Y_V  (v_1, z) v_2 +  Y _M (v_1, z ) w_2  + e^{ z L(-1) } Y_M( v_2, -z ) w_1, $$
where $v_1, v_2  \in V$, $w_1, w_2 \in M$.
Then by \cite{Li-1994} $(\mathcal V,  Y_{\mathcal V} ,  {\bf 1}, \omega )$ is a vertex operator algebra.
The following lemma gives a method for a construction of a family of  $\mathcal V$--modules.

\begin{lemma}\cite{AdM-2012} \label{AdM-2012}
Assume that $(M_2, Y_{M_2}) $ and $(M_3, Y_{M_3})$ be $V$--modules, and let $\mathcal Y (\cdot, z)$ be an intertwining operator of type   $\binom{ M_3}{ M  \ \ M_2}$ with integral powers of  $z$. Then $(M_2 \oplus M_3, Y_{M_2 \oplus M_3})$ is a $\mathcal V$--module, where the vertex operator is  given by
$$ Y_{M_2 \oplus M_3} (v + w) ( w_2 + w_3) = Y_{M_2} (v, z) w_2 + Y_{M_3} (v, z) w_3   + \mathcal Y (w, z) w_2 $$
 for $v \in V$, $w \in M$, $w_i \in M_i$, $i=1,2$.
\end{lemma}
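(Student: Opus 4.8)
The plan is to verify directly that the formula given for $Y_{M_2\oplus M_3}$ satisfies the axioms of a module for the extended vertex algebra $\mathcal V = V\oplus M$, closely paralleling Li's proof that $(\mathcal V, Y_{\mathcal V},{\bf 1},\omega)$ is a vertex operator algebra. First I would observe that the underlying space $M_2\oplus M_3$ carries a natural $\ganz$-grading by $L(0)$-weights (shifted if necessary so that $\mathcal Y$ has integral powers of $z$, which is exactly the stated hypothesis), and that $Y_{M_2\oplus M_3}({\bf 1},z)=\mathrm{id}$ is immediate since ${\bf 1}\in V$ acts as the identity on each $M_i$ and the intertwining-operator term is absent for $w={\bf 1}$-type inputs. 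The content is the Jacobi identity (or equivalently weak commutativity and weak associativity together with the $L(-1)$-derivative property) for the action of a general element $v+w\in\mathcal V$.

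Next I would split the Jacobi identity into its "block" components according to the decomposition $\mathcal V=V\oplus M$ and the decomposition of the module. Writing $a_i = v_i + w_i$ with $v_i\in V$, $w_i\in M$, the vertex operator $Y_{\mathcal V}(a_1,z)a_2$ has three pieces, and one must check compatibility of $Y_{M_2\oplus M_3}$ with each. The purely-$V$ piece reduces to the statement that $Y_{M_2}$ and $Y_{M_3}$ are $V$-module maps, which is given. The mixed pieces involving one factor from $M$ reduce, after bookkeeping, precisely to: (i) the intertwining property of $\mathcal Y$ of type $\binom{M_3}{M\;M_2}$ with respect to the $V$-actions on $M,M_2,M_3$; and (ii) the compatibility of $\mathcal Y$ with the vertex operator $Y_M$ on $M$ — this last being exactly the relation encoded in the product $Y_{\mathcal V}(w_1,z)w_2 = e^{zL(-1)}Y_M(w_2,-z)w_1$ used to define $\mathcal V$. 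The skew-symmetry identity for intertwining operators (Frenkel–Huang–Lepowsky / Li) is what converts the $e^{zL(-1)}Y_M(w_2,-z)w_1$ term in $\mathcal V$ into a statement that can be matched termwise against $\mathcal Y(w,z)$ acting on $M_2$.

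The main obstacle, and the step I would spend the most care on, is the piece of the Jacobi identity with \emph{two} inputs from $M\subset\mathcal V$: there one must show that $\mathcal Y(w_1,z_1)$ and $\mathcal Y(w_2,z_2)$, together with the "internal" product $w_1\cdot w_2$ computed inside $\mathcal V$ (which lands in $V$, and hence acts on $M_2$ via $Y_{M_2}$), assemble into a genuine Jacobi identity. This is where one genuinely uses that $\mathcal V$ is itself a vertex algebra — i.e. that the product of two elements of $M$ inside $\mathcal V$ is associative in the appropriate generalized sense — and the hypothesis that all powers of $z$ appearing in $\mathcal Y$ are integral, so that no branch cuts or formal-variable subtleties obstruct the manipulation. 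I expect this can be carried out by the standard "generating-function" argument: extract the coefficient of $z_0^{-n-1}z_1^{-m-1}z_2^{-\ell-1}$, reduce to a finite sum of commutator/associator relations among intertwining operators, and invoke the intertwining-operator axiom for $\mathcal Y$ plus the already-established vertex-algebra structure on $\mathcal V$. The remaining axioms — the grading-restriction/lower-truncation condition and the $L(-1)$-derivative property $\frac{d}{dz}Y_{M_2\oplus M_3}(a,z) = Y_{M_2\oplus M_3}(L(-1)a,z)$ — follow routinely from the corresponding properties of $Y_{M_2},Y_{M_3}$ and $\mathcal Y$ and the fact that $\omega\in V$, so $\widetilde L(-1)$ acts as $L(-1)$ on each summand.
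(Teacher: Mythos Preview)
The paper does not prove this lemma; it is simply quoted from \cite{AdM-2012} with no argument given, so there is no ``paper's proof'' to compare against. Your plan of verifying the module axioms block by block is the natural one and is essentially what the cited reference does.

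One correction to your discussion: you describe the $M$--$M$ piece of the Jacobi identity as the ``main obstacle,'' where the product $w_1\cdot w_2$ inside $\mathcal V$ ``lands in $V$.'' But look again at the definition of $Y_{\mathcal V}$ in Subsection~\ref{extended}: for $w_1,w_2\in M$ one has $Y_{\mathcal V}(w_1,z)w_2=0$, since all three terms in the formula vanish when $v_1=v_2=0$. Likewise, in the module formula $Y_{M_2\oplus M_3}(w,z)w_3=0$ for $w\in M$, $w_3\in M_3$. Hence in the Jacobi identity with both inputs from $M$, every term --- the iterate and both products --- is identically zero, and this case is trivial rather than the crux. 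The genuine content lies entirely in the mixed $V$--$M$ pieces, which, as you correctly say, reduce to the defining Jacobi identity for the intertwining operator $\mathcal Y$ of type $\binom{M_3}{M\ \ M_2}$ together with skew-symmetry. With that adjustment your outline is complete.
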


\begin{remark}
Note that the vertex operator algebra $\mathcal V$ is not simple, and that the module $M_2 \oplus M_3$ is also not simple. Moreover, the module structure on  $M_2 \oplus M_3$ is, in general,  not unique.
\end{remark}

\subsection{Fusion rules for the minimal Virasoro vertex operator algebras}
\label{Vir-subsect}
Now we review some results on Virasoro vertex operator algebras, their fusion rules and intertwining operators. More details can be found in \cite{FZ}, \cite{Wa}.

  Let $V^{Vir}(c,0)$ be the universal Virasoro vertex operator algebra of central charge $c$, and  let $L^{Vir} (c,0)$ be it simple quotient (cf. \cite{FZ}).
 Let
 $$ d_{p,p'} = 1- 6 \frac{(p-p') ^2}{p p'}, \quad p, p' \in {\Z}_{\ge 2}, \ (p,p') = 1 $$
 The Virasoro vertex algebra  $L^{Vir} (d_{p,p'},0)$ is rational (cf. \cite{Wa}) and its  irreducible modules are
 $\{ L^{Vir} (d_{p,p'}, h) \  \vert \  h \in   \mathcal{S}_{p,p'}\
\} $ where     $$ \mathcal{S}_{p,p'}= \{ h_{p,p'} ^{r,s }  = \frac{ ( s  p - r p')^2  - (p-p ')^2 } {4 p p'}  \ \vert \  \ 1 \le r \le p-1, 1 \le s \le p'-1\}. $$
% and  $h_{p,p'} ^{r,s }  = \frac{ ( r p - s p')^2  - (p-p ')^2 } {4 p p'} $.
Let us denote the highest weight vector in $L^{Vir} (d_{p,p'},  h_{p,p'} ^{r,s } )$ by $v_{r,s}$.

The fusion rules for  $L^{Vir} (d_{p,p'},0)$--modules are
$$ L^{Vir} (d_{p,p'},  h_{p,p'} ^{r,s } )  \boxtimes   L^{Vir} (d_{p,p'},  h_{p,p'} ^{r',s' } ) = \sum_{ r'', s''}   \left[\begin{matrix} (r'', s'') \\ (r, s) \ (r', s')\end{matrix}\right]_{ (p,p') }  L^{Vir} (d_{p,p'},  h_{p,p'} ^{r'',s'' } ), $$
where the fusion coefficient   $\left[\begin{matrix} (r'', s'') \\ (r, s) \ (r', s')\end{matrix}\right]_{ (p,p') }$  is equal to the dimension of the vector space of all intertwining operators of the type
$$   \binom{L^{Vir} (d_{p,p'},  h_{p,p'} ^{r'',s'' })}{L^{Vir} (d_{p,p'},  h_{p,p'} ^{r,s } )  \ \   L^{Vir} (d_{p,p'},  h_{p,p'} ^{r',s' } ) }.  $$
The fusion coefficient $\left[\begin{matrix} (r'', s'') \\ (r, s) \ (r', s')\end{matrix}\right]_{ (p,p') }$ is $0$  or $1$. For explicit formula see \cite{Wa}, \cite[Section 7]{CHY}.

Let $\mathcal Y(\cdot, z)$ be a non-trivial intertwining operator of type $$   \binom{L^{Vir} (d_{p,p'},  h_3)}{L^{Vir} (d_{p,p'},  h_1 )  \ \   L^{Vir} (d_{p,p'},  h_2) }, \quad (h_i \in S_{p,p'}).  $$ Then
for every $v \in  L^{Vir} (d_{p,p'},  h_1 )$ we have
$$ \mathcal Y(v, z) = \sum_{ r \in \Delta + {\Z}} v_r z^{-r-1} $$
where $\Delta = h_1 + h_2 - h_3$. Let $v_{h_i}$ be the highest weight vector in $L^{Vir} (d_{p,p'},  h_i)$, $i=1,2,3$. Then one can show that
$$ (v_{h_1} )_{\Delta -1} v_{h_2} = C v_{h_3}, \quad  (v_{h_1} )_{\Delta +n } v_{h_2} = 0$$
where $C \ne 0$, $n \in {\Z}_{\ge 0}$.

\section{Wakimoto modules for $\widehat{sl(2)}$}

In this section we  first recall the construction of the Wakimoto modules for  $\widehat{sl(2)}$ (cf. \cite{efren}, \cite{Wak}).  Then by using  the  embedding of  the Weyl vertex algebra into  a lattice vertex algebra (also called FMS bosonization)  we show that  the  universal affine vertex algebra $V^k(sl(2))$   can be embedded  into the  tensor product of a Virasoro vertex algebra with a vertex algebra $\Pi(0)$ of a lattice type. This result is stated in Proposition  \ref{prop-sl2-1},   which is a vertex-algebraic interpretation  of the result of A. M. Semikhatov from \cite{Sem-1994}.

\vskip 5mm

\subsection{Weyl vertex algebra $W$} Recall that the   Weyl algebra  {\it Weyl} is an associative algebra with generators
$ a(n), a^{*} (n) \quad (n \in {\Z})$ and relations
\bea  \label{comut-Weyl} && [a(n), a^{*} (m)] = \delta_{n+m,0}, \quad [a(n), a(m)] = [a ^{*} (m), a ^* (n) ] = 0 \quad (n,m \in {\Z}). \eea
Let $W$ denotes the simple {\it Weyl}--module generated by the cyclic vector ${\bf 1}$ such that
$$ a(n) {\bf 1} = a  ^* (n+1) {\bf 1} = 0 \quad (n \ge 0). $$
As a vector space $ W \cong {\C}[a(-n), a^*(-m) \ \vert \ n >0 , \ m \ge 0 ]. $
There is a  unique vertex algebra $(W, Y, {\bf 1})$ where
the  vertex operator map is $ Y: W  \rightarrow \mbox{End} (W )[[z, z ^{-1}]] $
such that
$$ Y (a(-1) {\bf 1}, z) = a(z), \quad Y(a^* (0) {\bf 1}, z) = a ^* (z),$$
$$ a(z)   = \sum_{n \in {\Z} } a(n) z^{-n-1}, \ \ a^{*}(z) =  \sum_{n \in {\Z} } a^{*}(n)
z^{-n}. $$

\subsection{ The Heisenberg vertex algebra $M_{\delta}( \kappa, 0)$. }
Let ${\mathfrak h} = {\C}  \delta   $ be the $1$--dimensional commutative   Lie algebra with a symmetric bilinear form defined by $( \delta   , \delta  ) =1  $, and  $\widehat{\mathfrak h} = {\mathfrak h}  \otimes {\C}[t,t ^{-1}] + {\C} c$ be its affinization.
 Set $\delta   (n) = \delta  \otimes t ^n $.  % Assume that $k \ne -2$.
 Let $M_{\delta}( \kappa ,  0 )$  denotes the simple $\widehat{\mathfrak h}$--module of level $\kappa \ne 0 $ generated by the  vector ${\bf 1}$ such that
 $ \delta (n) {\bf 1} = 0 \quad \forall n \ge 0. $
 As a vector space
$ M_{\delta}( \kappa , 0 )= {\C}[\delta (n) \,|\,  n \le -1 ]$.

There is a  unique   vertex algebra  $(M_{\delta} (\kappa, 0), Y , {\bf 1} )$ generated  by the  Heisenberg field $Y( \delta(-1) {\bf 1}, z) = \delta (z) = \sum_{n \in {\Z}} \delta (n) z ^{-n-1} $ such that
$$[\delta  (n), \delta (m) ] = \kappa  n \delta_{n+m,0}  \quad (n,m \in {\Z}). $$ Vector
$\omega =  \left( \frac{1}{2 \kappa} \delta(-1) ^2 + a \delta(-2) \right){\bf 1}$
is a conformal vector of central charge $1- \frac{12 a ^2 } {\kappa}. $

For $r  \in {\C}$, let $M_{\delta} (\kappa , r  )$ denotes the irreducible  $M_{\delta}( \kappa , 0 )$--module generated by the highest  weight vector $v_{r}$ such that
$$ \delta(n) v_{r  } = r  \delta_{n,0} v_{r} \quad (n \ge 0). $$

We can consider  lattice  $D_{r} = {\Z} ( \frac{ r \delta }{  \kappa } )$ and  the generalized lattice vertex algebra $V_{ D_{r }  }:= M_{\delta} (\kappa , 0) \otimes {\C} [ D_{r} ] $ (cf. \cite{DL}).    We have:
$$ M_{\delta}( \kappa , r ) = M_{\delta}( \kappa , 0). e^{  \frac{ r \delta  }{ \kappa }  }. $$

Then the restriction of the  vertex operator $Y( e^{ \frac{r \delta}{\kappa } }, z)$ on   $M_{\delta}( \kappa , 0)$ can be considered  as a  map
$ M_{\delta} (\kappa, 0 )  \rightarrow  M_{\delta} (\kappa, r )[[ z, z^{-1} ]] $.

\subsection{ The Wakimoto module  $W_{k,\mu}$. }
Assume that $k \ne -2$ and  $\mu \in {\C}$.
Let $$W_{k,\mu}= W \otimes  M_{\delta} (2 (k+2), \mu). $$  Then $W_{k,0}$ has the structure of a vertex algebra and $W_{k, \mu}$ is a $W_{k,0}$--module.

Let $V^ k(sl(2))$ be the universal vertex algebra of level $k$ associated to the affine Lie algebra $\widehat{sl(2)}$. There is a injective homomorphism of vertex algebras
$\Phi : V^ {k}(sl(2)) \rightarrow W_{k,0}$ generated by
\bea
e(z) &=& a(z); \nonumber \\
h(z) & =& -2 : a^{*}(z)  a (z) : + \delta(z); \nonumber \\
f(z) & = & - : a^{*} (z) ^2 a (z) : + k \partial_z a^{*} (z) + a^{*}(z) \delta (z). \nonumber
\eea
The screening operator is $Q = \mbox{Res}_z : a(z) Y(  e ^{-\tfrac{1}{k+2} \delta}, z) : =   (a(-1) e ^{-\tfrac{1}{k+2} \delta} )_0$ (cf. \cite{efren}).

\subsection{Bosonization} Let $H$ be the lattice
$$ H= {\Z} \alpha + {\Z}\beta, \ \la \alpha , \alpha \ra = - \la \beta , \beta \ra = 1, \quad \la \alpha, \beta \ra = 0, $$
and $V_H = M_{\alpha, \beta} (1) \otimes {\C} [L]$ the associated lattice vertex algebra, where $M_{\alpha, \beta} (1) $ denotes the Heisenberg vertex algebra generated by $\alpha$ and $\beta$.
%We have the following  subalgebra of $V_H$:
%$$ \Pi _{\alpha, \beta}  (0) = M_{\alpha, \beta} (1) \otimes {\C} [\Z (\alpha + \beta) ] \subset V_H. $$
%For every $r \in {\C}$,  and $s \in {\Z}$
%$$ \Pi_s  (r) = \Pi(0). e ^{ s \beta + r (\alpha + \beta)} =M_{\alpha, \beta} (1) \otimes {\C} [s \beta + (\Z + r) (\alpha + \beta) ] $$
%is an irreducible $\Pi(0)$--modules.

% For $s = 0$, we  set $\Pi(r) := \Pi_0 (r)$.

The Weyl vertex algebra  $W$ can be realized as a subalgebra of $V_{H}$ generated by
$$ a = e^{\alpha + \beta}, \ a^{*} = -\alpha(-1) e^{-\alpha-\beta}. $$
%Recall that (cf. \cite{A-2007}, \cite{efren}): $$ W = \mbox{Ker}_{\Pi _{\alpha, \beta} (0)} e ^{\alpha}_0.$$
This gives a realization of the universal affine vertex algebra $V_k(sl(2))$ as a subalgebra of
%$\Pi_{\alpha, \beta}  (0) \otimes M_{\delta} (2 (2k+2), 0) \subset
$V_H \otimes M_{\delta} (2 (k+2), 0)$
generated by
\bea
e & = & e^{\alpha+ \beta}, \label{def-e-1} \\
h & = & - 2 \beta(-1) + \delta(-1) \label{def-h-1} \\
f & = & \left[ (k+1) ( \alpha(-1) ^2 - \alpha(-2) ) + (k+2) \alpha (-1) \beta(-1) - \alpha(-1) \delta(-1) \right] e^{-\alpha-\beta} \label{def-f-1}.
\eea
Screening operators are (cf. \cite[Section 7]{efren}):
\bea
\label{scr}   Q = Res_z ( Y( e^{\alpha + \beta - \tfrac{1}{k+2} \delta}, z),    \quad  \widetilde{Q} = \mbox{Res}_z Y( e^{ - (k+2) (\alpha + \beta ) +\delta}, z) .
\eea

\vskip 5mm

\subsection {Embedding of $V^k(sl(2))$ into vertex algebra $V^{Vir} (d_k, 0) \otimes \Pi(0)$.}  We shall first define new generators of the Heisenberg vertex algebra $M_{\alpha, \beta} (1) \otimes M_{\delta} (2 (k+2)) $.  Let
$$ \gamma = \alpha + \beta - \tfrac{1}{k+2}\delta, \quad \mu = - \beta + \tfrac{1}{2} \delta, \quad \nu = -\tfrac{k}{2} \alpha - \tfrac{k+2}{2}\beta  + \tfrac{1}{2}\delta. $$
Then
$$ \langle \gamma, \gamma \rangle = \frac{2}{k+2}, \quad \langle \mu, \mu \rangle = - \la \nu, \nu \ra = \frac{k}{2}, $$
and all other products are zero. For our calculation, it is useful to notice that
\bea
 && \alpha = \nu + \tfrac{k+2}{2}\gamma, \nonumber \\
 &&  \beta = -\tfrac{k+2}{2} \gamma + \tfrac{2}{k} \mu - \tfrac{k+2}{k} \nu, \nonumber \\
 && \delta = - (k+2) \gamma + \tfrac{2 (k+2)}{k} \mu - \tfrac{2 (k+2)}{k} \nu. \nonumber
 \eea

 Let $M(1) := M_{\mu, \nu} (1) $  be the Heisenberg vertex algebra generated by $\mu$ and $\nu$. Consider the rank one lattice $\Z c \subset M(1)$ where $c =\tfrac{2}{k} (\mu - \nu).$ Then
  $$\Pi(0):=  M (1) \otimes {\C}[{\Z} c ] $$
  has the structure of a vertex algebra. Some properties of $\Pi(0)$ will be discussed in Section  \ref{pi0}.

  Let $M_{\gamma}(\frac{2}{k+2} )$ be the Heisenberg vertex algebra generated by $\gamma$.
We obtain the following expression for  the generators of $V^ k(sl(2))$:
\bea
\ \ e & = & e^{\tfrac{2}{k} (\mu - \nu) }, \label{def-e-2} \\
\ \ h & = & 2 \mu(-1)\label{def-h-2} \\
\ \ f & = & \left[ \tfrac{1}{4} (k+2) ^{2}  \gamma(-1) ^2  - \tfrac{1}{2} (k+1) (k+2) \gamma(-2) -\nu(-1)^{2} - (k+1) \nu(-2) \right] e^{-\tfrac{2}{k} (\mu - \nu)} \label{def-f-2}.
\eea
Set
$$ \omega ^{(k)}  =  \left(\frac{k+2}{4} \gamma (-1) ^2 - \frac{k+1}{2}\gamma (-2)\right){\bf 1}. $$
Then
$$   f =  \left[  (k+2) \omega^{(k)}  -\nu(-1)^{2} - (k+1) \nu(-2) \right] e^{-\tfrac{2}{k} (\mu - \nu)}.$$
Note that  $  \omega^{(k)} $  generates the universal Virasoro vertex algebra $V^{Vir} (d_k, 0)$ where
$ d_k = 1 - 6 \frac{(k+1)^2}{ (k+2)}$,
 which is realized as a subalgebra of the Heisenberg vertex algebra $ M_{\gamma} (\tfrac{2}{k+2} ,0 )$.

 As usual we set $L(n) = \omega_{n+1}$  and  denote the Virasoro field by $L(z) = \sum_{n\in {\Z}} L(n) z^{-n-2}$.

We get the following result:
\begin{proposition} \cite{Sem-1994}  \label{prop-sl2-1}
Let $\omega$ be  the conformal vector in $V^{Vir} (d_k, 0)$. There is a injective homomorphism of vertex algebras
$$\Phi : V^{k} (sl(2)) \rightarrow V^{Vir} (d_k, 0) \otimes \Pi(0) \subset  M_{\gamma} (\tfrac{2}{k+2} ,0 ) \otimes \Pi(0)  $$
such that
 \bea
e & \mapsto & e^{\tfrac{2}{k} (\mu - \nu) }, \label{def-e-3} \\
h & \mapsto & 2 \mu(-1), \label{def-h-3} \\
f & \mapsto & \left[     (k+2) \omega   -\nu(-1)^{2} -  (k+1) \nu(-2) \right] e^{-\tfrac{2}{k} (\mu - \nu)} \label{def-f-3}.
\eea
\end{proposition}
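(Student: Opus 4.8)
The plan is to verify directly that the proposed assignment $\Phi$ in \eqref{def-e-3}--\eqref{def-f-3} defines a vertex algebra homomorphism from $V^k(sl(2))$, which amounts to checking that the images satisfy the defining $\lambda$-bracket (equivalently, OPE) relations of $\widehat{sl(2)}$ at level $k$, together with the generating relations of the universal affine vertex algebra. Since $V^k(sl(2))$ is \emph{freely} generated by $e, h, f$ subject only to those relations, once the images obey the $\widehat{sl(2)}$ OPEs and land in the target vertex algebra, the universal property produces the homomorphism $\Phi$, and injectivity follows because $V^k(sl(2))$ is of Verma type (PBW basis) and $\Phi$ is clearly nonzero on the generating space, so its kernel, being an ideal, would have to miss the generators --- or more robustly, one compares with the already-established Wakimoto embedding $V^k(sl(2))\hookrightarrow V_H\otimes M_\delta(2(k+2),0)$ of the previous subsections and notes that $\Phi$ is just that embedding rewritten in the new Heisenberg generators $\gamma,\mu,\nu$.

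Concretely, the first step is to record the change-of-basis formulas $\alpha = \nu+\tfrac{k+2}{2}\gamma$, $\beta = -\tfrac{k+2}{2}\gamma+\tfrac{2}{k}\mu-\tfrac{k+2}{k}\nu$, $\delta=-(k+2)\gamma+\tfrac{2(k+2)}{k}\mu-\tfrac{2(k+2)}{k}\nu$ (already displayed) and to confirm the inner products $\la\gamma,\gamma\ra=\tfrac{2}{k+2}$, $\la\mu,\mu\ra=-\la\nu,\nu\ra=\tfrac{k}{2}$ with all cross terms zero; this is a finite linear-algebra check that guarantees $M_{\alpha,\beta}(1)\otimes M_\delta(2(k+2),0)\cong M_\gamma(\tfrac{2}{k+2})\otimes M(1)$ as Heisenberg vertex algebras and that $c=\tfrac{2}{k}(\mu-\nu)$ is isotropic, so $\Pi(0)$ is a well-defined lattice-type vertex algebra. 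The second step is to substitute these formulas into the Bosonized generators \eqref{def-e-1}--\eqref{def-f-1}: for $e=e^{\alpha+\beta}$ one checks $\alpha+\beta = \tfrac{2}{k}(\mu-\nu)$, so $e\mapsto e^{\tfrac{2}{k}(\mu-\nu)}$; for $h=-2\beta(-1)+\delta(-1)$ one checks $-2\beta+\delta = 2\mu$; for $f$ one expands $(k+1)(\alpha(-1)^2-\alpha(-2))+(k+2)\alpha(-1)\beta(-1)-\alpha(-1)\delta(-1)$ in the new variables and collects terms, using $\alpha(-1)\beta(-1)$, $\alpha(-1)\delta(-1)$, etc., to arrive at $\tfrac14(k+2)^2\gamma(-1)^2 - \tfrac12(k+1)(k+2)\gamma(-2) - \nu(-1)^2 - (k+1)\nu(-2)$. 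The third step is to recognize that $\omega^{(k)}=(\tfrac{k+2}{4}\gamma(-1)^2-\tfrac{k+1}{2}\gamma(-2)){\bf 1}$ is the standard Virasoro element inside the Heisenberg vertex algebra $M_\gamma(\tfrac{2}{k+2},0)$ of central charge $1-6\tfrac{(k+1)^2}{k+2}=d_k$ --- this is the classical Feigin--Fuchs computation $c = 1 - 12 a^2/\kappa$ with $\kappa=\tfrac{2}{k+2}$ and $a=-\tfrac{k+1}{2}$ --- and thus $(k+2)\omega^{(k)} = \tfrac14(k+2)^2\gamma(-1)^2-\tfrac12(k+1)(k+2)\gamma(-2)$, giving the compact form of $f$ and showing $\Phi$ factors through $V^{Vir}(d_k,0)\otimes\Pi(0)$.

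The last step is the honest verification that the images close under the $\widehat{sl(2)}$ relations: $h(z)h(w)\sim \tfrac{2k}{(z-w)^2}$, $h(z)e(w)\sim\tfrac{2e(w)}{z-w}$, $h(z)f(w)\sim\tfrac{-2f(w)}{z-w}$, and $e(z)f(w)\sim\tfrac{k}{(z-w)^2}+\tfrac{h(w)}{z-w}$. The first three are immediate from $\la\mu,\mu\ra=\tfrac{k}{2}$ and the lattice pairings $\la 2\mu, c\ra = \tfrac{4}{k}\la\mu,\mu-\nu\ra = 2$ and $\la 2\mu,-c\ra=-2$ together with $2\mu(-1)$ having no singular OPE with the $\gamma$- and $\nu$-fields. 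The main obstacle --- and the only computation requiring care --- is the $e$--$f$ OPE: one must compute $Y(e^{\tfrac{2}{k}(\mu-\nu)},z)$ applied to $f=[(k+2)\omega^{(k)}-\nu(-1)^2-(k+1)\nu(-2)]e^{-\tfrac{2}{k}(\mu-\nu)}$, where the vertex operator of the lattice element acts on both the exponential factor (producing a pole of order $\la c,c\ra=0$ adjustment, but combined with the normal-ordering contractions against $\nu(-1)^2$ and $\nu(-2)$ it is $\la c, -c\ra = 0$ so the leading pole order is governed by the $\nu$-contractions) and the Heisenberg descendants; carefully tracking the contractions of $\mu-\nu$ against $\nu(-1)^2$ (a double contraction producing the $(z-w)^{-2}$ term with coefficient $-\la\nu,\nu\ra^2\cdot(\tfrac2k)^2$-type expression that must simplify to $k$) and against $\nu(-2)$ and $\omega^{(k)}$ (single contractions producing the $(z-w)^{-1}$ term that must assemble into $h(w)=2\mu(-1)$) is where all the arithmetic lives. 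Since this is precisely the content of Semikhatov's computation in \cite{Sem-1994}, and equivalently a consequence of the already-verified Wakimoto and Bosonization constructions, I would present it as a direct but routine OPE calculation, emphasizing the two nontrivial cancellations (the $\gamma$-dependent terms must drop out of the $e$--$f$ OPE, and the residual $\nu$- and constant terms must reproduce exactly $k$ and $h$). Injectivity then follows since $\Phi$ restricted to the Heisenberg-type and lattice data is manifestly faithful, or alternatively since any nonzero ideal of the universal $V^k(sl(2))$ meets the top graded pieces nontrivially while $\Phi$ is injective there.
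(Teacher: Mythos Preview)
Your proposal is essentially correct and matches the paper's approach: the paper does not give a standalone proof of the proposition but rather derives the formulas \eqref{def-e-3}--\eqref{def-f-3} by the change of Heisenberg generators $(\alpha,\beta,\delta)\to(\gamma,\mu,\nu)$ applied to the already-established bosonized Wakimoto embedding \eqref{def-e-1}--\eqref{def-f-1}, then observes that the $\gamma$-dependent part of $f$ is exactly $(k+2)\omega^{(k)}$. Your steps~1--3 reproduce this precisely.

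Two minor comments. First, your step~4 (direct OPE verification of the $\widehat{sl(2)}$ relations for the images) is redundant: once steps~1--3 identify $\Phi$ with the known injective Wakimoto map composed with an isomorphism of Heisenberg vertex algebras, both the homomorphism property and injectivity are inherited, and the paper does not carry out this separate check. Second, your first injectivity argument (``the kernel, being an ideal, would have to miss the generators'') is not valid as stated, since $V^k(sl(2))$ is the universal affine vertex algebra and has nonzero proper ideals not containing the generators (e.g.\ the maximal ideal at admissible level); you are right to fall back on the Wakimoto comparison, which is the argument the paper implicitly relies on.
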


\begin{remark}
The realization in Proposition \ref{prop-sl2-1} had first obtained by A. M. Semikhatov  in \cite{Sem-1994} using slightly different notations.
%Since $-\omega ^{(k)}$ is also a Virasoro vector which generates $V^{Vir} (d_k, 0)$,   in the expression for "f"  we can replace $- \omega^{(k)}$ with  $  \omega^{(k)}$.
\end{remark}

A critical level version of this proposition was obtained in \cite{ALZ-2016}. Let $M_T(0) ={\C} [T(-n), n \ge 2]$ be the commutative vertex algebra generated by the commutative  field
$$T(z) = \sum_{n \le -2} T(n) z ^{-n-2}.$$

\begin{proposition}  \label{prop-sl2--crit}
Let $k=-2$. There is a injective homomorphism of vertex algebras
$$\Phi : V^{k} (sl(2)) \rightarrow M_T(0) \otimes \Pi(0)  $$
such that
 \bea
e & \mapsto & e^{\tfrac{2}{k} (\mu - \nu) }, \label{def-e-3-crit} \\
h & \mapsto & 2 \mu(-1), \label{def-h-3-crit} \\
f & \mapsto & \left[  T(-2)   -\nu(-1)^{2} -  (k+1) \nu(-2) \right] e^{-\tfrac{2}{k} (\mu - \nu)} \label{def-f-3-crit}
\eea
\end{proposition}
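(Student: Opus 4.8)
The plan is to reduce Proposition \ref{prop-sl2--crit} to a limiting case of Proposition \ref{prop-sl2-1}. The key observation is that when $k = -2$, the formulas \eqref{def-e-3}--\eqref{def-f-3} do not immediately make sense as stated, since $\omega^{(k)} = \left(\frac{k+2}{4}\gamma(-1)^2 - \frac{k+1}{2}\gamma(-2)\right){\bf 1}$ degenerates (the leading term drops out and $(k+2)\omega^{(k)} \to -\frac12\gamma(-2){\bf 1}$ times nothing useful), and the Heisenberg vertex algebra $M_\gamma(\tfrac{2}{k+2})$ has a level $\tfrac{2}{k+2}$ that blows up. So the first step is to renormalize: instead of $M_\gamma(\tfrac{2}{k+2})$ carrying the Virasoro subalgebra, I would keep track of the field $(k+2)\omega^{(k)}$ directly. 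Note that in the realization of Proposition \ref{prop-sl2-1}, the only place $\omega$ enters $f$ is through the combination $(k+2)\omega$. Define $\mathfrak{T}(z) := (k+2)L(z)$ where $L(z)$ is the Virasoro field from $\omega^{(k)}$; its components satisfy a bracket that, as $k \to -2$, degenerates to a \emph{commutative} current. Concretely one checks $[\mathfrak{T}(n),\mathfrak{T}(m)] = (k+2)^2\left((n-m)L(n+m) + \tfrac{d_k}{12}(n^3-n)\delta_{n+m,0}\right)$, and since $d_k = 1 - 6\frac{(k+1)^2}{k+2}$ has $(k+2)^2 d_k \to 0$ and $(k+2)L(n+m) = \mathfrak{T}(n+m)$ with the overall $(k+2)$ factor killing it, the limit field $T(z) := \lim_{k\to -2}\mathfrak{T}(z)$ is central. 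This identifies $T(z)$ with the generating field of the commutative vertex algebra $M_T(0)$; one should verify that $T(n){\bf 1} = 0$ for $n \ge -1$ so that indeed only modes $T(n)$, $n \le -2$, survive, matching the description $M_T(0) = \mathbb{C}[T(-n), n\ge 2]$.

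Next I would set up the target vertex algebra and the candidate map at $k=-2$ honestly, without appealing to limits. At $k=-2$ the bilinear forms among $\gamma,\mu,\nu$ become $\langle\gamma,\gamma\rangle = \infty$ formally, but the point is that $\gamma$ decouples: in the $k=-2$ formulas \eqref{def-e-3-crit}--\eqref{def-f-3-crit} the field $\gamma$ no longer appears at all, having been replaced by $T$. Meanwhile $\langle\mu,\mu\rangle = -\langle\nu,\nu\rangle = \tfrac{k}{2} = -1$ and the lattice element $c = \tfrac{2}{k}(\mu-\nu) = -(\mu-\nu)$ still has $\langle c,c\rangle = 0$, so $\Pi(0) = M_{\mu,\nu}(1)\otimes\mathbb{C}[\mathbb{Z}c]$ is a perfectly good vertex algebra (indeed isomorphic as a vertex algebra to the $\Pi(0)$ in the introduction, after rescaling). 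So I would \emph{define} $\Phi: V^{-2}(sl(2)) \to M_T(0)\otimes\Pi(0)$ on generators by \eqref{def-e-3-crit}--\eqref{def-f-3-crit} with $k=-2$ substituted, i.e. $e\mapsto e^{-(\mu-\nu)}$, $h\mapsto 2\mu(-1)$, $f\mapsto \left[T(-2) - \nu(-1)^2 + \nu(-2)\right]e^{-(\mu-\nu)}$.

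The main work, and the main obstacle, is verifying that this assignment actually extends to a vertex algebra homomorphism — i.e. that the images of $e,h,f$ satisfy the defining OPE relations of $V^{-2}(sl(2))$: $[h_\lambda h] = 2k\lambda = -4\lambda$, $[h_\lambda e] = 2e$, $[h_\lambda f] = -2f$, $[e_\lambda f] = h + k\lambda = h - 2\lambda$, $[e_\lambda e] = [f_\lambda f] = 0$. The relations involving only $h$ and $e$ are immediate since $h$ and $e$ have exactly the same expressions (in terms of $\mu,\nu$ and the lattice exponential) as in Proposition \ref{prop-sl2-1}, which is already established. For the relations involving $f$, the cleanest route is a continuity/specialization argument: all the OPE coefficients that arise in computing $[e_\lambda f]$, $[f_\lambda f]$, $[h_\lambda f]$ from the formulas of Proposition \ref{prop-sl2-1} are rational functions of $k$ that are regular at $k=-2$ (one should check there are no spurious poles — the potentially dangerous factors $\tfrac{1}{k}$ in the exponent combine so that $\tfrac{2}{k}(\mu-\nu)$ and its products are polynomial in $\tfrac{1}{k}$, which is finite at $k=-2$), and the identity $(k+2)\omega^{(k)} \rightsquigarrow T(-2){\bf 1}$ plus $(k+2)L(z) \rightsquigarrow T(z)$ (central) is consistent with these OPEs in the limit. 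Concretely: since Proposition \ref{prop-sl2-1} holds for all $k\ne -2$, each relation is a polynomial identity in $\C[k]$ between elements of the $k$-family of vertex algebras $V^{Vir}(d_k,0)\otimes\Pi(0)$; rewriting everything in terms of $\mathfrak{T} = (k+2)L$ and $1/k$, both sides become Laurent-polynomial in these regular-at-$k=-2$ quantities, and one simply evaluates at $k=-2$, using that $\mathfrak{T}$ specializes to the central field $T$. Finally, injectivity follows as in Proposition \ref{prop-sl2-1}: the map is injective on $V^{-2}(sl(2))$ because $V^{-2}(sl(2))$ at the critical level has a large center but is still a free module over its center in low degrees, and the images of $e,h,f$ generate algebraically independent data (the lattice element $e^{-(\mu-\nu)}$ has nonzero "momentum", $h$ a free boson current, and $f$ carries the genuinely new field $T(-2)$), so no relation can hold; alternatively, one checks that $\Phi$ is nonzero and then uses that $V^{-2}(sl(2))$, being universal, has the property that any nonzero homomorphism from it is automatically injective once one verifies the Poincaré–Birkhoff–Witt-type spanning set maps to linearly independent vectors, which here is transparent from the lattice/Heisenberg grading. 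I expect the bookkeeping of which rational-in-$k$ coefficients stay regular at $k=-2$ to be the only delicate point; everything else is a specialization of results already in hand.
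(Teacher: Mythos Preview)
The paper does not prove this proposition; it simply cites \cite{ALZ-2016} (see the sentence immediately preceding the statement). So there is no in-paper proof to compare against, and your proposal has to stand on its own.

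Your strategy of obtaining the critical-level realization as the $k\to-2$ specialization of Proposition~\ref{prop-sl2-1}, with $(k+2)\omega^{(k)}$ degenerating to a central field $T$, is the right idea and matches how one would naturally arrive at the formulas. Two parts, however, are not yet proofs. First, the ``continuity/specialization'' verification of the $\widehat{sl(2)}$ relations is informal: the target algebras $V^{Vir}(d_k,0)\otimes\Pi(0)$ genuinely change with $k$, so ``both sides are polynomial in $k$'' needs a common ambient object before you can evaluate at $k=-2$. The clean fix is to work in the bosonized Wakimoto picture \eqref{def-e-1}--\eqref{def-f-1} inside $V_H\otimes M_\delta(2(k+2),0)$, where all formulas are honestly polynomial in $k$; at $k=-2$ the Heisenberg $M_\delta$ becomes commutative, the coordinate change to $\mu,\nu$ is still regular, and a direct substitution shows $f=[\tfrac14\delta(-1)^2-\tfrac12\delta(-2)-\nu(-1)^2+\nu(-2)]e^{\mu-\nu}$, identifying $T(-2)$ with $\tfrac14\delta(-1)^2-\tfrac12\delta(-2)$. (Incidentally, $-\tfrac{2}{k}=1$ at $k=-2$, so the image of $f$ carries $e^{\mu-\nu}$, not $e^{-(\mu-\nu)}$ as you wrote.) Alternatively, since $T$ is central the direct OPE check in $M_T(0)\otimes\Pi(0)$ is short and avoids the family argument entirely.

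Second, and more seriously, your injectivity argument is a gap. The statement that ``any nonzero homomorphism from the universal vertex algebra is injective once the PBW basis maps to linearly independent vectors'' is circular, and at the critical level $V^{-2}(sl(2))$ is far from simple (it has the Feigin--Frenkel center), so shortcuts based on simplicity are unavailable. The standard route is to use that the Wakimoto embedding $V^k(sl(2))\hookrightarrow W_{k,0}$ is already known to be injective at all levels including $k=-2$ (cf.\ \cite{efren}), and then observe that the subsequent bosonization and change of variables $(\alpha,\beta,\delta)\mapsto(\mu,\nu,\delta)$ are injective, with the image landing in the subalgebra $M_T(0)\otimes\Pi(0)$ because the generators only involve $\delta$ through $T$. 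Your remark about the lattice/Heisenberg grading is on the right track but does not by itself separate, say, $\nu(-1)^2$ from $T(-2)$ inside the image of repeated products of $f$.
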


 \section{Some $\Pi(0)$--modules.}
 \label{pi0}
 In this section we study vertex algebra $\Pi(0)$ which is associated to  an  isotropic rank two  lattice   $L= {\Z} c + {\Z} d $.
 % such that $\langle c, c \rangle = \langle d, d \rangle = 0, \quad \langle c, d \rangle =2. $

 Lattice  $L$  is realized as $L= {\Z} c + {\Z} d \subset M(1)$, where $c = \alpha + \beta = \frac{2}{k} (\mu -\nu)$, and  $d = \mu + \nu$. Then $$\langle c, c \rangle = \langle d, d \rangle = 0, \quad \langle c, d \rangle =2. $$
 The vertex algebra $\Pi(0) = M(1) \otimes {\C}[\Z c]$ is generated by $c(-1) ,d(-1) , u = e^c, u^{-1} = e^{-c}. $ Its representation theory was studied in \cite{DBT}.

 \subsection{ Weight   $\Pi(0)$--modules and their characters}  Let us recall some steps in the construction of $\Pi(0)$--modules.
 Let $\mathcal A$ be the associative algebra generated by $d, e^{n c}$,  where $n \in {\Z}$ and relations
 $$ [ d, e^{n c} ] = 2 n e^{ n c}, \quad e^{n c} e^{m c} = e ^{ (n+m) c}, \quad (n, m \in {\Z}). $$
 (We use  the convention $ e^0 = 1$). By using results from  \cite[Section 4]{DBT}  we see that for any $\mathcal A$--module $U$ and any $ r \in {\Z}$, there exists a unique  $\Pi(0)$--module  structure  on the vector space
 $$\mathcal L_{r} (U) =  U \otimes M(1)$$  such that  $c(0) \equiv r \mbox{Id}$ on $\mathcal L_{r} (U)$. Moreover $\mathcal L_{r} (U)$ is irreducible $\Pi(0)$--module  if and only if $U$ is irreducible $\mathcal A$--module.
 By using this method one can construct  the  weight $\Pi(0)$--modules from \cite{DBT}.
 (see Proposition \ref{constr-pi0-mod} below).

In the present paper  we shall need  the following simple current extension of $\Pi(0)$:

$$\Pi ^{1/2} (0) = M(1) \otimes {\C} [{\Z} \tfrac{c}{2}]  = \Pi (0) \oplus \Pi(0) e^{\frac{c}{2}}. $$
$\Pi ^{1/2} (0) $ is again the vertex algebra of the same type and
it is generated by $c(-1) ,d(-1) ,  u^{1/2}=   e^{c/2}, u^{-1/2} = e^{-c/2}. $ Note that $g = \exp [\pi i \mu(0)]$ is an automorphism  of order two of the vertex algebra $\Pi ^{1/2} (0) $ and that $g= \mbox{Id}$ on $\Pi (0)$.

In order to construct $\Pi ^{1/2} (0)$--modules we need to consider a slightly larger associative algebra.  Let $\mathcal A^{1/2}$ be the associative algebra generated by $d, e^{n c}$,  $n \in \tfrac{1}{2}{\Z}$, and relations
 $$ [ d, e^{n c} ] = 2 n e^{ n c}, \quad e^{n c} e^{m c} = e ^{ (n+m) c} \quad (n, m \in \frac{1}{2} {\Z}). $$
For any $\mathcal A^{1/2} $--module $U'  $ and any $ r \in {\Z}$, there exists a unique  ($g$--twisted)  $\Pi^{1/2} (0)$--module  structure  on the vector space
 $$\mathcal L_{r} (U' ) =  U'  \otimes M(1)$$  such that  $c(0) \equiv r \mbox{Id}$ on $\mathcal L_{r} (U')$. Module $\mathcal L_{r} (U')$ is untwisted if $r$ is even and  $g$--twisted if $r$ is odd. We omit details, since arguments  are completely analogous to those of \cite{DBT}.
 In this way we get a realization of a family of irreducible modules for the vertex algebras  $\Pi(0)$ and $\Pi^{1/2} (0)$.
%
%By combining  the  construction of weight $\Pi(0)$--modules from \cite{DBT} and a construction of Whittaker $\Pi(0)$--modules from \cite{ALZ-2016} we get:
 \begin{proposition} \label{constr-pi0-mod}
\item[(1)]  \cite{DBT} For every $r \in {\Z}$ and $\lambda \in {\C} $, $\Pi _{(r)}  (\lambda) :=\Pi(0) e^{r \mu  + \lambda c}$ is an irreducible $\Pi(0)$--module on which $c(0)$ acts as
 $r \mbox{Id}$.

 \item[(2)]  Assume that  $r \in {\Z}$  is even (resp. odd) and $\lambda \in {\C} $. Then  $\Pi  ^{1/2} _{(r)}  (\lambda) :=\Pi ^{1/2} (0) e^{r \mu  + \lambda c}$ is an irreducible untwisted (resp. $g$--twisted)  $\Pi ^{1/2} (0)$--module on which $c(0)$ acts as
 $r \mbox{Id}$.
% \item[(3)]  \cite[Theorem 11.1] {ALZ-2016}  For every  $\lambda \in {\C} \setminus \{0 \}$ there is an irreducible $\Pi(0) $--module $\Pi _{\lambda}$ so that
 %$c(0)$ acts on  $\Pi _{\lambda}$  as $- \mbox{Id}$ and that $\Pi _{\lambda}$ is generated by cyclic vector $w_{\lambda} $ satisfying
 %$$ e^{c}_{0} w_{\lambda} = {\lambda} w_{\lambda}, \quad e^{-c} _0 w_{\lambda} = \frac{1}{\lambda} w_{\lambda}. $$
 %As a vector space, $\Pi_{\lambda} = M(1) \otimes {\C}[d(0)]$.

  %\item[(4)] For every  $\lambda \in {\C} \setminus \{0 \}$  $\Pi_{\lambda}$ has the structure of an irreducible $g$--twisted  $\Pi ^{1/2} (0)$--module generated by cyclic vector %$w_{\lambda}$  such that
%  $$ e^{c/2}_{0} w_{\lambda} = \sqrt{\lambda} w_{\lambda}, \quad e^{-c/2} _0 w_{\lambda} = \frac{1}{\sqrt{\lambda}} w_{\lambda}. $$
 \end{proposition}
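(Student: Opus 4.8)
\medskip
\noindent\emph{Proof proposal.}\enspace
The plan is to obtain both statements from the correspondence $U\mapsto\mathcal L_{r}(U)=U\otimes M(1)$ recalled above, which takes irreducible modules to irreducible ones, produces untwisted modules when $r$ is even and $g$--twisted ones when $r$ is odd, and on which $c(0)$ acts as the scalar $r$. Part~(1) is the content of \cite{DBT}: viewed as a $\Pi(0)$--module, $\Pi_{(r)}(\lambda)=\Pi(0)e^{r\mu+\lambda c}$ is $\mathcal L_{r}(U)$ for the $\mathcal A$--module $U$ with basis $\{u_m=e^{r\mu+(\lambda+m)c}\}_{m\in\Z}$, on which $e^{nc}$ sends $u_m$ to $u_{m+n}$ and $d$ acts diagonally with eigenvalue $\langle d,\,r\mu+(\lambda+m)c\rangle=\langle d,r\mu\rangle+2(\lambda+m)$ (using $\langle c,d\rangle=2$, $\langle c,\mu\rangle=1$, $\langle c,c\rangle=0$). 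Since $e^{\pm c}$ act invertibly and the $d$--eigenvalues are pairwise distinct, $U$ is irreducible; hence $\mathcal L_{r}(U)=\Pi_{(r)}(\lambda)$ is irreducible, with $c(0)\equiv r\,\mathrm{Id}$.

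For part~(2) I would repeat the argument with $\mathcal A^{1/2}$ in place of $\mathcal A$. Viewing $\Pi^{1/2}_{(r)}(\lambda)=\Pi^{1/2}(0)e^{r\mu+\lambda c}$ as a $\Pi^{1/2}(0)$--module, it is $\mathcal L_{r}(U')$ for the $\mathcal A^{1/2}$--module $U'$ with basis $\{u_m=e^{r\mu+(\lambda+\frac m2)c}\}_{m\in\Z}$, on which $e^{\frac n2c}$ sends $u_m$ to $u_{m+n}$ and the zero mode $d$ acts diagonally with eigenvalue $\langle d,r\mu\rangle+2\lambda+m$; one checks that these satisfy the defining relations of $\mathcal A^{1/2}$, namely $[d,e^{\frac n2c}]=n\,e^{\frac n2c}$ and $e^{\frac n2c}e^{\frac m2c}=e^{\frac{n+m}2c}$. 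The $d$--eigenvalues on $U'$ form a single coset of $\Z$, hence are pairwise distinct, and $e^{\pm c/2}$ act invertibly, so $U'$ is an irreducible $\mathcal A^{1/2}$--module. By the properties of the correspondence recalled before the Proposition, $\mathcal L_{r}(U')=\Pi^{1/2}_{(r)}(\lambda)$ is therefore an irreducible $\Pi^{1/2}(0)$--module with $c(0)\equiv r\,\mathrm{Id}$, untwisted if $r$ is even and $g$--twisted if $r$ is odd. The parity dichotomy reflects the computation that $Y(e^{c/2},z)$ acting on the sector $e^{r\mu+\lambda'c}$ carries $z$--exponents in $\tfrac r2+\Z$ (because $\langle\tfrac c2,\,r\mu+\lambda'c\rangle=\tfrac r2$, using $\langle c,\mu\rangle=1$ and $\langle c,c\rangle=0$); thus all fields $Y(a,z)$ with $a\in\Pi^{1/2}(0)$ have integral $z$--exponents exactly when $r$ is even, while for $r$ odd the resulting half--integral monodromy on $\Pi(0)e^{c/2}$ is precisely the one prescribed by the automorphism $g$.

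The step I expect to require the most care is making the identifications $\Pi_{(r)}(\lambda)\cong\mathcal L_{r}(U)$ and $\Pi^{1/2}_{(r)}(\lambda)\cong\mathcal L_{r}(U')$ rigorous, i.e.\ writing out the $\Pi^{1/2}(0)$--module structure on $U'\otimes M(1)$ explicitly and matching it with the natural one on $\Pi^{1/2}(0)e^{r\mu+\lambda c}$; as noted in the text this is \emph{completely analogous} to the corresponding computation in \cite[Section~4]{DBT} and comes down to bookkeeping with the cocycle and the Heisenberg normalizations, with no new idea involved. Everything else in the argument is formal.
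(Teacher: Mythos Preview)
Your proposal is correct and follows exactly the approach the paper intends: the paper does not give a separate proof of this proposition but states it as a consequence of the correspondence $U\mapsto\mathcal L_r(U)$ (and its $\mathcal A^{1/2}$ analogue) described immediately before, together with the results of \cite{DBT}. You have simply spelled out the details---identifying the relevant irreducible $\mathcal A$-- and $\mathcal A^{1/2}$--modules and verifying the parity dichotomy via the $z$--exponents of $Y(e^{c/2},z)$---that the paper leaves implicit.
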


As usual for a vector $V$ is a vertex algebra $V$ we define
 $$ \Delta(v,z) = z^{v_0} \exp \left(  \sum_{n =1} ^{\infty} \frac{v_n}{-n} (-z ) ^{-n} \right). $$

 The following lemma follows from \cite[Proposition 3.4]{Li}.
 \begin{lemma} \label{Li} For $\ell, r \in {\Z} $ we have
 $$ (\Pi_{(\ell + r) } (\lambda) , Y_{ \Pi_{(\ell+ r) } (\lambda)  } (\cdot, z) ) \cong   ( \Pi_{(r) } (\lambda)  , Y_{ \Pi_{(r)  } (\lambda)  } ( \Delta (\ell \mu ,z) \cdot, z)  ). $$
 \end{lemma}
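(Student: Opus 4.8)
The plan is to deduce Lemma~\ref{Li} from the general $\Delta$-operator construction of Li, so the work is essentially a matter of identifying the abstract hypotheses in our concrete situation. Recall that $\Pi(0)=M(1)\otimes\C[\Z c]$ with $c=\tfrac2k(\mu-\nu)$, and consider the vector $v=\mu(-1)\mathbf 1\in\Pi(0)$. First I would check that $v$ satisfies the two hypotheses required to run the $\Delta(v,z)$ machinery of Theorem~\ref{gen-const-log}/\cite[Prop.~3.4]{Li}: since $\langle\mu,\mu\rangle=\tfrac k2$ and $\langle\mu,\nu\rangle=0$, the field $\mu(z)$ is a (rescaled) Heisenberg field with $[\mu(n),\mu(m)]=\tfrac k2\,n\,\delta_{n+m,0}$, hence $[\mu(-1)_n,\mu(-1)_m]=[\mu(n),\mu(m)]=0$ fails to vanish in general — so instead of $\mu(-1)$ I should use the honest conformal-weight-one field whose zero mode generates the grading. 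The right choice is $v$ scaled so that $v_0$ acts as $\mu$-charge: because $\Pi_{(r)}(\lambda)=\Pi(0)e^{r\mu+\lambda c}$ is the module on which we want $\ell\mu(0)$ to implement the shift $r\mapsto\ell+r$, and $\langle\mu,c\rangle=\tfrac2k\langle\mu,\mu-\nu\rangle=1$, the operator $\mu(0)$ acts on $e^{r\mu+\lambda c}$ by $\langle\mu,r\mu+\lambda c\rangle=\lambda$ — not what we want. So the correct weight-one vector is the one dual to $\mu$ under the form, namely a multiple of $c(-1)\mathbf 1$: since $\langle c,c\rangle=0$ and $\langle c,\mu\rangle=1$, $\langle c,\nu\rangle=-1$, the operator $c(0)$ acts on $e^{r\mu+\lambda c}$ as $r$, which is exactly the grading by which $\Pi_{(r)}(\lambda)$ is labelled. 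Thus I would take $v=\tfrac12 c(-1)\mathbf 1$ (or rather $v=\ell\cdot(\text{the weight-one element with }v_0=\ell\mu(0)\text{ on charge-}c\text{ eigenspaces})$), verify $v_nv_m$ all commute (automatic, $\langle c,c\rangle=0$) and $L(n)v=\delta_{n,0}v$ for the relevant conformal vector of $\Pi(0)$, and confirm $v_0$ acts semisimply with integer eigenvalues on each $\Pi_{(r)}(\lambda)$.

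Wait — re-reading the statement, the $\Delta$ appearing is $\Delta(\ell\mu,z)$, so the distinguished vector is genuinely $\ell\mu(-1)\mathbf 1$, and the point is that although $\mu(-1)_n\mu(-1)_m$ need not vanish for the abstract ``$v$'' of Theorem~\ref{AdM-selecta}, Li's Proposition~3.4 applies more generally to any $v$ lying in a Heisenberg subalgebra, producing an isomorphic module twisted by $\Delta(v,z)$ whenever $v$ is a weight-one primary with $v_0$ acting semisimply by integers. So the clean route is: (i) observe $\mu(-1)\mathbf 1$ is primary of weight one for the conformal vector of $\Pi(0)$ with $[\mu(n),\mu(m)]=\tfrac k2 n\delta_{n+m,0}$; (ii) compute the $\mu(0)$-eigenvalue on $\Pi_{(r)}(\lambda)=\Pi(0)e^{r\mu+\lambda c}$, namely on a monomial $x\cdot e^{r\mu+\lambda c}$ it is $\langle\mu,\text{charge}\rangle$, and the charge lattice of $\Pi(0)$ is $\Z c$, so $\mu(0)$ has eigenvalue $\langle\mu,\lambda c+nc\rangle=(\lambda+n)\langle\mu,c\rangle$; since $\langle\mu,c\rangle=1$ this is $\lambda+\Z$ — semisimple with eigenvalues in a single coset of $\Z$; (iii) invoke \cite[Prop.~3.4]{Li} to conclude that $Y_{\Pi_{(r)}(\lambda)}(\Delta(\ell\mu,z)\cdot,z)$ defines on the same underlying space a $\Pi(0)$-module isomorphic to the one obtained by shifting the $\mu(0)$-eigenvalues (equivalently the lattice coset) by $\ell\langle\mu,\cdot\rangle$; (iv) identify that shifted module explicitly as $\Pi_{(\ell+r)}(\lambda)$ by matching highest-weight data — the shift $e^{r\mu+\lambda c}\mapsto e^{(\ell+r)\mu+\lambda c}$ is precisely what $\Delta(\ell\mu,z)$ effects on the generating vector, and the normalization $v=\ell\mu(-1)\mathbf 1$ has $v_0$ acting as multiplication by $\ell$ on the $c$-charge grading since $c(0)$ on $e^{r\mu+\lambda c}$ gives $r$ while $\mu(0)$ gives $\lambda$ — hence one actually wants the combination whose zero mode is the $c$-grading operator; I will spell out the correct linear combination of $\mu(-1)$ and $c(-1)$ in the proof.

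Concretely, then, the key steps in order are: \textbf{(1)} fix the conformal vector of $\Pi(0)$ and verify $\mu(-1)\mathbf 1$ is primary of weight $1$ (a one-line OPE check using $\langle\mu,\mu\rangle=\tfrac k2$ and the explicit $\omega$); \textbf{(2)} record that $\mu(0)$ acts semisimply on every $\Pi_{(r)}(\lambda)$ with spectrum in $\lambda+\Z$ (from the lattice structure $\Z c$ and $\langle\mu,c\rangle=1$); \textbf{(3)} apply \cite[Proposition~3.4]{Li}: for a weight-one primary $v$ with $v_0$ locally finite and semisimple on a module $M$, $(M,Y_M(\Delta(v,z)\cdot,z))$ is again a module, isomorphic to the $v_0$-spectral-flowed module; \textbf{(4)} compute the effect of spectral flow by $\ell\mu$ on the generator $e^{r\mu+\lambda c}$, obtaining $e^{(\ell+r)\mu+\lambda c}$ and hence the module $\Pi_{(\ell+r)}(\lambda)$, and check that the $\lambda c$-part (the eigenvalue of $d(0)$, say) and the $c(0)$-eigenvalue $r+\ell$ match on both sides. \textbf{(5)} Assemble these into the stated isomorphism.

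The main obstacle is step~(3)–(4): making precise \emph{which} module the $\Delta(\ell\mu,z)$-twist produces and exhibiting the isomorphism, rather than just asserting ``it is again of the same type.'' The subtlety is bookkeeping of two gradings at once (the $c(0)$-charge, which labels the family, and the $\mu(0)=d(0)$-type eigenvalue $\lambda$, which $\Delta(\ell\mu,z)$ must leave fixed while shifting the charge by $\ell$), together with pinning down the normalization of the weight-one vector so that its zero mode is exactly the operator generating the relevant $\Z$-grading. Once the correct primary vector is identified and its zero-mode action on $\Pi_{(r)}(\lambda)$ is computed, the isomorphism is forced, and the remaining verifications — that $\Delta$-conjugation sends generators to generators with the right eigenvalues — are the kind of routine lattice-vertex-algebra computation that \cite{Li} is designed to handle; I would cite \cite[Proposition~3.4]{Li} and \cite{DL} for these and not reproduce them. $\square$
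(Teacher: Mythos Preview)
Your approach is correct and matches the paper's own: the paper states the lemma as an immediate consequence of \cite[Proposition~3.4]{Li} and gives no further argument. Two minor remarks on your write-up: the detours about whether to use $\mu$, $c$, or a linear combination are unnecessary --- the statement already specifies $\ell\mu$, and Li's proposition needs only that $\ell\mu(-1)\mathbf{1}$ be weight-one primary with $(\ell\mu)(0)$ acting by integers on $\Pi(0)$ (which it does, since $\langle\mu,nc\rangle=n$); and your computation of the $\mu(0)$-eigenvalue on $e^{r\mu+(\lambda+n)c}$ drops the term $\langle\mu,r\mu\rangle=rk/2$, though this does not affect the argument.
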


 We also have the following  important observation which essentially  follows from the analysis of $\Pi _{(-1)}  (\lambda)$ as a module for the  Heisenberg-Virasoro vertex algebra at level zero \cite{AR-2017}. \footnote[1]{  It was proved in \cite{AR-2017} that  $\Pi _{(-1)}  (\lambda)$  is a direct sum of irreducible modules for the Heisenberg-Virasoro vertex algebra at level zero, and $e^c_0$ is a homomorphism which acts non-trivially on each irreducible component.}

 \begin{lemma}  \label{injektivnost}
 The operator $e^c _0$ acts injectively  on $\Pi _{(-1)}  (\lambda)$.
 \end{lemma}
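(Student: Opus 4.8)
The plan is to use directly that $c$ is isotropic, i.e.\ $\langle c,c\rangle=0$. First I would refine the decomposition of the module. Write $\Pi_{(-1)}(\lambda)=\Pi(0)e^{-\mu+\lambda c}=\bigoplus_{n\in{\mathbb Z}}V_n$, where $V_n:=M(1)\,e^{nc-\mu+\lambda c}$. Since $\langle c,d\rangle=2$, the operator $d(0)$ acts on $V_n$ as a scalar of the form $2n+\mathrm{const}$, so this is precisely the eigenspace decomposition of $d(0)$ and the eigenvalues are pairwise distinct. From $[d(0),e^c_0]=\langle d,c\rangle\,e^c_0=2e^c_0$ we get $e^c_0(V_n)\subseteq V_{n+1}$, and consequently a vector lies in $\ker e^c_0$ if and only if each of its $d(0)$--homogeneous components does. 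Hence it suffices to prove that $e^c_0\colon V_n\to V_{n+1}$ is injective for every $n$; I will in fact show it is bijective.

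Next I would compute the zero mode explicitly. Working inside the generalized lattice vertex algebra containing $\Pi_{(-1)}(\lambda)$, we have
$$ Y(e^c,z)=\exp\Big(\sum_{m\ge 1}\tfrac{c(-m)}{m}z^{m}\Big)\exp\Big(-\sum_{m\ge 1}\tfrac{c(m)}{m}z^{-m}\Big)\,e^{c}\,z^{c(0)}, $$
and since $c(0)\equiv-1$ on all of $\Pi_{(-1)}(\lambda)$ the factor $z^{c(0)}$ only contributes the scalar $z^{-1}$. Collecting the coefficient of $z^{-1}$ one finds, for $v\in V_n$, that $e^c_0\,v=\varepsilon\,\mathcal O\,\sigma_n(v)$, where $\sigma_n\colon V_n\to V_{n+1}$ is the obvious linear isomorphism $P\,e^{nc-\mu+\lambda c}\mapsto P\,e^{(n+1)c-\mu+\lambda c}$, $\varepsilon\ne 0$ is the relevant $2$--cocycle value, and $\mathcal O$ is the constant term in $z$ of $\exp(\sum_{m\ge1}\tfrac{c(-m)}{m}z^{m})\exp(-\sum_{m\ge1}\tfrac{c(m)}{m}z^{-m})$ acting on $V_{n+1}$. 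Because $\langle c,c\rangle=0$, the annihilation modes $c(m)$ ($m>0$) commute with all $c(-m')$ and kill the top vector, so on $V_{n+1}\cong{\mathbb C}[c(-m),d(-m)\ |\ m\ge1]$ they act as the derivations $2m\,\partial_{d(-m)}$; in particular they strictly lower the degree in the variables $d(-m)$, while each $c(-m)$ is merely multiplication and preserves that degree. Balancing powers of $z$ in the displayed product shows that every monomial of $\mathcal O$ other than the identity carries at least one factor $c(m)$ with $m>0$. Thus $\mathcal O=\mathrm{Id}+N$, where $N$ preserves the finite--dimensional homogeneous pieces for the grading of $V_{n+1}$ by total oscillator degree and strictly decreases the auxiliary $d(-\ast)$--degree; hence $N$ is locally nilpotent, $\mathcal O$ is invertible, and $e^c_0|_{V_n}=\varepsilon\,\mathcal O\,\sigma_n$ is bijective.

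The only step needing a careful check is the assertion that, after extracting the constant term in $z$, every nontrivial monomial of $\mathcal O$ contains an annihilation mode; this is immediate once one notes that $c(-m)$ comes paired with $z^{m}$ and $c(m)$ with $z^{-m}$, so a nonempty monomial with total $z$--degree $0$ must use both types. I do not expect any serious obstacle beyond this bookkeeping. As a cross-check, the bijectivity of $e^c_0$ is consistent with \cite{AR-2017}: there $\Pi_{(-1)}(\lambda)$ is described as a direct sum of pairwise non-isomorphic irreducible modules for the Heisenberg--Virasoro vertex algebra at level zero on which $e^c_0$ is a module homomorphism acting nontrivially on each summand, and the kernel of such a homomorphism, being a submodule of a multiplicity-free semisimple module, must be a partial sum of the summands, hence zero.
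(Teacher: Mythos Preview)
Your argument is correct and is genuinely different from the paper's. The paper does not give a direct proof: it simply invokes \cite{AR-2017}, where it is shown that $\Pi_{(-1)}(\lambda)$ decomposes as a multiplicity-free direct sum of irreducible modules for the Heisenberg--Virasoro vertex algebra at level zero, and observes that $e^c_0$ is a module homomorphism acting nontrivially on each irreducible summand, which forces injectivity. This is exactly the content of your final ``cross-check'' paragraph.

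Your main argument, by contrast, is a self-contained computation that exploits only the isotropy $\langle c,c\rangle=0$. Decomposing into the $d(0)$--eigenspaces $V_n\cong M(1)$ and reading off the zero mode of the lattice vertex operator, you identify $e^c_0|_{V_n}$ with (a nonzero scalar times) $\mathcal O\circ\sigma_n$, where $\mathcal O$ preserves the finite-dimensional oscillator-degree pieces of $M(1)$ and differs from the identity by an operator that strictly lowers the $d(-\ast)$--count; hence $\mathcal O$ is unipotent and $e^c_0|_{V_n}$ is a bijection $V_n\to V_{n+1}$. This is more elementary than the paper's route: it avoids any appeal to the representation theory of the Heisenberg--Virasoro algebra and in fact yields the stronger statement that $e^c_0$ is bijective (with inverse $\mathcal O^{-1}$ composed with the shift). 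The paper's approach, on the other hand, places the lemma in a broader structural context and makes the module-theoretic reason for injectivity transparent. Both are valid; yours is the more hands-on of the two.
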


Let $k \in {\C}$.  Now we shall fix the Heisenberg and the Virasoro vector in $\Pi(0)$, and calculate the character of the weight  $\Pi(0)$--modules.

Vector
\bea  &&  \omega_{\Pi(0)}  =  \frac{1}{2} c(-1) d(-1)  -\frac{1}{2} d(-2) + \frac{k}{4 } c(-2) \label{virasoro-pi0} \eea is a  Virasoro vector in the vertex algebra ${\Pi(0)}$ of central charge $ \overline c= 6 k +2$.
The Virasoro field is $\overline L(z) = \sum_{n\in {\Z}} \overline L(n) z ^{-n-2}$.

Define the Heisenberg vector   $h = 2 \mu = \frac{k}{2} c + d$ and the corresponding field $h(z)\hspace{-1pt} =\hspace{-1pt} \sum_{n \in {\Z}} h(n) z^{-n-2}$. Then
$ [h(n), h(m) ] = 2 k \delta_{n+m,0}$.

\begin{definition} A module $M$ for the vertex algebra $\Pi(0)$ (resp. $\Pi^{1/2} (0)$ )   is called weight if  the operators  $\overline L(0)$ and $h(0)$ act semisimply  on $M$.
\end{definition}
Assume that $M$ is a weight module for the vertex algebra $\Pi(0)$ (resp. $\Pi^{1/2} (0)$) having finite-dimensional weight spaces for $(\overline L(0), h(0))$. Then we can define the character of $M$:
$$ \mbox{ch}[M]  (q,z) = \mbox{Tr}_{M }   q^{\overline L(0) - \overline c /24}  z ^{h(0)}. $$
\begin{proposition} \label{char-pi0}
\item[(1)]   For every $\lambda \in {\C} $, $\Pi _{(-1)}  (\lambda) $ is a  $\Z_{\ge 0}$--graded weight $\Pi(0)$--module with character
$$ \mbox{ch} [\Pi _{(-1) }  (\lambda) ](q,z)=  \frac{  z^{-k + 2 \lambda } \delta (z^2)  }{\eta(\tau) ^2 }.$$
\item[(2)]  For every $\lambda \in {\C} $, $\Pi ^{1/2} _{(-1)}  (\lambda) $ is a  $\Z_{\ge 0}$--graded weight $\Pi^{1/2} (0)$--module with character
$$ \mbox{ch} [\Pi ^{1/2} _{(-1) }  (\lambda) ](q,z)=  \frac{  z^{-k + 2 \lambda } \delta (z)  }{\eta(\tau) ^2 }.$$

where  $\delta(z) = \sum_{\ell  \in {\Z} } z ^{ \ell}$, $\eta(\tau) = q^{1/24} \prod_{n \ge 1} (1-q^{n})$.
\end{proposition}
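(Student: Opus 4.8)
The plan is to compute the character directly from the explicit realization $\Pi_{(-1)}(\lambda) = \Pi(0)e^{-\mu + \lambda c}$ as a vector space, using the description of $\Pi(0) = M(1)\otimes\C[\Z c]$ from Section \ref{pi0}. First I would record the action of the two grading operators $\overline L(0)$ and $h(0) = 2\mu(0)$ on a PBW-type basis. Since $M(1) = M_{\mu,\nu}(1)$ is a rank-two Heisenberg vertex algebra and the lattice part contributes the vectors $e^{mc}$ ($m\in\Z$), a spanning set for $\Pi_{(-1)}(\lambda)$ is given by monomials $\mu(-i_1)\cdots\nu(-j_1)\cdots\, e^{mc - \mu + \lambda c}$. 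The key computation is to evaluate $\overline L(0)$ and $h(0)$ on the "ground states" $e^{mc-\mu+\lambda c}$: using $\omega_{\Pi(0)}$ from (\ref{virasoro-pi0}), the fact that $\langle c,c\rangle = \langle d,d\rangle = 0$, $\langle c,d\rangle = 2$, and $h = \tfrac{k}{2}c + d$, one gets that $\overline L(0)$ acts by $0$ on every such vector (the conformal weight of $e^{mc-\mu+\lambda c}$ vanishes because $c$ and $\mu-\tfrac{k}{2}c$ pair appropriately — this is precisely why the module is $\Z_{\ge 0}$-graded and bounded below), while $h(0)$ acts by the scalar $\langle h, mc - \mu + \lambda c\rangle$. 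I would then check the sign of the conformal weight contribution from the Heisenberg raising operators $\mu(-n)$, $\nu(-n)$ to confirm $\overline L(0) \ge 0$ on the whole module, establishing $\Z_{\ge 0}$-gradedness.

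Next I would assemble the character as a product of three factors: the Fock-space factor from the two free bosons $\mu,\nu$, which contributes $\prod_{n\ge 1}(1-q^n)^{-2} = q^{1/12}\eta(\tau)^{-2}$; the lattice sum $\sum_{m\in\Z} z^{\langle h,\, mc\rangle}$ coming from the vectors $e^{mc}$; and the overall prefactor $z^{\langle h,\, -\mu + \lambda c\rangle}$ together with the $q$-normalization $q^{-\overline c/24}$ with $\overline c = 6k+2$. For part (1), $\langle h, c\rangle = \langle \tfrac{k}{2}c + d, c\rangle = 2$, so $\sum_m z^{2m} = \delta(z^2)$; and $\langle h, -\mu + \lambda c\rangle = \langle \tfrac{k}{2}c+d, -\mu\rangle + 2\lambda = -k + 2\lambda$ (using $\langle d,\mu\rangle = \langle \mu+\nu,\mu\rangle = \langle\mu,\mu\rangle = \tfrac{k}{2}$ and $\langle c,\mu\rangle = \tfrac{2}{k}\langle\mu-\nu,\mu\rangle = \tfrac{2}{k}\cdot\tfrac{k}{2} = 1$, so $\langle \tfrac{k}{2}c+d,-\mu\rangle = -\tfrac{k}{2} - \tfrac{k}{2} = -k$). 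Combining the $q^{1/12}$ from the bosons with $q^{-(6k+2)/24}$ requires that the ground-state conformal weights indeed vanish; then the $q$-powers must match $\eta(\tau)^{-2} = q^{-1/12}\prod(1-q^n)^{-2}$, i.e.\ the $q^{1/12}$ cancels correctly only if $\overline c/24$ is accounted for by the shift — I would double-check by noting the character is $q$-independent in its $z$-exponent and the stated answer has the $\eta(\tau)^{-2}$ with no extra power of $q$, which pins down the normalization.

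For part (2), the argument is identical except the lattice is enlarged: $\Pi^{1/2}_{(-1)}(\lambda) = \Pi^{1/2}(0)e^{-\mu+\lambda c}$ with $\Pi^{1/2}(0) = M(1)\otimes\C[\Z\tfrac{c}{2}]$, so the lattice sum runs over $e^{\frac{m}{2}c}$, $m\in\Z$, giving $\sum_{m\in\Z} z^{\langle h,\, \frac{m}{2}c\rangle} = \sum_m z^m = \delta(z)$; by Proposition \ref{constr-pi0-mod}(2) this is indeed a module (twisted or untwisted according to parity of $r = -1$, here $g$-twisted), and the Fock factor and prefactor are unchanged, yielding $z^{-k+2\lambda}\delta(z)/\eta(\tau)^2$. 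The main obstacle I anticipate is purely bookkeeping: getting all the inner products $\langle h, \mu\rangle$, $\langle h, \nu\rangle$, $\langle h, c\rangle$ right in terms of $k$ and confirming that the ground-state $\overline L(0)$-eigenvalue is exactly zero (so that the module is genuinely $\Z_{\ge 0}$-graded and the character has no fractional $q$-shift beyond the $\eta$-function normalization). Once those scalars are verified, the character is just the product of a geometric series and a free-boson partition function, and the formula follows immediately.
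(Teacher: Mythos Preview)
Your approach is essentially the same as the paper's: identify the top component spanned by the vectors $e^{-\mu+(\lambda+j)c}$, compute $\overline L(0)$ and $h(0)$ on it, and multiply by the rank-two Fock contribution. The $h(0)$-computation and the lattice sums giving $\delta(z^2)$ and $\delta(z)$ are correct.

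However, there is one concrete computational error that would derail the $q$-bookkeeping. The ground-state $\overline L(0)$-eigenvalue is \emph{not} zero: using $\omega_{\Pi(0)} = \tfrac{1}{k}\mu(-1)^2 - \tfrac{1}{k}\nu(-1)^2 - \nu(-2)$ and $\langle\mu,\gamma\rangle = -\tfrac{k}{2}+(\lambda+j)$, $\langle\nu,\gamma\rangle = \lambda+j$ for $\gamma = -\mu + (\lambda+j)c$, one finds
\[
\overline L(0)\,e^{\gamma} \;=\; \Bigl(\tfrac{1}{k}\langle\mu,\gamma\rangle^2 - \tfrac{1}{k}\langle\nu,\gamma\rangle^2 + \langle\nu,\gamma\rangle\Bigr)e^{\gamma} \;=\; \tfrac{k}{4}\,e^{\gamma},
\]
independently of $j$. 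This is exactly what the paper records, and it is precisely this $\tfrac{k}{4}$ that cancels against $\overline c/24 = (6k+2)/24 = \tfrac{k}{4}+\tfrac{1}{12}$ to leave the clean $q^{-1/12}\prod_{n\ge 1}(1-q^n)^{-2} = \eta(\tau)^{-2}$. With your claimed ground weight $0$ you would instead obtain $q^{-k/4}\eta(\tau)^{-2}$, which is wrong; you noticed the mismatch but attributed it to normalization rather than to the eigenvalue. Note also that $\Z_{\ge 0}$-gradedness only requires the top weight to be constant in $j$ (which it is), not that it vanish. Once you correct this single scalar, your argument and the paper's coincide.
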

\begin{proof}  Set  $M=\Pi _{(-1)}  (\lambda)$. First we notice that $M$ is a $\Z_{\ge 0}$--graded   $M= \bigoplus_{m =0} ^ {\infty} M(m)$ such that
\bea  && M(0) \cong \mbox{span}_{\C} \{ e^{-\mu + ( \lambda + j) c} \ \vert \ j \in {\Z}\}, \nonumber \\
&&  \overline L(0) \vert M(0) \equiv \frac{k}{4} \mbox{Id}, \quad h(0) e^{-\mu + ( \lambda + j) c}  = ( - k + 2 (\lambda + j) )e^{-\mu + ( \lambda + j) c} . \nonumber
\eea
Now we have
 \bea
\mbox{ch}[  \Pi _{(-1)}  (\lambda) ]  (q,z) &=& \mbox{Tr} _{  \Pi _{(-1)}  (\lambda) }  q^{ \overline L (0) -\overline c /24 } z^{ h(0)}  \nonumber \\
&=&      q^{-\frac{k}{4} - \frac{1}{12} }  q^{\frac{k}{4} }  \frac{ z^ { -k + 2 \lambda}  \delta(z^2)} {\prod_{n =1} ^{\infty}  (1-q^n) ^2  }
= \frac{  z ^{- k + 2 \lambda} \delta (z ^2) }{\eta (\tau) ^2}. \nonumber
 \eea
This proves the assertion (1). The proof of (2) is analogous.
\end{proof}

 \subsection{Whittaker $\Pi(0)$--modules}

The construction of Whittaker modules for the   vertex algebra $\Pi(0)$--modules were presented in  \cite[Section 11]{ALZ-2016}. We considered   $\mathcal A$--module
$U_{\lambda}$ generated by vector $v_1$ such that
$$ e^{nc} v_{1} = {\lambda} ^n v_1 \quad (n \in {\Z}).$$
Note that  as a vector space $U_{\lambda} \cong {\C}[d]$  with the  free action of $d$.
Then we proved that  $\Pi_{\lambda} = U_{\lambda} \otimes M(1)$ has that structure of an  irreducible Whittaker $\Pi(0)$--module with the Whittaker vector $w_{\lambda} = 1 \otimes v_1$.

Similarly we can construct Whittaker modules for the vertex algebra   $\Pi ^{1/2} (0)$.  Consider $\mathcal A^{1/2}$--module $U_{\lambda} ^{1/2} ={\C}[d(0)] v _2$ such that
$$ e^{nc} v_{2} = {\lambda} ^n v_2 \quad (n \in  \tfrac{1}{2}{\Z}).$$ Then $ U_{\lambda} ^{1/2} \otimes M(1) $ has the structure of an irreducible  $g$--twisted  $\Pi ^{1/2} (0)$--module.

  \begin{proposition} \label{constr-pi0-mod-whit}

 \item[(1)]  \cite[Theorem 11.1] {ALZ-2016}  For every  $\lambda \in {\C} \setminus \{0 \}$ there is an irreducible $\Pi(0) $--module $\Pi _{\lambda}$ so that
 $c(0)$ acts on  $\Pi _{\lambda}$  as $- \mbox{Id}$ and that $\Pi _{\lambda}$ is generated by cyclic vector $w_{\lambda} $ satisfying
 $$ e^{c}_{0} w_{\lambda} = {\lambda} w_{\lambda}, \quad e^{-c} _0 w_{\lambda} = \frac{1}{\lambda} w_{\lambda}. $$
 As a vector space, $\Pi_{\lambda} = M(1) \otimes {\C}[d(0)]$.

  \item[(2)] For every  $\lambda \in {\C} \setminus \{0 \}$  $\Pi_{\lambda}$ has the structure of an irreducible $g$--twisted  $\Pi ^{1/2} (0)$--module generated by cyclic vector $w_{\lambda}$  such that
  $$ e^{c/2}_{0} w_{\lambda} = \sqrt{\lambda} w_{\lambda}, \quad e^{-c/2} _0 w_{\lambda} = \frac{1}{\sqrt{\lambda}} w_{\lambda}. $$
 \end{proposition}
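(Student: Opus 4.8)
Part (1) is nothing but \cite[Theorem 11.1]{ALZ-2016}, so the whole task is part (2), and for this I would simply feed the $\mathcal A^{1/2}$-module $U_\lambda^{1/2}$ introduced just above the statement into the general construction recalled before Proposition \ref{constr-pi0-mod} (the $\Pi^{1/2}(0)$-analogue of \cite[Section 4]{DBT}), with the parameter $r=-1$, and then identify the output with $\Pi_\lambda$. Concretely: fix a square root $\sqrt\lambda$ of $\lambda$, so that the $\mathcal A^{1/2}$-action on $U_\lambda^{1/2}=\C[d(0)]v_2$ is $e^{nc}v_2=\lambda^n v_2$ for $n\in\tfrac12\Z$ (in particular $e^{c/2}v_2=\sqrt\lambda\,v_2$, $e^{-c/2}v_2=\tfrac1{\sqrt\lambda}v_2$), with $d(0)$ acting freely; the relations $e^{nc}e^{mc}=e^{(n+m)c}$ and $[d(0),e^{nc}]=2n\,e^{nc}$ hold trivially since the form on $\Z\tfrac c2$ is identically zero and $\lambda^n\lambda^m=\lambda^{n+m}$. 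Since $r=-1$ is odd, $\mathcal L_{-1}(U_\lambda^{1/2})=U_\lambda^{1/2}\otimes M(1)$ is a $g$-twisted $\Pi^{1/2}(0)$-module with $c(0)\equiv-\mathrm{Id}$.

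Next I would observe that the restriction of $U_\lambda^{1/2}$ to the subalgebra $\mathcal A\subset\mathcal A^{1/2}$ is precisely the $\mathcal A$-module $U_\lambda$ of \cite{ALZ-2016} (same space, same free $d(0)$, same $e^{nc}$ for $n\in\Z$); and because the $\Pi(0)$-module structure on $\mathcal L_{-1}(\,\cdot\,)$ depends only on the underlying $\mathcal A$-module, the restriction of $\mathcal L_{-1}(U_\lambda^{1/2})$ to $\Pi(0)\subset\Pi^{1/2}(0)$ equals $\mathcal L_{-1}(U_\lambda)=\Pi_\lambda$. Hence the underlying space is $M(1)\otimes\C[d(0)]$, the cyclic vector $w_\lambda=v_2\otimes\mathbf 1$ is the Whittaker vector of part (1), and irreducibility of $\Pi_\lambda$ as a $g$-twisted $\Pi^{1/2}(0)$-module is automatic from irreducibility over the smaller algebra $\Pi(0)$; cyclicity of $w_\lambda$ transfers the same way.

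It then remains to compute $e^{\pm c/2}_0 w_\lambda$. Here I would use that $w_\lambda$ lies at the bottom of the $\overline L(0)$-grading, so $c(n)w_\lambda=d(n)w_\lambda=0$ for $n\ge 1$: writing $Y(e^{\pm c/2},z)=E^-(\mp\tfrac c2,z)\,E^+(\mp\tfrac c2,z)\,e^{\pm c/2}\,z^{(\pm c/2)(0)}$, the annihilation factor $E^+$ acts trivially on $w_\lambda$ (and on the momentum-shifted vector), $z^{(\pm c/2)(0)}$ contributes the scalar power $z^{\mp 1/2}$ since $c(0)w_\lambda=-w_\lambda$, the lattice shift $e^{\pm c/2}$ implements $e^{\pm c/2}v_2=\lambda^{\pm 1/2}v_2$, and $E^-$ only adds higher powers of $z$; extracting the leading coefficient — which is exactly what the subscript $0$ denotes for these weight-preserving modes, as in part (1) for $e^{\pm c}$ — gives $e^{c/2}_0 w_\lambda=\sqrt\lambda\,w_\lambda$ and $e^{-c/2}_0 w_\lambda=\tfrac1{\sqrt\lambda}w_\lambda$. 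The only point requiring genuine care — the one I would regard as the real content, deferred in the text to ``completely analogous to \cite{DBT}'' — is the justification that the module construction really does carry over with $\mathcal A^{1/2}$ and $\Pi^{1/2}(0)$ in place of $\mathcal A$ and $\Pi(0)$, i.e. that the twisted Jacobi identity for $\Pi^{1/2}(0)$ holds on $U_\lambda^{1/2}\otimes M(1)$; the new feature relative to \cite{DBT} is merely the half-integer powers of $z$ in $Y(e^{c/2},z)$ forced by $g\,e^{c/2}=-e^{c/2}$. Granting that, everything above is bookkeeping.
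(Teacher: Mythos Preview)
Your proposal is correct and matches the paper's approach: the paper does not give a formal proof of this proposition, but the paragraph immediately preceding it already constructs $U_\lambda^{1/2}$ and invokes the $\mathcal L_{r}$ construction (with the details deferred as ``completely analogous to \cite{DBT}''), which is exactly what you spell out. Your deduction of irreducibility over $\Pi^{1/2}(0)$ by restricting to $\Pi(0)$ and invoking part~(1) is a clean shortcut; the paper's implicit route is instead via the general principle that $\mathcal L_r(U')$ is irreducible iff $U'$ is an irreducible $\mathcal A^{1/2}$-module, but since $U_\lambda^{1/2}\big|_{\mathcal A}=U_\lambda$ is already irreducible these amount to the same thing.
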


 \begin{remark}
 Note that in \cite[Theorem 11.1] {ALZ-2016}, the operator $e^c_0$ is denoted by $a(0)$.
 \end{remark}

 \begin{remark}
 The operator $\bar L(0)$ acts semi-simply on $\Pi _{\lambda}$. But the action of $h(0)$ is not diagonalizable. This can be easily seen from  the action of $h(0)$ on top component $\Pi_{\lambda}(0) \cong U_{\lambda}$:
 $$ h(0)  \equiv \frac{k}{2} c(0) + d(0)  = - \frac{k}{2} + d(0)$$
 which is not  diagonalizable.
 \end{remark}

 \section{Realization of the admissible affine vertex algebra $V_k(sl(2))$ }
\label{realization-admissible}

In this section we use the realization from Proposition  \ref{prop-sl2-1}  and get a realization of the admissible affine vertex algebra $V_k(sl(2))$.
%\subsection{Realization of the vertex algebra $V_k(sl(2))$ }

 Assume now that $k$ is admissible and $k \notin{\Z}$. Then
 $$k +2 = \frac{p'}{p}, \quad d_k = 1- 6 \frac{(p-p') ^2}{p p'} = d_{p,p'}. $$
 Let $L^{Vir} (d_{p,p'},0)$ be the simple rational vertex operator algebra of central charge $d_{p,p'}$ (cf. Subsection \ref{Vir-subsect}).
 % and its  irreducible modules are
% $\{ L^{Vir} (d_{p,p'}, h) \  \vert \  h \in   \mathcal{S}_{p,p'}\
%\} $ where     $$ \mathcal{S}_{p,p'}= \{ h_{p,p'} ^{r,s }  = \frac{ ( s  p - r p')^2  - (p-p ')^2 } {4 p p'}  \ \vert \  \ 1 \le r \le p-1, 1 \le s \le p'-1\}. $$
% and  $h_{p,p'} ^{r,s }  = \frac{ ( r p - s p')^2  - (p-p ')^2 } {4 p p'} $.
%The fusion rules for  $L^{Vir} (d_{p,p'},0)$--modules is
%$$ L^{Vir} (d_{p,p'},  h_{p,p'} ^{r,s } )  \boxtimes   L^{Vir} (d_{p,p'},  h_{p,p'} ^{r',s' } ) = \sum_{ r'', s''}   \left[\begin{matrix} (r'', s'') \\ (r, s) \ (r', s')\end{matrix}\right]_{ (p,p') }  L^{Vir} %
%(d_{p,p'},  h_{p,p'} ^{r'',s'' } ), $$
%where the fusion coefficient   $\left[\begin{matrix} (r'', s'') \\ (r, s) \ (r', s')\end{matrix}\right]_{ (p,p') }$ is equal to the dimension of the vector space of all intertwining operators of the %type
%$$   {L^{Vir} (d_{p,p'},  h_{p,p'} ^{r'',s'' }) \choose   L^{Vir} (d_{p,p'},  h_{p,p'} ^{r,s } )  \ \   L^{Vir} (d_{p,p'},  h_{p,p'} ^{r',s' } ) }.  $$

Let now $\varphi = p' \gamma $. Since  $ \langle \varphi, \varphi \rangle = \frac{2 p'   {^2}  } {k+2}     =   2 p p' $, we
set  $M_{\varphi} (2 p p', 0) = M_{\gamma} (\frac{2}{k+2}, 0)  $ and
$$\omega ^{(k)}  =\left(\frac{1}{4 p p'} \varphi (-1) ^2 + \frac{p-p'}{ 2 p p'} \varphi (-2)\right) {\bf 1} .$$

 The universal vertex algebra $V ^{Vir} (d_{p,p'}, 0)$ is not simple and it contains a non-trivial ideal generated by singular vector
 $ \Omega_{p,p'} ^{Vir} $  of conformal weight  $(p-1) (p'-1)$.
 Moveover,
 $$ L ^{Vir} (d_{p,p'}, 0) = \frac{V^{Vir} (d_{p,p'}, 0) } {  U(Vir). \Omega_{p,p'} ^{Vir} }  $$
 is a simple vertex algebra (minimal model).
  The singular vector  $\Omega_{p,p'} ^{Vir}$ can be constructed in the free--field realization using screening operators.

  \begin{proposition} \label{prop-vir} \cite{TK}, \cite{RW1}
  There exist a unique, up to a scalar factor,   $Vir$--homomorphism
 \bea
 \Phi_{p,p'} ^{Vir}:   M_{\varphi} ( 2 p p', 0) . e ^{-\tfrac{p'-1}{p'} {\varphi}} &\rightarrow&   M_{\varphi} ( 2 p p', 0) \nonumber \\
  e^{-\frac{p'-1}{p'} \varphi } &\mapsto &  \Omega_{p,p'} ^{Vir} \nonumber .
 \eea
  There is a cycle $\Delta_{p'-1}$ and a non-trivial scalar $c_{p-1}$  such that  $ \Phi_{p,p'} ^{Vir} $ can be be represented as
$$ \frac{1}{c_{p'-1}} \int_{\Delta_{p'-1}}  Y( e^{\frac{\varphi} {p'} }  , z_1) \cdots   Y( e^{\frac{\varphi}{p'} }, z_{p'-1} ) dz_1 \cdots d z_{p'-1}. $$
  \end{proposition}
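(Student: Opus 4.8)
The plan is to realize the singular vector $\Omega_{p,p'}^{Vir}$ of the universal Virasoro vertex algebra inside the Heisenberg Fock module $M_\varphi(2pp',0)$ via a composition of screening operators, following the classical Tsuchiya--Kanie / Felder construction. First I would recall the free-field (Coulomb gas) realization: the Virasoro vector $\omega^{(k)} = \bigl(\tfrac{1}{4pp'}\varphi(-1)^2 + \tfrac{p-p'}{2pp'}\varphi(-2)\bigr){\bf 1}$ generates a copy of $V^{Vir}(d_{p,p'},0)$ inside $M_\varphi(2pp',0)$, and the vertex operators $Y(e^{t\varphi},z)$ for suitable exponents $t$ intertwine Fock modules and commute with the Virasoro action up to a total $z$-derivative. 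The relevant screening charge here is $Y(e^{\varphi/p'},z)$: the exponent $\tfrac{1}{p'}$ is chosen so that $e^{\varphi/p'}$ has conformal weight $1$ with respect to $\omega^{(k)}$ — one checks $\tfrac{\langle\varphi,\varphi\rangle}{2p'^2} - \tfrac{p-p'}{p'}\cdot\tfrac{1}{p'}\cdot(\text{linear term coefficient}) = 1$ using $\langle\varphi,\varphi\rangle = 2pp'$ — so its residue-type integrals produce well-defined screening operators that are $Vir$-homomorphisms between Fock spaces.

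The key steps, in order: (1) verify the conformal weight computation above so that the integrand $Y(e^{\varphi/p'},z_1)\cdots Y(e^{\varphi/p'},z_{p'-1})$, acting on $e^{-\frac{p'-1}{p'}\varphi}$, lands (charge-wise) in $M_\varphi(2pp',0)$ itself — indeed $-\tfrac{p'-1}{p'}\varphi + (p'-1)\cdot\tfrac{1}{p'}\varphi = 0$, matching the vacuum charge; (2) exhibit the twisted cycle $\Delta_{p'-1}$ in the configuration space of the $z_i$ on which the multivalued integrand admits a single-valued branch, so that the iterated contour integral converges and is closed (this is Felder's twisted-homology cycle); (3) argue that the resulting map $\Phi_{p,p'}^{Vir}$ is a nonzero $Vir$-module homomorphism — nonvanishing follows from an explicit leading-term computation identifying $\Phi_{p,p'}^{Vir}(e^{-\frac{p'-1}{p'}\varphi})$ with a nonzero vector of conformal weight $(p-1)(p'-1)$ (matching the weight of $e^{-\frac{p'-1}{p'}\varphi}$ after the charge shift, since $h\bigl(-\tfrac{p'-1}{p'}\varphi\bigr) = (p-1)(p'-1)$); and (4) identify this image vector with the singular vector $\Omega_{p,p'}^{Vir}$, using that the space of singular vectors of that conformal weight in $V^{Vir}(d_{p,p'},0)$ is one-dimensional (Feigin--Fuchs), and that $\Phi_{p,p'}^{Vir}(e^{-\frac{p'-1}{p'}\varphi})$ is annihilated by $L(n)$, $n\geq 1$, because it is the image of a singular vector $e^{-\frac{p'-1}{p'}\varphi}$ of the Fock module under a $Vir$-homomorphism. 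Uniqueness up to scalar is then immediate from one-dimensionality of the relevant Hom space of Fock modules (or of the singular-vector space).

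The main obstacle I expect is step (2)–(3): making precise the twisted-cycle $\Delta_{p'-1}$ and the normalizing constant $c_{p'-1}$, and proving the integral is genuinely nonzero rather than identically vanishing. This is the technical heart of the Felder/TK construction — one must choose the integration cycle in the complement of the diagonals $z_i = z_j$ and $z_i = 0$ so that the monodromy of $\prod_{i<j}(z_i-z_j)^{2/pp'}\cdot\prod_i z_i^{-2(p'-1)/pp'}$ cancels, and then evaluate (or at least bound away from zero) the resulting period integral, typically by reducing to a product of Selberg-type beta integrals. Since this is entirely standard and the cited references \cite{TK}, \cite{RW1} carry it out in detail, I would invoke them for steps (2) and the nonvanishing in (3), and present explicitly only the conformal-weight and charge-balance verifications (step (1)) and the singular-vector identification (step (4)), which are short.
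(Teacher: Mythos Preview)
The paper does not provide its own proof of Proposition~\ref{prop-vir}: it is stated as a known result, attributed to the references \cite{TK} and \cite{RW1}, and is used as input for the subsequent construction. Your outline is a correct sketch of the classical Tsuchiya--Kanie/Felder screening-operator construction carried out in those references---the conformal-weight check for $e^{\varphi/p'}$, the charge balance $-\tfrac{p'-1}{p'}+(p'-1)\tfrac{1}{p'}=0$, and the conformal-weight identity $h\bigl(-\tfrac{p'-1}{p'}\varphi\bigr)=(p-1)(p'-1)$ are all accurate---so there is nothing to compare against here beyond noting that your proposal matches the literature the paper cites.
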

Then $ \omega_{p,p'} = \omega^{(k)} + U(Vir). \Omega_{p,p'} ^{Vir} $ is the conformal vector  in  $L ^{Vir} (d_{p,p'}, 0) $.

 The universal affine vertex algebra $V^ k(sl(2))$ also contains a non-trivial maximal ideal generated by $\widehat{sl(2)}$--singular vector
 $\Omega_k ^{sl(2)}$   of conformal weight  $p (p'-1)$.
 Moreover,
 $$V_k(sl(2)) = \frac{V^ k(sl_2)}{ U (\widehat{sl(2)}). \Omega_k ^{sl(2)}}$$
 is a simple, admissible vertex algebra.  Let $\omega_{sug}$ denotes the Sugawara Virasoro vector in $V_k (sl(2))$ od central charge $\frac{3 k}{ k+2}$.
 The singular vector $ \Omega_k ^{sl(2)}$ can be also constructed using screening operators. The proof  was presented in \cite[Theorem 3.1] {RW2}   for $\widehat{sl(2)}$ and in \cite[Proposition 6.14]{Ara1}  in a more general setting (see also \cite{Ara2}  for some applications).
  \begin{proposition} \cite{RW2}, \cite{Ara1}
   There exist a unique, up to a scalar factor,   $\widehat{sl(2)}$--homomorphism
 \bea
 \Phi_{k} ^{sl(2)}:   W_{k, 2 (p'-1) } &\rightarrow&   W_{k,0}\nonumber \\
  e^{-\frac{p'-1}{p'} \varphi  + (p'-1) (\alpha +\beta)  } &\mapsto &  \Omega_{k} ^{sl_2} \nonumber .
 \eea

 \end{proposition}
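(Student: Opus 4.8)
The plan is to build $\Phi_k^{sl(2)}$ as a contour integral of a product of $p'-1$ screening currents, in exact analogy with the Virasoro statement recalled in Proposition \ref{prop-vir}. In the bosonized Wakimoto realization the relevant screening current is $S(z) = Y(e^{\alpha+\beta-\tfrac{1}{k+2}\delta},z)$, whose residue is the screening operator $Q$ of (\ref{scr}); its essential feature (cf. \cite{efren}) is that $[x_n, S(z)]$ is a total $z$-derivative for every $x\in sl(2)$ and every $n\in\Z$. Hence any integral of a product of such currents over a closed (possibly twisted) cycle commutes with the $\widehat{sl(2)}$-action and therefore defines an $\widehat{sl(2)}$-module homomorphism between the appropriate Wakimoto modules.

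Because $\langle\gamma,\gamma\rangle = \tfrac{2}{k+2} = \tfrac{2p}{p'}$ is in general not an integer, the operator product $S(z_i)S(z_j)$ carries branch cuts and iterated single residues do not compose; following \cite{TK}, I would integrate over the Tsuchiya--Kanie twisted cycle $\Delta_{p'-1}$ already used in Proposition \ref{prop-vir}, and set
$$ \Phi_k^{sl(2)} = \frac{1}{c_{p'-1}}\int_{\Delta_{p'-1}} S(z_1)\cdots S(z_{p'-1})\,dz_1\cdots dz_{p'-1}, $$
with $c_{p'-1}$ a non-zero Dotsenko--Fateev normalization constant. A charge count then pins down the domain: each $S$ shifts the $\delta(0)$-eigenvalue by $-2$, so a product of $p'-1$ of them maps $W_{k,2(p'-1)}$ into $W_{k,0}$; and using $\varphi = p'\gamma$ one checks that $-\tfrac{p'-1}{p'}\varphi + (p'-1)(\alpha+\beta) = \tfrac{p'-1}{k+2}\delta$, which is exactly the charge of the generator $e^{-\tfrac{p'-1}{p'}\varphi + (p'-1)(\alpha+\beta)}$ of $W_{k,2(p'-1)}$.

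It then remains to identify the image of that generator. As the value of an $\widehat{sl(2)}$-homomorphism on a highest weight vector for $\widehat{sl(2)}$, it is a singular vector of $W_{k,0}$; its $h(0)$- and $L(0)$-eigenvalues are those of $\Omega_k^{sl(2)}$, so it has conformal weight $p(p'-1)$, and by the Kac--Kazhdan / Malikov--Feigin--Fuchs analysis the space of singular vectors of $W_{k,0}$ at this weight is at most one-dimensional and contains $\Phi(\Omega_k^{sl(2)})$. Hence the image is a scalar multiple of $\Omega_k^{sl(2)}$, and is non-zero once $c_{p'-1}$ is chosen so that the twisted-cycle integral is non-degenerate; automatically the image of $\Phi_k^{sl(2)}$ is then $\Phi(U(\widehat{sl(2)}).\Omega_k^{sl(2)})$, i.e. lies in $\Phi(V^k(sl(2)))$. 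Uniqueness of $\Phi_k^{sl(2)}$ up to a scalar reduces to the same one-dimensionality, and is part of the structure theory of Wakimoto modules over admissible $\widehat{sl(2)}$ (Felder complex, Malikov--Feigin--Fuchs).

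The hard part is the middle step: proving rigorously that the twisted cycle $\Delta_{p'-1}$ really yields a well-defined, non-zero operator and that the resulting singular vector is $\Omega_k^{sl(2)}$ itself rather than $0$ or a descendant --- that is, the explicit Dotsenko--Fateev integral and the non-vanishing of $c_{p'-1}$. This is precisely the technical content carried out in \cite[Theorem 3.1]{RW2} for $\widehat{sl(2)}$ and in \cite[Proposition 6.14]{Ara1} in greater generality, so I would invoke those results rather than redo the contour analysis here.
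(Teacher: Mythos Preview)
Your proposal is correct and matches the paper's treatment exactly. The paper does not give its own proof of this proposition: it is stated as a cited result from \cite{RW2} and \cite{Ara1}, and immediately afterwards the paper records precisely the contour-integral representation you describe (with $U(z)=Y(a(-1)e^{-\delta/(k+2)},z)$, which in the bosonized picture is your $S(z)=Y(e^{\gamma},z)=Y(e^{\varphi/p'},z)$), over the same Tsuchiya--Kanie cycle $\Delta_{p'-1}$ as in Proposition~\ref{prop-vir}. Your explicit acknowledgement that the non-vanishing and well-definedness of the twisted-cycle integral is the technical content of \cite[Theorem~3.1]{RW2} and \cite[Proposition~6.14]{Ara1} is exactly how the paper handles it as well.
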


By \cite[Theorem 3.1] {RW2}  $   \Phi_{k} ^{sl(2)}$ can be   represent as
$$ \frac{1}{c_{p'-1}} \int_{\Delta_{p'-1}}   U ( z_1 )  \cdots   U( z_{p'-1} ) dz_1 \cdots d z_{p'-1}, $$
where $U(z) = Y (a(-1) e^{-\frac{\delta}{k+2} }, z) $ and the cycle $\Delta_{p'-1}$  is as in  Proposition   \ref{prop-vir}.
But  since $U(z) = Y(e^{\frac{\varphi}{p'}}, z)$ we get the following consequence:
   \begin{corollary} \label{posljedica}  $\Phi_{k} ^{sl(2) }$ can be extended to a $\widehat{sl(2)}$--homomorphism
   $$ M_{\varphi} ( 2 p p', 0) . e ^{-\tfrac{p'-1}{p'} {\varphi}} \otimes \Pi (0) \rightarrow  M_{\varphi} ( 2 p p', 0)  \otimes \Pi (0) $$ such that
   $ \Phi_{k} ^{sl(2)} = \Phi_{p,p'} ^{Vir} \otimes \mbox{Id} $ and $ \Omega_{k} ^{sl_2} =  \Omega_{p,p'} ^{Vir} \otimes e^{ (p'-1) c }. $
   \end{corollary}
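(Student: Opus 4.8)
The plan is to combine three facts already assembled in the text: (i) the Wakimoto-level screening homomorphism $\Phi_k^{sl(2)}:W_{k,2(p'-1)}\to W_{k,0}$ is realized as an iterated residue of the field $U(z)=Y(a(-1)e^{-\tfrac{\delta}{k+2}},z)$ over the cycle $\Delta_{p'-1}$ (by \cite[Theorem 3.1]{RW2}); (ii) under the bosonization $a=e^{\alpha+\beta}$, $a^*=-\alpha(-1)e^{-\alpha-\beta}$ and the change of Heisenberg generators to $\gamma,\mu,\nu$, one has the field identity $U(z)=Y(a(-1)e^{-\tfrac{\delta}{k+2}},z)=Y(e^{\alpha+\beta-\tfrac{1}{k+2}\delta},z)=Y(e^{\varphi/p'},z)$, since $\gamma=\alpha+\beta-\tfrac{1}{k+2}\delta$ and $\varphi=p'\gamma$ give $\tfrac{\varphi}{p'}=\gamma=\alpha+\beta-\tfrac{1}{k+2}\delta$; (iii) the Virasoro screening homomorphism $\Phi_{p,p'}^{Vir}$ is realized by exactly the same iterated residue of $Y(e^{\varphi/p'},z)$ over the same cycle $\Delta_{p'-1}$ (Proposition \ref{prop-vir}). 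Thus at the level of the ambient lattice-type vertex algebra, $\Phi_k^{sl(2)}$ and $\Phi_{p,p'}^{Vir}$ are the \emph{same} operator, just acting on different (sub)spaces.

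First I would make precise the domain extension. The operator $\tfrac{1}{c_{p'-1}}\int_{\Delta_{p'-1}}Y(e^{\varphi/p'},z_1)\cdots Y(e^{\varphi/p'},z_{p'-1})\,dz_1\cdots dz_{p'-1}$ is a well-defined linear map on all of $M_\varphi(2pp',0)\cdot e^{-\tfrac{p'-1}{p'}\varphi}$ (the contour integral of the product of vertex operators $e^{\varphi/p'}$ converges on this Fock space for the Felder-type cycle $\Delta_{p'-1}$, as in \cite{TK},\cite{RW1}), and it is a $Vir$-homomorphism by Proposition \ref{prop-vir}. Since the $\gamma$-Heisenberg fields commute with everything in $\Pi(0)$, this operator acts on $M_\varphi(2pp',0)\cdot e^{-\tfrac{p'-1}{p'}\varphi}\otimes\Pi(0)$ simply as $\Phi_{p,p'}^{Vir}\otimes\mathrm{Id}$; I would take this tensored map as the definition of the extension of $\Phi_k^{sl(2)}$ and then verify it is an $\widehat{sl(2)}$-homomorphism for the $\widehat{sl(2)}$-action coming from \eqref{def-e-2}–\eqref{def-f-2}. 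For this, I would check that $\Phi_{p,p'}^{Vir}\otimes\mathrm{Id}$ intertwines $e,h,f$: the screening charge commutes with $e=e^{\tfrac{2}{k}(\mu-\nu)}$ and $h=2\mu(-1)$ trivially since these involve only $\mu,\nu,c$ and are untouched by an operator built from $\gamma$; for $f=[(k+2)\omega^{(k)}-\nu(-1)^2-(k+1)\nu(-2)]e^{-\tfrac{2}{k}(\mu-\nu)}$ one uses that $\Phi_{p,p'}^{Vir}$ already commutes with the Virasoro action (hence with $\omega^{(k)}$, equivalently $L(n)$) and commutes with the $\mu,\nu$-part by the same disjointness of Heisenberg variables. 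Finally the image computation: $\Phi_k^{sl(2)}$ sends the Wakimoto singular vector $e^{-\tfrac{p'-1}{p'}\varphi+(p'-1)(\alpha+\beta)}$ to $\Omega_k^{sl(2)}$; writing $e^{(p'-1)(\alpha+\beta)}=e^{(p'-1)c}$ (recall $c=\alpha+\beta$) and factoring $e^{-\tfrac{p'-1}{p'}\varphi+(p'-1)c}=e^{-\tfrac{p'-1}{p'}\varphi}\otimes e^{(p'-1)c}$ inside $M_\varphi(2pp',0)\otimes\Pi(0)$, the fact that the map is $\Phi_{p,p'}^{Vir}\otimes\mathrm{Id}$ together with $\Phi_{p,p'}^{Vir}(e^{-\tfrac{p'-1}{p'}\varphi})=\Omega_{p,p'}^{Vir}$ (Proposition \ref{prop-vir}) gives $\Omega_k^{sl(2)}=\Omega_{p,p'}^{Vir}\otimes e^{(p'-1)c}$ up to the (common, nonzero) normalization $c_{p'-1}$.

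The main obstacle I anticipate is the precise bookkeeping needed to justify that the \emph{same} analytically-defined contour operator serves simultaneously as $\Phi_k^{sl(2)}$ on the Wakimoto module and as $\Phi_{p,p'}^{Vir}\otimes\mathrm{Id}$ on the Virasoro-times-$\Pi(0)$ picture — i.e. checking that under the isomorphism $W_{k,0}\cong$ (subalgebra of) $M_\gamma(\tfrac{2}{k+2},0)\otimes\Pi(0)$ established via bosonization and the $\gamma,\mu,\nu$ substitution, the vector $e^{\alpha+\beta-\tfrac{1}{k+2}\delta}$ is genuinely $e^{\varphi/p'}\otimes 1$ and the screening cycle/normalization constant transport correctly. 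Once the identity $U(z)=Y(e^{\varphi/p'},z)$ of operators is granted (it is essentially asserted in the line preceding the corollary), the rest is the formal observation that an operator acting only in the $\gamma$-variables extends by the identity on the decoupled $\Pi(0)$-factor, and an $\widehat{sl(2)}$-module map on the Wakimoto module restricts/extends compatibly; I do not expect genuine difficulty there beyond careful notation.
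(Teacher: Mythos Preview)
Your proposal is correct and follows essentially the same approach as the paper: the key observation, stated just before the corollary, is that under bosonization $U(z)=Y(a(-1)e^{-\delta/(k+2)},z)=Y(e^{\varphi/p'},z)$, so the two screening homomorphisms are literally the same contour operator and the extension is $\Phi_{p,p'}^{Vir}\otimes\mathrm{Id}$. The paper leaves the verification that this extension intertwines the $\widehat{sl(2)}$-action implicit, whereas you spell it out via the decoupling of the $\gamma$-variables from $\Pi(0)$; this extra detail is sound and adds nothing new beyond what the paper takes for granted.
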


 \begin{example}
 Let us illustrate the above analysis in the simplest case $p' = 2$. Then we have that $k+ 2 = \frac{2}{p}$ where $p$ is odd natural number, $p \ge 3$.   Moreover, we have
 $\langle \varphi, \varphi \rangle = 4p$. The construction of the Virasoro singular vectors was obtained in \cite{AdM-IMRN} by using lattice vertex algebras.

 The singular vector in $V^k (sl(2))$ is given by
 \bea
 Q e^{\frac{\delta}{k+2}}  &=& (a (-1) e^{-\frac{\delta}{k+2}})_0  e^{\frac{\delta}{k+2}}  \nonumber \\
 &=&  S_{p-1} ( \alpha + \beta -   \frac{\delta}{ k+2} ) a(-1) {\bf 1} \nonumber  \nonumber \\
 &=&  S_{p-1} ( \frac{\varphi}{2} )  e^{\alpha + \beta}  \nonumber \\
 &=& Q e^{-\frac{\varphi} {2} } \otimes e^{ \alpha + \beta}  = \Omega_{p,2 } ^{Vir} \otimes e^{  c }. \nonumber
 \eea
 Here $S_{n} (\gamma)$ denotes  the $n$-th  Schur polynomial in $(\gamma (-1), \gamma(-2) , \dots)$.
In particular,   $Q e^{-\frac{\varphi} {2}  } = S_{p-1} ( \frac{\varphi}{2}  ) $ is a singular vector in $V^{Vir} (d_{p,2}, 0)  \subset M_{\varphi} (4p)$ (cf. \cite{AdM-IMRN}).

 \end{example}

Finally we get the realization of $V_k (sl(2))$:
 \begin{theorem} \label{real-simpl-sl2}
 There exist a non-trivial $\widehat{sl(2)}$--homomorphism
 $$ \overline{\Phi} : V_k (sl(2)) \rightarrow L ^{Vir} (d_{p,p'}, 0) \otimes \Pi(0) $$
 defined by the relations (\ref{def-e-3})-(\ref{def-f-3}).
 Moreover,
 \bea  \overline{\Phi} (\omega_{sug}) &=& \omega_{p,p'} +  \frac{1}{k} \mu (-1) ^2 - \frac{1}{k} \nu (-1) ^2 - \nu(-2)  \label{sug} \\
 &=& \omega_{p,p'} + \frac{1}{2} c(-1) d(-1)  -\frac{1}{2} d(-2) + \frac{k}{4 } c(-2). \label{sug-cd}
 \eea
 \end{theorem}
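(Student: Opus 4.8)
The plan is to deduce Theorem \ref{real-simpl-sl2} by passing to the quotient in the universal embedding $\Phi$ of Proposition \ref{prop-sl2-1}. First I would recall that $\Phi: V^k(sl(2)) \hookrightarrow V^{Vir}(d_k,0)\otimes \Pi(0)$ is injective, and that for admissible $k\notin\Z$ we have $d_k = d_{p,p'}$. The target vertex algebra $V^{Vir}(d_{p,p'},0)\otimes\Pi(0)$ has a natural surjection onto $L^{Vir}(d_{p,p'},0)\otimes\Pi(0)$, namely $\pi = \pi_{Vir}\otimes\mathrm{Id}$ where $\pi_{Vir}$ kills the maximal ideal $U(Vir).\Omega_{p,p'}^{Vir}$. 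The composition $\overline\Phi := \pi\circ\Phi$ is then a homomorphism $V^k(sl(2))\to L^{Vir}(d_{p,p'},0)\otimes\Pi(0)$, and it is clearly non-trivial since it sends $e\mapsto e^{\frac2k(\mu-\nu)}\ne 0$. The content of the theorem is that $\overline\Phi$ factors through the simple quotient $V_k(sl(2)) = V^k(sl(2))/U(\widehat{sl(2)}).\Omega_k^{sl(2)}$; equivalently, that $\overline\Phi(\Omega_k^{sl(2)}) = 0$.

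The key step is precisely this vanishing, and here I would invoke Corollary \ref{posljedica}: the affine singular vector is realized in the free-field picture as $\Omega_k^{sl(2)} = \Omega_{p,p'}^{Vir}\otimes e^{(p'-1)c}$ (up to the identification of the Wakimoto realization with the $V^{Vir}\otimes\Pi(0)$ realization via the change of Heisenberg generators $\gamma,\mu,\nu$). Applying $\overline\Phi = \pi_{Vir}\otimes\mathrm{Id}$, the first tensor factor $\Omega_{p,p'}^{Vir}$ maps to $0$ in $L^{Vir}(d_{p,p'},0)$ by definition of the minimal model, hence $\overline\Phi(\Omega_k^{sl(2)}) = 0$. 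Therefore $U(\widehat{sl(2)}).\Omega_k^{sl(2)}\subseteq\ker\overline\Phi$, and $\overline\Phi$ descends to a homomorphism $V_k(sl(2))\to L^{Vir}(d_{p,p'},0)\otimes\Pi(0)$ defined by (\ref{def-e-3})--(\ref{def-f-3}). I should also check that the descended map is still non-trivial: since $e\mapsto e^{\frac2k(\mu-\nu)}$ generates a nonzero Heisenberg-lattice submodule and the image of $e$ is nonzero already at the level of $\Pi(0)$, this is immediate.

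For the formula (\ref{sug})--(\ref{sug-cd}) for $\overline\Phi(\omega_{sug})$, I would compute directly from the Sugawara construction. Recall $\omega_{sug} = \frac{1}{2(k+2)}\big(\frac12 h(-1)^2 + e(-1)f(-1) + f(-1)e(-1)\big){\bf 1}$; substituting the images (\ref{def-e-3})--(\ref{def-f-3}) and using the operator product relations among $\mu,\nu,\gamma$ (and $e^{\pm\frac2k(\mu-\nu)}$), one collects the $\gamma$-dependent part into $(k+2)\,\omega^{(k)}$ — which is $\omega_{p,p'}$ after passing to the quotient — and the remaining terms into the Heisenberg/lattice expression $\frac1k\mu(-1)^2 - \frac1k\nu(-1)^2 - \nu(-2)$. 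The second form (\ref{sug-cd}) follows by rewriting $\mu,\nu$ in terms of $c = \frac2k(\mu-\nu)$ and $d = \mu+\nu$: one has $\mu(-1)^2 - \nu(-1)^2 = (\mu-\nu)(-1)(\mu+\nu)(-1)$ plus a derivative correction coming from normal ordering, which reproduces $\frac k2\big(\frac12 c(-1)d(-1)\big)$ together with the $c(-2),d(-2)$ terms; comparing with (\ref{virasoro-pi0}) identifies the result with $\omega_{p,p'} + \omega_{\Pi(0)}$. The main obstacle is bookkeeping: the normal-ordering rearrangements and the translation between the two sets of Heisenberg coordinates must be done carefully, but no conceptual difficulty arises — it is the identity (\ref{posljedica}) matching the two singular vectors that does the real work, and that has already been established.
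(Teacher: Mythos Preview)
Your proposal is correct and follows essentially the same route as the paper: compose $\Phi$ with the projection $\pi_{Vir}\otimes\mathrm{Id}$, use Corollary~\ref{posljedica} to see that $\Omega_k^{sl(2)} = \Omega_{p,p'}^{Vir}\otimes e^{(p'-1)c}$ is killed, and descend to $V_k(sl(2))$. One small slip: in your sketch of the Sugawara computation the $\gamma$-dependent part should assemble to $\omega^{(k)}$ itself (which becomes $\omega_{p,p'}$ in the quotient), not to $(k+2)\,\omega^{(k)}$; the factor $(k+2)$ belongs to the expression for $f$, not to the conformal vector.
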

 \begin{proof}
 We have constructed homomorphism
$ \Phi : V^ k (sl(2)) \rightarrow V ^{Vir} (d_{p,p'}, 0) \otimes \Pi(0) $
and showed in Corollary \ref{posljedica}     that
$ \Phi( \Omega_k ^{sl(2)}) = \Omega_{p,p'} ^{Vir} \otimes e ^{ (p'-1) c }$.
The claim  follows.
 \end{proof}

 In what follows, we identify $ \omega_{sug}$ with  $\overline{\Phi} (\omega_{sug}) $ and denote  the Sugawara Virasoro field by
 $$L_{sug} (z ) = \sum_{n \in {\Z}} L_{sug} (n) z^{-n-2}, \quad L_{sug} (n) = (\omega_{sug})_{n+1}. $$

\begin{remark}  \label{central-charge} Note that
$  \overline{\Phi} (\omega_{sug})  = \omega_{p,p'} + \omega_{\Pi(0)} $,  where $  \omega_{\Pi(0)}$
%$$  \omega_{\Pi(0)}  =  \frac{1}{2} c(-1) d(-1)  -\frac{1}{2} d(-2) + \frac{k}{4 } c(-2) $$
 is a Virasoro vector in the vertex algebra ${\Pi(0)}$ of central charge $6 k +2$ given by   (\ref{virasoro-pi0}).
In particular, we have
$$c_{sug} = \frac{3 k}{k+2} = d_{p,p'} + 6 k  +2. $$
\end{remark}

 \section{Realization  of ordinary $V_k(sl(2))$--modules and their intertwining operators}

 In this section we present a realization of irreducible, ordinary $V_k(sl(2))$--modules, i.e.,   the $V_k(\g)$--modules having finite-dimensional $L_{sug}(0)$--eigenspaces.
 It was proved in \cite{CHY}, that the category of ordinary $V_{k}(sl(2))$--modules at the admissible level , denoted by $\mathcal O_{k,ord}$, form a braided tensor category with the tensor product bifunctor $\boxtimes_{\mathcal P(1)}$.

  In this section, we show that the  intertwining operators among ordinary $V_k(\g)$--modules can be constructed from  the intertwining operators for the minimal Virasoro vertex algebra.

\vskip 5mm Recall \cite{AdM-MRL} that the set
$$ \{  \mathcal L _s := L_{A_1} ( (k+ 1 -s )\Lambda_0 + (s-1)  \Lambda_1)  \ \vert \ s = 1, \dots, p'-1 \}$$
provides all irreducible, ordinary $V_k(\g)$--modules.

\begin{proposition}  \label{ordinary}
Let $s \in {\Z}$, $ 1 \le s \le p'-1$.
We have
$$  \mathcal L _s \cong  V_k(sl(2)) . (v_{1,s}  \otimes e^{\frac{s-1}{2} c} ) \subset  L^{Vir} (d_{p,p'}, h_{p,p'} ^{1,s} ) \otimes \Pi ^{1/2} (0). $$
\end{proposition}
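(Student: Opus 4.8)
The plan is to identify the submodule of $L^{Vir}(d_{p,p'}, h_{p,p'}^{1,s}) \otimes \Pi^{1/2}(0)$ generated by the vector $v_{1,s} \otimes e^{\frac{s-1}{2}c}$ and show that it is isomorphic to $\mathcal L_s$. First I would observe that under the embedding $\overline{\Phi}$ of Theorem \ref{real-simpl-sl2}, the vector $w_s := v_{1,s} \otimes e^{\frac{s-1}{2}c}$ is annihilated by $e_0, h_0 + $ (appropriate scalar)$, \dots$; concretely, using the explicit formulas (\ref{def-e-3})--(\ref{def-f-3}) and the fact that $v_{1,s}$ is the highest weight vector in $L^{Vir}(d_{p,p'}, h_{p,p'}^{1,s})$, I would check that $w_s$ is a highest weight vector for $\widehat{sl(2)}$ with $h(0) w_s = (s-1) w_s$, $e(n) w_s = 0$ for $n \ge 0$, $f(n) w_s = 0$ for $n \ge 1$. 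The $h$-eigenvalue computation comes from $h \mapsto 2\mu(-1)$ and $\langle \mu, \tfrac{s-1}{2}c\rangle = \tfrac{s-1}{2}\langle \mu, \tfrac{2}{k}(\mu-\nu)\rangle = \tfrac{s-1}{2}$; the annihilation by $e(n)$ for $n\ge0$ follows from the conformal weight of $e^{\frac{2}{k}(\mu-\nu)}$ acting on the top of $\Pi^{1/2}_{(0)}(\tfrac{s-1}{2})$, and the annihilation by $f(n)$, $n\ge1$, from the corresponding weight count together with $\overline L(0)$ acting by $0$ on the relevant top component (the Heisenberg/lattice part contributes $0$ since $\langle \tfrac{s-1}{2}c, \tfrac{s-1}{2}c\rangle = 0$). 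So $V_k(sl(2)).w_s$ is a highest weight module of highest weight $(k+1-s)\Lambda_0 + (s-1)\Lambda_1$, hence a quotient of $\mathcal L_s$ — and since $\mathcal L_s$ is irreducible, it suffices to prove $V_k(sl(2)).w_s \ne 0$, i.e. that $w_s$ generates a nonzero module, together with showing the generated module is not a proper quotient, which is automatic once we know it is nonzero and $\mathcal L_s$ is simple.

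Next I would address the genuinely nontrivial point: that $V_k(sl(2)).w_s$ is in fact \emph{all} of $\mathcal L_s$ rather than a proper submodule — equivalently, that the map $\mathcal L_s \to L^{Vir}(d_{p,p'}, h_{p,p'}^{1,s}) \otimes \Pi^{1/2}(0)$ determined by $v_{hw} \mapsto w_s$ is injective. Since $\mathcal L_s$ is irreducible, injectivity is equivalent to nonvanishing, and nonvanishing of the module map is equivalent to the statement that the submodule generated by $w_s$ is nonzero, which it visibly is. The subtler claim is the reverse inclusion $\supseteq$: that every vector of the form $v_{r,s'} \otimes e^{\ell c/2} \times (\text{Heisenberg stuff})$ lying in the target which \emph{should} be in $\mathcal L_s$ is actually reached. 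But this is not needed: the proposition only asserts that $\mathcal L_s$ is isomorphic to the \emph{submodule generated by} $w_s$, i.e. $\mathcal L_s \cong V_k(sl(2)).w_s$, so I only need the map to be a well-defined injective $\widehat{sl(2)}$-homomorphism onto its image. Well-definedness follows because $w_s$ satisfies the defining highest-weight relations of $\mathcal L_s$ \emph{including} the singular-vector relation: $\Omega_k^{sl(2)}$ must act as $0$ on $w_s$. This last fact follows from Corollary \ref{posljedica} / Theorem \ref{real-simpl-sl2}, since $\overline\Phi$ is already a homomorphism out of the simple algebra $V_k(sl(2))$, so $\Omega_k^{sl(2)} = 0$ in $V_k(sl(2))$ and acts as $0$ on every vector in the image, in particular on $w_s$.

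The main obstacle, then, is the weight bookkeeping in the first step — verifying the $L_{sug}(0)$-eigenvalue of $w_s$ equals the conformal weight $\tfrac{(s-1)(s+1)}{4(k+2)}$ expected of $\mathcal L_s$, and that $w_s$ is a true highest weight vector (killed by the full set of raising operators of $\widehat{sl(2)}$, not merely by $e_0$ and $f_1$). I would carry this out using formula (\ref{sug-cd}): $L_{sug}(0) = \omega_{p,p'}(1) + \tfrac12(c(-1)d(-1))_1 - \tfrac12 d(0) + \tfrac{k}{4}c(0)$ acting on $v_{1,s}\otimes e^{\frac{s-1}{2}c}$, where $\omega_{p,p'}$ contributes $h_{p,p'}^{1,s}$, and the $\Pi(0)$-Virasoro $\omega_{\Pi(0)}$ contributes $\langle \tfrac{s-1}{2}c, \tfrac{k}{2}c + d\rangle \cdot(\text{coefficient}) = \tfrac{k}{4}\langle c, (s-1)c\rangle/1 \dots$; a short computation using $h_{p,p'}^{1,s} + (\text{lattice contribution}) = \tfrac{(sp - p')^2 - (p-p')^2}{4pp'}$ together with $k+2 = p'/p$ should yield exactly the Kac–Kazhdan value for $\mathcal L_s$. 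Once the numerology checks out, the proposition follows from the preceding structural results with no further work.
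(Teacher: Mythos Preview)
Your overall strategy coincides with the paper's: show that $w_s = v_{1,s}\otimes e^{\frac{s-1}{2}c}$ is a highest-weight vector for $\widehat{sl(2)}$ inside a $V_k(sl(2))$-module, then invoke simplicity/classification to identify the cyclic submodule with $\mathcal L_s$.  The paper carries this out in the same order.

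There is, however, a genuine gap in your verification of $f(n)w_s=0$ for $n\ge 1$.  The vanishing for $n\ge 2$ does follow from weight bookkeeping, but the case $n=1$ does \emph{not}.  First, your claim that $\overline L(0)$ acts by $0$ on $e^{\frac{s-1}{2}c}$ is false: although $\langle c,c\rangle=0$ kills the quadratic part, the linear term $-\tfrac12 d(-2)$ in $\omega_{\Pi(0)}$ contributes $\tfrac{s-1}{2}$.  More importantly, $f(1)w_s$ lands in the one-dimensional space ${\C}\, v_{1,s}\otimes e^{\frac{s-3}{2}c}$, which has exactly the right $(L_{sug}(0),h(0))$-weight, so weight counting cannot force it to vanish.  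The paper computes this coefficient explicitly via $(w_s)_1 f$ and finds
\[
(k+2)\,h_{p,p'}^{1,s}\;-\;\tfrac{(s-1)^2}{4}\;+\;\tfrac{(k+1)(s-1)}{2}\;=\;0,
\]
an identity which genuinely uses the specific value of $h_{p,p'}^{1,s}$ (equivalently, your later observation that $h_{p,p'}^{1,s}+\tfrac{s-1}{2}=\tfrac{(s-1)(s+1)}{4(k+2)}$).  Without this calculation the argument is incomplete.

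A minor point: your sentence ``hence a quotient of $\mathcal L_s$'' has the direction reversed; a highest-weight module is a quotient of the Verma module and has $\mathcal L_s$ as its unique irreducible \emph{quotient}.  The identification $V_k(sl(2)).w_s\cong\mathcal L_s$ then uses that highest-weight $V_k(sl(2))$-modules in category $\mathcal O$ are irreducible at admissible level (the paper leaves this implicit as well).
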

 \begin{proof}
 Since $L^{Vir} (d_{p,p'}, h_{p,p'} ^{1,s} ) \otimes \Pi ^{1/2} (0)$ is a $V_k(sl(2))$--module, it suffices to show that  $w_{s} =v_{1,s}  \otimes e^{\frac{s-1}{2} c}$ is a singular vector for $\widehat{sl(2)}$.  For $n \ge 0$ we  have:
 \bea
 && e(n) w_{s}  =   f(n +2) w_s = 0,  \
  h(n) w_{s} =  (s-1) \delta_{n,0} w_{s}. \nonumber
 \eea
It remains to prove that $f(1) w_s = 0$. Since
 \bea
(w_s) _1  f  &= & (k+2) (v_{1,s} )_1 \omega_{p,p'} \otimes e ^{ \frac{s-1}{2} c - c} - v_{1,s} \otimes   e ^{ \frac{s-1}{2} c} _1 ( \nu(-1) ^2 + (k+1) \nu(-2)  ) e^{-c}  \nonumber  \\
& = &  \left(  (k+2)      h_{p,p'} ^{1,s}  -  \frac{ (s-1) ^2}{4}   +  \frac{ (k+1)  (s-1)}{2}     \right) v_{1,s}    \otimes   e^{ \frac{s-1}{2} c - c} = 0,
 \eea
 we get
 $ f(1) w_s = - [(w_s )_{-1}, f (1)] {\bf 1} = (w_s) _1 f   = 0$.
 The proof follows.
 \end{proof}

\vskip 5mm

 The following fusion rules result was proved in \cite{CHY}:
 \bea  && \mathcal L _{s_1} \boxtimes_{\mathcal P(1)}   \mathcal   L _{s_2} = \bigoplus _{s_3 = 1} ^ {p'  -1} N_{s_1, s_2} ^{s_3}  \mathcal  L_{s_3}, \label{affine-fusion} \eea
 where the fusion coefficient is
 $$N_{s_1,  s_2 } ^{s_3}:=
  \begin{cases} 1 & \text{if}  \ \vert s_2 - s_1 \vert + 1 \le s_3 \le \mbox{min} \{ s_1 + s_2 +1, 2 p'- s_1 - s_2 - 1\} \\  & \ \ s_1 + s_2 + s_3 \ \mbox{odd} \\  0 & \text{otherwise} \end{cases}.$$
Moreover,  $ N_{s_1,  s_2 } ^{s_3}=\left[\begin{matrix} (1, s_3) \\ (1, s_1 ) \ (1, s_2)\end{matrix}\right]_{ (p,p') } $
 coincides with the  fusion coefficient for the Virasoro minimal models (cf. Subsection \ref{Vir-subsect}):
 $$  L^{Vir} (d_{p,p'}, h_{p,p'} ^{1,s_1} ) \boxtimes_{\mathcal P(1)}   L^{Vir} (d_{p,p'}, h_{p,p'} ^{1,s_2} )  =  \bigoplus _{s_3 = 1} ^ {p'  -1} N_{s_1, s_2} ^{s_3} L^{Vir} (d_{p,p'}, h_{p,p'} ^{1,s_3} ).$$

 By using our realization we can  construct all  intertwining operators appearing in the fusion rules  (\ref{affine-fusion}) as follows.
  Let $\mathcal Y_1(\cdot, z)$ be a non-trivial intertwining operator of the type $$\binom{   L^{Vir} (d_{p,p'}, h_{p,p'} ^{1,s_3})}{L^{Vir} (d_{p,p'}, h_{p,p'} ^{1,s_1})  \ \ L^{Vir} (d_{p,p'}, h_{p,p'} ^{1,s_2}) } $$
 for the Virasoro vertex operator algebra $L^{Vir} (d_{p,p'}, 0)$. We can tensor it with the vertex operator map  $Y_{ \Pi ^{1/2} (0)} (\cdot, z)$ for the vertex algebra $ \Pi ^{1/2} (0)$, and obtain  the intertwining operator $\mathcal Y = \mathcal Y_1 \otimes Y_{ \Pi ^{1/2} (0)}$ of type
 \bea  \binom{   L^{Vir} (d_{p,p'}, h_{p,p'} ^{1,s_3})  \otimes    \Pi ^{1/2} (0)}{L^{Vir} (d_{p,p'}, h_{p,p'} ^{1,s_1})  \otimes \Pi ^{1/2} (0)   \ \ L^{Vir} (d_{p,p'}, h_{p,p'} ^{1,s_2})  \otimes \Pi ^{1/2} (0) }\label{operator-restrikcija-2}\eea
in the category of  $V_k(sl(2))$--modules.
Intertwining operator corresponding to the fusion rules  (\ref{affine-fusion}) can be obtained by restricting the above intertwining operators.

 \begin{proposition}
 Assume that $N_{s_1, s_2} ^{s_3} = 1$. Then there is a non-trivial intertwining operator of  type
  $$\binom{  \mathcal  L_{s_3}}{\mathcal  L_{s_1} \ \  \mathcal  L_{s_2} }, $$
realizead as a restriction of the intertwining operator (\ref{operator-restrikcija-2}).
\end{proposition}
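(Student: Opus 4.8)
The plan is to produce, from the tensor-product intertwining operator $\mathcal Y = \mathcal Y_1 \otimes Y_{\Pi^{1/2}(0)}$ of type (\ref{operator-restrikcija-2}), a non-trivial intertwining operator of affine type $\binom{\mathcal L_{s_3}}{\mathcal L_{s_1}\ \ \mathcal L_{s_2}}$ by restricting its domain and codomain along the realization of Proposition \ref{ordinary}. Recall that there $\mathcal L_{s} \cong V_k(sl(2)).(v_{1,s}\otimes e^{\frac{s-1}{2}c})$ sits inside $L^{Vir}(d_{p,p'},h_{p,p'}^{1,s})\otimes \Pi^{1/2}(0)$. First I would fix the highest-weight vectors $v_{h_i}:=v_{1,s_i}$ in the three Virasoro modules and normalize $\mathcal Y_1$ so that, writing $\Delta = h_{p,p'}^{1,s_1}+h_{p,p'}^{1,s_2}-h_{p,p'}^{1,s_3}$, we have $(v_{h_1})_{\Delta-1}v_{h_2} = C\, v_{h_3}$ with $C\neq 0$ and $(v_{h_1})_{\Delta+n}v_{h_2}=0$ for $n\ge 0$, exactly as recalled at the end of Subsection \ref{Vir-subsect}. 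On the $\Pi^{1/2}(0)$ side I would use the lattice vertex operator $Y_{\Pi^{1/2}(0)}(e^{\frac{s_1-1}{2}c},z)$ acting on $\Pi^{1/2}(0)e^{\frac{s_2-1}{2}c}$, which lands in $\Pi^{1/2}(0)e^{\frac{s_1+s_2-2}{2}c}$; note $\frac{s_1+s_2-2}{2}$ and $\frac{s_3-1}{2}$ differ by an integer precisely because $N_{s_1,s_2}^{s_3}=1$ forces $s_1+s_2+s_3$ odd, so this is consistent with a non-trivial component mapping into the $\mathcal L_{s_3}$-piece.

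The key step is then to check that $\mathcal Y$ applied to the generating vector $w_{s_1}=v_{1,s_1}\otimes e^{\frac{s_1-1}{2}c}$ produces something that lies inside the submodule $\mathcal L_{s_3}$ and is non-zero. Non-vanishing follows from the two normalizations above: the coefficient of $z$ picking out $(v_{h_1})_{\Delta-1}v_{h_2}\otimes (e^{\frac{s_1-1}{2}c})_{?}e^{\frac{s_2-1}{2}c}$ is a non-zero multiple of $v_{1,s_3}\otimes e^{\frac{s_1+s_2-2}{2}c}$, and by Lemma \ref{Li} this vector generates a $\Pi^{1/2}(0)$-submodule isomorphic to $\Pi^{1/2}_{(s_3-1)}(\lambda)$ for the appropriate $\lambda$, hence it is a cyclic vector equivalent, under the $V_k(sl(2))$-action, to $w_{s_3}$; so it lies in $\mathcal L_{s_3}$ and is non-zero there. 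Once the image of the generator lands in $\mathcal L_{s_3}$, I would invoke the intertwining property: for $a\in V_k(sl(2))$ and $v$ in the cyclic module $\mathcal L_{s_1}$, $\mathcal Y(a_n v,z)$ is expressed through $a_m$'s acting on $\mathcal Y(v,z)$, and since $\mathcal L_{s_3}$ is a $V_k(sl(2))$-submodule of $L^{Vir}(d_{p,p'},h^{1,s_3})\otimes\Pi^{1/2}(0)$, the whole of $\mathcal Y(\mathcal L_{s_1},z)\mathcal L_{s_2}$ stays inside $\mathcal L_{s_3}\{z\}$. Thus the restriction $\mathcal Y\big|_{\mathcal L_{s_1}\otimes\mathcal L_{s_2}}$ defines an intertwining operator of the desired affine type, and it is non-trivial by the computation above.

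The main obstacle I expect is the closure claim: verifying that $\mathcal Y(v,z)w$ never leaves the affine submodule $\mathcal L_{s_3}$ for all $v\in\mathcal L_{s_1}$, $w\in\mathcal L_{s_2}$, not merely for the generators. The clean way around this is a generation/coefficient argument: $\mathcal L_{s_1}$ is generated by $w_{s_1}$ under $\widehat{sl(2)}$, and using the Jacobi/commutator identity for intertwining operators one reduces $\mathcal Y(x(n)\cdots w_{s_1},z)w_{s_2}$ to a sum of terms $x(m)\big(\mathcal Y(\cdots,z)w_{s_2}'\big)$ with $w_{s_2}'\in\mathcal L_{s_2}$ obtained by acting with $\widehat{sl(2)}$; an induction on the length of the generating word then confines everything to $\mathcal L_{s_3}$, using at the base case only that $\mathcal Y(w_{s_1},z)w_{s_2}\in \mathcal L_{s_3}\{z\}$ — which itself needs the same lattice-module identification (Lemma \ref{Li}) applied to every $\Pi^{1/2}(0)$-mode, not just the leading one. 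A secondary technical point is bookkeeping of fractional $z$-powers: the $\Delta$ for the Virasoro part and the conformal-weight shift on the $\Pi^{1/2}(0)$ part must combine to the $\Delta$ dictated by the Sugawara conformal weights of $\mathcal L_{s_1},\mathcal L_{s_2},\mathcal L_{s_3}$, which I would confirm using Remark \ref{central-charge} and the formula $\overline\Phi(\omega_{sug})=\omega_{p,p'}+\omega_{\Pi(0)}$ from Theorem \ref{real-simpl-sl2}.
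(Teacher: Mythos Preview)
Your overall strategy matches the paper's: tensor the Virasoro intertwiner with the lattice vertex operator, restrict to the affine submodules $\mathcal L_{s_i}$, and verify non-triviality on the generators $w_{s_i}=v_{1,s_i}\otimes e^{\frac{s_i-1}{2}c}$. The gap is in how you place the image vector $v_{1,s_3}\otimes e^{\frac{s_1+s_2-2}{2}c}$ inside $\mathcal L_{s_3}$. Invoking Lemma~\ref{Li} does not do this: that lemma only identifies $\Pi_{(\ell+r)}(\lambda)$ with a twist of $\Pi_{(r)}(\lambda)$ as abstract $\Pi(0)$-modules; it says nothing about whether a particular vector in $L^{Vir}(d_{p,p'},h_{p,p'}^{1,s_3})\otimes\Pi^{1/2}(0)$ belongs to the affine submodule $\mathcal L_{s_3}=V_k(sl(2)).w_{s_3}$. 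An isomorphism of ambient $\Pi^{1/2}(0)$-modules does not let you conclude that two cyclic $V_k(sl(2))$-submodules coincide.

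The paper replaces this step by a one-line computation using the realization $e=e^{c}$. Since $N_{s_1,s_2}^{s_3}=1$ forces $s_1+s_2+s_3$ odd and $s_3\le s_1+s_2-1$, the integer $m=\tfrac{s_1+s_2-s_3-1}{2}\in\mathbb Z_{\ge 0}$, and because $\langle c,c\rangle=0$ one has
\[
e(-1)^{m}\,w_{s_3}\;=\;e^{mc}_{-1}\bigl(v_{1,s_3}\otimes e^{\frac{s_3-1}{2}c}\bigr)\;=\;v_{1,s_3}\otimes e^{\frac{s_1+s_2-2}{2}c}.
\]
This shows directly that the leading coefficient of $\mathcal Y(w_{s_1},z)w_{s_2}$ lies in $\mathcal L_{s_3}$. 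The paper then sidesteps your closure worry: it sets $M_{s_3}:=V_k(sl(2)).\bigl(v_{1,s_3}\otimes e^{\frac{s_1+s_2-2}{2}c}\bigr)$, so that the restricted intertwiner has target $M_{s_3}$ (the image of an intertwiner between cyclic highest-weight modules is generated by this top coefficient), observes $M_{s_3}\subseteq\mathcal L_{s_3}$ by the displayed computation, and concludes $M_{s_3}=\mathcal L_{s_3}$ by irreducibility of $\mathcal L_{s_3}$. No inductive Jacobi argument is needed. If you swap out your Lemma~\ref{Li} appeal for the $e(-1)^{m}$ identity and use irreducibility in place of the induction, your proof becomes correct and essentially identical to the paper's.
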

 \begin{proof}
Note that $ \mathcal L_{s_i} = V_k(sl(2)). w_{s_i}$, where $w_{s_i}=v_{1,s_i} \otimes e^{\frac{s_i -1}{2} c}$, $i=1,2,3$. By restricting $\mathcal Y(\cdot, z)$ on $\mathcal L_{s_1} \otimes  \mathcal L_{s_2}$, we get a non-trivial  intertwining operator  of type
  $\binom{  \mathcal M_{s_3}}{\mathcal  L_{s_1} \ \  \mathcal  L_{s_2} }, $
   where $M_{s_3} = V_{k} (sl(2)). v_{s_3} \otimes e^{ \frac{s_1 + s_2-2}{2} c}$.
   Note that $ s_1 + s_2 - s_3 -1 \in  2 {\Z}_{\ge 0}$. Then
   $$ e(-1) ^{\frac{s_1 + s_2 - s_3-1}{2} }   w_{s_3} = e ^{\frac{s_1 + s_2 - s_3-1}{2} c }_{-1}  \left( v_{s_3} \otimes e^{ \frac{s_3-1}{2} c} \right)  = v_{s_3} \otimes e^{ \frac{s_1 + s_2-2}{2} c}.$$
    This shows that $M_{s_3} \subseteq \mathcal  L_{s_3}$, and since $ \mathcal  L_{s_3}$ is irreducible, we have that $\mathcal  L_{s_3} =M_{s_3}$.
   %Using similar calculation as in the proof of Proposition  \ref{ordinary} one gets:
 %  $$ f(1) v_{1,s}  \otimes e^{\frac{r-1}{2} c} =   (r-s) \nu v_{1,s}  \otimes e^{\frac{r-1}{2} c-c} \quad (\nu \ne 0). $$
%   Note that $ s_1 + s_2 - s_3  \in  1+ 2 {\Z}_{\ge 0}$.
%By acting with the operator $f(1)$ on  $v_{s_3} \otimes e^{ \frac{s_1 + s_2-2}{2} c}$ we get  $w_{s_3} \in M_{s_3}$, so $\mathcal L_{s_3} \subset M_{s_3}$.
Thus, we have  constructed a non-trivial intertwining operator of type  $\binom{  \mathcal  L_{s_3}}{\mathcal  L_{s_1} \ \  \mathcal  L_{s_2} }$.
The proof follows.
% A more detailed analysis will appear in our forthcoming papers.
 \end{proof}

\section{Explicit realization  of  relaxed highest weight $V_k(sl(2))$--modules}

 \label{relaxed-realization}

 We say that a ${\Z}_{\ge 0}$--graded $V^ k(sl(2))$--module  $M= \bigoplus_{m =0} ^{\infty} M(m)$ is a relaxed highest weight module if the following conditions hold:
 \begin{itemize}
 \item Each  graded component $M(m)$  is an eigespace for $L_{sug}(0)$;
 \item $M= V^{k} (sl(2)). M(0)$;
 \item $M(0)$ is an  irreducible weight $sl(2)$--module which is neither highest nor  lowest weight $sl(2)$--module.
 \end{itemize}
 The subspace $M(0)$ is usually called the top component of $M(0)$ (although it has lowest conformal weight with respect to $L_{sug}(0)$).

 By  using the  classification of irreducible   $V_k(sl(2))$--modules from \cite{AdM-MRL} (see also \cite{RW2}),  we conclude that   any  irreducible  $\Z_{\ge 0}$--graded $V_k(sl(2))$--module belongs to one of the following series:
 \begin{itemize}
 \item[(1)]   The ordinary modules   $\mathcal L_s$   (cf. Proposition \ref{ordinary}) with  lowest conformal  weight
 $ h_{p,p'} ^{1,s} +  \frac{ s-1}{2}$.
% =   p^2 \frac{s^2-1}{4 p' p } = \frac{ (p s -p' ) ^2 - p^2  - (p' )^2 + 2 s p p'  }{4 p' p } = \frac{  ( p s - p') ^2 - (p-p') ^2 + 2 (s-1) p p'}{4 p' p }  = h_{p,p'} ^{1,s} +  2 (s-1)  $$
 \item[(2)]  The  ${\Z}_{\ge 0}$ graded $V_{k}(sl(2))$--modules $D_{r,s} ^{\pm}$, $1 \le r \le p-1$, $1 \le s\le p'-1$, where
\item  $D_{r,s} ^+$ is an  irreducible $\Z_{\ge 0}$--graded $V_k(sl(2))$--module such that  $D_{r,s} ^+ (0)$  is an  irreducible highest weight $sl(2)$--module with highest weight $\mu_{r,s} =  (s-1 - (k+2) r) \omega_1$, where $\omega_1$ is the fundamental weight for $sl(2)$.
\item $D_{r,s} ^-$  is an  irreducible $\Z_{\ge 0}$--graded $V_k(sl(2))$--module such  that   $D_{r,s} ^- (0)$  is an  irreducible  lowest  weight $sl(2)$--module with lowest weight $- \mu_{r,s}$.
  %The analysis from  Remark \ref{indecomposable-1} gives that  all  irreducible  modules of this type appear as submodules or quotients of the indecomposable modules  $\mathcal E_{r,s} ^{\lambda_{r,s} ^{\pm} }$.
 \item[(3)] Relaxed   highest weight modules $M = \bigoplus_{m = 0} ^ {\infty}  M(m)$, such that the top component $M(0)$ has conformal weight   $h_{p,p'} ^{r,s} + k /4$.
  % One sees that $M (0) \cong \mathcal E_{r,s} ^{\lambda} (0)$ for certain    $\lambda\ne {\lambda}_{r,s} ^{\pm}$, and therefore   $M$  is  isomorphic to a  certain quotient of $\mathcal E_{r,s} ^{\lambda}$.
   \end{itemize}
 In this section we construct a family of relaxed highest weight modules for $V_k(sl(2))$. These  modules  also appeared in \cite{AFR}, \cite{E}, \cite{FST},  \cite{CR-2013}, \cite{RW2}, \cite{RSW}, \cite{S-2017}. In this section  we shall explicitly construct these modules and see from  the realization that their  characters are given   by  the Creutzig-Ridout character formulas \cite{CR-2013} (see also \cite{KR}).
 \subsection{Realization of relaxed $V_k(sl(2))$--modules}
%In this section we construct a family of relaxed highest weight modules for $V_k(sl(2))$.
%In the paper \cite{AdM-MRL}, it was showed that the admissible affine vertex algebra $V_k(sl(2))$ admits irreducible relaxed highest weight modules.
%These  modules  also appeared in \cite{AFR}, \cite{E}, \cite{FST},  \cite{CR-2013}, \cite{RW2}, \cite{RSW}, \cite{S-2017}.
%The paper \cite{CR-2013} discussed the character formulas for irreducible relaxed highest weight modules.
%In this section  we shall explicitly construct these modules and see from  the realization that their  characters are given   by  the Creutzig-Ridout character formulas \cite{CR-2013}.
%We should also say that a similar realization of  irreducible relaxed highest weight modules were presented in \cite[Section 9]{A-2007} in the case of critical level  for $A_1^{(1)}$ and %in \cite[Corollary 7]{A-2014} in the case of affine Lie algebra $A_2 ^{(1)}$ at level $k =-3/2$.

  For every $\lambda \in {\C}$ and $r, s \in \Z$, $ 0 < r < p$, $0 < s < p'$ we define  the $L^{Vir}(d_{p,p'},0 ) \otimes \Pi (0)$--module
$$
 \mathcal{E}_{r,s} ^{\lambda} =
L^{Vir}(d_{p,p'},h_{p,p'} ^{r,s}) \otimes \Pi_{-1} (\lambda)  =L^{Vir} (d_{p,p'}, h_{p,p'} ^{r,s}) \otimes \Pi(0). e^{- \mu + \lambda \frac{2}{k} (\mu - \nu)}.$$

  Let $\ell  \in {\Z}$ and $\pi_{\ell}$ be the (spectral flow) automorphism of $V_k(sl(2))$  defined by
 $$ \pi_{\ell}  (e(n) ) = e(n +  \ell), \quad \pi_{\ell}  (f(n) ) = f (n - \ell), \quad \pi_{\ell}  (h(n)) = h(n)  + \ell k \delta_{n,0}. $$
 By using the realization of $V_k(sl(2))$ one can  see that the spectral-flow automorphism $\pi_{\ell}$ can be realized  as the lattice element $e ^{\ell \mu}$ acting on $\Pi(0)$--modules:

\begin{proposition}  \label{spectral-flow} We have: $ \pi_{\ell}  ( \mathcal{E}_{r,s} ^{\lambda} ) = L^{Vir}(d_{p,p'},h_{p,p'} ^{r,s}) \otimes \Pi_{ \ell -1} (\lambda). $ \end{proposition}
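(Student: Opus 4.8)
The plan is to compute directly how the spectral-flow automorphism $\pi_\ell$ acts through the realization $\overline\Phi$ of Theorem \ref{real-simpl-sl2}, and identify this action with the operator $e^{\ell\mu}$ on the $\Pi(0)$--factor. Concretely, recall from Proposition \ref{prop-sl2-1} that under $\overline\Phi$ we have $e\mapsto e^{\frac{2}{k}(\mu-\nu)}=e^{c}$, $h\mapsto 2\mu(-1)$, and $f\mapsto\bigl[(k+2)\omega-\nu(-1)^2-(k+1)\nu(-2)\bigr]e^{-c}$. First I would use Lemma \ref{Li}, which gives an isomorphism $\Pi_{(\ell+r)}(\lambda)\cong(\Pi_{(r)}(\lambda),Y_{\Pi_{(r)}(\lambda)}(\Delta(\ell\mu,z)\cdot,z))$ coming from \cite[Proposition 3.4]{Li}; tensoring with the identity on the Virasoro factor $L^{Vir}(d_{p,p'},h_{p,p'}^{r,s})$ this yields an isomorphism of vector spaces $\mathcal{E}_{r,s}^\lambda=L^{Vir}(d_{p,p'},h_{p,p'}^{r,s})\otimes\Pi_{(-1)}(\lambda)\cong L^{Vir}(d_{p,p'},h_{p,p'}^{r,s})\otimes\Pi_{(\ell-1)}(\lambda)$ in which the $V_k(sl(2))$--action on the right-hand side is precomposed with $\Delta(\ell\mu,z)$ on the $\Pi$--factor.

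The core of the argument is then the identity, for each generator $x\in\{e,h,f\}$,
$$ Y\bigl(\Delta(\ell\mu,z)\,\overline\Phi(x),z\bigr) \;=\; Y\bigl(\overline\Phi(\pi_\ell(x)),z\bigr), $$
viewed as operators on the relevant $\Pi(0)$-- (and Virasoro--) modules. I would verify this on the three generators separately. Since $\langle\mu,\mu\rangle=k/2$ and $\langle\mu,\nu\rangle=0$, one has $\mu_0\, e^{c}=\langle\mu,c\rangle e^{c}=\langle\mu,\tfrac{2}{k}(\mu-\nu)\rangle e^{c}=e^{c}$, so $\Delta(\ell\mu,z)$ multiplies $e^{\tfrac2k(\mu-\nu)}$ by $z^{\ell}$ and the exponential-of-positive-modes factor contributes nothing new on $e^c$; this reproduces exactly $\pi_\ell(e(n))=e(n+\ell)$. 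For $h\mapsto 2\mu(-1)$, only the $\mu_0$-shift in $\Delta(\ell\mu,z)$ is relevant, and $\mu_0\cdot 2\mu(-1){\bf 1}=0$ while the constant term from $\exp$ produces the expected shift $h(n)\mapsto h(n)+\ell k\delta_{n,0}$ because $\langle\ell\mu,h\rangle=\ell k$; I would spell this out using the explicit form of $\Delta$. For $f$, which has the factor $e^{-c}$, one gets $\mu_0 e^{-c}=-e^{-c}$ so $\Delta(\ell\mu,z)$ multiplies by $z^{-\ell}$, matching $\pi_\ell(f(n))=f(n-\ell)$; one must also check that the Virasoro part $(k+2)\omega$ and the $\nu$--part are unaffected (indeed $\mu_0$ kills them and the positive $\mu_n$, $n\ge1$, act as zero on states built from $\nu$ and $\gamma$), so no spurious correction terms appear.

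Having matched all three generators, the conclusion is immediate: the vector-space isomorphism from Lemma \ref{Li} tensored with the Virasoro identity intertwines the original $V_k(sl(2))$--action on $\pi_\ell(\mathcal{E}_{r,s}^\lambda)$ with the $V_k(sl(2))$--action on $L^{Vir}(d_{p,p'},h_{p,p'}^{r,s})\otimes\Pi_{(\ell-1)}(\lambda)$, which is exactly the assertion $\pi_\ell(\mathcal{E}_{r,s}^\lambda)\cong L^{Vir}(d_{p,p'},h_{p,p'}^{r,s})\otimes\Pi_{(\ell-1)}(\lambda)$. The main obstacle I anticipate is purely bookkeeping: being careful that $\Delta(\ell\mu,z)$ acts only on the $\Pi(0)$--tensor factor (the Virasoro generator $\omega$ lives in $M_\gamma(\tfrac{2}{k+2},0)$ and commutes with $\mu$), and that the normalization conventions for $\Delta$ in Lemma \ref{Li} agree with those needed so that the powers of $z$ come out as $+\ell$, $0$, $-\ell$ on $e$, $h$, $f$ respectively — i.e. checking a sign/orientation convention rather than any real difficulty. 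No delicate analytic or module-theoretic input is needed beyond Lemma \ref{Li} and the explicit formulas already recorded.
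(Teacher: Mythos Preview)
Your proposal is correct and follows essentially the same route as the paper's proof. The paper is more compressed: it cites \cite[Proposition 2.1]{A-2007} for the general fact that the spectral-flow twist $\pi_\ell(M)$ is realized by $Y_M(\Delta(\tfrac{\ell h}{2},z)\cdot,z)$, then observes that under the realization $h/2=\mu$, so this is exactly $\Delta(\ell\mu,z)$ acting on the $\Pi(0)$--factor; Lemma~\ref{Li} then finishes. You replace that citation by an explicit check on the generators $e,h,f$, which is slightly more elementary but amounts to re-proving the special case of that proposition needed here. One small wording slip: for $h$ the shift $h(n)\mapsto h(n)+\ell k\,\delta_{n,0}$ comes from the $\mu_1$--term of the exponential in $\Delta(\ell\mu,z)$ (since $\ell\mu_1\cdot 2\mu(-1){\bf 1}=\ell k\,{\bf 1}$), not from $\mu_0$; but as you say, once spelled out this is routine.
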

\begin{proof}
Using \cite[Proposition 2.1]{A-2007} we get that if $(M, Y_M(\cdot, z) )$ is a $V_k(sl(2))$--module then
$$(\pi_{\ell} (M), Y_{\pi_{\ell} (M)) } (\cdot, z) ) :=  (M, Y_M ( \Delta (\tfrac{\ell h}{2},z) \cdot, z)  )$$
Using Lemma \ref{Li} we get
$$ (\Pi_{\ell -1} (\lambda) , Y_{ \Pi_{\ell-1} } (\cdot, z) ) =   ( \Pi_{-1} (\lambda)  , Y_{ \Pi_{-1} (\lambda)  } ( \Delta (\ell \mu ,z) \cdot, z)  ) $$
which implies  $\pi_{\ell} ( \mathcal{E}_{r,s} ^{\lambda} ) = L^{Vir}(d_{p,p'},h_{p,p'} ^{r,s}) \otimes \Pi_{\ell -1} (\lambda)$. The proof follows.
\end{proof}

 Let
 $$ E_{r,s} ^{\lambda} = v_{r,s} \otimes e^{- \mu + \lambda \frac{2}{k} (\mu - \nu)} =v_{r,s} \otimes  e ^{\beta - \delta /2 + \lambda (\alpha + \beta)}. $$
 Then $E_{r,s} ^{\lambda} $ is a primary vector with conformal weight $ k /4 + h_{p,p'} ^{r,s}$, i.e.
 \bea L_{sug} (n) E_{r,s} ^{\lambda} =  (k/ 4 + h_{p,p'} ^{r,s}) \delta_{n,0}  E_{r,s} ^{\lambda}    , \quad (n \ge 0). \label{act-L} \eea
 The $sl(2)$ action on these vectors is as follows:
 \bea
 e(0) E_{r,s} ^{\lambda}  &=& E_{r,s} ^{\lambda +1}, \label{act-e} \\
 h(0)  E_{r,s} ^{\lambda} &=& (-k + 2 \lambda )E_{r,s} ^{\lambda}, \label{act-h} \\
 f(0) E_{r,s} ^{\lambda} &=& \left( (k+2)  h_{p,p'} ^{r,s} - \lambda ^2 + \lambda (k+1) ) \right)   E_{r,s} ^{\lambda-1}  \nonumber \\
  &=& \left( \frac{ (s p - r p' ) ^2 }{4 p ^2} - (\lambda -\frac{p'- p}{2 p} ) ^2 \right)   E_{r,s} ^{\lambda-1} . \label{act-f}
 \eea
% Define:
 %$$ \mathcal{E}_{r,s} ^{\lambda} = L^{Vir} (c_{p,p'}, h_{p,p'} ^{r,s}) \otimes \Pi(0). e^{- \mu + \lambda \frac{2}{k} (\mu - \nu)}.$$

 \begin{remark} \label{indecomposable-1}
 Note that
 $ f(0)  E_{r,s} ^{\lambda} = 0 $ iff $ \lambda = {\lambda}_{r,s} ^{\pm}  $ where $\lambda_{r,s} ^{\pm} = \frac{p'-p}{2p} \pm \frac{s p -r  p'}{2p}. $
It is also important to notice that
$ \lambda_{r,s} ^{+} = \frac{ s- 1}{2} - \frac{r-1}{2}  (k+2)$, $\lambda_{r,s} ^{-} = \lambda_{ p-r, p'-s} ^+$.
%$$\ \lambda_{r,s} ^{-} = -\frac{ s+ 1}{2} + \frac{r+ 1}{2}  \frac{p'}{p} =  \frac{  p ' - s- 1}{2} - \frac{p- r-1}{2}  \frac{p'}{p} $$
 %

If  $\lambda = \lambda_{r,s} ^{+}   $,  then
$  \mathcal{E}_{r,s} ^{\lambda } $ is an indecomposable $\Z_{\ge 0}$--graded $V_k(sl(2))$--module  which appears in the non-split extension
\bea \label{non-split} 0 \rightarrow  D _{p-r,p'-s}   ^-    \rightarrow \mathcal{E}_{r,s} ^{\lambda } \rightarrow  D_{r,s} ^+  \rightarrow 0. \eea
where $D_{r,s} ^{\pm}$ are irreducible $V_k(sl(2))$--modules described above.
 This extension was also constructed in  \cite{CR-2013} (see \cite[Section 4]{CR-2013} and their formula (4.3)).
In Section \ref{logarithmic}, we shall see that indecomposable modules $  \mathcal{E}_{r,s} ^{\lambda } $  appear in the construction of logarithmic modules.
 \end{remark}

 Assume that $\lambda \notin  \lambda_{r,s} ^{\pm} + {\Z} $.

    \begin{theorem}
 We have:
 \item[(1)] $ \mathcal{E}_{r,s} ^{\lambda}$ is $\Z_{\ge 0}$--graded $V_k(sl(2))$--module.
 \item[(2)] The  top component  is
 $ \mathcal{E}_{r,s} ^{\lambda} (0) =\mbox{span}_{\C} \{ E_{r,s} ^{\lambda + j}  \ \vert \ j \in {\Z} \} $
and it  has conformal weight $k/4 + h_{p,p'} ^{r,s}$. If $\lambda \notin  (\lambda_{r,s} ^{\pm} + {\Z}) $, then  $ \mathcal{E}_{r,s} ^{\lambda} (0)$ is an irreducible $sl(2)$--module.
 \item[(3)]  The character of $ \mathcal{E}_{r,s} ^{\lambda}$ is given by  $$ \mbox{ch}[\mathcal{E}_{r,s} ^{\lambda} ]  (q,z) =  z ^{- k + 2 \lambda}   \chi_{r,s} (q)   \frac{ \delta (z ^2)} {\eta (\tau) ^2} , $$
     where $\chi_{r,s}(q) = \mbox{ch}[ L^{Vir} (d_{p,p'}, h_{p,p'} ^{r,s}  ) ]  (q) $, $\delta(z^2) = \sum_{\ell  \in {\Z} } z ^{2 \ell}$, $\eta(\tau) = q^{1/24} \prod_{n \ge 1} (1-q^{n})$.
 \end{theorem}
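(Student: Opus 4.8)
The plan is to verify the three assertions directly from the realization in Theorem~\ref{real-simpl-sl2}, using the explicit action formulas (\ref{act-L})--(\ref{act-f}) together with the structure of $\Pi_{(-1)}(\lambda)$ developed in Section~\ref{pi0}. First, for part (1), recall that $L^{Vir}(d_{p,p'},0)\otimes\Pi(0)$ is a vertex algebra into which $V_k(sl(2))$ embeds by Theorem~\ref{real-simpl-sl2}, and $L^{Vir}(d_{p,p'},h_{p,p'}^{r,s})\otimes\Pi_{(-1)}(\lambda)$ is a module for that vertex algebra (tensor product of a module for each factor), hence a $V_k(sl(2))$--module by restriction. To see it is $\Z_{\ge0}$--graded with respect to $L_{sug}(0)$, use the expression (\ref{sug-cd}): $\omega_{sug}=\omega_{p,p'}+\omega_{\Pi(0)}$, so $L_{sug}(0)=L^{Vir}(0)\otimes\mathrm{Id}+\mathrm{Id}\otimes\overline L(0)$. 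The first summand acts with eigenvalues in $h_{p,p'}^{r,s}+\Z_{\ge0}$ on $L^{Vir}(d_{p,p'},h_{p,p'}^{r,s})$, and by Proposition~\ref{char-pi0}(1) the module $\Pi_{(-1)}(\lambda)$ is $\Z_{\ge0}$--graded for $\overline L(0)$ with lowest eigenvalue $k/4$. Therefore $L_{sug}(0)$ acts with eigenvalues in $h_{p,p'}^{r,s}+k/4+\Z_{\ge0}$ and finite-dimensional eigenspaces, giving the $\Z_{\ge0}$--grading $\mathcal{E}_{r,s}^{\lambda}=\bigoplus_{m\ge0}\mathcal{E}_{r,s}^{\lambda}(m)$.

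For part (2), the top component $\mathcal{E}_{r,s}^{\lambda}(0)$ is the lowest $L_{sug}(0)$--eigenspace, which by the previous paragraph is $v_{r,s}\otimes\Pi_{(-1)}(\lambda)(0)$; and $\Pi_{(-1)}(\lambda)(0)=\mathrm{span}_{\C}\{e^{-\mu+(\lambda+j)c}\mid j\in\Z\}$ from the proof of Proposition~\ref{char-pi0}(1). Hence $\mathcal{E}_{r,s}^{\lambda}(0)=\mathrm{span}_{\C}\{E_{r,s}^{\lambda+j}\mid j\in\Z\}$ and its conformal weight is $h_{p,p'}^{r,s}+k/4$ by (\ref{act-L}). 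For irreducibility as an $sl(2)$--module when $\lambda\notin\lambda_{r,s}^{\pm}+\Z$: by (\ref{act-e}) the operator $e(0)$ raises $j$ and is never zero, so it acts bijectively on the top component; dually, one checks $f(0)$ lowers $j$ and, by the factored form of the eigenvalue in (\ref{act-f}), $f(0)E_{r,s}^{\lambda+j}=0$ only when $\lambda+j\in\{\lambda_{r,s}^{+},\lambda_{r,s}^{-}\}$ (using Remark~\ref{indecomposable-1}), which is excluded by hypothesis. Thus from any $E_{r,s}^{\lambda+j}$ one reaches every other basis vector by applying $e(0)$ and $f(0)$, so any nonzero $sl(2)$--submodule is all of $\mathcal{E}_{r,s}^{\lambda}(0)$; combined with $h(0)$ acting with distinct eigenvalues $-k+2(\lambda+j)$ this gives irreducibility.

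For part (3), compute the character directly from the tensor product decomposition: since $L_{sug}(0)=L^{Vir}(0)+\overline L(0)$ and, by (\ref{def-h-2}) together with $h=\tfrac{k}{2}c+d$, the $sl(2)$--Cartan operator $h(0)$ acts as $\mathrm{Id}\otimes h(0)$ on the $\Pi(0)$--factor, the trace factorizes:
\begin{align*}
\mathrm{ch}[\mathcal{E}_{r,s}^{\lambda}](q,z)
&=\Big(\mathrm{Tr}_{L^{Vir}(d_{p,p'},h_{p,p'}^{r,s})}q^{L^{Vir}(0)-d_{p,p'}/24}\Big)\cdot\Big(\mathrm{Tr}_{\Pi_{(-1)}(\lambda)}q^{\overline L(0)-\overline c/24}z^{h(0)}\Big)\\
&=\chi_{r,s}(q)\cdot\mathrm{ch}[\Pi_{(-1)}(\lambda)](q,z),
\end{align*}
where I have used $c_{sug}=d_{p,p'}+\overline c$ from Remark~\ref{central-charge} so the $q$--powers from the two central-charge shifts combine correctly. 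Substituting the character of $\Pi_{(-1)}(\lambda)$ from Proposition~\ref{char-pi0}(1) yields
$$\mathrm{ch}[\mathcal{E}_{r,s}^{\lambda}](q,z)=z^{-k+2\lambda}\,\chi_{r,s}(q)\,\frac{\delta(z^2)}{\eta(\tau)^2}.$$
I expect the main obstacle to be the irreducibility claim in part (2): one must rule out that $e(0)$ or $f(0)$ could fail to generate everything, i.e.\ carefully track the zero-locus of the $f(0)$--eigenvalue and invoke that $e(0)$ acts injectively (which ultimately rests on Lemma~\ref{injektivnost}, the injectivity of $e^c_0$ on $\Pi_{(-1)}(\lambda)$, and on $e(0)=(e^{\frac{2}{k}(\mu-\nu)})_0$ acting nontrivially); the other two parts are essentially bookkeeping with the gradings and traces.
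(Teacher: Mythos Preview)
Your proposal is correct and follows essentially the same approach as the paper's own proof---invoking (\ref{act-L})--(\ref{act-f}) for parts (1) and (2), and factorizing the trace via Remark~\ref{central-charge} and Proposition~\ref{char-pi0} for part (3)---only with the details spelled out more fully. One small correction: the $L_{sug}(0)$--eigenspaces themselves are \emph{not} finite-dimensional (the top component already has infinite dimension); it is the joint $(L_{sug}(0),h(0))$--weight spaces that are finite-dimensional, which is what is needed for the character in part (3) to make sense.
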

 \begin{proof}
 By using (\ref{act-L}) we see that $ \mathcal{E}_{r,s} ^{\lambda}$ is  ${\Z}_{\ge 0}$--graded, and by (\ref{act-e})-(\ref{act-f}) we get that the top component   $ \mathcal{E}_{r,s} ^{\lambda} (0)$  is an  irreducible $sl(2)$--module. This proves assertions (1) and (2).  Recall that  $ c_{sug} = \frac{3k}{k+2} = d_{p,p'} +  2  + 6 k$ (see Remark \ref{central-charge}). Using Proposition \ref{char-pi0} our explicit realization gives the following character formula:
 \bea
\mbox{ch}[\mathcal{E}_{r,s} ^{\lambda} ]  (q,z) &=& \mbox{Tr} _{ \mathcal{E}_{r,s} ^{\lambda}  }  q^{ L_{sug} (0) -c_{sug} /24 } z^{ h(0)}  \nonumber \\
&=&   \mbox{Tr} _{ L^{Vir} (d_{p,p'}, h_{p,p'} ^{r,s}  ) }  q^{ L  (0) -d_{p,p'} /24 }   \mbox{ch} _{ \Pi_{(-1)} (\lambda)} (q,z) \nonumber  \\
& =&    \mbox{ch}[ L^{Vir} (d_{p,p'}, h_{p,p'} ^{r,s}  ) ]  (q) \cdot \frac{  z ^{- k + 2 \lambda} \delta (z ^2) }{\eta (\tau) ^2}. \nonumber
 \eea
 The proof follows.
 \end{proof}

% By  using the  classification of irreducible   $V_k(sl(2))$--modules from \cite{AdM-MRL} (see also \cite{RW2}),  we conclude that   any  irreducible  $\Z_{\ge 0}$--graded $V_k(sl(2))$--module belongs to one of the following series:
 %\begin{itemize}
 %\item The ordinary modules   $\mathcal L_s$   (cf. Proposition \ref{ordinary}) with  lowest conformal  weight
 %$ h_{p,p'} ^{1,s} +  \frac{ s-1}{2}$.
% =   p^2 \frac{s^2-1}{4 p' p } = \frac{ (p s -p' ) ^2 - p^2  - (p' )^2 + 2 s p p'  }{4 p' p } = \frac{  ( p s - p') ^2 - (p-p') ^2 + 2 (s-1) p p'}{4 p' p }  = h_{p,p'} ^{1,s} +  2 (s-1)  $$
 %\item  The  ${\Z}_{\ge 0}$ graded $V_{k}(sl(2))$--modules whose top components are either highest or lowest weight $sl(2)$--modules.  The analysis from  Remark \ref{indecomposable-1} gives that  all  irreducible  modules of this type appear as submodules or %quotients of the indecomposable modules  $\mathcal E_{r,s} ^{\lambda_{r,s} ^{\pm} }$.
 %\item Relaxed   highest weight modules $M = \bigoplus_{m = 0} ^ {\infty}  M(m)$, such that the top component $M(0)$ has conformal weight   $h_{p,p'} ^{r,s} + k /4$.  One sees that $M (0) \cong \mathcal E_{r,s} ^{\lambda} (0)$ for certain    $\lambda\ne %{\lambda}_{r,s} ^{\pm}$, and therefore   $M$  is  isomorphic to a  certain quotient of $\mathcal E_{r,s} ^{\lambda}$.\end{itemize}

 \subsection{Irreducibility of relaxed $V_k(sl(2))$--modules}
We will now discuss the irreducibility of relaxed  $V_k(sl(2))$--modules $\mathcal{E}_{r,s} ^{\lambda}$. We shall present a proof of irreducibility in generic cases (cf. Proposition  \ref{maximal}) which uses our realization and the representation theory of the vertex operator algebra $V_k(sl(2))$.
 \begin{lemma} \label{beskonacne}
 Assume that $M=\bigoplus_{n \in {\Z}_{\ge 0} } M(n)$ is an irreducible  $\Z_{\ge 0}$--graded $V_k(sl(2))$--module such that   $M(0)$ is  an irreducible, infinite-dimensional weight $sl(2)$--module.  Then $M$  is isomorphic to a subquotient of  $ \mathcal{E}_{r,s} ^{\lambda}$ for  certain $1 \le r \le p'-1$, $1 \le s \le p-1$,  $\lambda \in {\C}$ and
 $$ L_{sug} (0) \equiv (h_{p,p'} ^{r,s} + k/4) \mbox{Id} \quad  \mbox{on} \ M(0). $$
 \end{lemma}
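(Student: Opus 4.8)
The statement is a classification result: any irreducible $\Z_{\ge 0}$--graded $V_k(sl(2))$--module whose top component is an infinite-dimensional irreducible weight module must embed (as a subquotient) into one of the relaxed modules $\mathcal{E}_{r,s}^\lambda$. The natural tool is Zhu's algebra $A(V_k(sl(2)))$, which controls the possible top components $M(0)$. The plan is to combine the known description of $A(V_k(sl(2)))$ and its simple modules from \cite{AdM-MRL} with the explicit realization $\overline\Phi : V_k(sl(2)) \to L^{Vir}(d_{p,p'},0)\otimes \Pi(0)$ of Theorem \ref{real-simpl-sl2}.

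\medskip

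\textbf{Step 1: Identify the top component via Zhu's algebra.} By the general theory of Zhu's algebra, $M(0)$ is an irreducible $A(V_k(sl(2)))$--module. Since $V_k(sl(2))$ is the simple admissible quotient, $A(V_k(sl(2)))$ is a proper quotient of $A(V^k(sl(2))) \cong U(sl(2))$, and its irreducible modules were classified in \cite{AdM-MRL}. Among these, the infinite-dimensional irreducible weight modules are precisely those whose $L_{sug}(0)$--eigenvalue (given by the image of the Casimir/Sugawara element in $A(V_k(sl(2)))$) equals $h_{p,p'}^{r,s} + k/4$ for some $1\le r\le p-1$, $1\le s\le p'-1$ — this is exactly the constraint coming from the relation imposed by the singular vector $\Omega_k^{sl(2)}$ passing to $A(V_k(sl(2)))$. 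So the first thing I would do is recall from \cite{AdM-MRL} that $M(0)$, as an $sl(2)$--module, is (a possibly twisted) dense weight module with this Casimir eigenvalue, and $h(0)$ acts with a fixed eigenvalue-set congruent mod $\Z$ to some $-k+2\lambda$.

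\medskip

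\textbf{Step 2: Match with the realized modules.} Using the computations (\ref{act-L})--(\ref{act-f}), the top component $\mathcal{E}_{r,s}^\lambda(0) = \mathrm{span}_\C\{E_{r,s}^{\lambda+j}\mid j\in\Z\}$ realizes, for varying $\lambda$, every irreducible dense weight $sl(2)$--module with Casimir eigenvalue $(k+2)h_{p,p'}^{r,s}$-shifted appropriately and $h(0)$--spectrum $-k+2\lambda+2\Z$; when $\lambda\in\lambda_{r,s}^\pm+\Z$ one instead gets the indecomposable (\ref{non-split}) whose composition factors are the $D^\pm$'s. Thus for the given $M$, there exist $r,s$ and $\lambda$ so that $M(0)$ is isomorphic, as an $sl(2)$--module, to a subquotient of $\mathcal{E}_{r,s}^\lambda(0)$; and by Step 1 the Sugawara weight is forced to be $h_{p,p'}^{r,s}+k/4$.

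\medskip

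\textbf{Step 3: Pass from top components to modules.} Since $\mathcal{E}_{r,s}^\lambda$ is a genuine $\Z_{\ge0}$--graded $V_k(sl(2))$--module generated by its top component (the realization gives $V_k(sl(2)).\mathcal{E}_{r,s}^\lambda(0)=\mathcal{E}_{r,s}^\lambda$), and $M$ is irreducible and generated by $M(0)$, I would invoke the standard Zhu-theory dictionary: the generalized Verma-type module $\widehat{M}(M(0))$ induced from the $A(V_k(sl(2)))$--module $M(0)$ surjects onto both $M$ and onto the submodule of $\mathcal{E}_{r,s}^\lambda$ generated by the corresponding top piece; since $M$ is irreducible, it is the unique irreducible quotient of $\widehat{M}(M(0))$, hence a subquotient of $\mathcal{E}_{r,s}^\lambda$. (Alternatively: map the universal affine module onto $M$ and onto $\mathcal{E}_{r,s}^\lambda$ and compare, using that $M$ is determined by its top component among graded modules with irreducible top.)

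\medskip

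\textbf{Main obstacle.} The delicate point is Step 1 — pinning down exactly \emph{which} infinite-dimensional dense weight modules occur as irreducible $A(V_k(sl(2)))$--modules and that their Casimir eigenvalues are exactly $(k+2)h_{p,p'}^{r,s}$, i.e. that the parameter ranges $1\le r\le p-1$, $1\le s\le p'-1$ suffice and are forced. This is precisely the content of the classification in \cite{AdM-MRL} (and \cite{RW2}), so I would cite it rather than reprove it; the only work left is bookkeeping to align their parametrization of Casimir eigenvalues with the quantities $h_{p,p'}^{r,s}+k/4$ and $\lambda$ appearing in the present realization, which is routine given (\ref{act-f}).
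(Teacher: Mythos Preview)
Your proposal is correct and follows essentially the same approach as the paper: both arguments invoke the classification of irreducible $\Z_{\ge 0}$--graded $V_k(sl(2))$--modules from \cite{AdM-MRL} (equivalently, the description of $A(V_k(sl(2)))$--modules) to pin down $M(0)$, and then observe that every such top component already appears inside some $\mathcal{E}_{r,s}^{\lambda}(0)$, with the $D_{r,s}^{\pm}$ cases handled via the non-split extension of Remark~\ref{indecomposable-1}. The only difference is presentational --- the paper splits explicitly into the highest/lowest weight cases versus the relaxed (dense) case and cites the classification list directly, whereas you phrase the same trichotomy uniformly through Zhu's algebra and the induced module $\widehat{M}(M(0))$; the content is the same.
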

 \begin{proof}
 If  $M(0)$ is an irreducible highest (resp. lowest)  weight module for $sl(2)$, then the classification of irreducible $V_k(sl(2))$--modules gives that $M \cong D_{r,s} ^+$ (resp. $M \cong D_{p-r,p'-s} ^-.$).  By Remark   \ref{indecomposable-1}, $M$ can be realized as a submodule or a quotient of the indecomposable module $\mathcal E_{r,s} ^{\lambda}$. If $M$ is an irreducible relaxed $V_k(sl(2))$--module, then $M(0) \cong \mathcal E_{r,s} ^{\lambda} (0)$ for certain $r,s, \lambda$, and therefore $M$ is isomorphic to a (quotient) of  $E_{r,s} ^{\lambda}$. The proof follows.
 \end{proof}

\begin{remark} \label{ired-comment} Modules $ \mathcal{E}_{r,s} ^{\lambda}$    are irreducible for   $\lambda \notin \lambda_{r,s} ^{\pm} + {\Z} $. This follows from the fact that they have the same characters as irreducible quotients of relaxed Verma modules presented by T. Creutzig and D. Ridout in  \cite{CR-2013}.  We should mention that a new proof of irreducibility of a large family of relaxed highest weight modules  is presented in new paper   \cite{KR} using Mathieu' s coherent families \footnote[2]{Talk presented by K. Kawasetsu at the conference \emph{Affine, vertex and W-algebras, Rome, December 11-15, 2017}}.
 K. Sato in \cite{S-2017} presented a proof of irreducibility of certain typical modules for the $N=2$ superconformal algebra   which are related to  the relaxed $\widehat{sl(2)}$--modules via the  anti Kazama--Suzuki mapping \cite{A-1998}, \cite{FST}.

%In the present paper we will not use  the  general irreducibility result.  Instead of that, we will see how the irreducibility can be proved  using  Lemma  \ref{beskonacne} in some special %cases.

 \end{remark}

\begin{proposition} \label{maximal}  Let $r_{0}, s_{0}$ such that $1 \le r_0  \le p'-1, 1 \le s_0 \le p-1$ and  $\lambda \notin  (\lambda_{r_0,s_0} ^{\pm} + {\Z}) $.

  Assume that
\bea h - h_{p,p'} ^{r_0,s_0 } \notin {\Z}_{> 0} \qquad \forall h \in \mathcal{S}_{p,p'}.  \label{uvjet-ired} \eea
Then $\mathcal{E}_{r_0, s_0} ^{\lambda}$
is an  irreducible $V_k(sl(2))$--module. In particular, $\mathcal{E}_{r_0, s_0} ^{\lambda}$ is irreducible if $h_{p,p'} ^{r_0,s_0 } $ is maximal  in the set  $\mathcal{S}_{p,p'}$.
%   if $\lambda \notin  (\lambda_{r_0,s_0} ^{\pm} + {\Z}) $.

%\item[(2)] Assume that   $ h_{p,p'} ^{r_0,s_0 } $ is maximal in the set $$\{ h =   h_{p,p'} ^{r,s  }  \in \mathcal{S}_{p,p'} \vert \ \mathcal{E}_{r , s} ^{\lambda} \ \mbox{is reducible for certain} %\  \lambda \notin  (\lambda_{r ,s} ^{\pm} + {\Z})  \}.$$
%Then there are $r_1,s_1$,  $1 \le r_1 \le p'-1, 1 \le 1_0 \le p-1$ and  $\lambda_1 \in {\C}$ so that $\mathcal{E}_{r_1 s_1} ^{\lambda_1 } \subset \mathcal{E}_{r_0, s_0} ^{\lambda} $.
\end{proposition}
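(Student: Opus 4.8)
The plan is to prove irreducibility by contradiction, exploiting the module structure we have constructed together with the classification of irreducible $\Z_{\ge 0}$--graded $V_k(sl(2))$--modules recalled above. Suppose $N \subset \mathcal{E}_{r_0,s_0}^{\lambda}$ is a nonzero proper submodule. Since the top component $\mathcal{E}_{r_0,s_0}^{\lambda}(0)$ is an irreducible $sl(2)$--module (here we use $\lambda \notin \lambda_{r_0,s_0}^{\pm} + \Z$, by part (2) of the preceding Theorem) and $\mathcal{E}_{r_0,s_0}^{\lambda}$ is $\Z_{\ge 0}$--graded with $L_{sug}(0)$ acting as $(h_{p,p'}^{r_0,s_0}+k/4)\,\mathrm{Id}$ on the top, any submodule $N$ must either contain the whole top component or meet it trivially. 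If $N$ contains the top component then $N = \mathcal{E}_{r_0,s_0}^{\lambda}$, a contradiction; so $N(0)=0$, i.e. $N$ is generated in strictly positive conformal weight.

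First I would argue that such an $N$ must itself contain an irreducible $\Z_{\ge 0}$--graded $V_k(sl(2))$--submodule (or at least a subquotient) whose top component has conformal weight $h' + k/4$ with $h' > h_{p,p'}^{r_0,s_0}$, where the weight-structure forces $h' \in \mathcal{S}_{p,p'}$. The point is that the $h(0)$--weights appearing in $\mathcal{E}_{r_0,s_0}^{\lambda}$ in each fixed conformal degree $m$ form, by Proposition \ref{char-pi0} and the explicit character formula, exactly one $\Z$--coset (shifted by $2m$), matching $\delta(z^2)$; hence any graded submodule is again, in each degree, a full such coset or empty, and in particular any nonzero graded submodule supported above degree $0$ gives rise, after passing to a lowest-weight generating layer, to a relaxed (or highest/lowest weight) module whose $L_{sug}(0)$--eigenvalue on its top is of the form $h' + k/4$ with $h' - h_{p,p'}^{r_0,s_0} \in \Z_{>0}$. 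But by Lemma \ref{beskonacne} and the classification, the only $L_{sug}(0)$--eigenvalues occurring as conformal weights of tops of irreducible $\Z_{\ge 0}$--graded $V_k(sl(2))$--modules of relaxed/highest-weight type are $h + k/4$ (or the ordinary ones $h + (s-1)/2$) with $h \in \mathcal{S}_{p,p'}$; this is where hypothesis \eqref{uvjet-ired} bites, ruling out $h' \in \mathcal{S}_{p,p'}$ with $h' > h_{p,p'}^{r_0,s_0}$. The case of an ordinary submodule is excluded similarly since its top is finite-dimensional, incompatible with the infinite-dimensional weight spaces dictated by $\delta(z^2)$.

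The main obstacle I expect is making rigorous the step ``$N(0)=0$ forces a generating layer whose top component is again of one of the classified forms'': a priori $N$ need not be generated by a single graded piece, and one must control the possible conformal weights of its ``lowest'' nonzero graded component and show the $sl(2)$--action there is a weight module of the type covered by the classification. I would handle this by letting $m_0 = \min\{ m : N(m)\neq 0\}$, observing that $N(m_0)$ is an $sl(2)$--submodule of $\mathcal{E}_{r_0,s_0}^{\lambda}(m_0)$ annihilated by all lowering operators in $\hg$ of negative degree, hence that the $V_k(sl(2))$--submodule generated by $N(m_0)$ is $\Z_{\ge 0}$--graded with top $N(m_0)$; any irreducible subquotient of it is then one of the classified modules and has top conformal weight $h' + k/4$ (or ordinary), giving $L_{sug}(0) = (h_{p,p'}^{r_0,s_0} + k/4) + m_0$ with $m_0 \in \Z_{>0}$, so $h' = h_{p,p'}^{r_0,s_0} + m_0 \in \mathcal{S}_{p,p'}$, contradicting \eqref{uvjet-ired}. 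The final sentence, that $\mathcal{E}_{r_0,s_0}^{\lambda}$ is irreducible whenever $h_{p,p'}^{r_0,s_0}$ is maximal in $\mathcal{S}_{p,p'}$, is then immediate, since maximality makes \eqref{uvjet-ired} automatic.
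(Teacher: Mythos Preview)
Your overall strategy matches the paper's: argue by contradiction, use irreducibility of the top component to push any proper submodule into strictly positive degree, then invoke Lemma~\ref{beskonacne} to force the top of an irreducible subquotient to have conformal weight $h+k/4$ with $h\in\mathcal S_{p,p'}$ and $h-h_{p,p'}^{r_0,s_0}\in\Z_{>0}$, contradicting \eqref{uvjet-ired}. That part is fine.

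The gap is in how you exclude \emph{ordinary} irreducible subquotients (the $\mathcal L_s$, with finite-dimensional top of conformal weight $h_{p,p'}^{1,s}+(s-1)/2$, not of the form $h+k/4$). Your argument that ``any graded submodule is again, in each degree, a full such coset or empty'' and that a finite-dimensional top is ``incompatible with the infinite-dimensional weight spaces dictated by $\delta(z^2)$'' is not valid: the character of $\mathcal E_{r_0,s_0}^{\lambda}$ only tells you that each $(L_{sug}(0),h(0))$--weight space of the ambient module has a certain dimension, but says nothing about which subspaces a submodule may occupy. In positive degree those weight spaces are not one--dimensional, and there is no a priori reason a submodule could not sit in a finite-dimensional slice of some graded piece, producing an ordinary subquotient whose conformal weight is not governed by \eqref{uvjet-ired}.

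The paper closes this gap with Lemma~\ref{injektivnost}: the operator $e(0)=e^{c}_{0}$ acts \emph{injectively} on $\Pi_{(-1)}(\lambda)$, hence on all of $\mathcal E_{r_0,s_0}^{\lambda}$. Consequently $e(0)$ is injective on every submodule $N$, and since $e(0)$ preserves each $L_{sug}(0)$--eigenspace while raising $h(0)$--weight by $2$, every nonzero $L_{sug}(0)$--eigenspace of $N$ is infinite--dimensional. In particular $N(m_0)$ is infinite--dimensional, so any irreducible subquotient produced from $N$ has infinite--dimensional top and Lemma~\ref{beskonacne} applies. You should replace your character heuristic by this injectivity argument; once you do, your proof becomes essentially the paper's.
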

\begin{proof}
Assume that $\mathcal{E}_{r_0, s_0} ^{\lambda} $ is reducible. By Lemma \ref{injektivnost} the operator $e(0) = e^c_0$   acts injectively  on    the module $\mathcal{E}_{r_0, s_0} ^{\lambda} $, and therefore  there are no  submodules of   $\mathcal{E}_{r_0, s_0} ^{\lambda} $ with finite-dimensional $L_{sug}(0)$--eigenspaces. Since the  top component $\mathcal{E}_{r_0, s_0} ^{\lambda}(0)$ is an irreducible $sl(2)$--module, we conclude that $\mathcal{E}_{r_0, s_0}^{\lambda}$ has a non-trivial $\Z_{\ge 0}$--graded irreducible  subquotient $M = \bigoplus_{m=0} ^{\infty} M(m)$,  such that $\dim M(0)= \infty$.  By   Lemma  \ref{beskonacne},   the conformal weight of $M(0)$ is   $h + k/4$, such that
$$h \in \mathcal{S}_{p,p'}, \ h > h_{p,p'} ^{r_0,s_0 }, \ h - h_{p,p'} ^{r_0,s_0 }   \in  {\Z}_{> 0}. $$  This contradicts the choice of $(r_0, s_0)$. The claim  holds.
\end{proof}

\begin{remark} \label{KR-sl}
A new result on   characters of irreducible relaxed $V_k(sl(2))$ from \cite[Theorem 5.2]{KR} directly   implies that $\mathcal{E}_{r, s} ^{\lambda}$ is irreducible  for all $1 \le r  \le p'-1, 1 \le s \le p-1$ and $\lambda \notin  (\lambda_{r,s} ^{\pm} + {\Z})$.
\end{remark}
%  Let $\ell  \in {\Z}$ and $\pi_{\ell}$ be the (spectral flow) automorphism of $V_k(sl(2))$  defined by
% $$ \pi_{\ell}  (e(n) ) = e(n +  \ell), \quad \pi_{\ell}  (f(n) ) = f (n - \ell), \quad \pi_{\ell}  (h(n)) = h(n)  + \ell k \delta_{n,0}. $$

 \subsection{Realization of the intertwining operators among relaxed modules}
 We will now compare our realization with conjectural fusion rules from \cite{CR-2013}.

Note that in the realization we have  $ \pi_{\ell}  ( \mathcal{E}_{r,s} ^{\lambda} ) = L^{Vir}(d_{p,p'},h_{p,p'} ^{r,s}) \otimes \Pi_{ \ell -1} (\lambda)$ (cf. Proposition \ref{spectral-flow}). One can show that if   $\left[\begin{matrix} (r'', s'') \\ (r, s) \ (r', s')\end{matrix}\right]_{ (p,p') }=1$, we can realize the intertwining operator of the following type
 \bea \label{int-real} \binom{   \pi _{ \ell + \ell'- 1} (  \mathcal{E}_{r'', s''} ^{\lambda + \lambda'}  )}{\pi _{\ell} (  \mathcal{E}_{r, s} ^{\lambda} ) \ \  \pi _{\ell'} (  \mathcal{E}_{r', s' } ^{\lambda'}  )   }.\eea

  \begin{itemize}
\item There is a small difference between notation in \cite{CR-2013} where relaxed modules were denoted by $\mathcal E_{\mu, \Delta_{r,s}}$, where $\Delta_{r,s} = h_{p, p'} ^{r,s}$. Precise  relation is
$  \mathcal{E}_{r, s} ^{\lambda}   = \mathcal E_{2\lambda- k, \Delta_{r,s}}. $

\item  Based on   the Grothendieck fusion rules    \cite[Propositions 13 and 18]{CR-2013} and \cite[Proposition 2.17]{ACR}, it is conjectured that the following fusion rules holds in the category of weight $V_{k}(sl(2))$--modules:

\bea
&&\pi _{\ell} ( \mathcal E_{\mu , \Delta_{r,s}}  ) \boxtimes  \pi _{\ell'} (  \mathcal E_{\mu ' , \Delta_{r,s}} ) \\
  & = & \sum_{ r'', s''} \left[\begin{matrix} (r'', s'') \\ (r, s) \ (r', s')\end{matrix}\right]_{ (p,p') }\left(  \pi _{\ell + \ell' +1}  (\mathcal E_{\mu + \mu' -k , \Delta_{r'',s''}} ) + \pi_{\ell + \ell' -1}  (\mathcal E_{\mu + \mu' +k , \Delta_{r'',s''}})     \right)  \nonumber \\
&& +  \sum_{ r'', s''} \left(  \left[\begin{matrix} (r'', s'') \\ (r, s) \ (r', s'-1 )\end{matrix}\right]_{ (p,p') }    +  \left[\begin{matrix} (r'', s'') \\ (r, s) \ (r', s'+1 )\end{matrix}\right]_{ (p,p') }   \right)  \pi _{\ell + \ell'}  ( \mathcal E_{\mu + \mu' , \Delta_{r'',s''}} )\nonumber
\eea

%\item In free field realization, we can only construct an intertwining operator of the type (\ref{int-real}) which is in the terminology of \cite{CR-2013}:
%$$ {   \pi_{\ell + \ell' -1}  (\mathcal E_{\mu + \mu' +k , \Delta_{r'',s''}}) \choose   \pi _{\ell} ( \mathcal E_{\mu , \Delta_{r,s}}  ) \ \   \pi _{\ell'} (  \mathcal E_{\mu ' , \Delta_{r,s}} )    }.$$

%\item   The conjugate module(= the contragradient module)  to $\pi_{\ell} ( \mathcal E_{\mu, \Delta_{r,s}}) $ is $\pi_{-\ell} (\mathcal E_{-\mu, \Delta_{r,s}})$  (cf. \cite{CR-2013}).  Therefore the conjugate module to
%$ \pi_{\ell} ( \mathcal{E}_{r, s} ^{\lambda}  ) $ is $\pi_{-\ell + 2}  (\mathcal{E}_{r, s} ^{ - \lambda} )$.
\end{itemize}

 In free field realization, we can only construct an intertwining operator of the type (\ref{int-real}) which is in the terminology of \cite{CR-2013}:
$$ \binom{   \pi_{\ell + \ell' -1}  (\mathcal E_{\mu + \mu' +k , \Delta_{r'',s''}})}{\pi _{\ell} ( \mathcal E_{\mu , \Delta_{r,s}}  ) \ \   \pi _{\ell'} (  \mathcal E_{\mu ' , \Delta_{r,s}} )    }.$$
The construction of other three type of intertwining operators is still an open problem.

 We should mention that the cases of  the collapsing levels $k=-1/2$ and $k =-4/3$ are very interesting, since then $V_k(sl(2))$ is related with the triplet vertex algebras $W(p)$ for $p=2,3$ (cf. \cite{Ridout}, \cite{A-2005}). The fusion rules for these vertex algebras are also related to the (conjectural) fusion rules for  the singlet vertex algebra $\mathcal M(p)$ (cf. \cite{CM}, \cite{AdM-2017}).
By \cite{CR-2013},  the    fusion rules for $k=-1/2$ are
 \bea \label{fusion-12} \pi _{\ell} ( \mathcal E_{\mu , -1/8}  ) \boxtimes  \pi _{\ell'} (  \mathcal E_{\mu ' , -1/8} ) = \pi _{\ell + \ell'-1} ( \mathcal E_{\mu + \mu' -1/2  , -1/8}  ) +  \pi _{\ell + \ell'+1} ( \mathcal E_{\mu + \mu' +1/2  , -1/8}  ). \eea
In our forthcoming paper \cite{AdP}, we shall study the fusion rules (\ref{fusion-12}).

% provided that   $\left[\begin{matrix} (r'', s'') \\ (r, s) \ (r', s')\end{matrix}\right]_{ (p,p') }=1$.
%We also have the intertwining operator  of the type
 % $$ {    \pi _{-\ell'} (  \mathcal{E}_{r', s' } ^{k - \lambda'}  )   \choose    \pi _{\ell} (  \mathcal{E}_{r, s} ^{\lambda} ) \ \   \pi _{ - \ell - \ell'+ 1} (  \mathcal{E}_{r'', s''} ^{k - \lambda - \lambda'}  )   )   }.$$
 % Set $\ell_1 =  - \ell - \ell ' + 1$, $\lambda_1 =  k - \lambda - \lambda'$. Then
 % $$- \ell '  =  \ell + \ell_1  -1, \ \ k - \lambda'  = \lambda + \lambda_1.  $$
\section{Whittaker modules for $V_k(sl(2))$}
\label{Whittaker}

In this section we extend result from \cite{ALZ-2016} and construct all degenerate Whittaker modules at an arbitrary  admissible level. As a consequence, we will see that admissible affine vertex algebra  $V_k(sl(2))$   contains ${\Z}_{\ge 0}$--graded modules of the Whittaker type.

Let us first recall some notation from \cite{ALZ-2016}.

For a $(\lambda, \mu) \in {\C} ^2$, let $\widetilde {Wh}_{\widehat{sl_2}} (\lambda, \mu, k)$ denotes the universal Whittaker module at level $k$ which is generated by the Whittaker vector $w_{\l, \mu, k}$ satisfying
\bea
e(n) w_{\l, \mu, k} &=& \delta_{n,0} \lambda w_{\l, \mu, k} 	\quad (n \in {\Z}_{\ge 0}), \\
f(m) w_{\l, \mu, k} &=& \delta_{m,1} \mu w_{\l, \mu, k}	\quad (m \in {\Z}_{\ge 1}).
\eea
% Let $Wh_{\widehat{sl_2}} (\lambda, \mu, k)$ be is simple quotient.

If $\mu \cdot \lambda \ne 0$, then the Whittaker module is called non-degenerate. It was proved in \cite{ALZ-2016} that at the non-critical level the universal non-degenerate Whittaker module is automatically irreducible. But in the degenerate case when $\mu= 0$, $\widetilde {Wh}_{\widehat{sl(2)}} (\lambda, 0 , k)$  is reducible and it contains a non-trivial submodule
$$  M _{\widehat{sl(2) }} (\lambda, 0, k, a ):= \widetilde {Wh}_{\widehat{sl(2)}} (\lambda, 0, k) / U(\widehat{sl(2)}) . (L_{sug} (0)-a) w_{\l, \mu, k} \quad ( a\in {\C}). $$
Let $ Wh _{\widehat{sl(2)}} (\lambda, 0,  k, a )$ be the simple quotient of $M _{\widehat{sl(2)}} (\lambda, 0, k, a )$.

We have the following result.

\begin{theorem} \label{whittaker-degenerate} For all $ k, h, \lambda \in {\C}$, $\lambda \ne 0$  we have:
$$ Wh _{\widehat{sl(2)}} (\lambda, 0,  k, h + k/4 ) \cong  L^{Vir} (d_k,  h) \otimes \Pi_{\lambda}. $$
\end{theorem}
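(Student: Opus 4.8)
The plan is to use the embedding $\Phi\colon V^k(sl(2))\to V^{Vir}(d_k,0)\otimes\Pi(0)$ of Proposition \ref{prop-sl2-1}, which turns $L^{Vir}(d_k,h)\otimes\Pi_\lambda$ into a $V^k(sl(2))$-module, and to locate inside it the Whittaker vector that produces the isomorphism. Set $w=v_h\otimes w_\lambda$, where $v_h$ is the highest weight vector of $L^{Vir}(d_k,h)$ and $w_\lambda$ is the cyclic Whittaker vector of $\Pi_\lambda$ from Proposition \ref{constr-pi0-mod-whit}(1); by construction $(e^c)_0w_\lambda=\lambda w_\lambda$, $(e^c)_nw_\lambda=0$ for $n\ge1$, $c(0)w_\lambda=-w_\lambda$, the positive Heisenberg modes of $\Pi(0)$ annihilate $w_\lambda$, and $d(0)$ acts freely on $\mathbb C[d(0)]w_\lambda=\Pi_\lambda(0)$. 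The first step is to check that $w$ is a degenerate Whittaker vector of type $(\lambda,0)$ for $\widehat{sl(2)}$: under $\Phi$ the currents $e$ and $h$ lie in the $\Pi(0)$-factor, so $e(n)w=v_h\otimes(e^c)_nw_\lambda=\delta_{n,0}\lambda w$ for $n\ge0$; and expanding $\Phi(f)=(k+2)\,\omega\otimes e^{-c}-{\bf 1}\otimes\bigl(\nu(-1)^2+(k+1)\nu(-2)\bigr)e^{-c}$ into modes, using $L(m)v_h=\delta_{m,0}h\,v_h$ for $m\ge0$ together with the relations satisfied by $w_\lambda$, gives $f(m)w=0$ for $m\ge1$. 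Using $\omega_{sug}=\omega^{(k)}\otimes{\bf 1}+{\bf 1}\otimes\omega_{\Pi(0)}$, $L(0)v_h=h\,v_h$, and $\overline L(0)w_\lambda=\tfrac{k}{4}w_\lambda$ (the $d(0)$-part of $\overline L(0)$ cancels $\tfrac12 d(0)$ because $c(0)=-1$ on $\Pi_\lambda$), one gets $L_{sug}(0)w=(h+\tfrac{k}{4})w$. Hence there is a homomorphism of $\widehat{sl(2)}$-modules $\Psi\colon M_{\widehat{sl(2)}}(\lambda,0,k,h+\tfrac{k}{4})\to L^{Vir}(d_k,h)\otimes\Pi_\lambda$ sending the universal Whittaker vector to $w$, with image the submodule $V^k(sl(2)).w$.

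Next I would show that $\Psi$ is surjective, i.e. $V^k(sl(2)).w=L^{Vir}(d_k,h)\otimes\Pi_\lambda$. Since $L^{Vir}(d_k,h)$ is generated over $L^{Vir}(d_k,0)$ by $v_h$ and $\Pi_\lambda$ is generated over $\Pi(0)$ by $w_\lambda$, the tensor product is generated over $L^{Vir}(d_k,0)\otimes\Pi(0)$ by $w$; so it suffices to recover, from the affine action on $w$, the Heisenberg descendants in $\mu(-n)$ and $\nu(-n)$, the Virasoro descendants $L(-n_1)\cdots L(-n_j)v_h$, and all modes of $e^{\pm c}$. This is a direct if somewhat lengthy computation with the explicit formulas of Proposition \ref{prop-sl2-1}: the $\mu(-n)$ come from $h(-n)=2\mu(-n)$, the modes of $e=e^c$ provide the $e^{\pm c}$-action and (since $\langle c,-c\rangle=0$, so $(e^c)_{-1}e^{-c}={\bf 1}$) allow one to strip the lattice factor $e^{-c}$ off $\Phi(f)$, and what remains of $f$ then yields the $\nu(-n)$-strings and the Virasoro descendants of $v_h$.

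Finally I would prove that $L^{Vir}(d_k,h)\otimes\Pi_\lambda$ is irreducible as a $V^k(sl(2))$-module; combined with surjectivity of $\Psi$ this forces $\ker\Psi$ to be maximal, so $L^{Vir}(d_k,h)\otimes\Pi_\lambda$ is the simple quotient $Wh_{\widehat{sl(2)}}(\lambda,0,k,h+\tfrac{k}{4})$. For irreducibility one uses that $e(0)=(e^c)_0$ acts injectively on $\Pi_\lambda$ (the analogue for $\Pi_\lambda$ of Lemma \ref{injektivnost}), hence on the tensor product, so no nonzero submodule can contain a vector annihilated by a power of $e(0)$; together with the irreducibility of $L^{Vir}(d_k,h)$ over the Virasoro algebra and of $\Pi_\lambda$ over $\Pi(0)$, any nonzero submodule must be the whole space. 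I expect this last step to be the main obstacle: the $\overline L(0)$-eigenspaces of $\Pi_\lambda$ are infinite-dimensional (with a free $d(0)$-action on $\mathbb C[d(0)]w_\lambda$), so a naive lowest-conformal-weight argument is unavailable and one must genuinely control the action of the triple $e(0),h(0),f(0)$ and of the higher $f$-modes on these infinite-dimensional components — most cleanly, I think, by reducing via the explicit form of $f(0)$ in the realization to the known $\Pi(0)$-module structure of $\Pi_\lambda$ and the Virasoro-module structure of $L^{Vir}(d_k,h)$, or, once surjectivity of $\Psi$ is in hand, by invoking the structure of the universal degenerate Whittaker module established in \cite{ALZ-2016}.
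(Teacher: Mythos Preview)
Your overall plan coincides with the paper's: locate the Whittaker vector $w=v_h\otimes w_\lambda$, then prove surjectivity and irreducibility of the resulting $\widehat{sl(2)}$-module. Your verification that $w$ is a degenerate Whittaker vector of type $(\lambda,0)$ with $L_{sug}(0)w=(h+k/4)w$ is correct and is exactly what the paper does.

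Where your argument diverges from the paper --- and where it has a genuine gap --- is in the surjectivity and irreducibility steps. The paper's proof rests on two ingredients you have not identified. First, it invokes \cite[Lemma~10.2]{ALZ-2016}: $\Pi_\lambda$ is already irreducible as a module for the Lie subalgebra $\widehat{\mathfrak b}_1\subset\widehat{sl(2)}$ generated by $e(n),h(n)$, $n\in\mathbb Z$. This immediately handles the $\Pi_\lambda$-factor in both steps and replaces your vague ``direct if somewhat lengthy computation'' as well as your appeal to $\Pi(0)$-irreducibility. Second, the paper observes the clean identity
\[
f(m_0)(v\otimes w_\lambda)=(k+2)\lambda\,L(m_0)v\otimes w_\lambda\qquad(m_0>0),
\]
which follows from the realization because the $\nu$-part of $f$ contributes only positive Heisenberg modes on $w_\lambda$ and $(e^{-c})_0 w_\lambda$ is a nonzero scalar multiple of $w_\lambda$. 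This formula lets one descend in Virasoro conformal weight and, together with the $\widehat{\mathfrak b}_1$-irreducibility of $\Pi_\lambda$, shows every homogeneous $v\otimes w$ is cyclic; the analogous computation with $f(-n_0)$ (producing $L(-n_0)$ plus lower-weight terms) runs the induction upward for surjectivity.

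Your proposed irreducibility argument does not close. Injectivity of $e(0)$ (even if established for $\Pi_\lambda$; Lemma~\ref{injektivnost} is stated only for the weight module $\Pi_{(-1)}(\lambda)$) does not by itself preclude proper $\widehat{sl(2)}$-submodules --- a proper submodule could perfectly well carry an injective $e(0)$. And the step ``together with irreducibility of the tensor factors any submodule is everything'' is a non-sequitur: an $\widehat{sl(2)}$-submodule need not be an $L^{Vir}(d_k,0)\otimes\Pi(0)$-submodule, which is precisely the obstacle you yourself flag. The missing idea is exactly the $f(m_0)$-formula above, which converts the affine action into the Virasoro lowering action and thereby bridges the gap.
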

\begin{proof}
The proof will use \cite[Lemma 10.2]{ALZ-2016} which says that $\Pi_{\lambda}$ is an irreducible $\widehat{\mathfrak b}_1$--module, where $\widehat{\mathfrak b}_1$ is a Lie subalgebra of $\widehat{sl(2) }$ generated by $e(n), h(n)$, $n \in {\Z}$.

 On  $L^{Vir} (d_k,  h)$ we have the weight decomposition:
 $$  L^{Vir} (d_k,  h)= \bigoplus_{m \in {\Z}_{\ge 0} } L^{Vir} (d_k,  h)_{h + m},  \quad  L^{Vir} (d_k,  h)_{h + m} = \{v \in  L^{Vir} (d_k,  h) \ \vert L(0) v = (h +m ) v \}. $$
Let $v_h$ be the highest weight vector in  $L^{Vir} (d_k,  h)$, and define
$ \widetilde w_{\l,  0,  k} = v_h \otimes w_{\lambda}$.
Since
\bea
e(n) \widetilde  w_{\l, 0, k} &=& \delta_{n,0} \lambda w_{\l, 0, k} 	\quad (n \in {\Z}_{\ge 0}), \\
f(m) \widetilde w_{\l, 0,  k} &=& 0	\quad (m \in {\Z}_{\ge 1}), \\
L_{sug} (n) \widetilde  w_{\l, 0, k} &=& \delta_{n,0}  (h + k/4) w_{\l, 0, k} 	\quad (n \in {\Z}_{\ge 0})
\eea
we conclude that  $  \widehat W= U (\widehat{sl(2) }).  \widetilde w_{\l, 0, k} $ is a certain quotient of
%the Whittaker module
$M _{\widehat{sl(2)}} (\lambda, 0, k,  h + k/4 )$.

  Let us first prove that $\widehat W =  L^{Vir} (d_k,  h) \otimes \Pi_{\lambda} $.
 It suffices to prove that for every $m \in {\Z}_{\ge 0}$ we have that
 \bea
  &&  v \otimes w  \in  \widehat W \quad \forall  v \in   L^{Vir} (d_k,  h)_{h + m},  \quad  \forall  w\in \Pi_{\lambda}.  \label{claim-1}
 \eea
%We shall prove the claim by induction on $s \in {\Z}_{\ge 0}$.
  For $m= 0$, the claim follows by using  the irreducibility of $\Pi_{\lambda}$  as a $\widehat{\mathfrak b}_1$--module.
  Assume now that    $v'  \otimes w   \in  \widehat W   $  for all  $v'   \in    L^{Vir} (d_k,  h)_{h + m'} $ such that  $m' < m$ and  all $w \in \Pi_{\lambda}$.
   Let $ v \in  L^{Vir} (d_k,  h)_{h + m} $. It suffices to consider homogeneous  vectors
  $$ v = L(-n_0)  L(-n_1)  \cdots L(-n_s)  v_h, \quad n_0 \ge \cdots  \ge n_s \ge 1, \ n_0 + \cdots + n_s = m.$$
  Then by inductive assumption we have that  $L(-n_1)  \cdots L(-n_s)  v_h \otimes w \in \widehat W$ for all $w \in \Pi_{\lambda}$.  By using the formulae for  the action of $f (m)$, $m \in {\Z}$, we get
  $$ f(-n_0)   (L(-n_1) \cdots L(-n_s) v_h \otimes w_{\lambda} ) = AL(-n_0)  L(-n_1) \cdots L(-n_s) v_h \otimes w_{\lambda}  + z $$
where $A \ne 0$   and
$$ z =  \sum_{i} v_i \otimes w_i, \quad v_i \in    L^{Vir} (d_k,  h)_{h + m_i ' } ,  \ m_i ' < m, \ w_i \in \Pi_{\lambda}.  $$
%$ w \in \Pi_{\lambda}$
By using inductive assumption   we get that $z \in \widehat W$, and therefore $v \otimes w_{\lambda}  \in \widehat  W$.
Using the fact that  $\Pi_{\lambda}$ is an irreducible $\widehat{\mathfrak b}_1$--module, we get that  $v \otimes w \in \widehat  W$ or every $w \in \Pi_{\lambda}$.
The claim (\ref{claim-1}) now follows by induction.

 %Next, we use  the weight decomposition
 %$$  L^{Vir} (d_k,  h)= \bigoplus_{m \in {\Z}_{\ge 0} } L^{Vir} (d_k,  h)_{h + m},  \quad  L^{Vir} (d_k,  h)_{h + m} = \{v \in  L^{Vir} (d_k,  h) \ \vert L(0) v = (h +m ) v \}. $$
 %
Now the irreducibility result will be a consequence of the following claim:    \bea
 && v \otimes w \ \mbox{is cyclic vector in} \   L^{Vir} (d_k,  h) \otimes \Pi_{\lambda}  \quad \forall v \in  L^{Vir} (d_k,  h)_{h + m}, \ m\in {\Z}_{\ge 0},  \ \forall w \in \Pi_{\lambda}. \label{claim-2}
 \eea
 For $m=0$, the claim  (\ref{claim-2})  follows by using  irreducibility of $\Pi_{\lambda}$  as a $\widehat{\mathfrak b}_1$--module and (\ref{claim-1}).
 Assume now that  $v \in  L^{Vir} (d_k,  h)_{h + m}$ for $m >0$. Then there is  $m_0$, $0 < m_0 \le m$ so that $L(m_0) v \ne 0$ and $L(m_0) v \in  L^{Vir} (c_k,  h)_{h + m-m_0}. $
 Since
 $$ f(m_0) (v \otimes w_{\lambda} ) = (k+2)  \lambda L(m_0) v \otimes w_{\lambda},$$
 by induction we have that  $ L(m_0) v \otimes w_{\lambda}$ is a cyclic vector. So $v \otimes w_{\lambda} $ is   also cyclic. By using again the  irreducibility of $\Pi_{\lambda}$ as  $\widehat{\mathfrak b}_1$--module, we see that  $v \otimes w  $ is cyclic for all $w \in \Pi_{\lambda}$. The proof follows.
\end{proof}

As a consequence, we shall describe the structure of
simple Whittaker module $Wh_{\widehat{sl_2}} (\lambda, 0, k,a )$ at admissible levels, and show that these modules are $V_k(sl_2)$--modules.

\begin{theorem} Assume that $k$ is admissible, non-integral, and $\lambda \ne 0$. Then we have:
\item[(1)]$ Wh _{\widehat{sl (2) }} (\lambda, 0,  k, a ) \cong   L^{Vir} (d_{p,p'},  h_{p,p'}^{r,s}) \otimes \Pi_{\lambda}, $
where $a =  h_{p,p'}^{r,s} + k / 4 $.
\item[(2)] The set
$$\{ Wh_{\widehat{sl(2) }} (\lambda, 0, k, h + k/4)  \ \vert \ h \in \mathcal S_{p,p'} \}$$  provides all irreducible  ${\Z}_{\ge 0}$--graded $V_k(sl(2))$--modules which are Whittaker $\widehat{sl(2) }$--modules.
\end{theorem}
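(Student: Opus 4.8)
The plan is to read off both statements from Theorem~\ref{whittaker-degenerate}, combined with the identification of the maximal ideal $J^k(sl(2))$ provided by Corollary~\ref{posljedica}.

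For part~(1): since $k$ is admissible and non-integral we have $d_k=d_{p,p'}$, hence $L^{Vir}(d_k,h_{p,p'}^{r,s})=L^{Vir}(d_{p,p'},h_{p,p'}^{r,s})$, and Theorem~\ref{whittaker-degenerate} applied with $h=h_{p,p'}^{r,s}$, $a=h_{p,p'}^{r,s}+k/4$, gives an isomorphism of $\widehat{sl(2)}$--modules $Wh_{\widehat{sl(2)}}(\lambda,0,k,a)\cong L^{Vir}(d_{p,p'},h_{p,p'}^{r,s})\otimes\Pi_\lambda$. It then remains only to verify that the right-hand side is genuinely a $V_k(sl(2))$--module. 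Since $h_{p,p'}^{r,s}\in\mathcal{S}_{p,p'}$, the module $L^{Vir}(d_{p,p'},h_{p,p'}^{r,s})$ is a module for the rational vertex algebra $L^{Vir}(d_{p,p'},0)$ (Subsection~\ref{Vir-subsect}) and $\Pi_\lambda$ is a $\Pi(0)$--module (Proposition~\ref{constr-pi0-mod-whit}), so the tensor product is a module for $L^{Vir}(d_{p,p'},0)\otimes\Pi(0)$ and hence, via $\overline{\Phi}$ from Theorem~\ref{real-simpl-sl2}, a $V_k(sl(2))$--module; equivalently, $Y(\Omega_{p,p'}^{Vir},z)$ acts as zero on $L^{Vir}(d_{p,p'},h_{p,p'}^{r,s})$, so by Corollary~\ref{posljedica} the operator $Y(\Omega_k^{sl(2)},z)=Y(\Omega_{p,p'}^{Vir},z)\otimes Y(e^{(p'-1)c},z)$ acts as zero and $J^k(sl(2))$ acts trivially.

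For the converse half of part~(2), let $M$ be an arbitrary irreducible $\Z_{\ge 0}$--graded $V_k(sl(2))$--module which is a Whittaker $\widehat{sl(2)}$--module, generated by a Whittaker vector $w$ with $e(0)w=\lambda w$, $\lambda\neq 0$. Since $e(n)$ and $f(m)$ with $n,m\geq 1$ strictly lower the $L_{sug}(0)$--degree, compatibility with the $\Z_{\ge 0}$--grading forces $w$ into the top component $M(0)$, so that $e(n)w=f(m)w=0$ for $n,m\geq 1$ and $L_{sug}(0)w=aw$ for some $a\in\C$; hence $M$ is a quotient of $M_{\widehat{sl(2)}}(\lambda,0,k,a)$ and, being irreducible, $M\cong Wh_{\widehat{sl(2)}}(\lambda,0,k,a)$. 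By Theorem~\ref{whittaker-degenerate}, $M\cong L^{Vir}(d_k,h)\otimes\Pi_\lambda$ with $a=h+k/4$. Now use that $M$ is a $V_k(sl(2))$--module: viewing $M$ as a $V^k(sl(2))$--module through $\Phi$ (Proposition~\ref{prop-sl2-1}), the operator $Y(\Omega_k^{sl(2)},z)$ must vanish on $L^{Vir}(d_k,h)\otimes\Pi_\lambda$, and by Corollary~\ref{posljedica} it equals $Y(\Omega_{p,p'}^{Vir},z)\otimes Y(e^{(p'-1)c},z)$; since $e^{(p'-1)c}_0$ acts on the Whittaker module $\Pi_\lambda$ as the nonzero scalar $\lambda^{p'-1}$ (Proposition~\ref{constr-pi0-mod-whit}), the field $Y(e^{(p'-1)c},z)$ is nonzero, and a tensor product of fields vanishes only if one factor does, so $Y(\Omega_{p,p'}^{Vir},z)=0$ on $L^{Vir}(d_k,h)$. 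By rationality of $L^{Vir}(d_{p,p'},0)$ and the classification of its irreducibles this happens precisely when $h\in\mathcal{S}_{p,p'}$; thus $a=h+k/4$ with $h\in\mathcal{S}_{p,p'}$ and $M$ lies in the asserted list. Conversely each $Wh_{\widehat{sl(2)}}(\lambda,0,k,h+k/4)$ with $h\in\mathcal{S}_{p,p'}$ is on that list by part~(1): it is irreducible, $\Z_{\ge 0}$--graded with top conformal weight $h+k/4$, and Whittaker with vector $v_h\otimes w_\lambda$.

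The step I expect to be the main obstacle is the reduction in part~(2): making rigorous that an abstract irreducible $\Z_{\ge 0}$--graded $V_k(sl(2))$--module of Whittaker type is a quotient of $M_{\widehat{sl(2)}}(\lambda,0,k,a)$. This requires pinning down the correct notion of a Whittaker vector in the graded setting, checking that it can be placed in the top component $M(0)$ (so that $\mu=0$ is automatic and an $L_{sug}(0)$--eigenvalue exists), and isolating the genuinely Whittaker regime $\lambda\neq 0$ (the case $\lambda=0$ producing a highest weight module, already covered by the classification in category $\mathcal O$). Tied to this is the vertex-algebraic bookkeeping that $J^k(sl(2)).M=0$ is equivalent to the vanishing of $Y(\Omega_k^{sl(2)},z)$, and that transporting this through $\Phi$ together with the invertibility of $e^{(p'-1)c}_0$ on a Whittaker $\Pi(0)$--module lets one cancel the lattice factor and reduce everything to a statement about the Virasoro singular vector $\Omega_{p,p'}^{Vir}$ alone.
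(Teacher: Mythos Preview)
Your proof is correct. Part~(1) matches the paper's argument essentially verbatim: both observe that $L^{Vir}(d_{p,p'},h_{p,p'}^{r,s})$ is an $L^{Vir}(d_{p,p'},0)$--module for $h_{p,p'}^{r,s}\in\mathcal S_{p,p'}$, so the tensor product with $\Pi_\lambda$ is an $L^{Vir}(d_{p,p'},0)\otimes\Pi(0)$--module and hence a $V_k(sl(2))$--module via $\overline\Phi$.

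For part~(2) you take a genuinely different route. The paper argues abstractly: given that $Wh_{\widehat{sl(2)}}(\lambda,0,k,h+k/4)\cong L^{Vir}(d_{p,p'},h)\otimes\Pi_\lambda$ is a $V_k(sl(2))$--module, it invokes the description of Zhu's algebra $A(V_k(sl(2)))$ from \cite{AdM-MRL}, \cite{RW2} to conclude that the $L_{sug}(0)$--eigenvalue on the top component must be $h+k/4$ with $h\in\mathcal S_{p,p'}$. You instead stay inside the realization: you use the factorization $\Phi(\Omega_k^{sl(2)})=\Omega_{p,p'}^{Vir}\otimes e^{(p'-1)c}$ from Corollary~\ref{posljedica}, observe that $e^{(p'-1)c}_0$ acts on $w_\lambda$ by the nonzero scalar $\lambda^{p'-1}$ so the lattice factor of the field is nonzero on $\Pi_\lambda$, and deduce that $Y(\Omega_{p,p'}^{Vir},z)$ must vanish on $L^{Vir}(d_k,h)$, forcing $h\in\mathcal S_{p,p'}$ by rationality of $L^{Vir}(d_{p,p'},0)$. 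Your approach is more self-contained (it uses only ingredients already developed in the paper and avoids the external Zhu--algebra input), while the paper's approach is shorter and treats $V_k(sl(2))$ as a black box. Your extra care in placing the Whittaker vector in the top component and identifying $M$ with a simple quotient of $M_{\widehat{sl(2)}}(\lambda,0,k,a)$ is in fact more explicit than what the paper writes; the paper tacitly assumes this reduction from the setup in \cite{ALZ-2016}.
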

\begin{proof}
Since $ L^{Vir} (d_{p,p'},  h) $ for   $h \in \mathcal S_{p,p'} $ is a $ L^{Vir} (d_{p,p'},  0) $--module, we conclude that  $ L^{Vir} (d_{p,p'},  h ) \otimes \Pi_{\lambda}$ is  a
$ L^{Vir} (d_{p,p'},  0) \otimes \Pi(0)$--module  and therefore a $V_k(sl(2) )$--module.

Assume that  $ Wh _{\widehat{sl(2)}} (\lambda, 0,  k, h + k/4 )  $ is a $V_k(sl(2) )$--module. We proved in Theorem \ref{whittaker-degenerate}  that   $ Wh _{\widehat{sl(2) }} (\lambda, 0,  k, h + k/4 ) \cong   L^{Vir} (d_{p,p'},  h) \otimes \Pi_{\lambda} $ for certain $h \in {\C}$ and that $L_{sug} (0) $ acts on lowest weight component as $h + \frac{k}{4}$. By using description of Zhu's algebra for $V_k(sl(2) )$  (cf. \cite{AdM-MRL}, \cite{RW2}) we see that $L(0)$ must act on lowest component as $h  \cdot \mbox{Id}$ for  $h \in \mathcal S_{p,p'}$. Therefore,  $ Wh _{\widehat{sl(2)}} (\lambda, 0,  k, h + k/4 ) \cong   L^{Vir} (d_{p,p'},  h) \otimes \Pi_{\lambda} $ for $h \in  \mathcal S_{p,p'}$. The proof follows.
\end{proof}

\section{ Screening operators and logarithmic modules for $V_k(sl(2))$ }

\label{logarithmic}
This section gives a vertex-algebraic interpretation of the construction of logarithmic modules from \cite[Section 5]{FFHST}. By using the embedding of  $V_k (sl(2) ) $
in the vertex algebra $ L ^{Vir} (d_{p,p'}, 0) \otimes \Pi(0) \subset  L ^{Vir} (d_{p,p'}, 0) \otimes V_L$, we are able to use methods
 \cite{AdM-selecta} to construct logarithmic modules for admissible affine vertex algebra $V_k(sl(2))$.  Formula for the  screening operator $S$ appeared in \cite{FFHST}.  In the case $k=-\frac{4}{3}$, the construction  of logarithmic modules reconstructs modules from \cite[Section 8]{AdM-selecta} and \cite{Gab}.
 % In order to make present  section self-contained, we include a routine calculation which directly shows that $S$ is a screening operator.

Note that the basic definitions and constructions related with logarithmic modules were discussed in Section \ref{prelimin}.

\subsection{Screening operators}

First we notice that $L^{Vir} (d_{p,p'},h_{p,p'} ^{2,1})$ is an irreducible   $L ^{Vir} (d_{p,p'}, 0)$--module generated by lowest weight vector $v_{2,1}$  of conformal
weight $$h^{2,1}:= h_{p,p'} ^{2,1}= \frac{3 p' -2p}{4p} = \frac{3}{4} k + 1. $$ Let us now consider $L ^{Vir} (c_{p,p'}, 0) \otimes \Pi(0)$--module
$$ \mathcal{M}_{2,1} = L ^{Vir} (d_{p,p'}, 0) \otimes \Pi(0) . (v_{2,1} \otimes e ^{\nu}) = L^{Vir} (d_{p,p'},h_{p,p'} ^{2,1}) \otimes \Pi_{(1)} ( - \tfrac{k}{2})  .  $$
Note that $ \mathcal{M}_{2,1}$  has integral weights with respect to $L_{sug} (0)$.
Using construction from \cite{Li}, which was reviewed  in Subsection \ref{extended},  we have the extended vertex algebra
$$ \mathcal{V} = L ^{Vir} (d_{p,p'}, 0) \otimes \Pi(0) \bigoplus \mathcal{M}_{2,1}. $$

Note also
\bea  L(-2) v_{2,1}& =&  \frac{1}{k+2} L(-1) ^2 v_{2,1}.  \nonumber \\ \
   [ L(n), ( v_{2,1} )_m  ]  &=& ( (h^ {2,1} -1) (n+1) - m )  (v_{2,1} )_{m+n} \qquad (m,n \in {\Z}).  \nonumber  \\ \
   [L(-2),  ( v_{2,1} )_{-1}   ] & = &  ( 2 - h^{2,1})    (v_{2,1} )_{-3}  \nonumber  \\ \
    [L(-2),  ( v_{2,1} )_{0}   ] & = &  ( 1 - h^{2,1})    (v_{2,1} )_{-2}  \nonumber \\ \
     [L(-2),  ( v_{2,1} )_{1}   ] & = &  -  h^{2,1}    (v_{2,1} )_{-1}  \nonumber
 \eea
Let $s = v_{2,1} \otimes e^{\nu}$. By using formula (\ref{sug})  we get
$$ L_{sug}( n) s = \delta_{n,0} s \quad (n \ge 0). $$
Therefore
$$ S = s_0= \mbox{Res}_{z} Y(s,z) $$
commute with the action of the Virasoro algebra $L_{sug}(n)$, $n \in {\Z}$.

We want to see that $S$ commutes with $\widehat{sl(2)}$--action.
The arguments  for claim  were  essentially presented in \cite{FFHST}.
% For the sake of completeness we include a vertex-algebraic proof.
The following lemma can be proved by direct calculation in lattice vertex algebras.
\begin{lemma} \label{ffhst} \cite{FFHST}
We have
\bea  s_2 f &= & 2 (k+1) v_{2,1} \otimes   e^{\nu - \frac{2}{k} (\mu -\nu ) }  \nonumber \\
s_1 f & = &  k L(-1) v_{2,1}  \otimes   e^{\nu - \frac{2}{k} (\mu -\nu ) }  + (k+2) v_{2,1} \otimes \nu(-1) e^{\nu - \frac{2}{k} (\mu -\nu ) }  \nonumber \\
s_0 f &=& Sf = 0. \nonumber
\eea

\end{lemma}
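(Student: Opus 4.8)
The plan is to reduce the three identities to explicit vertex-operator computations in lattice and Heisenberg vertex algebras. Recall that $\mathcal M_{2,1}=L^{Vir}(d_{p,p'},h^{2,1})\otimes\Pi_{(1)}(-\tfrac{k}{2})$ is an outer tensor product of an $L^{Vir}(d_{p,p'},0)$--module and a $\Pi(0)$--module, that $e^{\nu}=e^{\mu-\frac{k}{2}c}$ generates the $\Pi(0)$--module $\Pi_{(1)}(-\tfrac{k}{2})$, and that $e^{-\frac{2}{k}(\mu-\nu)}=e^{-c}$, so that, as an element of $L^{Vir}(d_{p,p'},0)\otimes\Pi(0)$,
$$ f \;=\; (k+2)\,\omega_{p,p'}\otimes e^{-c}\;-\;{\bf 1}\otimes\bigl(\nu(-1)^2+(k+1)\nu(-2)\bigr)e^{-c}. $$
In the extended vertex algebra $\mathcal V=(L^{Vir}(d_{p,p'},0)\otimes\Pi(0))\oplus\mathcal M_{2,1}$ of Subsection~\ref{extended} one has $Y_{\mathcal V}(s,z)f=e^{zL_{sug}(-1)}Y_{\mathcal M_{2,1}}(f,-z)s$, so it suffices to compute the module products $f_m s$ (only finitely many of which are nonzero) and to collect powers of $z$ via
$$ s_n f \;=\; \sum_{i\ge 0}\frac{(-1)^{n+i+1}}{i!}\,L_{sug}(-1)^i\bigl(f_{n+i}s\bigr). $$
Because $Y_{\mathcal M_{2,1}}$ factors as $Y^{Vir}\otimes Y^{\Pi(0)}$ on decomposable vectors, each $f_m s$ splits into a Virasoro factor acting on $v_{2,1}$ and a lattice factor acting on $e^{\nu}$. (Alternatively one may replace $v_{2,1}$ by its Fock realization $e^{\lambda_{2,1}\gamma}$ in $M_{\gamma}(\tfrac{2}{k+2},0)$ and carry out the whole computation inside a lattice vertex algebra, projecting onto the simple modules at the end; the two routes are equivalent.)

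For the Virasoro factor I would use only $Y^{Vir}(\omega,z)v_{2,1}=\sum_{n\le 0}L(n)v_{2,1}\,z^{-n-2}$, with $L(0)v_{2,1}=h^{2,1}v_{2,1}$, the $z^{-1}$--coefficient $L(-1)v_{2,1}$, and the $z^{0}$--coefficient $L(-2)v_{2,1}$, which I rewrite by the level-two null-vector relation $L(-2)v_{2,1}=\tfrac{1}{k+2}L(-1)^2v_{2,1}$ (valid precisely because $h^{2,1}=\tfrac34 k+1$, i.e. $\tfrac{3}{2(2h^{2,1}+1)}=\tfrac{1}{k+2}$); more negative modes contribute only to $s_n f$ with $n<0$. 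For the lattice factor I would compute $Y^{\Pi(0)}(e^{-c},z)e^{\nu}$ and $Y^{\Pi(0)}(\nu(-1)^2e^{-c},z)e^{\nu}$, $Y^{\Pi(0)}(\nu(-2)e^{-c},z)e^{\nu}$ by the standard formula for vertex operators in a lattice vertex algebra, using $\langle c,\nu\rangle=1$, $\langle c,c\rangle=0$, $\langle\nu,\nu\rangle=-\tfrac{k}{2}$; the first produces an $e^{\nu-c}$--valued series of the form $z^{-1}\bigl(1-c(-1)z+\cdots\bigr)$, and the $\nu(-j)$--insertions raise the pole order to $z^{-3}$. Multiplying the two factors, reading off the coefficients of $z^{-3},z^{-2},z^{-1}$ to get $f_2 s,f_1 s,f_0 s$, and then substituting into the formula above (using that $L_{sug}(-1)$ acts on $\mathcal M_{2,1}$ as the canonical derivation $L(-1)\otimes 1+1\otimes\overline{L}(-1)$, so $L_{sug}(-1)(v_{2,1}\otimes e^{\nu-c})=L(-1)v_{2,1}\otimes e^{\nu-c}+v_{2,1}\otimes(\nu(-1)-c(-1))e^{\nu-c}$) yields the claimed formulas for $s_2 f$ and $s_1 f$.

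The main obstacle I expect is the identity $s_0 f=Sf=0$. Here three contributions---$-f_0 s$, $L_{sug}(-1)(f_1 s)$ and $-\tfrac12 L_{sug}(-1)^2(f_2 s)$---must cancel, and the cancellation uses simultaneously the null-vector relation (which replaces the $z^0$--coefficient $L(-2)v_{2,1}$ of the Virasoro factor by $\tfrac{1}{k+2}L(-1)^2v_{2,1}$), the value $\langle\nu,\nu\rangle=-\tfrac{k}{2}$, and the precise coefficients $k+2$ and $k+1$ in $f$; this is exactly the assertion that $S=s_0$ is a screening operator for $f$. Together with the (immediate) facts that $Y^{\Pi(0)}(e^{c},z)e^{\nu}$ and $Y^{\Pi(0)}(\mu(-1){\bf 1},z)e^{\nu}$ have no pole at $z=0$---whence $s_0 e=s_0 h=0$---and with the commutator identity $[s_0,f_n]=(s_0 f)_n$, the vanishing $s_0 f=0$ will give that $S$ commutes with the whole $\widehat{sl(2)}$--action. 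A convenient way to keep the last step short is to deduce $s_0 f=0$ from the already-established formula for $s_1 f$ via $s_0=-[L_{sug}(-1),s_1]$ and the explicit form of $L_{sug}(-1)f$, turning the final cancellation into a one-line check rather than a fresh OPE computation.
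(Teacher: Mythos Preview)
Your plan is correct and is precisely the ``direct calculation in lattice vertex algebras'' the paper invokes without details: the skew-symmetry formula for $Y_{\mathcal V}(s,z)f$ in the extended algebra, the tensor-factorization of $f_m s$ into a Virasoro piece on $v_{2,1}$ and a lattice piece on $e^{\nu}$, and the null-vector identity $L(-2)v_{2,1}=\tfrac{1}{k+2}L(-1)^2 v_{2,1}$ are exactly the right ingredients, and the inner products $\langle c,\nu\rangle=1$, $\langle\nu,\nu\rangle=-\tfrac{k}{2}$ you list are the ones that drive the cancellation in $s_0 f$.

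One small caveat on your closing shortcut: writing $s_0=-[L_{sug}(-1),s_1]$ gives $s_0 f = s_1(L_{sug}(-1)f)-L_{sug}(-1)(s_1 f)$, and the first term is $s_1$ applied to a \emph{new} weight-$3$ vector, so you still need essentially the same OPE data ($f_0 s,f_1 s,f_2 s$) that you already computed; it repackages the three-term cancellation rather than avoiding it. The cleanest route is the one you outlined first: read off $f_2 s,f_1 s,f_0 s$ from the product of the Virasoro and lattice series and feed them into $s_0 f=-f_0 s+L_{sug}(-1)(f_1 s)-\tfrac12 L_{sug}(-1)^2(f_2 s)$.
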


\begin{proposition} \cite{FFHST}
We have:
$$ [S, \widehat{sl(2) } ] = 0, $$
i.e., $S$ is a scrrening operator.
\end{proposition}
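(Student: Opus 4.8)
The plan is to show that the screening operator $S=s_0$ commutes with the action of $\widehat{sl(2)}$ on the module $\mathcal V = L^{Vir}(d_{p,p'},0)\otimes\Pi(0)\oplus\mathcal M_{2,1}$, equivalently that $[S,x(n)]=0$ for $x\in\{e,f,h\}$ and all $n\in\Z$. Since $e,f,h$ generate $V_k(sl(2))$ as a vertex algebra (inside $L^{Vir}(d_{p,p'},0)\otimes\Pi(0)$), and $S=s_0$ acts as a derivation of all the $n$-th products by the commutator formula $[s_0, a_n] = \sum_{j\ge 0}\binom{0}{j}(s_j a)_{n-j}=(s_0 a)_n + \cdots$ — more precisely, by the standard vertex-algebra identity one has $[s_0, Y(a,z)] = Y(s_0 a, z) + (\text{terms from }s_j a,\ j\ge 1)$, but here the relevant statement is simply that $s_0$ is a derivation of the vertex algebra structure — it suffices to check $s_0 e = s_0 h = s_0 f = 0$, where $e,h,f$ are the images under $\overline\Phi$ given by (\ref{def-e-3})--(\ref{def-f-3}).

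First I would dispose of $e$ and $h$. We have $e=e^{\frac{2}{k}(\mu-\nu)}$ and $s=v_{2,1}\otimes e^{\nu}$; since $\langle\nu,\frac{2}{k}(\mu-\nu)\rangle = -\frac{2}{k}\langle\nu,\nu\rangle = -\frac{2}{k}\cdot(-\frac{k}{2})=1$, the operator product $Y(s,z)e^{\frac{2}{k}(\mu-\nu)}$ has a pole of order at most one in the lattice part, and combined with the Virasoro factor (where $v_{2,1}$ paired against the vacuum contributes nonnegative powers) one finds that $s_n e = 0$ for $n\ge 0$; in particular $S e = s_0 e = 0$. Similarly, $h=2\mu(-1)=\left(\frac{k}{2}c+d\right)(-1)$ and $s_0 h$ is computed from the action of the Heisenberg modes on $e^{\nu}$ together with $L_{sug}$-weight considerations: since $\langle\nu,\mu\rangle=0$ there is no singular term, giving $s_0 h = 0$. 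These are the routine cases.

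The substantive case is $f$, and for this I would invoke Lemma \ref{ffhst}: the explicit computations there give $s_2 f = 2(k+1)\,v_{2,1}\otimes e^{\nu-\frac{2}{k}(\mu-\nu)}$, $s_1 f = kL(-1)v_{2,1}\otimes e^{\nu-\frac{2}{k}(\mu-\nu)} + (k+2)v_{2,1}\otimes\nu(-1)e^{\nu-\frac{2}{k}(\mu-\nu)}$, and crucially $s_0 f = Sf = 0$. Together with $s_0 e = 0$ and $s_0 h = 0$ this shows $S$ annihilates all three generators. Since $S$ is the zero mode of a vertex operator and hence a derivation of every $n$-th product $a_n b$, and since $e,h,f$ together with $\bf 1$ generate $V_k(sl(2))$ under these products, it follows that $S$ commutes with $x(n)$ for every $x\in\widehat{sl(2)}$ and every $n\in\Z$, i.e. $[S,\widehat{sl(2)}]=0$.

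The main obstacle is the case of $f$: the vector $f$ is a sum of several terms — a Virasoro piece $(k+2)\omega$, a piece $-\nu(-1)^2$, and $-(k+1)\nu(-2)$ — each dressed by $e^{-\frac{2}{k}(\mu-\nu)}$, and one must compute $Y(s,z)f$ as a product in the tensor product $L^{Vir}(d_{p,p'},0)\otimes V_L$. The Virasoro factor requires using the null-vector relation $L(-2)v_{2,1}=\frac{1}{k+2}L(-1)^2 v_{2,1}$ (which reflects $h^{2,1}=\frac34 k+1$ being a degenerate weight), while the lattice factor requires tracking the pole structure coming from $\langle\nu,-\frac{2}{k}(\mu-\nu)\rangle = \frac{2}{k}\langle\nu,\nu\rangle = -1$ together with the $\nu(-1),\nu(-2)$ contractions. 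I would not reproduce this calculation in detail, citing \cite{FFHST} and Lemma \ref{ffhst}; the point to emphasize is that the vanishing $s_0 f = 0$ is not formal but depends essentially on the precise value of $h^{2,1}$, i.e. on $v_{2,1}$ being the lowest weight vector of the $(2,1)$-module and on the admissibility relation $k+2=p'/p$.
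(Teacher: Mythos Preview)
Your proposal is correct and follows essentially the same route as the paper: verify $s_0 e = s_0 h = 0$ directly and invoke Lemma~\ref{ffhst} for $s_0 f = 0$, then use the commutator formula to conclude $[S,x(n)]=0$ for $x\in\{e,f,h\}$.

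One small point of exposition: the formula you wrote, $[s_0, Y(a,z)] = Y(s_0 a, z) + (\text{terms from }s_j a,\ j\ge 1)$, is wrong --- since $\binom{0}{j}=\delta_{j,0}$, Borcherds' commutator formula gives exactly $[s_0,a_n]=(s_0 a)_n$ with no extra terms. This is precisely what you need, and it makes the final step immediate: from $s_0 e = s_0 h = s_0 f = 0$ one gets $[S,e(n)]=[S,h(n)]=[S,f(n)]=0$ directly, without passing through the derivation property or the fact that $e,h,f$ generate $V_k(sl(2))$. Your detour via ``$s_0$ is a derivation'' and generation is not incorrect, just unnecessary. The paper's proof (three lines) uses the direct route.
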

\begin{proof}
 Since
\bea
&& s_n e = s_n h = 0  \quad (n \ge 0), \nonumber \eea
we get
$ [ S, e(n)] = [S, h(n)] = 0$.

By using Lemma \ref{ffhst} we get
$ [S, f(n)] =  (S f) (n) = 0$.
The claim follows.
\end{proof}

\subsection{Construction of logarithmic modules for $V_k(sl(2))$}
\begin{lemma}  \label{moduli-ext}
Assume that
% $\lambda  \in {\C}$
 $\ell \in {\Z}$, $ 1\le s \le p'-1$ , $ 1 \le r \le p-2$ and
\bea \label{uvjet}
%\frac{1}{2}  (1 - r + (k+2)  (s-1)  + 2 \lambda ) \in {\Z}.
\lambda \equiv  \lambda_{r,s} ^+ = \frac{1}{2}  \left(s - 1 - (k+2)  (r-1)  \right)  \quad   \mbox{mod}( {\Z}).
 \eea
 Then we have:
 \begin{itemize}
\item[(1)]
$\mathcal M^{\ell, +} _{r,s} (\lambda) =  L^{Vir} (d_{p,p'}, h_{p,p'} ^{r,s}) \otimes \Pi_{(\ell ) } ( \lambda) \bigoplus   L^{Vir} (d_{p,p'}, h_{p,p'} ^{r+1, s}) \otimes    \Pi_{(\ell +1)} ( - \frac{k}{2} +  \lambda) $
is a $\mathcal V$--module.

\item[(2)] $S^2 = 0 $ on $\mathcal M^{\ell,+} _{r,s} (\lambda)$.

\item[(3)]   Let  $\lambda = \lambda_{r,s} ^+ -1$. Then $$ S  (v_{r,s} \otimes e^{\ell \mu + {\lambda} c} ) =   C v_{r+1, s} \otimes  e^{(\ell+1 )  \mu + (\lambda-k/2) c} , $$ where $C \ne 0$. In particular, $ S \ne 0$ on $ \mathcal M^{\ell,+} _{r,s} (\lambda)$.

\end{itemize}
\end{lemma}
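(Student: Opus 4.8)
The plan is to prove the three assertions in turn, using the extended vertex algebra machinery of Subsection \ref{extended} together with the fusion rules and intertwining-operator facts recalled in Subsection \ref{Vir-subsect} and the structure of the $\Pi(0)$-modules from Section \ref{pi0}.

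For part (1), I would apply Lemma \ref{AdM-2012}. The extended vertex algebra is $\mathcal V = L^{Vir}(d_{p,p'},0)\otimes\Pi(0)\oplus\mathcal M_{2,1}$, where $\mathcal M_{2,1}=L^{Vir}(d_{p,p'},h_{p,p'}^{2,1})\otimes\Pi_{(1)}(-\tfrac k2)$. Take $M_2 = L^{Vir}(d_{p,p'},h_{p,p'}^{r,s})\otimes\Pi_{(\ell)}(\lambda)$ and $M_3=L^{Vir}(d_{p,p'},h_{p,p'}^{r+1,s})\otimes\Pi_{(\ell+1)}(-\tfrac k2+\lambda)$, both of which are genuine $L^{Vir}(d_{p,p'},0)\otimes\Pi(0)$-modules. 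By Lemma \ref{AdM-2012} it suffices to produce a non-trivial intertwining operator of type $\binom{M_3}{\mathcal M_{2,1}\ \ M_2}$ with integral powers of $z$. I would construct this as the tensor product $\mathcal Y = \mathcal Y_1\otimes Y_{\Pi(0)}$ of a Virasoro intertwining operator $\mathcal Y_1$ of type $\binom{L^{Vir}(d_{p,p'},h_{p,p'}^{r+1,s})}{L^{Vir}(d_{p,p'},h_{p,p'}^{2,1})\ \ L^{Vir}(d_{p,p'},h_{p,p'}^{r,s})}$ with the vertex operator $Y_{\Pi(0)}$ acting on the lattice-module factors. The Virasoro intertwiner is non-trivial because the fusion coefficient $\left[\begin{matrix}(r+1,s)\\(2,1)\ (r,s)\end{matrix}\right]_{(p,p')}=1$ for $1\le r\le p-2$ by the explicit fusion rules in Subsection \ref{Vir-subsect}; on the lattice side, $e^{\nu}$ acting on $\Pi_{(\ell)}(\lambda)$ lands in $\Pi_{(\ell+1)}(-\tfrac k2+\lambda)$ since $\nu$ shifts the $\mu$-charge appropriately (recall $\mu(-1)$-charge is what labels $\Pi_{(r)}$ and $c=\tfrac2k(\mu-\nu)$). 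The integrality of the powers of $z$ is exactly condition (\ref{uvjet}): the conformal weight mismatch $\Delta=h_{p,p'}^{2,1}+h_{p,p'}^{r,s}-h_{p,p'}^{r+1,s}$ on the Virasoro side combines with the $\langle\nu,\cdot\rangle$-pairing contribution from the lattice factor to an integer precisely when $\lambda\equiv\lambda_{r,s}^+\pmod{\Z}$; this is a direct computation using the formula $h_{p,p'}^{r,s}=\frac{(sp-rp')^2-(p-p')^2}{4pp'}$ and the bilinear form on $\Pi(0)$.

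For part (2), since $S=s_0$ and $\mathcal M^{\ell,+}_{r,s}(\lambda)$ is built from only two "steps" of the extended algebra, $S$ raises the $L^{Vir}\otimes\Pi(0)$-module index by one, so $S^2$ would map $L^{Vir}(d_{p,p'},h_{p,p'}^{r,s})\otimes\Pi_{(\ell)}(\lambda)$ to a third summand which is not present in $\mathcal M^{\ell,+}_{r,s}(\lambda)$; since $S$ preserves the module, $S^2=0$ automatically on the whole module. More precisely: $S$ acts as $Y_{\mathcal M}(s,z)$-residue, and on $M_2$ it produces elements of $M_3$, while $S$ acting on $M_3$ produces elements of a would-be $M_3'$ with Virasoro label $h_{p,p'}^{r+2,s}$ and $\mu$-index $\ell+2$, which is not a summand, hence the composite is zero. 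This grading argument is clean and I expect no difficulty.

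For part (3), set $\lambda=\lambda_{r,s}^+-1$ and consider the lowest weight vector $v_{r,s}\otimes e^{\ell\mu+\lambda c}$. The screening $S=s_0$ is the residue of $\mathcal Y(s,z)=\mathcal Y_1(v_{2,1},z)\otimes Y_{\Pi(0)}(e^{\nu},z)$. On the lattice factor, $(e^{\nu})_{n}$ acting on $e^{\ell\mu+\lambda c}$ produces $e^{(\ell+1)\mu+(\lambda-k/2)c}$ (up to a nonzero scalar, with a shift in $z$-power by $\langle\nu,\ell\mu+\lambda c\rangle$) — here I would compute $\langle\nu,\mu\rangle=0$, $\langle\nu,c\rangle=\tfrac2k\langle\nu,\mu-\nu\rangle = \tfrac2k\cdot\tfrac k2 = 1$, so the lattice shift is $\lambda$, an integer shift away from $\lambda_{r,s}^+$. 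Combining with the Virasoro intertwiner, the coefficient extracting $v_{r+1,s}$ is exactly the nonzero constant $C$ from the formula $(v_{h_1})_{\Delta-1}v_{h_2}=Cv_{h_3}$, $C\ne0$, recalled in Subsection \ref{Vir-subsect}, applied with $h_1=h_{p,p'}^{2,1}$, $h_2=h_{p,p'}^{r,s}$, $h_3=h_{p,p'}^{r+1,s}$. The precise choice $\lambda=\lambda_{r,s}^+-1$ rather than $\lambda_{r,s}^+$ is made so that $\Delta-1$ (the relevant mode index, combined with the lattice shift) hits exactly the component where the leading term of the intertwiner is nonzero; verifying this index bookkeeping is the one spot that needs care.

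The main obstacle will be the bookkeeping in part (1): matching the $z$-powers of the Virasoro intertwining operator (which live in $\Delta+\Z$ with $\Delta=h_{p,p'}^{2,1}+h_{p,p'}^{r,s}-h_{p,p'}^{r+1,s}$) against the $z$-powers coming from $Y_{\Pi(0)}(e^\nu,z)$ (which involve the pairing $\langle\nu,\ell\mu+\lambda c\rangle=\lambda$) and checking that their sum is integral exactly under the congruence (\ref{uvjet}). Once that integrality is established, Lemma \ref{AdM-2012} applies verbatim, and parts (2)–(3) follow from the grading/leading-term arguments sketched above.
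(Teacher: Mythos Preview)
Your proposal is correct and follows essentially the same route as the paper: tensor a Virasoro intertwining operator of type $\binom{L^{Vir}(d_{p,p'},h_{p,p'}^{r+1,s})}{L^{Vir}(d_{p,p'},h_{p,p'}^{2,1})\ \ L^{Vir}(d_{p,p'},h_{p,p'}^{r,s})}$ with the lattice-side intertwiner, verify integrality of $z$-powers via the congruence~(\ref{uvjet}) (the paper checks this by computing $\Delta=h_{p,p'}^{2,1}+h_{p,p'}^{r,s}-h_{p,p'}^{r+1,s}=\lambda_{r,s}^+$ and matching it against the $-\lambda+\Z$ grading on the $\Pi(0)$ side), apply Lemma~\ref{AdM-2012} for (1), use the two-step grading for (2), and extract the leading coefficient $(v_{2,1})_{\Delta-1}v_{r,s}\otimes e^\nu_{-\lambda-1}e^{\ell\mu+\lambda c}$ for (3). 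One minor imprecision: the lattice factor is not the vertex operator $Y_{\Pi(0)}$ itself but a genuine $\Pi(0)$-intertwining operator of type $\binom{\Pi_{(\ell+1)}(-\tfrac k2+\lambda)}{\Pi_{(1)}(-\tfrac k2)\ \ \Pi_{(\ell)}(\lambda)}$, since $e^\nu\notin\Pi(0)$; this does not affect your argument, as your pairing computations already treat it correctly.
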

\begin{proof}
In general, $L^{Vir} (d_{p,p'}, h_{p,p'} ^{r,s}) \otimes \Pi_{(\ell ) } ( \lambda) $ is a ${\Z}$--graded module whose conformal weights are congruent  $\mbox{mod} (\Z)$ to
 $ h_{p,p'}^{r,s} + \frac{1}{4} (k \ell ^ 2  +  4 (\ell +1 )  \lambda   ) $.  By direct calculation we see that
 $$ h_{p,p'} ^{r+1, s}   + \frac{1}{4} (( \ell +1)  ^ 2 k + 4 (\ell +2 ) (\lambda -\frac{k}{2}) )  \equiv  h_{p,p'}^{r,s} + \frac{1}{4} (\ell ^ 2 k + 4 (\ell +1 ) \lambda )  \qquad \mbox{mod} (\Z)$$
if and only if  (\ref{uvjet}) holds.  Therefore
we conclude    $L^{Vir} (d_{p,p'}, h_{p,p'} ^{r,s}) \otimes \Pi_{(\ell ) } ( \lambda) $ and $ L^{Vir} (d_{p,p'}, h_{p,p'} ^{r,s+1}) \otimes    \Pi_{(\ell +1)} ( - \frac{k}{2} +  \lambda) $ have conformal weights congruent
 $\mbox{mod} (\Z)$ if and only if    (\ref{uvjet}) holds.

 Let $\mathcal Y_1( \cdot, z)$ be the non-trivial intertwining operator of type
$$\binom{  L^{Vir} (d_{p,p'}, h_{p,p'} ^{r+1,s})}{L^{Vir} (d_{p,p'},h_{p,p'} ^{2,1})    \ \
L^{Vir} (d_{p,p'}, h_{p,p'} ^{r,s})  }  $$
Then $$ \mathcal Y_1 ( v , z) = \sum_{ r \in {\Z} + \Delta} v _r z ^{-r -1}  \quad (v \in   L^{Vir} (c_{p,p'},h_{p,p'} ^{2,1}))$$
where  $$\Delta = h_{p,p'} ^{2,1}+ h_{p,p'} ^{r,s} -  h_{p,p'} ^{r+1,s}  = \frac{1}{2} ( (s-1) - (r-1)(k+2)) = {\lambda}_{r,s} ^+ .$$
In particular we have  $$ ( v_{2,1} )_{\Delta-1} v_{r,s} = C_2 v_{r+1,s} \quad (C_2 \ne 0),   ( v_{2,1} )_{\Delta + n } v_{r,s} =  0 \quad (n \in {\Z}_{\ge 0}). $$

  Let $\mathcal Y_2( \cdot, z)$ be the non-trivial intertwining operator of type
$$\binom{  \Pi_{(\ell +1)} ( - \frac{k}{2} +  \lambda)}{\Pi_{(1)} ( - \tfrac{k}{2})     \ \
 \Pi_{(\ell ) } ( \lambda)    }  $$
Then $$ \mathcal Y_2 (v  , z) = \sum_{ r \in {\Z} - \lambda} v _r z ^{-r -1} \quad (v \in   \Pi_{(1)} ( - \tfrac{k}{2}) = \Pi(0) . e^{\nu})$$
In particular we have \bea  &&e^{\nu}_{-\lambda -1}   e^{\ell \mu + \lambda c} =  C_1 e^{(\ell+1 )  \mu + (\lambda-k/2) c}  \quad (C_1 \ne 0) \nonumber  \\
&&e^{\nu}_ {-\lambda -n-1 }   e^{\ell \mu + \lambda c}  \ne 0 ,  \quad e^{\nu}_{-\lambda + n }   e^{\ell \mu + \lambda c}  =0  \quad  (n \in {\Z_{\ge 0}} ) \nonumber \eea

 We conclude that there is an  non-trivial intertwining operator
$\mathcal Y = \mathcal Y_1 \otimes \mathcal Y_2  $  of type $$\binom{  L^{Vir} (c_{p,p'}, h_{p,p'} ^{r+1,s}) \otimes    \Pi_{(\ell +1)} ( - \frac{k}{2} +  \lambda)}{L^{Vir} (c_{p,p'},h_{p,p'} ^{2,1}) \otimes \Pi_{(1)} ( - \tfrac{k}{2})  \ \
L^{Vir} (c_{p,p'}, h_{p,p'} ^{r,s}) \otimes \Pi_{(\ell ) } ( \lambda) }$$
 with integral powers of $z$. Now, the  assertion (1)  follows by applying Lemma \ref{AdM-2012}.

 By construction  we have    $S ^2 = 0 $ on  $\widetilde { \mathcal M^{\ell, +} _{r,s} (\lambda) }$, so (2) holds.

 For  $\lambda = {\lambda_{r,s} ^+} -1 $ we have
\bea S  v_{r,s} \otimes  e^{\ell \mu + \lambda c} &=&    ( v_{2,1} )_{\Delta-1} v_{r,s}   \otimes   e^{\nu}_{-\lambda -1}   e^{\ell \mu + \lambda c}    \nonumber \\
&=& C_1 \cdot C_2 v_{r+1, s} \otimes  e^{(\ell+1 )  \mu + (\lambda-k/2) c}  \ne 0. \eea
The proof follows.
\end{proof}

By using the Virasoro  intertwining operator of type $$\binom{  L^{Vir} (d_{p,p'}, h_{p,p'} ^{r-1,s})}{L^{Vir} (d_{p,p'},h_{p,p'} ^{2,1})    \ \
L^{Vir} (d_{p,p'}, h_{p,p'} ^{r,s})  }  $$ and an analogous  proof to that of Lemma   \ref{moduli-ext} we get:
\begin{lemma}  \label{moduli-ext-2}
Assume that
% $\lambda  \in {\C}$
 $\ell \in {\Z}$, $ 1\le s \le p'-1$ , $ 2 \le r \le p-1$ and
\bea
%\frac{1}{2}  (1 - r + (k+2)  (s-1)  + 2 \lambda ) \in {\Z}.
\lambda \equiv  \lambda_{r,s} ^- = -\frac{1}{2}  \left(s + 1 - (k+2)  (r+1)  \right)  \quad   \mbox{mod}( {\Z}).
 \eea
 Then we have:
 \begin{itemize}
\item[(1)]
$\mathcal M^{\ell,-} _{r,s} (\lambda) =  L^{Vir} (d_{p,p'}, h_{p,p'} ^{r,s}) \otimes \Pi_{(\ell ) } ( \lambda) \bigoplus   L^{Vir} (d_{p,p'}, h_{p,p'} ^{r-1, s}) \otimes    \Pi_{(\ell +1)} ( - \frac{k}{2} +  \lambda) $
is a $\mathcal V$--module.

\item[(2)] $S^2 = 0 $ on $\mathcal M^{\ell, -} _{r,s} (\lambda)$.

\item[(3)]   Let  $\lambda = \lambda_{r,s} ^- -1$. Then $$ S  (v_{r,s} \otimes e^{\ell \mu + {\lambda} c} ) =   C v_{r-1, s} \otimes  e^{(\ell+1 )  \mu + (\lambda-k/2) c} , $$ where $C \ne 0$. %In particular, $ S \ne 0$ on $ \mathcal M^{\ell, -} _{r,s} (\lambda)$.

\end{itemize}
\end{lemma}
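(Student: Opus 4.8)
The plan is to follow the proof of Lemma~\ref{moduli-ext} word for word, replacing throughout the Virasoro intertwining operator of type $\binom{L^{Vir}(d_{p,p'},h_{p,p'}^{r+1,s})}{L^{Vir}(d_{p,p'},h_{p,p'}^{2,1})\ \ L^{Vir}(d_{p,p'},h_{p,p'}^{r,s})}$ by one of type $\binom{L^{Vir}(d_{p,p'},h_{p,p'}^{r-1,s})}{L^{Vir}(d_{p,p'},h_{p,p'}^{2,1})\ \ L^{Vir}(d_{p,p'},h_{p,p'}^{r,s})}$. The hypothesis $2\le r\le p-1$ is imposed precisely so that $1\le r-1\le p-1$, i.e. so that $L^{Vir}(d_{p,p'},h_{p,p'}^{r-1,s})$ is an honest irreducible $L^{Vir}(d_{p,p'},0)$-module; by the Virasoro fusion rules recalled in Subsection~\ref{Vir-subsect} the $(2,1)$-row carries $(r,s)$ to $(r-1,s)\oplus(r+1,s)$, so the relevant fusion coefficient equals $1$ and a non-trivial intertwining operator $\mathcal Y_1(\cdot,z)$ of that type exists. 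Writing $\Delta=h_{p,p'}^{2,1}+h_{p,p'}^{r,s}-h_{p,p'}^{r-1,s}$ we have $\mathcal Y_1(v,z)=\sum_{n\in\Z+\Delta}v_n z^{-n-1}$, with $(v_{2,1})_{\Delta-1}v_{r,s}=C_2 v_{r-1,s}$ for some $C_2\ne0$ and $(v_{2,1})_{\Delta+n}v_{r,s}=0$ for $n\ge0$.

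The only genuinely new point -- and the step I expect to need the most care -- is the bookkeeping of conformal weights. First one verifies the identity $\Delta=\lambda_{r,s}^-$: substituting $h_{p,p'}^{r,s}=\frac{(sp-rp')^2-(p-p')^2}{4pp'}$ and using $k+2=p'/p$ gives $h_{p,p'}^{2,1}+h_{p,p'}^{r,s}-h_{p,p'}^{r-1,s}=\frac{(r+1)p'-(s+1)p}{2p}=\tfrac12\big((k+2)(r+1)-(s+1)\big)=\lambda_{r,s}^-$. Next, using $\omega_{sug}=\omega_{p,p'}+\omega_{\Pi(0)}$ (Remark~\ref{central-charge}) together with the weight computation of Proposition~\ref{char-pi0}, one checks that $L^{Vir}(d_{p,p'},h_{p,p'}^{r,s})\otimes\Pi_{(\ell)}(\lambda)$ and $L^{Vir}(d_{p,p'},h_{p,p'}^{r-1,s})\otimes\Pi_{(\ell+1)}(-\tfrac{k}{2}+\lambda)$ have $L_{sug}(0)$-weights congruent $\mbox{mod}\,(\Z)$ if and only if $\lambda\equiv\lambda_{r,s}^-\ \mbox{mod}\,(\Z)$, which is exactly the hypothesis. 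As in the proof of Lemma~\ref{moduli-ext}, the lattice vertex operator $Y(e^\nu,z)$ provides a non-trivial $\Pi(0)$-intertwining operator $\mathcal Y_2(\cdot,z)$ of type $\binom{\Pi_{(\ell+1)}(-\frac{k}{2}+\lambda)}{\Pi_{(1)}(-\frac{k}{2})\ \ \Pi_{(\ell)}(\lambda)}$ with powers of $z$ in $\Z-\lambda$ and $e^\nu_{-\lambda-1}e^{\ell\mu+\lambda c}=C_1 e^{(\ell+1)\mu+(\lambda-k/2)c}$, $C_1\ne0$. Hence $\mathcal Y=\mathcal Y_1\otimes\mathcal Y_2$ is a non-trivial intertwining operator of type $\binom{L^{Vir}(d_{p,p'},h_{p,p'}^{r-1,s})\otimes\Pi_{(\ell+1)}(-\frac{k}{2}+\lambda)}{L^{Vir}(d_{p,p'},h_{p,p'}^{2,1})\otimes\Pi_{(1)}(-\frac{k}{2})\ \ L^{Vir}(d_{p,p'},h_{p,p'}^{r,s})\otimes\Pi_{(\ell)}(\lambda)}$ with integral powers of $z$.

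Now Lemma~\ref{AdM-2012}, applied with $M=\mathcal M_{2,1}$, $M_2=L^{Vir}(d_{p,p'},h_{p,p'}^{r,s})\otimes\Pi_{(\ell)}(\lambda)$, $M_3=L^{Vir}(d_{p,p'},h_{p,p'}^{r-1,s})\otimes\Pi_{(\ell+1)}(-\tfrac{k}{2}+\lambda)$ and this $\mathcal Y$, shows that $\mathcal M^{\ell,-}_{r,s}(\lambda)=M_2\oplus M_3$ is a $\mathcal V$-module, which is~(1). By the module structure in Lemma~\ref{AdM-2012} the screening $S=s_0$ (for $s=v_{2,1}\otimes e^\nu\in\mathcal M_{2,1}$) maps $M_2$ into $M_3$ and annihilates $M_3$, so $S^2=0$ on $\mathcal M^{\ell,-}_{r,s}(\lambda)$, giving~(2). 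Finally, for~(3) take $\lambda=\lambda_{r,s}^--1=\Delta-1$ and expand $S(v_{r,s}\otimes e^{\ell\mu+\lambda c})=\mbox{Res}_z\,\mathcal Y(v_{2,1}\otimes e^\nu,z)(v_{r,s}\otimes e^{\ell\mu+\lambda c})$; the vanishing conditions on $(v_{2,1})_{\Delta+n}v_{r,s}$ and $e^\nu_{-\lambda+n}e^{\ell\mu+\lambda c}$ for $n\ge0$ leave a single surviving term, namely $(v_{2,1})_{\Delta-1}v_{r,s}\otimes e^\nu_{-\lambda-1}e^{\ell\mu+\lambda c}=C_1C_2\,v_{r-1,s}\otimes e^{(\ell+1)\mu+(\lambda-k/2)c}\ne0$, whence $C=C_1C_2\ne0$.
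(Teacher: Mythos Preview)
Your proposal is correct and follows exactly the approach the paper takes: the paper's proof of Lemma~\ref{moduli-ext-2} simply says to use the Virasoro intertwining operator of type $\binom{L^{Vir}(d_{p,p'},h_{p,p'}^{r-1,s})}{L^{Vir}(d_{p,p'},h_{p,p'}^{2,1})\ \ L^{Vir}(d_{p,p'},h_{p,p'}^{r,s})}$ and run an argument analogous to that of Lemma~\ref{moduli-ext}. You have faithfully carried this out, and in fact supplied more detail than the paper does---in particular the explicit verification that $\Delta=h_{p,p'}^{2,1}+h_{p,p'}^{r,s}-h_{p,p'}^{r-1,s}=\lambda_{r,s}^{-}$, which is the key new computation.
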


%For $v \in \mathcal V$ we define
%$$ \Delta(v,z) = z^{v_0} \exp \left(  \sum_{n =1} ^{\infty} \frac{v_n}{-n} (-z ) ^{-n} \right). $$

Next result shows that at a non-integral admissible levels, logarithmic modules always exist. By applying  Theorem \ref{AdM-selecta} and taking $v=s$ we get:
\begin{proposition}
Assume that $(\mathcal M, \mathcal Y_{\mathcal M})$ is any $\mathcal V$--module. Then
$$ (\widetilde {\mathcal M},\widetilde { \mathcal Y_{\mathcal M}} (\cdot, z)  ) := (\mathcal M, \mathcal Y_{\mathcal M} (\Delta(s,z) \cdot, z))$$
is a $V_k(sl(2))$--module such that
$$ \widetilde L_{sug} (0) = L_{sug} (0) + S. $$

In particular,  $\widetilde {\mathcal V}$ is a logarithmic $V_k(sl(2))$--module of $ \widetilde L_{sug} (0)$ nilpotent rank   two.

%\item[(2)]  $\widetilde { \mathcal M^{\ell} _{r,s} (\lambda ) }$ is a logarithmic $V_k(sl(2))$--module of $ \widetilde L_{sug} (0)$ nilpotent rank   two, where
% $\ell \in {\Z}$, $ 1\le s \le p'-1$ , $ 1 \le r \le p-2$ and $\lambda = \lambda_{r,s} ^+$.

 %\end{itemize}
\end{proposition}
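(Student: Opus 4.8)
The plan is to invoke Theorem \ref{AdM-selecta} with the vertex operator algebra taken to be $V = L^{Vir}(d_{p,p'},0)\otimes V_L$ (the lattice vertex algebra containing $L^{Vir}(d_{p,p'},0)\otimes \Pi(0)$), the distinguished conformal weight one vector taken to be $v = s = v_{2,1}\otimes e^{\nu}$, and $\overline V = V_k(sl(2))$ sitting inside $\mathrm{Ker}_V\, s_0$. First I would verify the hypotheses (\ref{rel-c-1}) and (\ref{rel-c-2}): relation (\ref{rel-c-2}), that $s$ has conformal weight one for the Sugawara Virasoro element, is exactly the identity $L_{sug}(n)s = \delta_{n,0}s$ for $n\ge 0$ established just above via formula (\ref{sug}); relation (\ref{rel-c-1}), that $[s_n,s_m]=0$ for all $n,m$, follows from the lattice vertex algebra commutator formula since $\langle \nu,\nu\rangle = -k/2$ need not vanish — wait, one must be slightly careful here: the required vanishing of all $s_n s_m$-brackets is a statement about the screening field $s=v_{2,1}\otimes e^\nu$, and the relevant point is that $s(z)s(w)$ has no singular terms, which is the standard computation for screening currents of this type (this is part of what is recorded in Lemma \ref{ffhst} and the surrounding discussion from \cite{FFHST}); I would cite that. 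Crucially, $V_k(sl(2))\subseteq \mathrm{Ker}_V\, s_0 = \mathrm{Ker}_V\, S$ because the previous Proposition shows $[S,\widehat{sl(2)}]=0$ and $S\mathbf 1 = 0$, so $S$ annihilates every element of the vertex subalgebra generated by $e,h,f$.

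Given these hypotheses, part (1) of Theorem \ref{AdM-selecta} applies to any $\mathcal V$-module $(\mathcal M,\mathcal Y_{\mathcal M})$: the twisted structure $(\widetilde{\mathcal M}, \widetilde{\mathcal Y}_{\mathcal M}(\cdot,z)) = (\mathcal M,\mathcal Y_{\mathcal M}(\Delta(s,z)\cdot,z))$ is a weak module for $\overline V = V_k(sl(2))$. Part (2) of that theorem gives $\widetilde L_{sug}(0) = L_{sug}(0) + s_0 = L_{sug}(0) + S$, provided $L_{sug}(0)$ acts semisimply on $\mathcal M$ — which it does on $\mathcal V$ and on the modules $\mathcal M^{\ell,\pm}_{r,s}(\lambda)$ constructed in Lemmas \ref{moduli-ext} and \ref{moduli-ext-2}, since these are direct sums of tensor products of irreducible Virasoro modules (on which $L(0)$ is semisimple) with the $\Z_{\ge 0}$-graded modules $\Pi_{(\ell)}(\lambda)$ on which $\overline L(0)$ is semisimple by Proposition \ref{char-pi0}.

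For the "in particular" statement about $\widetilde{\mathcal V}$, I take $\mathcal M = \mathcal V = L^{Vir}(d_{p,p'},0)\otimes\Pi(0)\oplus\mathcal M_{2,1}$ itself as a module over itself. By part (2) of Theorem \ref{AdM-selecta}, $\widetilde{\mathcal V}$ is a logarithmic $V_k(sl(2))$-module if and only if $S = s_0$ does not act semisimply on $\mathcal V$. Since $S^2 = 0$ on $\mathcal V$ (this is the $S^2=0$ assertion, which on the full extended algebra follows because $S$ maps the summand $\mathcal M_{2,1}$ into $L^{Vir}(d_{p,p'},0)\otimes\Pi(0)$ via an intertwining-operator-type product and maps the latter summand forward by the same mechanism, composing to zero for degree/weight reasons — the same argument as in Lemmas \ref{moduli-ext}(2), \ref{moduli-ext-2}(2)), it suffices to exhibit one vector on which $S$ acts nontrivially, i.e. $S \ne 0$ on $\mathcal V$. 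This is immediate: $S(v_{2,1}\otimes e^{\nu})$ — no, better, $S$ acting on $L^{Vir}(d_{p,p'},0)\otimes\Pi(0)$ produces a nonzero element of $\mathcal M_{2,1}$, e.g. applying the nilpotency/nontriviality computation of Lemma \ref{ffhst} ($s_2 f = 2(k+1)v_{2,1}\otimes e^{\nu-\frac2k(\mu-\nu)}\neq 0$ when $k\neq -1$, so the screening current is not identically zero as an operator into $\mathcal M_{2,1}$; for $k=-1$ one instead uses that $\lambda$ is non-integral admissible so some other component of $Y(s,z)$ acts nontrivially). Hence $S$ is a nonzero nilpotent operator of square zero, so $\widetilde L_{sug}(0) = L_{sug}(0) + S$ has Jordan blocks of size exactly two, giving nilpotent rank two. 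The main obstacle, and the point I would flesh out most carefully, is confirming that $S$ genuinely acts nonsemisimply (equivalently nonzero) on $\widetilde{\mathcal V}$ for \emph{every} admissible non-integral $k$, i.e. handling any degenerate values of $k$ uniformly; once $S\neq 0$ and $S^2=0$ are in hand, the logarithmic structure and the rank-two claim are a formal consequence of Theorem \ref{AdM-selecta}.
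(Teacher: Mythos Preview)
Your overall strategy---invoke Theorem~\ref{AdM-selecta} with $v=s$ and $\overline V = V_k(sl(2))$, then verify $S\ne 0$ and $S^2=0$ on $\mathcal V$---matches the paper's. But there are two genuine gaps in how you carry it out.

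First, your setup places $s$ in the wrong ambient algebra. You take $V = L^{Vir}(d_{p,p'},0)\otimes V_L$, but $v_{2,1}$ lives in the \emph{module} $L^{Vir}(d_{p,p'},h_{p,p'}^{2,1})$, not in $L^{Vir}(d_{p,p'},0)$; so $s=v_{2,1}\otimes e^\nu$ is not an element of that tensor product. The correct ambient is the extended vertex algebra $\mathcal V = \bigl(L^{Vir}(d_{p,p'},0)\otimes\Pi(0)\bigr)\oplus\mathcal M_{2,1}$ of Subsection~\ref{extended}. Once you work there, condition~(\ref{rel-c-1}) is automatic: by the definition of $Y_{\mathcal V}$ on $V\oplus M$ one has $Y_{\mathcal V}(w_1,z)w_2=0$ for $w_1,w_2\in M$, hence $s_n s=0$ for all $n$ and $[s_n,s_m]=0$ trivially. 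No OPE computation is needed. This structural fact also gives the correct direction of $S$: since $s\in M=\mathcal M_{2,1}$, the operator $S=s_0$ sends $L^{Vir}(d_{p,p'},0)\otimes\Pi(0)$ into $\mathcal M_{2,1}$ and annihilates $\mathcal M_{2,1}$ (not the other way around, as you wrote), whence $S^2=0$.

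Second, your nontriviality argument does not prove what you need. You cite $s_2 f = 2(k+1)\,v_{2,1}\otimes e^{\nu-\frac{2}{k}(\mu-\nu)}\ne 0$, but this only shows that the \emph{field} $Y(s,z)$ is nonzero; it says nothing about the zero mode $S=s_0$. You must exhibit a vector on which $s_0$ itself is nonzero. The paper does this cleanly with
\[
S\,\nu(-1)\mathbf{1} \;=\; (v_{2,1}\otimes e^{\nu})_0\,\nu(-1)\mathbf{1} \;=\; \tfrac{k}{2}\,v_{2,1}\otimes e^{\nu}\;\ne\;0,
\]
which is valid for every admissible non-integral $k$ (in particular $k\ne 0$), so no separate treatment of $k=-1$ is required. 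With $S\ne 0$ and $S^2=0$ established on $\mathcal V$, Theorem~\ref{AdM-selecta}(2) gives the logarithmic structure of nilpotent rank exactly two.
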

\begin{proof}
First claim follows directly by  applying  Theorem \ref{AdM-selecta}.

The assertion (1) follows from the following observations:
\begin{itemize}
\item $L_{sug}(0)$ acts semi-simply on $\mathcal V$, and $\widetilde L_{sug} (0) - L_{sug} (0)  = S$ on  $\widetilde {\mathcal V}$.
\item By construction $ S (  L ^{Vir} (d_{p,p'}, 0) \otimes \Pi(0)   )  \subset \mathcal M_{2,1}$, $ S (\mathcal M_{2,1} )= 0$, so $S^2 = 0$ on $\mathcal V$.
Since $ S \nu (-1) {\bf 1} = \frac{k}{2}v_{2,1} \otimes e^{\nu} \ne 0 $ we have
$$ ( \widetilde L_{sug} (0) - L_{sug} (0)  )  \ne 0, ( \widetilde L_{sug} (0) - L_{sug} (0)  )  ^2 = 0 \quad \mbox{on } \  \widetilde {\mathcal V}. $$
\end{itemize}
\end{proof}

Using Lemma \ref{moduli-ext} we obtain:

\begin{corollary}   \label{cor-log-1}  Assume that
% $\lambda  \in {\C}$
 $\ell \in {\Z}$, $ 1\le s \le p'-1$ , $ 1 \le r \le p-2$ and $\lambda = \lambda_{r,s} ^+$.  Then we have:
 \begin{itemize}
 \item[(1)]  $\widetilde { \mathcal M^{\ell, +} _{r,s} (\lambda ) }$ is a logarithmic $V_k(sl(2))$--module of $ \widetilde L_{sug} (0)$ nilpotent rank   two,
 \item[(2)] The logarithmic module $\widetilde { \mathcal M^{\ell, +} _{r,s} (\lambda  ) }$ appears in the following  non-split extension of weight $V_k (sl(2))$--modules:
 $$ 0 \rightarrow     L^{Vir} (d_{p,p'}, h_{p,p'} ^{r+1, s})    \otimes \Pi_{(\ell +1)} ( - \frac{k}{2} +  \lambda)  \rightarrow \widetilde { \mathcal M^{\ell, +} _{r,s} (\lambda) } \rightarrow       L^{Vir} (d_{p,p'}, h_{p,p'} ^{r,s}) \otimes \Pi_{(\ell ) } ( \lambda)  \rightarrow 0. $$
 \end{itemize}
\end{corollary}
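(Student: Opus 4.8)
The plan is to push the $\mathcal{V}$-module $\mathcal{M}^{\ell,+}_{r,s}(\lambda)$ produced by Lemma \ref{moduli-ext} through the general device recorded in the Proposition immediately above (the instance of Theorem \ref{AdM-selecta} with $v=s$), and then read off the submodule/quotient structure directly from the construction in Lemma \ref{AdM-2012}. \textbf{Part (1).} By Lemma \ref{moduli-ext}(1), $\mathcal{M}^{\ell,+}_{r,s}(\lambda)$ is a $\mathcal{V}$-module, so the preceding Proposition immediately yields a $V_k(sl(2))$-module $\widetilde{\mathcal M^{\ell,+}_{r,s}(\lambda)} = (\mathcal{M}^{\ell,+}_{r,s}(\lambda), \mathcal{Y}_{\mathcal M}(\Delta(s,z)\cdot,z))$ with $\widetilde L_{sug}(0) = L_{sug}(0) + S$. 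Now $L_{sug}(0)$ acts semisimply on $\mathcal{M}^{\ell,+}_{r,s}(\lambda)$: by (\ref{sug-cd}) it is the sum of the Virasoro zero-mode on the minimal-model factors and of $\overline L(0)$ on the $\Pi(0)$-factors, and on the $\mathbb{Z}_{\ge 0}$-graded weight modules $\Pi_{(\bullet)}(\bullet)$ (Proposition \ref{char-pi0}, Lemma \ref{Li}) the operator $\overline L(0)$ is semisimple. Since $S$ commutes with $L_{sug}(n)$ for all $n$, the operator $\widetilde L_{sug}(0)$ has semisimple part $L_{sug}(0)$ and nilpotent part $S$, hence the same generalized-eigenspace decomposition as $L_{sug}(0)$; thus $\widetilde{\mathcal M^{\ell,+}_{r,s}(\lambda)}$ is logarithmic. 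Finally $S^2=0$ by Lemma \ref{moduli-ext}(2), while $S\ne0$ on $\mathcal{M}^{\ell,+}_{r,s}(\lambda)$ by Lemma \ref{moduli-ext}(3) (the module $\mathcal{M}^{\ell,+}_{r,s}(\lambda)$ depends only on $\lambda$ mod $\mathbb{Z}$, since $\Pi(0)$ contains $e^{\pm c}$, so one may exhibit the nontrivial action of $S$ on the representative $\lambda = \lambda_{r,s}^+-1$). Therefore $(\widetilde L_{sug}(0)-L_{sug}(0))^2 = S^2 = 0$ and $\widetilde L_{sug}(0)-L_{sug}(0)=S\ne 0$, so the nilpotent rank is exactly two.

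\textbf{Part (2).} As a vector space $\widetilde{\mathcal M^{\ell,+}_{r,s}(\lambda)} = \mathcal{M}_2\oplus\mathcal{M}_3$ with $\mathcal{M}_2 = L^{Vir}(d_{p,p'},h_{p,p'}^{r,s})\otimes\Pi_{(\ell)}(\lambda)$ and $\mathcal{M}_3 = L^{Vir}(d_{p,p'},h_{p,p'}^{r+1,s})\otimes\Pi_{(\ell+1)}(-k/2+\lambda)$, where $\mathcal{V}$ acts through the intertwining operator $\mathcal{Y}=\mathcal{Y}_1\otimes\mathcal{Y}_2$ of type $\binom{\mathcal M_3}{\mathcal M_{2,1}\ \mathcal M_2}$ used in the proof of Lemma \ref{moduli-ext}. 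From the formula of Lemma \ref{AdM-2012}, the part $\mathcal{M}_{2,1}\subset\mathcal{V}$ sends $\mathcal{M}_2$ into $\mathcal{M}_3$ and annihilates $\mathcal{M}_3$; in particular $\mathcal{M}_3$ is a $\mathcal{V}$-submodule on which $s_0=S$ vanishes, and $\mathcal{M}_2$ carries the plain $V=L^{Vir}(d_{p,p'},0)\otimes\Pi(0)$-action on $\mathcal{M}/\mathcal{M}_3$. Passing to the $\Delta(s,z)$-twist: since $s_n$ maps $V$ into $\mathcal{M}_{2,1}$ and kills $\mathcal{M}_{2,1}$, one gets $\Delta(s,z)a - a\in\mathcal{M}_{2,1}[[z,z^{-1}]]$ for $a\in V_k(sl(2))\subseteq\overline{\mathcal V}$, and then $\widetilde Y_{\mathcal M}(a,z)$ restricted to $\mathcal{M}_3$ agrees with $Y_{\mathcal M_3}(a,z)$ while on $\mathcal{M}/\mathcal{M}_3$ it agrees with $Y_{\mathcal M_2}(a,z)$. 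Hence $\mathcal{M}_3$ remains a $V_k(sl(2))$-submodule with its original structure and the quotient is $\mathcal{M}_2$ with its original structure, giving the short exact sequence of statement (2); these are weight modules, and the twist does not disturb that since $[h(0),s_n]=0$ (because $\langle\mu,\nu\rangle=0$). Non-splitness follows from Part (1): a splitting would make $\widetilde L_{sug}(0)=L_{sug}(0)+S$ act semisimply on the whole module (it does on each of $\mathcal{M}_2$ and $\mathcal{M}_3$), contradicting nilpotent rank two; equivalently, by Lemma \ref{moduli-ext}(3), $S$ has a nonzero component $\mathcal{M}_2\to\mathcal{M}_3$.

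The bulk of the work has already been done in Lemma \ref{moduli-ext}, Lemma \ref{AdM-2012} and Theorem \ref{AdM-selecta}, so there is no serious analytic difficulty. The one point requiring care — and the most likely place for a slip — is the bookkeeping in Part (2): determining which of the two summands becomes the submodule and which the quotient after the $\Delta(s,z)$-twist, and checking that the twist does not mix them beyond the single off-diagonal term produced by $S$. A minor secondary point, worth stating explicitly, is the insensitivity of $\mathcal{M}^{\ell,+}_{r,s}(\lambda)$ to integer shifts of $\lambda$, which is what allows invoking Lemma \ref{moduli-ext}(3) (phrased for $\lambda_{r,s}^+-1$) at the value $\lambda=\lambda_{r,s}^+$ appearing in the corollary.
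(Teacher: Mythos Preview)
Your proof is correct and follows essentially the same route as the paper: invoke Lemma~\ref{moduli-ext} for $S\ne 0$, $S^2=0$, apply the $\Delta(s,z)$--twist from Theorem~\ref{AdM-selecta} to obtain nilpotent rank two, identify $\mathcal M_3$ as the submodule killed by $S$ (hence a weight submodule), and conclude non-splitness from the non-semisimplicity of $\widetilde L_{sug}(0)$. Your version is in fact more careful on two points the paper leaves implicit: you explicitly justify why the $S\ne 0$ statement of Lemma~\ref{moduli-ext}(3), phrased at $\lambda=\lambda_{r,s}^+-1$, applies at $\lambda=\lambda_{r,s}^+$ (since $\Pi_{(\ell)}(\lambda)$ depends only on $\lambda\ \mathrm{mod}\ \Z$), and you spell out why $\mathcal M_3$ remains a $V_k(sl(2))$--submodule after the twist via $\Delta(s,z)a-a\in\mathcal M_{2,1}[[z,z^{-1}]]$.
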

\begin{proof}
  Lemma \ref{moduli-ext}  gives that $S \ne 0$ and $S ^2 = 0$ on   $\mathcal M^{\ell} _{r,s} (\lambda )$.

This implies that
 $$ ( \widetilde L_{sug} (0) - L_{sug} (0)  )  \ne 0, ( \widetilde L_{sug} (0) - L_{sug} (0)  )  ^2 = 0 \quad \mbox{on } \  \widetilde { \mathcal M^{\ell, +} _{r,s} (\lambda ) },$$
 which proves the assertion (1).

 Note that    $L^{Vir} (d_{p,p'}, h_{p,p'} ^{r+1, s})    \otimes \Pi_{(\ell +1)} ( - \frac{k}{2} +  \lambda)  $ is a submodule of $\widetilde { \mathcal M^{\ell, +} _{r,s} (\lambda ) }$ on  which $S$ acts trivially. Therefore  $L^{Vir} (d_{p,p'}, h_{p,p'} ^{r+1, s})    \otimes \Pi_{(\ell +1)} ( - \frac{k}{2} +  \lambda)  $  is a weight submodule. The quotient module is isomorphic to  the weight module $L^{Vir} (d_{p,p'}, h_{p,p'} ^{r,s}) \otimes \Pi_{(\ell ) } ( \lambda) $.

 Since $\widetilde { \mathcal M^{\ell, +} _{r,s} (\lambda) }$ is non-weight module by (1), we have that the sequence  in (2) does not split. The proof follows.
\end{proof}

Similarly, using   Lemma \ref{moduli-ext-2} we obtain:

\begin{corollary}   \label{cor-log-2}     Assume that
% $\lambda  \in {\C}$
 $\ell \in {\Z}$, $ 1\le s \le p'-1$ , $ 2 \le r \le p-1$ and $\lambda = \lambda_{r,s} ^-$.  Then we have:
 \begin{itemize}
 \item[(1)]  $\widetilde { \mathcal M^{\ell, -} _{r,s} (\lambda ) }$ is a logarithmic $V_k(sl(2))$--module of $ \widetilde L_{sug} (0)$ nilpotent rank   two,
 \item[(2)] The logarithmic module $\widetilde { \mathcal M^{\ell, -} _{r,s} (\lambda  ) }$ appears in the following  non-split extension of weight $V_k (sl(2))$--modules:
 $$ 0 \rightarrow     L^{Vir} (d_{p,p'}, h_{p,p'} ^{r-1, s})    \otimes \Pi_{(\ell +1)} ( - \frac{k}{2} +  \lambda)  \rightarrow \widetilde { \mathcal M^{\ell,-} _{r,s} (\lambda) } \rightarrow       L^{Vir} (d_{p,p'}, h_{p,p'} ^{r,s}) \otimes \Pi_{(\ell ) } ( \lambda)  \rightarrow 0. $$
 \end{itemize}
\end{corollary}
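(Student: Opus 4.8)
The plan is to mirror the proof of Corollary \ref{cor-log-1}, with Lemma \ref{moduli-ext-2} replacing Lemma \ref{moduli-ext} throughout. First I would apply Lemma \ref{moduli-ext-2}(1) to regard $\mathcal M^{\ell,-}_{r,s}(\lambda)$ as a $\mathcal V$-module, and then invoke Theorem \ref{AdM-selecta} with $v=s$ (as in the Proposition preceding Corollary \ref{cor-log-1}) to obtain $\widetilde{\mathcal M^{\ell,-}_{r,s}(\lambda)}$ as a $V_k(sl(2))$-module on which $\widetilde L_{sug}(0)=L_{sug}(0)+S$. Here one uses that $L_{sug}(0)$ acts semisimply on $\mathcal M^{\ell,-}_{r,s}(\lambda)$, being a direct sum of tensor products of irreducible Virasoro modules with weight $\Pi(0)$-modules.

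For part (1), I would use Lemma \ref{moduli-ext-2}(2),(3): one has $S^2=0$ on $\mathcal M^{\ell,-}_{r,s}(\lambda)$, while $S\ne 0$ there, the nonvanishing being witnessed on the vector $v_{r,s}\otimes e^{\ell\mu+(\lambda_{r,s}^- -1)c}$, which lies in $L^{Vir}(d_{p,p'},h_{p,p'}^{r,s})\otimes\Pi_{(\ell)}(\lambda)$ since $\Pi_{(\ell)}(\lambda)=\Pi_{(\ell)}(\lambda-1)$ (so the choice $\lambda=\lambda_{r,s}^-$ in the statement is merely a choice of representative modulo $\Z$). Because $s$ has $L_{sug}$-conformal weight one, $S=s_0$ commutes with the semisimple operator $L_{sug}(0)$; hence $L_{sug}(0)$ is the semisimple part of $\widetilde L_{sug}(0)$ and $\widetilde L_{sug}(0)-L_{sug}(0)=S$ is its nilpotent part, with $S\ne 0$ and $S^2=0$. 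This yields that $\widetilde{\mathcal M^{\ell,-}_{r,s}(\lambda)}$ is logarithmic of $\widetilde L_{sug}(0)$-nilpotent rank exactly two.

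For part (2), I would observe that $L^{Vir}(d_{p,p'},h_{p,p'}^{r-1,s})\otimes\Pi_{(\ell+1)}(-\tfrac{k}{2}+\lambda)$ is the summand of $\mathcal M^{\ell,-}_{r,s}(\lambda)$ on which $S$ acts trivially: by the definition of the $\mathcal V$-module structure in Lemma \ref{AdM-2012}, the intertwining part $\mathcal Y_1\otimes\mathcal Y_2$ sends the first summand into the second and annihilates the second. Consequently, on this second summand $\widetilde{\mathcal Y}$ agrees with the untwisted action of $L^{Vir}(d_{p,p'},0)\otimes\Pi(0)\subset\mathcal V$, so it is a weight $V_k(sl(2))$-submodule of $\widetilde{\mathcal M^{\ell,-}_{r,s}(\lambda)}$, and the quotient is the weight module $L^{Vir}(d_{p,p'},h_{p,p'}^{r,s})\otimes\Pi_{(\ell)}(\lambda)$. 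Since $\widetilde{\mathcal M^{\ell,-}_{r,s}(\lambda)}$ is non-weight by (1), the resulting short exact sequence cannot split.

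I do not expect any genuine obstacle at this stage: everything nontrivial has already been absorbed into Lemma \ref{moduli-ext-2}, whose proof in turn requires only (i) the existence of a nontrivial Virasoro intertwining operator of type $\binom{L^{Vir}(d_{p,p'},h_{p,p'}^{r-1,s})}{L^{Vir}(d_{p,p'},h_{p,p'}^{2,1})\ \ L^{Vir}(d_{p,p'},h_{p,p'}^{r,s})}$ — which is exactly what forces $h_{p,p'}^{r-1,s}\in\mathcal S_{p,p'}$ and hence the constraint $2\le r\le p-1$ — and (ii) the congruence computation of conformal weights guaranteeing integral powers of $z$ in $\mathcal Y_1\otimes\mathcal Y_2$. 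Both are settled in the statement and proof of Lemma \ref{moduli-ext-2}, so the present corollary is a direct specialization of that lemma together with Theorem \ref{AdM-selecta}.
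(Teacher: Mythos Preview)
Your proposal is correct and follows exactly the approach the paper intends: the paper itself says only ``Similarly, using Lemma~\ref{moduli-ext-2} we obtain'' before stating Corollary~\ref{cor-log-2}, so the proof is precisely the one of Corollary~\ref{cor-log-1} with Lemma~\ref{moduli-ext-2} in place of Lemma~\ref{moduli-ext}. Your observation that $\Pi_{(\ell)}(\lambda)=\Pi_{(\ell)}(\lambda-1)$, which reconciles the choice $\lambda=\lambda_{r,s}^-$ in the corollary with $\lambda=\lambda_{r,s}^--1$ in Lemma~\ref{moduli-ext-2}(3), is the only point the paper leaves implicit, and you have handled it correctly.
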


\section{A realization of  the $N=3$ superconformal vertex algebra  $W_{k'} (spo(2,3), f_{ \theta} )$ for $k'=-1/3$ }
\label{real-N3}
The cases $k \in \{-1/2, -4/3\}$ were already studied in the literature. In these cases the quantum Hamiltonian reduction maps $V_k(sl(2))$ to the trivial vertex algebra, and therefore  the affine vertex algebra $V_k(sl(2)$ is realizead as a vertex subalgebra of $\Pi(0)$. In the case $k=-1/2$, $V_k(sl(2))$ admits a realization as a subalgebra of the Weyl vertex algebra and it is also related with the triplet vertex algebra $\mathcal W(2)$ with central  charge $c=-2$ (cf. \cite{FF}, \cite{Ridout}). In \cite{A-2005}, the author related $V_{-4/3}(sl(2))$ with the triplet vertex algebra $\mathcal W(3)$ at central charge $c=-7$.
\vskip 5mm

Let $k=-2/3$.  Then $V_k(sl(2))$ is realized as a subalgebra of $L^{Vir} (d_{3,4},0) \otimes \Pi (0)$. But $L^{Vir} (d_{3,4}, 0)$ is exactly the even subalgebra of the fermionic vertex superalgebra  $F = L^{Vir} (d_{3,4},0) \oplus L^{Vir} (d_{3,4},\frac{1}{2})$ generated by the odd field $\Psi(z) = \sum_{m \in {\Z}} \Psi(m + \tfrac{1}{2}) z^{-m-1}$ (see Section \ref{osp(1,2)}).  The Virasoro vector is $\omega_F = \frac{1}{2}  \Psi(-\frac{3}{2}) \Psi (-\frac{1}{2}) {\bf 1}$.  Let $\gamma  = \frac{2}{k}  \nu$,  $\varphi = \frac{2}{k} \mu$. Then
$$ \la \gamma, \gamma \ra = - \la \varphi , \varphi \ra = 3. $$
Let $D ={\Z} \gamma$. Then $V_D =M_{\gamma} (1) \otimes {\C}[D]$ is the lattice vertex superalgebra, where  $M_{\gamma}(1)$ is the Heisenberg vertex algebra generated by $\gamma$ and $ {\C}[D]$ the group algebra of the lattice $D$.
The screening operator $S$ is then expressed as
$$    S =  \mbox{Res}_z :\Psi (z) e^{\nu } (z): =\mbox{Res}_z :\Psi (z) e^{-\frac{1} {3}  \gamma} (z):. $$
Define also  $$  Q = \mbox{Res}_z :\Psi (z) e^{\gamma} (z): $$  We have:
\begin{proposition} \cite{AdM-supertriplet}
\item[(1)] $ SW(1) \cong \mbox{Ker}_{ F \otimes V_D} S $
is isomorphic to the $N=1$ super-triplet vertex algebra  at central charge $c=-5/2$  strongly  generated  by  $$ X = e^{-\gamma}, \  H =  Q X , \   Y = Q^2 X,
\ \widehat X = \Psi(-1/2) X, \  \widehat H = Q \widehat X,  \ \widehat Y = Q^2 \widehat X $$
and superconformal vector $\tau = \frac{1}{\sqrt{3}} \left( \Psi (-\frac{1}{2})  \gamma (-1)   + 2 \Psi(-\frac{3}{2})  \right){\bf 1}  $ and corresponding conformal vector
$$ \omega_{N=1}  = \frac{1}{2} \tau_0 \tau = \frac{1}{6} \left ( \gamma(-1) ^2  + 2 \gamma (-2) \right) {\bf 1} + \frac{1}{2} \Psi(-\frac{3}{2}) \Psi(-\frac{1}{2}) {\bf 1}.  $$

\item[(2)] $ \overline{SM(1)} \cong \mbox{Ker}_{ F \otimes M_{\gamma} (1) } S $
is isomorphic to the $N=1$ super singlet vertex algebra at central charge $c=-5/2$  strongly  generated by $\tau, \omega_{N=1}  , H, \widehat H$.

\end{proposition}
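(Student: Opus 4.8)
\emph{Proof proposal.} The statement is that of \cite{AdM-supertriplet}, and the plan is to deduce it from that paper once the screening $S$ of the present section is matched with the defining screening there. As recorded above, at $k=-\tfrac23$ we have $\gamma=\tfrac2k\nu=-3\nu$, $\langle\gamma,\gamma\rangle=3$, and $L^{Vir}(d_{3,4},0)$ is the even part of the free fermion $F=L^{Vir}(d_{3,4},0)\oplus L^{Vir}(d_{3,4},\tfrac12)$; since the lowest weight vector $v_{2,1}$ has conformal weight $h_{3,4}^{2,1}=\tfrac34k+1=\tfrac12$, the intertwining operator attached to it is realized on $F$ by the fermion field $\Psi$, so the screening vector $s=v_{2,1}\otimes e^{\nu}$ becomes $\Psi(-\tfrac12){\bf 1}\otimes e^{-\gamma/3}$ and $S=s_0=\mbox{Res}_z{:}\Psi(z)e^{-\gamma/3}(z){:}$ acts on $F\otimes V_D$ (the monodromy is integral, so $F\otimes V_D$ and its Fock summand $F\otimes M_\gamma(1)$ are $S$-invariant). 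This realizes the $N=1$ super-triplet $SW(1)$ and super-singlet $\overline{SM(1)}$ of \cite{AdM-supertriplet} as $\mbox{Ker}_{F\otimes V_D}\,S$ and $\mbox{Ker}_{F\otimes M_\gamma(1)}\,S$; it then remains to check the central charge and the generators.

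For the central charge I would split $\omega_{N=1}=\bigl(\tfrac16\gamma(-1)^2+\tfrac13\gamma(-2)\bigr){\bf 1}+\tfrac12\Psi(-\tfrac32)\Psi(-\tfrac12){\bf 1}$ into a Heisenberg Virasoro vector with background charge on $M_\gamma(1)$ (with $\langle\gamma,\gamma\rangle=3$ its central charge is $-3$) plus the free-fermion Virasoro vector of central charge $\tfrac12$, giving $c=-\tfrac52$. The long screening $Q=\mbox{Res}_z{:}\Psi(z)e^{\gamma}(z){:}$ graded-commutes with $S$ — the two screening fields being mutually local with integral powers of $z$, as in \cite{AdM-supertriplet}, \cite{FFHST} — hence preserves $\mbox{Ker}\,S$; so from $X=e^{-\gamma}\in\mbox{Ker}\,S$ one gets $H=QX$, $Y=Q^2X\in\mbox{Ker}\,S$, and applying the fermion mode $\Psi(-\tfrac12)$ gives $\widehat X,\widehat H,\widehat Y$, while $\tau$ and $\omega_{N=1}=\tfrac12\tau_0\tau$ are annihilated by $S$ by a direct lattice computation of the type of Lemma \ref{ffhst}.

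The hard part is the strong-generation statement, which I would quote from \cite{AdM-supertriplet} rather than reprove: one regards $F\otimes V_D$ as a module over the Heisenberg--fermion subalgebra, describes $\mbox{Ker}\,S$ by a Felder-type complex for $S$, and compares graded characters to see that $\mbox{Ker}_{F\otimes V_D}\,S$ is strongly generated by $X,H,Y,\widehat X,\widehat H,\widehat Y$ — its even part a triplet-type bosonic subalgebra, its odd part the image of that under $\Psi(-\tfrac12)$ — which gives (1). For (2), note that $F\otimes M_\gamma(1)\subset F\otimes V_D$ is $S$-invariant and that $H=Qe^{-\gamma}$ and $\widehat H=Q\widehat X$ carry zero $\gamma$-lattice charge (hence lie in $F\otimes M_\gamma(1)$) whereas $X$ does not; restricting the character comparison to this summand identifies $\mbox{Ker}_{F\otimes M_\gamma(1)}\,S$ with the $N=1$ super-singlet strongly generated by $\tau,\omega_{N=1},H,\widehat H$. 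The only genuinely new ingredient beyond \cite{AdM-supertriplet} is the screening identification of the first paragraph.
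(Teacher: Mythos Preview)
Your proposal is correct and matches the paper's approach: the paper gives no proof of this proposition at all, simply citing it from \cite{AdM-supertriplet}, and your write-up does exactly what is needed to justify that citation --- identify the screening $S=\mbox{Res}_z{:}\Psi(z)e^{-\gamma/3}(z){:}$ of the present section with the defining short screening of \cite{AdM-supertriplet} (via $\nu=-\gamma/3$ and $v_{2,1}\leftrightarrow\Psi(-\tfrac12){\bf 1}$), then import the strong-generation and kernel results from that reference. Your additional checks of the central charge and of the membership of the listed generators in $\mbox{Ker}\,S$ are more than the paper itself supplies.
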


 Consider the lattice vertex algebra $F_{-3} = V_{\Z \varphi}$. We shall now see that the admissible affine vertex algebra $V_{-2/3} (sl (2))$ is a vertex subalgebra of $SW(1) \otimes F_{-3}$. Note that $\gamma(0) - \varphi (0)$ acts semisimply on $SW(1) \otimes F_{-3}$ and we have the following vertex algebra
 $$ \mathcal U = \{ v \in SW(1) \otimes F_{-3} \vert \  ( \gamma(0) - \varphi (0) ) v = 0 \}. $$

 Following \cite{KW04}, we identify the $N=3$ superconformal vertex algebra with affine $W$--algebra  $W_{k'} (spo(2,3), f_{ \theta} )$. By applying  results on   conformal embeddings from  \cite[Theorem 6.8 (12)]{AKMPP-JJM},  we see that  the  vertex algebra $W_{k'} (spo(2,3), f_{ \theta} )$ for $k'=-1/3$  is  isomorphic  to the simple current extension of $V_{-2/3} (sl(2))$:
 $$W_{k'} (spo(2,3),f_{ \theta} ) =   L_{A_1} (-\frac{2}{3} \Lambda_0) \bigoplus  L_{A_1} (-\frac{8}{3} \Lambda_0 + 2 \Lambda_1). $$

 \begin{theorem} We have:
 \item[(1)] $\mathcal U \cong W_{k'} (spo(2, 3), f_{\theta})  $ for $k' = -1/3$.
 \item[(2)] $ \mbox{Com} (M_h (1) , W_{k'} (spo(2,3),f_{ \theta} ) ) \cong  \overline{SM(1)} $.
 \item[(3)] $\mbox{Ker} (sl(2), -\frac{2}{3} ) \cong \overline{SM(1)} ^0$, where $\overline{SM(1)} ^0$ is even subalgebra of the supersinglet vertex algebra $\overline{SM(1)}$.
 \end{theorem}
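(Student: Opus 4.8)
Here is a plan, assuming the results quoted in the excerpt.

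The plan is to deduce everything from the decomposition $W_{-1/3}(spo(2,3),f_{\theta})=V_{-2/3}(sl(2))\oplus\mathcal L_{3}$ of \cite[Theorem 6.8(12)]{AKMPP-JJM}, where $\mathcal L_{3}=L_{A_1}(-\tfrac{8}{3}\Lambda_0+2\Lambda_1)$ is the ordinary module of Proposition~\ref{ordinary}, of lowest $L_{sug}(0)$--weight $h_{3,4}^{1,3}+1=\tfrac32$. Write $V=V_{-2/3}(sl(2))$, $W=W_{-1/3}(spo(2,3),f_{\theta})$. First I would note that for $k=-2/3$ (so $p=3$, $p'=4$) one has $h_{3,4}^{2,1}=\tfrac34k+1=\tfrac12$, hence $v_{2,1}$ is a multiple of $\Psi(-\tfrac12){\bf 1}$ and the operator $S$ of this section coincides with the screening $S$ of Section~\ref{logarithmic}, which commutes with $\widehat{sl(2)}$. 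Therefore the images of $e,h,f$ under the homomorphism of Theorem~\ref{real-simpl-sl2}, given by (\ref{def-e-3})--(\ref{def-f-3}) with the Virasoro vector of $L^{Vir}(d_{3,4},0)$ realized as $\omega_{F}=\tfrac12\Psi(-\tfrac32)\Psi(-\tfrac12){\bf 1}$, lie in $\bigl(\mbox{Ker}_{F\otimes V_{D}}S\bigr)\otimes F_{-3}=SW(1)\otimes F_{-3}$; computing the eigenvalues of $\gamma(0)$ and $\varphi(0)$ (e.g. $(\gamma(0)-\varphi(0))e^{\varphi-\gamma}=0$) will show $e,h,f\in\mathcal U$, so $V\subset\mathcal U$. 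I also expect to identify $e=X\otimes e^{\varphi}$, $f$ a non--zero multiple of $Y\otimes e^{-\varphi}$, and $\tau\otimes{\bf 1}\in\mathcal U$ as an odd vector of $L_{sug}(0)$--weight $\tfrac32$ annihilated by $h(0)$.

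For (1) I would grade $SW(1)\otimes F_{-3}$ by $\gamma$--charge: $\gamma(0)$ acts by $3n$ on $SW(1)_{n}$ and $\varphi(0)$ by $-3m$ on $M_{\varphi}(1)e^{m\varphi}$, so
$$\mathcal U=\bigoplus_{n\in\Z}SW(1)_{n}\otimes M_{\varphi}(1)e^{-n\varphi}.$$
Because $\langle\gamma,\gamma\rangle$ and $\langle\varphi,\varphi\rangle$ are both odd, the $\Z_{2}$--parity of a homogeneous element of $\mathcal U$ is its $\Psi$--number mod $2$; thus $\mathcal U^{0}$ carries integral and $\mathcal U^{1}$ half--integral $L_{sug}(0)$--weights, and $V\subseteq\mathcal U^{0}$. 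Next I would compare graded characters: from the (logarithmic--type) characters of the charged components $SW(1)_{n}$ of the simple super--triplet \cite{AdM-supertriplet} and the lattice factors, $\mbox{ch}\,\mathcal U=\sum_{n}\mbox{ch}[SW(1)_{n}]\,\mbox{ch}[M_{\varphi}(1)e^{-n\varphi}]$, and I expect this to equal $\mbox{ch}[V]+\mbox{ch}[\mathcal L_{3}]$ (the Kac--Wakimoto admissible characters) via an $\eta/\theta$--identity. Since $\mbox{ch}[V]$ is supported in integral and $\mbox{ch}[\mathcal L_{3}]$ in half--integral weights and $V\subseteq\mathcal U^{0}$, this forces $\mathcal U^{0}=V$ and $\mbox{ch}[\mathcal U^{1}]=\mbox{ch}[\mathcal L_{3}]$; in particular $\mathcal U$ is $\tfrac12\Z_{\ge 0}$--graded and $\mathcal U^{1}$ is concentrated in weights $\ge\tfrac32$. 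Then $\tau\otimes{\bf 1}$ lies in the lowest--weight space of the $\Z_{\ge 0}$--graded $V$--submodule $V.(\tau\otimes{\bf 1})$ of $\mathcal U^{1}$, whose top is $3$--dimensional; by the classification of $\Z_{\ge 0}$--graded irreducible $V$--modules in Section~\ref{relaxed-realization} the unique one with lowest weight $\tfrac32$ is $\mathcal L_{3}$, so the character identity gives $\mathcal U^{1}=V.(\tau\otimes{\bf 1})\cong\mathcal L_{3}$ and $\mathcal U=V\oplus\mathcal L_{3}$ as a $V$--module. To finish I would check that the cross--products $\mathcal L_{3}\times\mathcal L_{3}\to V$ are non--zero by a direct lattice computation in $SW(1)\otimes F_{-3}$; since $\mathcal L_{3}\boxtimes\mathcal L_{3}=\mathcal L_{1}=V$ and the corresponding space of intertwining operators is one--dimensional, this shows $\mathcal U$ is the (non--split) simple current extension $V\oplus\mathcal L_{3}\cong W$.

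Parts (2) and (3) should then follow quickly. For (2), with $h=-\tfrac23\varphi(-1)$ one has $h(n)=-\tfrac23\varphi(n)$, so in the decomposition above $\ker h(0)$ is the $n=0$ summand and $\bigcap_{n\ge 1}\ker\varphi(n)$ inside $SW(1)_{0}\otimes M_{\varphi}(1)$ equals $SW(1)_{0}\otimes\C{\bf 1}$; hence $\mbox{Com}(M_{h}(1),W)=\mbox{Com}(M_{h}(1),\mathcal U)=SW(1)_{0}=\mbox{Ker}_{F\otimes M_{\gamma}(1)}S=\overline{SM(1)}$. For (3), the parafermion algebra is $\mbox{Ker}(sl(2),-\tfrac23)=\mbox{Com}(M_{h}(1),V)=V\cap\mbox{Com}(M_{h}(1),\mathcal U)=V\cap\overline{SM(1)}$; by (1) $V=\mathcal U^{0}$, and $\overline{SM(1)}$ inherits the $\Z_{2}$--grading of $\mathcal U$ (even $=$ integral weight), so $V\cap\overline{SM(1)}=\overline{SM(1)}\cap\mathcal U^{0}=\overline{SM(1)}^{0}$, i.e. $\mbox{Ker}(sl(2),-\tfrac23)\cong\overline{SM(1)}^{0}$.

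The hard part will be the character identity in (1): establishing $\sum_{n}\mbox{ch}[SW(1)_{n}]\,\mbox{ch}[M_{\varphi}(1)e^{-n\varphi}]=\mbox{ch}[V]+\mbox{ch}[\mathcal L_{3}]$ requires the explicit characters of the charged super--triplet pieces from \cite{AdM-supertriplet} and a non--trivial $\theta$--function manipulation; once that is in hand, (2) and (3) are short coset computations and the remaining points are applications of \cite{AdM-supertriplet}, \cite{AKMPP-JJM} and the module classification of Section~\ref{relaxed-realization}.
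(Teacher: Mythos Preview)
Your plan is essentially sound and would yield the result, but it takes a substantially harder road than the paper for part (1). The paper never proves a character identity. Instead it exploits the fact that $k=-2/3$ is admissible, so the category $\mathcal O_{k,ord}$ of ordinary $V_{-2/3}(sl(2))$--modules is semisimple (Arakawa). Since $\mathcal U\subset SW(1)\otimes F_{-3}$ is $\tfrac12\Z_{\ge 0}$--graded with finite--dimensional weight spaces (the lattice weights grow quadratically), $\mathcal U$ is a direct sum of ordinary irreducibles $\mathcal L_1,\mathcal L_2,\mathcal L_3$. But $\mathcal L_2$ has lowest weight $h_{3,4}^{1,2}+\tfrac12=\tfrac{9}{16}\notin\tfrac12\Z$, so only $\mathcal L_1$ and $\mathcal L_3$ can occur. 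Then one checks directly that $\dim\mathcal U_0=1$, $\mathcal U_{1/2}=0$, and $\mathcal U_{3/2}=\mbox{span}\{\widehat X\otimes e^{-\varphi},\,\tau,\,\widehat Y\otimes e^{-\varphi}\}$ is $3$--dimensional; this forces multiplicity one for each of $\mathcal L_1$ and $\mathcal L_3$, giving $\mathcal U\cong V\oplus\mathcal L_3$ without any $\theta$--function work. Simplicity of $\mathcal U$ (as an orbifold of the simple VOA $SW(1)\otimes F_{-3}$) and uniqueness of the simple current extension then identify $\mathcal U$ with $W_{-1/3}(spo(2,3),f_\theta)$; the explicit non--zero cross--product check you propose is not needed.

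What your route buys is independence from the semisimplicity theorem for admissible categories --- if you can establish the identity $\sum_n \mbox{ch}[SW(1)_n]\cdot\mbox{ch}[M_\varphi(1)e^{-n\varphi}]=\mbox{ch}[V]+\mbox{ch}[\mathcal L_3]$, your argument is more self--contained. But that identity is genuinely non--trivial (it amounts to an admissible Kac--Wakimoto character written in super--triplet form), whereas the paper replaces it with a one--line appeal to complete reducibility plus a three--line dimension count. For (2) and (3) your argument is the same as the paper's: the commutant picks out the $n=0$ piece $SW(1)_0=\overline{SM(1)}$, and intersecting with $V=\mathcal U^0$ gives the even part.
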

\begin{proof}
%We first check that $V_{-2/3} (sl(2))$ is a subalgebra of $\mathcal U$.
Since
\bea  Y  & =&  Q ^2 X = \left( - 6 \Psi(-\frac{3}{2}) \Psi (-\frac{1}{2})  + \gamma(-1) ^2 - \gamma(-2) \right) e^{\gamma} \nonumber \\
 & = & - 9 \left(  (k+2) \omega_F  - \nu (-1) ^2 - (k+1) \nu (-2) \right) e^{\gamma} \quad (k=-2/3) \nonumber \eea
we have that
%$e =  e^{ \varphi- \gamma},  h = k \varphi   \in \mathcal U$.  Since
\bea
 e &=& X \otimes  e^{\varphi}  = e^{\varphi - \gamma},  \nonumber \\
 h &=&  -\frac{2}{3} \varphi,   \nonumber \\
 f &=& - \frac{1}{9}  Y \otimes e^{-\varphi}  =  - \frac{1}{9} Q^2 e^{- \varphi - \gamma},  \nonumber  \\
 \omega_{sug} &=& \omega_{N=1} -\frac{1}{6} \varphi (-1) ^2 {\bf 1}. \nonumber
%& =&     Q  ( \Phi (-\frac{1}{2}  )  \otimes S_2 (\gamma)   e^{-\varphi}    + \Phi(-\frac{3}{2} ) \otimes \gamma(-1) e^{-\varphi}  )  \nonumber  \\
% &=&   6   \Phi (-\frac{3}{2}   ) \Phi(-\frac{1}{2})   e^{\gamma -\varphi } \nonumber
\eea
This implies that $V_{-2/3} (sl(2))$ is a vertex subalgebra of $\mathcal U$.  Therefore $\mathcal U$ is a $V_{-2/3} (sl(2))$--module which is $\frac{1}{2} {\Z}_{\ge 0}$--graded
$ \mathcal U = \bigoplus _{m \in  \frac{1} {2} {\Z} _{\ge 0}  } \mathcal U _ m$ with respect to $L_{sug} (0)$.
One directly sees that $\mathcal U_{1/2} = \{ 0\}$ and that
$\mathcal U_{3/2} = \mbox{span}_{\C} \{  \widehat{X}\otimes  e^{-\varphi}, \tau ,  \widehat{Y}\otimes  e^{-\varphi} \}$.  Then $\mathcal U_{3/2}$ generates a $V_{-2/3} (sl(2))$--module isomorphic to  $L_{A_1} (-\frac{8}{3} \Lambda_0 + 2 \Lambda_1)$. Since $\mathcal U$ is completely reducible  as $V_{-2/3} (sl(2))$--module we easily conclude that
$$ \mathcal U \cong   L_{A_1} (-\frac{2}{3} \Lambda_0) \bigoplus  L_{A_1} (-\frac{8}{3} \Lambda_0 + 2 \Lambda_1). $$
Since $\mathcal U$ is simple and   the extension of $V_{-2/3} (sl(2))$ by  its simple current module $L_{A_1} (-\frac{8}{3} \Lambda_0 + 2 \Lambda_1)$ is unique, we get  the assertion (1).
Assertion (2) follows from
 \bea   \mbox{Com} (M_h (1) , W_{k'} (spo(2,3),f_{ \theta} ) )  &=&  \{ v \in SW(1) \otimes F_{-3}  \ \vert \  \varphi(n) v =  (\gamma(0) - \varphi (0) ) v = 0, \ n \ge 0 \}  \nonumber \\
 &\cong & \mbox{Ker} _{SW(1)} \gamma (0) = \overline{SM(1)}.  \nonumber \eea
 (3) easily follows from (2).
\end{proof}

 \section{Realization of $V_k(osp(1,2)) $}
 \label{osp(1,2)}

 A free field realization of $\widehat{osp(1,2)}$ of the Wakimoto type was presented in \cite{ERSS}.
 In this section we study  an explicit realization of affine vertex algebras associated to $\widehat{osp(1,2)}$ which generalize realizations for $\widehat{sl(2)}$ from previous sections.
 Since the quantum Hamiltonian reduction of $V^k (osp(1,2)) $ is the  $N=1$ Neveu-Schwarz vertex algebra $V^{ns} (c_k,0)$ where $c_k= \frac{3}{2} - 12 \frac{ (k+1) ^2 }{2k+3}$ (cf. \cite[Section 8.2]{KW04}), one  can expect that inverse  of the quantum Hamoltonian reduction  (assuming that it should  exist)  gives a realization of the form
  $V^{ns} (c_k,0) \otimes \mathcal F$, where $\mathcal F$ is a certain vertex algebra of  free-fields. In this section we show that for $\mathcal F$ one can take  the tensor product of the fermionic vertex algebra $F$ at central charge $1/2$ and the lattice type vertex algebra $\Pi^{1/2}(0)$ introduced in Section \ref{pi0}.
  %In this section we  consider the vertex algebra
%$$ \Pi ^{1/2} (0) := M_{\alpha, \beta} (1) \otimes {\C} [\tfrac{1}{2} (\alpha + \beta)] = M_{\mu, \nu} (1) \otimes  {\C} [\tfrac{1}{k} (\mu - \nu)],  $$
%which is a simple current extension of the vertex algebra $\Pi(0)$:
%$$  \Pi ^{1/2} (0)= \Pi(0) \oplus \Pi(0). e^{\frac{1}{k} (\mu - \nu)}. $$

\subsection{ Affine vertex algebra $V_k(osp(1,2)) $}

Recall that $\g = osp(1,2)$ is the simple complex Lie superalgebra   with basis $\{e,f, h, x, y\}$ such that the even part
$\g^{0} = \mbox{span}_{\C} \{ e, f, h \}$ and the  odd part $\g^{1}  =\mbox{span} _{\C} \{x, y \}$.
The anti-commutation relations are given by
\bea
 && [e,f] = h, \ [h, e] = 2 e, \ [h, f] = - 2 f \nonumber \\
 && [h, x] = x, \ [ e, x] = 0,  \ [f, x] = -y \nonumber \\
 && [h, y] = -y, \ [e, y] =-x \ [f, y] = 0 \nonumber \\
 && \{ x, x\} = 2 e, \ \{x, y\} = h, \ \{ y, y \} = -2 f. \nonumber
\eea
Choose the non-degenerate super-symmetric bilinear form  $ ( \cdot , \cdot )$ on $\g$ such that non-trivial products are given by
$$ (e, f) = (f,e) = 1, \ (h,h) = 2, \ (x, y) =  -(y, x) = 2. $$
Let ${\widehat \g} = {\g} \otimes {\C}[t, t^{-1}] + {\C} K$ be the associated affine Lie superalgebra, and $V^k (\g)$ (resp. $V_k (\g)$) the associated universal (resp. simple) affine vertex algebra.
As usually, we identify  $x \in {\g}$ with $x(-1) {\bf 1}$.

 \subsection{Clifford vertex algebras}  Consider the Clifford algebra $Cl$  with generators $\Psi _i  (r)$, $r \in {\hf} + {\Z}$, $i =1, \cdots, n$  and relations
$$ \{ \Psi  _i(r), \Psi_j  (s)\} =  \delta_{r+s, 0} \delta_{i,j}  , \quad  ( r,s  \in {\hf} + {\Z}, \ 1\le i, j \le n) . $$
Then the fields
$$  \Psi_i  (z) = \sum _{m \in {\Z} } \Psi_i (m+\tfrac{1}{2}) z^{-m-1} \quad (i=1, \dots, n) $$ generate on $F_n= \bigwedge  (\Psi_i  (-n-1/2) \ \vert \ n \in {\Z}_{\ge 0} )$ a unique structure of the  vertex superalgebra with conformal vector
$$\omega_{F_n}  =  \sum_{i=1} ^n \frac{1}{2} \Psi_i(-\tfrac{3}{2})  \Psi_i(-\tfrac{1}{2}) {\bf  1}$$
of central charge  $n /2$. Let $F = F_1$. Then $F$ is a vertex operator superalgebra of central charge $c=d_{3,4} = 1/2$.  Moreover $F = F^{even} \oplus F^{odd}$ and
$$ F^{even} = L^{Vir} (d_{3,4}, 0), \quad F^{even} = L^{Vir} (d_{3,4}, 1/2). $$
Let $\sigma$ be the canonical automorphism of $F$ of order two.   The vertex algebra $F$ has precisely two irreducible $\sigma$--twisted modules $M^{\pm}$. Twisted modules can be also constructed explicitly as an exterior algebra
$$ M^{\pm} = \bigwedge \left(\Psi (-n) \ \vert \ n \in {\Z}_{>0} \right) $$
which is an irreducible module for  twisted Clifford algebra $CL^{tw}$ with generators $\Psi   (r)$, $r \in  {\Z}$, and relations $$ \{ \Psi  (r), \Psi_j  (s)\} =  \delta_{r+s, 0} , \quad  ( r, s  \in  {\Z}) . $$
$\Psi (0)$ acts on top component of  $M^{\pm}$ as $\pm \frac{1}{\sqrt{2}} \mbox{Id}$.

As a $ L^{Vir} (d_{3,4}, 0)$--module, we have that $M^{\pm} \cong L^{Vir} (d_{3,4}, \frac{1}{16} )$.

Note also that the character of $M^{\pm}$ is given by
\bea \label{weber}  \mbox{ch}[M^{\pm}](q) =\frac{f_2(\tau)}{\sqrt{2}} = q^{\frac{1}{24}} \prod_{n =1} ^{\infty} (1+q^n) \eea
where $f_2(\tau) =  \sqrt{2} q^{\frac{1}{24}}  \prod_{n =1} ^{\infty} (1+q^n)$ is the  Weber function.

\subsection{The general case}
Let $V^{ns} (c_{p,q}, 0)$ be the universal $N=1$ Neveu-Schwarz vertex superalgebra with central charge $c_{p,q} = \frac{3}{2}( 1  - \frac{ 2 (p-q) ^2}{pq} ) $.  Let $L^{ns} (c_{p,q},0)$ be its simple quotient.
If
   \bea p, q\in {\Z},  \ p, q \ge 2,\  (\frac{p-q}{2} , q) = 1 \label{uvjet-minimal},  \eea
   then $L^{ns} (c_{p,q},0)$  is called a minimal $N=1$ Neveu-Schwarz vertex operator superalgebra. It is a rational vertex operator superalgebra \cite{A-1997}.

\begin{proposition} \label{ns-vir-min} Let $p, q \in {\Z}$,  $p, q \ne 0$.  We have:
\item[(1)]  Assume that $p, q, p + q \ne 0$. The Virasoro vertex operator  algebra $V^{Vir} (d_{p, \frac{p+q}{2} }, 0) \otimes V^{Vir} (d_{\frac{p+q}{2} , q}, 0) $ is conformally embedded in $V^{ns} (c_{p,q}, 0)\otimes F$ and
$ \omega_{p,q}   + \omega_{F} =  \omega_{p, \frac{p+q}{2}} + \omega_{\frac{p+q}{2}, q} $
where
$$      \omega_{\frac{p+q}{2} ,q }   = \frac{p}{p+q} \omega_{p,q}  + i  \frac{\sqrt{pq}} { p+q}  G(-\tfrac{3}{2} )\Psi(-\frac{1}{2}){\bf 1}   + \frac{2q-p} {p+q}  \omega_F  $$
$$      \omega_{p, \frac{p+q}{2}  }   = \frac{q}{p+q} \omega_{p,q}  - i     \frac{\sqrt{pq}} { p+q}  G(-\tfrac{3}{2} )\Psi(-\frac{1}{2}){\bf 1}   + \frac{2p-q} {p+q}  \omega_F  $$

\item[(2)]  Assume that $ p + q  = 0$.  Then   $$  t_{p,q} := \frac{1}{2} \left( - \omega_{p,q}   -     G(-\frac{3}{2}) \Psi(-\frac{1}{2}){\bf 1} +  3  \omega_F  \right) \in L^{ns} (c_{p, q}, 0) \otimes F $$
 is a commutative vector in  the vertex algebra $  L^{ns} (\frac{27}{2} , 0)\otimes F. $ The vertex subalgebra generated by $t_{p,q}$ is isomorphic to the commutative vertex algebra $M_T(0)$.

\item[(3)]  Assume that $p,q$ satisfy condition (\ref{uvjet-minimal}). Then the  rational  Virasoro vertex operator algebra  $L^{Vir} (d_{p, \frac{p+q}{2} }, 0) \otimes L^{Vir} (d_{\frac{p+q}{2} , q}, 0) $  is conformally embedded in $L^{ns} (c_{p,q}, 0)\otimes F$.
\end{proposition}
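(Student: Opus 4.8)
The plan is to treat the three parts in turn; parts (1) and (2) reduce to finite operator-product computations inside the vertex superalgebra $V^{ns}(c_{p,q},0)\otimes F$, and part (3) is obtained from part (1) by passing to simple quotients.

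For (1), set $X=G(-\tfrac32)\Psi(-\tfrac12){\bf 1}$, $\omega_2:=\omega_{\frac{p+q}{2},q}$ and $\omega_1:=\omega_{p,\frac{p+q}{2}}=\omega_{p,q}+\omega_F-\omega_2$. The only inputs needed are: $\omega_{p,q}$ (in the $V^{ns}$ factor) and $\omega_F$ (in $F$) commute; $X$ is Virasoro-primary, of weight $\tfrac32$ for $\omega_{p,q}$ and of weight $\tfrac12$ for $\omega_F$; and $Y(X,z)Y(X,w)=-[G(z)G(w)]\otimes[\Psi(z)\Psi(w)]$, read off from $G(z)G(w)\sim\tfrac{2c_{p,q}/3}{(z-w)^3}+\tfrac{2\omega_{p,q}(w)}{z-w}$ and $\Psi(z)\Psi(w)\sim\tfrac1{z-w}$. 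Expanding $Y(\omega_2,z)Y(\omega_2,w)$, the $\omega_{p,q}(w)$- and $X(w)$-contributions to the singular part reduce, after substituting $c_{p,q}=\tfrac32-\tfrac{3(p-q)^2}{pq}$, to elementary identities in $p,q$ and match $2\omega_2(w)$; the $\omega_F(w)$-term matches, and the $(z-w)^{-4}$-coefficient comes out as $\tfrac12 d_{\frac{p+q}{2},q}$. Hence $\omega_2$ is a Virasoro vector of central charge $d_{\frac{p+q}{2},q}$, the same computation gives $[\omega_1,\omega_2]=0$, and so $\omega_1$ is a Virasoro vector of central charge $(c_{p,q}+\tfrac12)-d_{\frac{p+q}{2},q}=d_{p,\frac{p+q}{2}}$. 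Since $V^{ns}(c_{p,q},0)$ is freely generated by its Virasoro and superconformal fields, assigning the two Virasoro generators to $\omega_1,\omega_2$ extends to a homomorphism $V^{Vir}(d_{p,\frac{p+q}{2}},0)\otimes V^{Vir}(d_{\frac{p+q}{2},q},0)\to V^{ns}(c_{p,q},0)\otimes F$; injectivity follows from a low-degree graded-dimension comparison as in Proposition \ref{prop-sl2-1}, and the embedding is conformal because $\omega_1+\omega_2=\omega_{p,q}+\omega_F$ is the conformal vector.

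For (2), with $p+q=0$ (so $c_{p,q}=\tfrac{27}{2}$) I would compute $Y(t_{p,q},z)Y(t_{p,q},w)$ using the same OPEs together with $\Psi(z)\Psi(w)=\tfrac1{z-w}+2(z-w)\omega_F(w)+\cdots$. As in (1) the $\omega_{p,q}(w)$- and $X(w)$-pieces of the singular terms cancel identically; the $(z-w)^{-4}$-coefficient and the $\omega_F(w)$-coefficient of the $(z-w)^{-2}$-term are each proportional to $c_{p,q}-\tfrac{27}{2}$, hence vanish, and the $(z-w)^{-1}$-term then vanishes automatically by skew-symmetry. Thus $t_{p,q}$ is a commutative vector, so the vertex subalgebra it generates is a quotient of $M_T(0)$; it equals $M_T(0)$ because the top-degree component of $t_{p,q}$ in the $G\Psi$-grading is nonzero, so $t_{p,q},\partial t_{p,q},\dots$ are algebraically independent.

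For (3), compose the homomorphism from (1) with the surjection $V^{ns}(c_{p,q},0)\otimes F\twoheadrightarrow L^{ns}(c_{p,q},0)\otimes F$. The kernel of $V^{Vir}(d_{p,\frac{p+q}{2}},0)\otimes V^{Vir}(d_{\frac{p+q}{2},q},0)\to L^{Vir}(d_{p,\frac{p+q}{2}},0)\otimes L^{Vir}(d_{\frac{p+q}{2},q},0)$ is generated, over the product Virasoro algebra, by the singular vectors $\Omega^{Vir}_{p,\frac{p+q}{2}}\otimes{\bf 1}$ and ${\bf 1}\otimes\Omega^{Vir}_{\frac{p+q}{2},q}$, so it suffices to show their images in $V^{ns}(c_{p,q},0)\otimes F$ lie in $J^{ns}(c_{p,q})\otimes F$; granting this, the homomorphism descends to a nonzero map $L^{Vir}(d_{p,\frac{p+q}{2}},0)\otimes L^{Vir}(d_{\frac{p+q}{2},q},0)\to L^{ns}(c_{p,q},0)\otimes F$, which is injective (its source is a simple vertex operator algebra, being a tensor product of two simple rational ones) and conformal (since $\omega_1+\omega_2\mapsto\omega_{p,q}+\omega_F$). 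This null-vector compatibility is the delicate step, and I would handle it exactly as in the $\widehat{sl(2)}$ case (Proposition \ref{prop-vir}, Corollary \ref{posljedica}): realize $L^{ns}(c_{p,q},0)$ as the joint kernel of two screening operators on the tensor product of a rank-one Heisenberg vertex algebra with a free fermion, with $\Omega^{ns}_{p,q}$ given by a screening integral; bosonize the two fermions to identify the resulting rank-two Heisenberg vertex algebra with the one underlying $V^{Vir}(d_{p,\frac{p+q}{2}},0)\otimes V^{Vir}(d_{\frac{p+q}{2},q},0)$; and verify that under this identification the two Virasoro screening currents become scalar multiples of the two Neveu--Schwarz screening currents, so that $\Omega^{Vir}_{p,\frac{p+q}{2}}$ and $\Omega^{Vir}_{\frac{p+q}{2},q}$ coincide with the corresponding screening integrals and lie in $J^{ns}(c_{p,q})\otimes F$ by construction. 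Alternatively one may invoke the known branching $L^{ns}(c_{p,q},0)\otimes F\cong\bigoplus_{h,h'}L^{Vir}(d_{p,\frac{p+q}{2}},h)\otimes L^{Vir}(d_{\frac{p+q}{2},q},h')$ of the associated GKO-type coset, which exhibits the vacuum-sector conformal embedding directly. I expect the matching of this free-field data on the two sides to be the main obstacle; parts (1) and (2) are mechanical OPE verifications.
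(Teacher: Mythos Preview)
Your approach to parts (1) and (2) matches the paper's: both are handled by direct OPE computation, and the paper's proof says little more than ``follows by direct calculation.'' The one place where the paper is slightly more concrete than you is the isomorphism $\langle t_{p,q}\rangle\cong M_T(0)$ in (2): rather than appealing to a ``$G\Psi$-grading'' (which you would need to make precise), the paper observes that the PBW monomials $L(-n_1)\cdots L(-n_r){\bf 1}$ with $n_1\ge\cdots\ge n_r\ge 2$ are linearly independent already in the simple quotient $L^{ns}(\tfrac{27}{2},0)$, and then reads off the linear independence of $T(-n_1)\cdots T(-n_r){\bf 1}$ from the expression for $T=t_{p,q}$.

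For part (3) you do considerably more work than the paper, which simply cites \cite{A-2004} (see also \cite{La}, \cite{LY}) where this conformal embedding of the rational Virasoro tensor product into $L^{ns}(c_{p,q},0)\otimes F$ was already established. Your second alternative---invoking the known GKO-type branching of $L^{ns}(c_{p,q},0)\otimes F$---is exactly what those references prove, so that route is correct and is in fact the paper's route by proxy. Your primary proposal (matching screening currents after bosonization to show the Virasoro singular vectors land in $J^{ns}(c_{p,q})\otimes F$) is a legitimate and more self-contained strategy, analogous to what the paper does for $\widehat{sl(2)}$ in Corollary~\ref{posljedica}; it would work, but as you yourself flag, the identification of the free-field data on the two sides is genuine work that neither you nor the paper carries out here.
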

\begin{proof}
Assertions  (1) follows  by direct calculation. One can also directly show that $T=t_{p,q}$ is a commutative vector in $  L^{ns} (\frac{27}{2} , 0)\otimes F $. Let $\langle T \rangle$ be the vertex subalgebra generated by $T$.
By using fact that vectors
$$L(-n_1) \cdots L(-n_r) {\bf 1}\qquad  (n_1 \ge \cdots n_r \ge 2)$$
are linearly independent in $L^{ns} (\frac{27}{2} , 0)$  and expression for $T$ one can easily  show  that the  vectors
$$T(-n_1) \cdots T(-n_r) {\bf 1} \qquad  (n_1 \ge \cdots n_r \ge 2)$$
provide a basis of $\langle T \rangle$. So $\langle T \rangle\cong M_T(0)$.

Assertion (3) was proved in  \cite{A-2004} (see also \cite{La}, \cite{LY}).
\end{proof}

\begin{theorem}   \label{thm-reali-osp} Assume that   $k + \frac{3}{2} = \frac{p}{2q} \ne 0$.
There exists  a non-trivial vertex superalgebra homomorphism
$$ \overline \Phi : V^ {k} (osp(1,2)) \rightarrow V^{ns} (c_{p, q}, 0) \otimes  F \otimes \Pi ^{1/2} (0) $$
such that
 \bea\
e & \mapsto & e^{\tfrac{2}{k} (\mu - \nu) }, \nonumber  \\
h & \mapsto & 2 \mu(-1),  \nonumber \\
f & \mapsto & \left[   \Omega_{p,q}    -\nu(-1)^{2} -  (k+1) \nu(-2) \right] e^{-\tfrac{2}{k} (\mu - \nu)} \nonumber \\
x & \mapsto &   \sqrt{2}   \Psi (-\frac{1}{2})    e^{\tfrac{1}{k} (\mu - \nu) } \nonumber \\
y & \mapsto & \sqrt{2}   \left[ - \frac{ \sqrt{-2k- 3} }{2 }    G (-3/2) +   \Psi (-\frac{1}{2})  \nu (-1) + \frac{2k+1}{2}   \Psi (-\frac{3}{2})  \right] e^{-\tfrac{1}{k} (\mu - \nu)} , \quad    \nonumber
\eea
where $\Omega_{p,q} = (k+2) \omega_{\frac{p+q}{2} ,q}$ if $k \ne -2$ and $\Omega_{p,q} = t_{p,q}$ if $k =-2$.
\end{theorem}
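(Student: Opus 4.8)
The plan is to use the fact that $V^k(osp(1,2))$ is freely generated, in the PBW sense, by the currents $e,h,f,x,y$: a homomorphism out of it into any vertex superalgebra is uniquely determined by the images of these five generators, and it exists as soon as those images satisfy the same operator product expansions (equivalently, the same $\lambda$--brackets) as the corresponding currents of $\widehat{osp(1,2)}$ at level $k$. Thus the whole proof reduces to a finite list of OPE checks inside $V^{ns}(c_{p,q},0)\otimes F\otimes\Pi^{1/2}(0)$. Non--triviality is clear from $\overline\Phi(e)=e^{\frac2k(\mu-\nu)}\neq 0$, and as an initial consistency check one verifies that the central charges add up: with $k+\tfrac32=\tfrac{p}{2q}$ one has $c_{p,q}+\tfrac12+(6k+2)=\tfrac{k}{k+3/2}$, the Sugawara central charge of $V^k(osp(1,2))$.

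First I would dispatch the even part. From $k+\tfrac32=\tfrac{p}{2q}$ one gets $k+2=\tfrac{p+q}{2q}=\tfrac{(p+q)/2}{q}$, so the Virasoro vector $\omega_{\frac{p+q}{2},q}$ supplied by Proposition \ref{ns-vir-min}(1) has central charge $d_{\frac{p+q}{2},q}=d_k$, and $\Omega_{p,q}=(k+2)\,\omega_{\frac{p+q}{2},q}$ when $k\neq-2$, while $\Omega_{p,q}=t_{p,q}$ generates $M_T(0)$ when $k=-2$ by Proposition \ref{ns-vir-min}(2). Since $\omega_{\frac{p+q}{2},q}$ (resp.\ $t_{p,q}$) lies in the $V^{ns}(c_{p,q},0)\otimes F$ tensor factor, while $\mu,\nu$ and $e^{\pm c}$, $c=\tfrac2k(\mu-\nu)$, lie in $\Pi^{1/2}(0)$, the formulas for $\overline\Phi(e),\overline\Phi(h),\overline\Phi(f)$ are \emph{verbatim} those of Proposition \ref{prop-sl2-1} (resp.\ Proposition \ref{prop-sl2--crit}) with $\Pi(0)$ enlarged to $\Pi^{1/2}(0)$. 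Hence every OPE among $e,h,f$ is automatically the affine $sl(2)$ one at level $k$, and nothing new has to be computed there.

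What is left are the relations involving the odd generators. The brackets $[h_\lambda x]=x$ and $[h_\lambda y]=-y$ are immediate since $h=\tfrac k2 c(-1)+d(-1)$ and $e^{\pm c/2}$ are $\pm1$--eigenvectors for $d(0)$; $[e_\lambda x]=0$ holds because $\langle c,\tfrac c2\rangle=0$; $[e_\lambda y]=-x$ is produced by the single Heisenberg contraction $\langle c,\nu\rangle=1$ hitting the factor $\nu(-1)$ inside $y$; and $\{x_\lambda x\}=2e$ follows from $\Psi(z)\Psi(w)\sim(z-w)^{-1}$ together with $(e^{c/2})_{-1}e^{c/2}=e^{c}=e$. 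The substantive identities are $[f_\lambda x]=-y$, $[f_\lambda y]=0$, $\{x_\lambda y\}=h+2k\lambda$ and $\{y_\lambda y\}=-2f$, which I would verify by a direct OPE computation using the Neveu--Schwarz $\lambda$--bracket relations between $\omega_{p,q}$ and $G$, the neutral--fermion OPE, and the lattice OPEs in $\Pi^{1/2}(0)$ (together with the contractions $\langle c,\nu\rangle=1$, $\langle c,c/2\rangle=0$ that move $\nu(-1)$ past the exponentials). An equivalent and more conceptual route is to start from the Wakimoto--type free field realization of $\widehat{osp(1,2)}$ of \cite{ERSS} (a $\beta\gamma$ system together with a free fermion and a Heisenberg field), bosonize the $\beta\gamma$ part, and pass to the coordinates $\gamma,\mu,\nu$ as in Section \ref{osp(1,2)}; this reproduces exactly the displayed formulas for $\overline\Phi$.

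The main obstacle is the identity $\{y_\lambda y\}=-2f$ (closely followed by $[f_\lambda x]=-y$): this is where the coefficients $-\tfrac{\sqrt{-2k-3}}{2}$, $\tfrac{2k+1}{2}$ and the normalizing $\sqrt2$'s are forced. Computing $\{y(z)y(w)\}$ one must collect the self--contraction of $G(-\tfrac32)$, which through $[G_\lambda G]=2\omega_{p,q}+(\mbox{const})\lambda^2$ gives the $\omega_{p,q}$--contribution; the self--contractions among $\Psi(-\tfrac12),\Psi(-\tfrac32)$, giving the $\omega_F$--contribution and, after pulling $\nu(-1)$ through $e^{-c/2}$, the $\nu(-1)^2$ and $\nu(-2)$ terms; and the mixed terms reconstructing $G(-\tfrac32)\Psi(-\tfrac12){\bf 1}$. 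Using $2k+3=\tfrac pq$ (so $\sqrt{-2k-3}=i\sqrt{p/q}$) one then matches the total against
$$-2f=-2\big(\,(k+2)\,\omega_{\tfrac{p+q}{2},q}-\nu(-1)^2-(k+1)\nu(-2)\,\big)e^{-c},\qquad \omega_{\tfrac{p+q}{2},q}=\tfrac{p}{p+q}\omega_{p,q}+i\tfrac{\sqrt{pq}}{p+q}G(-\tfrac32)\Psi(-\tfrac12){\bf 1}+\tfrac{2q-p}{p+q}\omega_F,$$
the coefficient bookkeeping being precisely what pins down the statement. In the critical case $k=-2$ (i.e.\ $p+q=0$) the same computation goes through with $\omega_{\frac{p+q}{2},q}$ replaced by the commutative vector $t_{p,q}$, in accordance with Propositions \ref{prop-sl2--crit} and \ref{ns-vir-min}(2).
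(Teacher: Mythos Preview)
Your proposal is correct and follows essentially the same route as the paper: reduce to checking the $\widehat{osp(1,2)}$ OPEs on the given images, dispatch the even part by invoking the $sl(2)$ realization of Propositions~\ref{prop-sl2-1}/\ref{prop-sl2--crit} applied to $\omega_{\frac{p+q}{2},q}$ (resp.\ $t_{p,q}$), and then verify the odd-generator relations directly, with the crux being $y(n)y=-2\delta_{n,0}f$. The only organizational difference is that the paper splits off $\bar y=\sqrt{2}\big[\Psi(-\tfrac12)\nu(-1)+\tfrac{2k+1}{2}\Psi(-\tfrac32)\big]e^{-c/2}$ and computes $\bar y(n)\bar y$ in a separate lemma by packaging $\Psi$ and $\nu$ into an auxiliary $N=1$ superconformal vector $\bar\tau=\sqrt{-2/k}\big(\Psi(-\tfrac12)\nu(-1)+(k+1)\Psi(-\tfrac32)\big){\bf 1}$, which streamlines the bookkeeping you describe; the paper also checks $x(n)f=\delta_{n,0}y$ rather than the skew-symmetric $[f_\lambda x]=-y$, but this is of course equivalent.
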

The proof of Theorem \ref{thm-reali-osp} will be presented in Section \ref{realizacija-proof}.

\begin{theorem}  \label{exp-osp-adm} Assume that   $k + \frac{3}{2} = \frac{p}{2q} \ne 0$ and that  $p,q$ satisfy condition (\ref{uvjet-minimal}).
\item[(1)]There exists  a non-trivial vertex superalgebra homomorphism
$$ \overline \Phi : V_{k} (osp(1,2)) \rightarrow L^{ns} (c_{p, q}, 0) \otimes  F \otimes   \Pi ^{1/2} (0) $$
such that
 \bea\
e & \mapsto & e^{\tfrac{2}{k} (\mu - \nu) }, \nonumber  \\
h & \mapsto & 2 \mu(-1), \nonumber \\
f & \mapsto & \left[    (k+2) \omega_{\frac{p+q}{2} ,q}     -\nu(-1)^{2} -  (k+1) \nu(-2) \right] e^{-\tfrac{2}{k} (\mu - \nu)} \nonumber \\
x & \mapsto &   \sqrt{2}   \Psi (-\frac{1}{2})    e^{\tfrac{1}{k} (\mu - \nu) } \nonumber \\
y & \mapsto & \sqrt{2}   \left[ - \frac{ \sqrt{-2k- 3} }{2 }    G (-3/2) +   \Psi (-\frac{1}{2})  \nu (-1) + \frac{2k+1}{2}   \Psi (-\frac{3}{2})  \right] e^{-\tfrac{1}{k} (\mu - \nu)} , \quad    \nonumber
\eea
\item[(2)]   $\omega_{p, \frac{p+q}{2}}  \in \mbox{Com} (V_k(sl(2)), V_{k} (osp(1,2)) $.
\end{theorem}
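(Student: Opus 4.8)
The plan is to deduce both parts from the universal realization of Theorem \ref{thm-reali-osp} by passing to the simple quotients, using Proposition \ref{ns-vir-min} to control the Virasoro / Neveu--Schwarz vectors. For part (1), I would start from the non-trivial homomorphism $\overline\Phi\colon V^k(osp(1,2))\to V^{ns}(c_{p,q},0)\otimes F\otimes\Pi^{1/2}(0)$ of Theorem \ref{thm-reali-osp}. Since $k+\tfrac32=\tfrac{p}{2q}>0$ we have $k\ne-2$, so there $\Omega_{p,q}=(k+2)\,\omega_{\frac{p+q}{2},q}$; and by Proposition \ref{ns-vir-min}(3) the vector $\omega_{\frac{p+q}{2},q}$ survives in the conformally embedded copy of $L^{Vir}(d_{\frac{p+q}{2},q},0)\subset L^{ns}(c_{p,q},0)\otimes F$. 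Composing $\overline\Phi$ with the projection $V^{ns}(c_{p,q},0)\twoheadrightarrow L^{ns}(c_{p,q},0)$ yields a homomorphism $\widetilde\Phi\colon V^k(osp(1,2))\to L^{ns}(c_{p,q},0)\otimes F\otimes\Pi^{1/2}(0)$ with exactly the displayed formulas, so the only point is to check that $\widetilde\Phi$ kills the maximal ideal $J^k(osp(1,2))$.

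This last step I would handle just as Corollary \ref{posljedica} handles the $\widehat{sl(2)}$ case. The ideal $J^k(osp(1,2))$ is generated by a singular vector $\Omega^{osp}_k$, and in the Wakimoto-type free-field realization of $\widehat{osp(1,2)}$ of \cite{ERSS} this singular vector is obtained by integrating a fermionic screening current over a suitable cycle. Under the bosonization built into $\overline\Phi$ this screening current is identified with the fermionic screening current of the Neveu--Schwarz minimal model tensored with the identity on $\Pi^{1/2}(0)$, just as the $\widehat{sl(2)}$ Wakimoto screening $U(z)=Y(a(-1)e^{-\delta/(k+2)},z)$ becomes $Y(e^{\varphi/p'},z)$ in Corollary \ref{posljedica}. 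Hence $\overline\Phi(\Omega^{osp}_k)=\Omega^{ns}_{p,q}\otimes w$ for some $w\in F\otimes\Pi^{1/2}(0)$, where $\Omega^{ns}_{p,q}$ generates the maximal ideal of $V^{ns}(c_{p,q},0)$; since $\Omega^{ns}_{p,q}$ maps to $0$ in $L^{ns}(c_{p,q},0)$, we get $\widetilde\Phi(J^k(osp(1,2)))=0$. Thus $\widetilde\Phi$ factors through $V_k(osp(1,2))$, giving the asserted homomorphism; it is non-trivial since $\overline\Phi(e)=e^{\frac2k(\mu-\nu)}\ne0$, and then injective because $V_k(osp(1,2))$ is simple.

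For part (2), write $c=\frac2k(\mu-\nu)$, $d=\mu+\nu$ and let $\omega_{\Pi(0)}=\tfrac12 c(-1)d(-1)-\tfrac12 d(-2)+\tfrac k4 c(-2)$ be the Virasoro vector (\ref{virasoro-pi0}), which lies in $\Pi(0)\subset\Pi^{1/2}(0)$. Restricting $\overline\Phi$ to $V_k(sl(2))$ recovers the realization of Theorem \ref{real-simpl-sl2} with the minimal Virasoro vector equal to $\omega_{\frac{p+q}{2},q}$ (the central charges agree, $d_k=d_{\frac{p+q}{2},q}$), so $\overline\Phi(\omega_{sug})=\omega_{\frac{p+q}{2},q}+\omega_{\Pi(0)}$. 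A direct computation, plugging the formulas of part (1) into the $osp(1,2)$ Sugawara vector $\omega^{osp}_{sug}$ and checking consistency of central charges via $c_{p,q}+\tfrac12+(6k+2)=\tfrac{2k}{2k+3}=c^{osp}_{sug}$, gives $\overline\Phi(\omega^{osp}_{sug})=\omega_{p,q}+\omega_F+\omega_{\Pi(0)}$. Rewriting $\omega_{p,q}+\omega_F=\omega_{p,\frac{p+q}{2}}+\omega_{\frac{p+q}{2},q}$ via Proposition \ref{ns-vir-min}(1),(3) and subtracting, $\omega_{p,\frac{p+q}{2}}=\overline\Phi\big(\omega^{osp}_{sug}-\omega_{sug}\big)\in V_k(osp(1,2))$.

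It then remains to see that $\omega_{p,\frac{p+q}{2}}$ commutes with $V_k(sl(2))$. Since $\omega_{p,\frac{p+q}{2}}$ sits in the tensor factor $L^{ns}(c_{p,q},0)\otimes F$, its modes commute with those of $e=e^c$, $h=2\mu(-1)$ and of $\nu(-1),\nu(-2),e^{\pm c}$, all of which lie in the separate factor $\Pi^{1/2}(0)$; and by the conformal embedding of Proposition \ref{ns-vir-min}(3) its modes commute with those of $\omega_{\frac{p+q}{2},q}$, hence with those of $f=\big[(k+2)\omega_{\frac{p+q}{2},q}-\nu(-1)^2-(k+1)\nu(-2)\big]e^{-c}$. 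As $e,h,f$ generate $V_k(sl(2))$, this gives $\omega_{p,\frac{p+q}{2}}\in\mbox{Com}(V_k(sl(2)),V_k(osp(1,2)))$; equivalently, the Goddard--Kent--Olive coset construction shows $\omega^{osp}_{sug}-\omega_{sug}$ lies in this commutant. The main obstacle is the singular-vector step in part (1): one must verify carefully that the $\widehat{osp(1,2)}$ maximal ideal is generated by a vector realized through the fermionic Wakimoto screening, and that the bosonization intertwines that screening with the Neveu--Schwarz screening, so that $J^k(osp(1,2))$ genuinely lands in $\ker\widetilde\Phi$.
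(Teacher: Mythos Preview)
Your argument for part (2) is essentially the same as the paper's: you show $\omega_{p,\frac{p+q}{2}}\in V_k(osp(1,2))$ as the image of an element of the affine vertex algebra (you use $\omega_{sug}^{osp}-\omega_{sug}^{sl_2}$; the paper uses the explicit combination $x(-1)y-\omega_{sug}^{sl_2}-\tfrac12 h(-2)=-\tfrac{p}{q}\,\omega_{p,\frac{p+q}{2}}$, which amounts to the same GKO identity), and then observe that $V_k(sl(2))\subset L^{Vir}(d_{\frac{p+q}{2},q},0)\otimes\Pi(0)$ while $\omega_{p,\frac{p+q}{2}}$ lives in the commuting Virasoro factor.

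For part (1), however, your route diverges from the paper's and carries exactly the gap you flag at the end. You want to identify the $\widehat{osp(1,2)}$ Wakimoto screening with the Neveu--Schwarz screening and conclude $\overline\Phi(\Omega_k^{osp})=\Omega_{p,q}^{ns}\otimes w$. Nothing in the paper (or in the cited \cite{ERSS}) supplies this: there is no $osp(1,2)$ analogue of Corollary \ref{posljedica}, no identification of cycles, and no proof that the maximal ideal of $V^k(osp(1,2))$ is generated by a single screening-produced vector in this realization. So as written the step ``$\widetilde\Phi(J^k(osp(1,2)))=0$'' is unjustified.

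The paper's proof of (1) bypasses all of this with a short structural argument. By Proposition \ref{ns-vir-min}(3), inside $L^{ns}(c_{p,q},0)\otimes F$ the vector $\omega_{\frac{p+q}{2},q}$ already generates the \emph{simple} Virasoro algebra $L^{Vir}(d_{\frac{p+q}{2},q},0)$; hence Theorem \ref{real-simpl-sl2} says the subalgebra generated by $e,f,h$ under $\widetilde\Phi$ is the \emph{simple} $V_k(sl(2))$, not the universal one. Now at an admissible level the maximal ideal of $V^k(osp(1,2))$ is simple (generated by a unique singular vector), so $\mathrm{Im}\,\widetilde\Phi$ is either $V^k(osp(1,2))$ or $V_k(osp(1,2))$. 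If it were $V^k(osp(1,2))$, the $sl(2)$--subalgebra would have to be the universal $V^k(sl(2))$, contradicting what was just shown. Therefore $\widetilde\Phi$ factors through $V_k(osp(1,2))$. This avoids any screening analysis for $osp(1,2)$; I recommend replacing your singular-vector argument with this one.
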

\begin{proof}
(1) Using  Theorem \ref{thm-reali-osp} we get a homomorphism $\widetilde \Phi :  V^k (osp(1,2)) \rightarrow  L^{ns} (c_{p, q}, 0) \otimes  F \otimes   \Pi ^{1/2} (0). $
Then Proposition  \ref{ns-vir-min}  implies that  $\omega_{\frac{p+q}{2} ,q} $ generates a subalgebra of $L^{ns} (c_{p, q}, 0) \otimes  F$ isomorphic to the minimal Virasoro vertex algebra $L^{Vir} ( d_{\frac{p+q}{2},q}, 0)$.
Therefore  Theorem \ref{real-simpl-sl2}  gives that $e,f,h$  generate the simple admissible affine vertex algebra $V_k(sl(2))$.

At admissible level  $k$, the vertex algebra $V^k (osp(1,2))$ contains a unique singular vector, i.e., the maximal ideal of  $V^k (osp(1,2))$ is simple. So we have two possibilities:
$$ \mbox{Im} (\widetilde \Phi ) = V^k (osp(1,2))  \quad \mbox{or} \quad \mbox{Im} (\widetilde \Phi ) = V_k (osp(1,2)) . $$
But, if $ \mbox{Im} (\widetilde \Phi ) = V^k (osp(1,2))  $, then the subalgebra generated by the embedding $sl(2)$ into $osp(1,2)$ must be universal affine vertex algebra $V^k(sl(2))$. A contradiction. So $ \mbox{Im} (\widetilde \Phi ) = V_k (osp(1,2))$, and first assertion holds.

(2)  By using relation
$$ x(-1) y - \omega_{sug} ^{ sl_2}  - \frac{1}{2} h(-2) = - \frac{p}{q} \omega_{p,\frac{p+q}{2}} $$
we see that $\omega_{p,\frac{p+q}{2}} \in V_{k} (osp(1,2)) $.
Since  $V_k(sl(2)) \subset L^{Vir} (d_{\frac{p+q}{2}, q}, 0) \otimes \Pi(0)$ we get that $\omega_{p, \frac{p+q}{2}}$ commutes with the action of $V_k(sl(2))$. The claim (2) follows.

\end{proof}

 \begin{remark} T. Creutzig and A. Linshaw  studied the coset $ Com(V_k (sl(2)), V_k(osp(1,2))$,  and proved in \cite[Theorem 8.2]{CL} that if $k$ is admissible, then the coset is isomorphic to a minimal  Virasoro vertex algebra. This can be also directly  proved  from     Theorem  \ref{exp-osp-adm}.
 \end{remark}

\subsection{Realization of $V^ {k} (osp(1,2)) $ at the critical level}

%Let $\overline F_2$ be the vertex subalgebra of $F_2$ generated by the fields $ \Psi_1 (z)$ and   $\partial _z \Psi_2 (z)$.
Let $M(0) ={\C} [b (-n) \ \vert n \ge 1]$ be the commutative vertex algebra generated by the field $b(z) = \sum_{n \le -1} b(n) z^{-n-1}$.

Let $\mbox{NS}_{cri}$ the infinite-dimensional Lie superalgebra with generators
$$C, T(n),  G^{cri} (n+\frac{1}{2}) \quad (n \in {\Z})$$ such that $T(n), C$ are in the center and
$$ \{ G^{cri} (r), G^{cri} (s) \} = 2 T(r+s) + \frac{r^2 - \tfrac{1}{4}}{3} \delta_{r+s, 0} C \quad (r,s \in \tfrac{1}{2} + {\Z}). $$

Let  $V^{ns} _{cri}$ be the  universal vertex superalgebra  associated to $\mbox{NS}_{cri}$ such that $C$ acts as scalar  $C=-3$.  $V^{ns} _{cri}$ is freely generated by odd field  $G^{cri} (z) = \sum_{n \in {\Z}} G^{cri} (n+\frac{3}{2}) z^{-n-2} $ and even vector $T (z) = \sum_{n \Z} T(n) z^{-n-2}$ such that $T$ is in central  and that the following $\lambda$--bracket relation holds:
$$ [G ^{cri} _{\lambda} G^{cri} ] =  2 T  -  \lambda ^2. $$

 $V^{ns} _{cri}$ can by realized as the vertex subalgebra of $F_2 \otimes M(0)$ generated by
$$  G^{cri}  =   b (-1) \Psi_2 (-\frac{1}{2}) +      \Psi_2 (-\frac{3}{2}), \  T = \frac{1}{2} ( b(-1) ^2  +  b(-2) )  .  $$

 By direct calculation we get that
 $$\omega_{1,2} =   T(-2) + G^{cri} (-\frac{3}{2}) \Psi (-\frac{1}{2})  + 2 \omega_F$$
 is a Virasoro vector of central charge  $c_{1,2} = -2$.

\begin{theorem} \label{homo-critical}
\item[(1)] Assume that $k =-3/2$. There exists a non-trivial homomorphism
$$ \overline \Phi : V^ {k} (osp(1,2)) \rightarrow  V^{ns} _{cri} \otimes F  \otimes     \Pi ^{1/2} (0)    $$
such that
 \bea\
e & \mapsto & e^{\tfrac{2}{k} (\mu - \nu) }, \nonumber  \\
h & \mapsto & 2 \mu(-1), \nonumber \\
f & \mapsto & \left[    (k+2)  \omega_{1,2}  -\nu(-1)^{2} -  (k+1) \nu(-2) \right] e^{-\tfrac{2}{k} (\mu - \nu)} \nonumber \\
x & \mapsto &   \sqrt{2}   \Psi  (-\frac{1}{2})    e^{\tfrac{1}{k} (\mu - \nu) } \nonumber \\
y & \mapsto & \sqrt{2}   \left[  - \frac{i }{2} G ^{cri} (-\frac{3}{2})  + \Psi  (-\frac{1}{2})  \nu (-1) + \frac{2k+1}{2}   \Psi  (-\frac{3}{2})  \right] e^{-\tfrac{1}{k} (\mu - \nu)} . \quad    \nonumber
\eea
\item[(2)]   $T    = \frac{1}{2} G^{cri} (-\frac{1}{2}) G^{cri} (-\frac{3}{2}) {\bf 1} $ is a central element of  $V^{k} (osp(1,2)) $.
\end{theorem}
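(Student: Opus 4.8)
The plan is to treat Theorem \ref{homo-critical} as the critical-level ($k=-\tfrac32$) counterpart of Theorem \ref{thm-reali-osp}, whose verification is carried out in Section \ref{realizacija-proof}. Since $V^{k}(osp(1,2))$ is the universal affine vertex superalgebra it is freely generated by $e,f,h,x,y$, so to produce $\overline\Phi$ it suffices to check that the proposed images satisfy, inside $V^{ns}_{cri}\otimes F\otimes\Pi^{1/2}(0)$, the defining operator product expansions of $\widehat{osp(1,2)}$ at level $-\tfrac32$. The one new feature is that at the critical level the Neveu--Schwarz central charge $c_k=\tfrac32-12\tfrac{(k+1)^2}{2k+3}$ diverges; the vertex algebra $V^{ns}_{cri}$, with its \emph{central} field $T$ and the relation $[G^{cri}_\lambda G^{cri}]=2T-\lambda^2$, is the renormalized replacement (informally, $G^{cri}$ and $T$ arise as the $k\to-\tfrac32$ limits of $\sqrt{-2k-3}\,G$ and of $(k+2)\,\omega_{\frac{p+q}{2},q}$), so that $\omega_{1,2}=T(-2)+G^{cri}(-\tfrac32)\Psi(-\tfrac12){\bf 1}+2\omega_F$, a Virasoro vector of central charge $-2=d_{-3/2}$, takes over the role of the conformal vector $(k+2)\omega^{(k)}$.

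First I would dispatch the bosonic relations. The brackets $[h_\lambda h]=2k\lambda$, $[h_\lambda e]=2e$, $[h_\lambda f]=-2f$, $[h_\lambda x]=x$, $[h_\lambda y]=-y$, $[e_\lambda e]=[f_\lambda f]=0$ and $[e_\lambda x]=[f_\lambda y]=0$ are immediate from $\langle\mu,\mu\rangle=-\langle\nu,\nu\rangle=\tfrac k2$, $\langle\mu,\nu\rangle=0$ and from the Heisenberg and lattice weights of the exponentials $e^{\pm\frac2k(\mu-\nu)}$, $e^{\pm\frac1k(\mu-\nu)}$, all of which are isotropic. For $[e_\lambda f]=h+k\lambda$ one repeats verbatim the $sl(2)$ computation behind Proposition \ref{prop-sl2-1}, with $\omega^{(k)}$ replaced by $\omega_{1,2}$: the only property used there is that $\omega_{1,2}$ is a conformal vector of central charge $d_k=1-6\tfrac{(k+1)^2}{k+2}=-2$, and the computation is insensitive to which Virasoro vector of that charge is used. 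The fermionic relations $\{x_\lambda x\}=2e$ and $\{x_\lambda y\}=h+2k\lambda$ then follow from the free-fermion contraction $\Psi(z)\Psi(w)\sim(z-w)^{-1}$ together with $\langle\tfrac1k(\mu-\nu),\tfrac1k(\mu-\nu)\rangle=0$ and the fact that $G^{cri}$ (living in the $V^{ns}_{cri}$ factor) has regular OPE with $\Psi$.

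The main obstacle is $\{y_\lambda y\}=-2f$. Here one must check that the self-OPE of $\overline\Phi(y)=\sqrt2\bigl(-\tfrac i2 G^{cri}(-\tfrac32)+\Psi(-\tfrac12)\nu(-1)+\tfrac{2k+1}{2}\Psi(-\tfrac32)\bigr)e^{-\frac1k(\mu-\nu)}$ reassembles into $\overline\Phi(-2f)=\bigl(-2(k+2)\omega_{1,2}+2\nu(-1)^2+2(k+1)\nu(-2)\bigr)e^{-\frac2k(\mu-\nu)}$ at $k=-\tfrac32$, from three sources: (i) the $G^{cri}$--$G^{cri}$ self-contraction, governed by $[G^{cri}_\lambda G^{cri}]=2T-\lambda^2$, produces the $T(-2)$ term of $\omega_{1,2}$, the $-\lambda^2$ being reabsorbed through the (regular) lattice factor $e^{c}$ so that the total carries no $\lambda$-dependence, consistently with $(y,y)=0$; (ii) the normal-ordered cross terms pairing the $G^{cri}(-\tfrac32)$-part of one copy of $y$ with the fermionic part of the other reproduce the $G^{cri}(-\tfrac32)\Psi(-\tfrac12){\bf 1}$ term; (iii) the contractions among the fermionic terms, combined with the Heisenberg factors $\nu(-1),\nu(-2)$ and $\omega_F=\tfrac12\Psi(-\tfrac32)\Psi(-\tfrac12){\bf 1}$, produce the remaining $2\omega_F$, $\nu(-1)^2$ and $\nu(-2)$ terms with the correct coefficients. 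This is exactly the critical-level specialization of the $\{y_\lambda y\}$ computation of Section \ref{realizacija-proof}; the coefficients $-\tfrac i2,\ \tfrac{2k+1}{2}$ in $\overline\Phi(y)$ and the coefficient of $G^{cri}(-\tfrac32)\Psi(-\tfrac12){\bf 1}$ in $\omega_{1,2}$ are precisely what makes it close, and I expect step (ii) --- the bookkeeping of the cross terms between the $V^{ns}_{cri}$ factor and the fermion --- to be the most delicate point. This establishes (1).

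For (2): $T$ is central in $V^{ns}_{cri}$ --- in the realization $T=\tfrac12\bigl(b(-1)^2+b(-2)\bigr)\in M(0)$, a commutative vertex algebra --- hence $T\otimes{\bf 1}\otimes{\bf 1}$ is central in $V^{ns}_{cri}\otimes F\otimes\Pi^{1/2}(0)$, and in particular in the subalgebra $\overline\Phi(V^k(osp(1,2)))$. It remains to see that $T$ actually lies in $\overline\Phi(V^k(osp(1,2)))$. The vector $(k+2)\omega_{1,2}=\tfrac12\bigl(T(-2)+G^{cri}(-\tfrac32)\Psi(-\tfrac12){\bf 1}+2\omega_F\bigr)$ times $e^{-\frac2k(\mu-\nu)}$ occurs inside $\overline\Phi(f)$; the Sugawara vector $\omega_{sug}$ of the $sl(2)$-subalgebra satisfies $\overline\Phi(\omega_{sug})=\omega_{1,2}+\omega_{\Pi(0)}$ (the $sl(2)$ computation, cf. Proposition \ref{prop-sl2-1} and Remark \ref{central-charge}); and suitable modes of $\overline\Phi(x)$ applied to $\overline\Phi(y)$ --- i.e. the element $x(-1)y\in V^k(osp(1,2))$ --- produce a combination of $G^{cri}(-\tfrac32)\Psi(-\tfrac12){\bf 1}$, $\omega_F$ and Heisenberg terms. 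Solving this small linear system exhibits an explicit $\widetilde T\in V^k(osp(1,2))$, a linear combination of $x(-1)y$, $\omega_{sug}$, $h(-2)$ and $h(-1)^2$, with $\overline\Phi(\widetilde T)=T$; thus $\widetilde T$ is the desired central element. If one prefers a statement internal to $V^k(osp(1,2))$, either one uses that $\overline\Phi$ is injective (as for the Wakimoto-type realizations) to transport $\overline\Phi([\widetilde T_\lambda a])=[T_\lambda\overline\Phi(a)]=0$ back, or one checks directly that $a(n)\widetilde T=0$ for all $a\in\{e,f,h,x,y\}$ and $n\ge 0$.
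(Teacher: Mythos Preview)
Your approach is correct and matches the paper's implicit strategy: Theorem \ref{homo-critical} has no separate proof in the paper, being the critical-level analogue of Theorem \ref{thm-reali-osp}, whose verification in Section \ref{realizacija-proof} carries over with $G^{cri}$ replacing $\sqrt{-2k-3}\,G$ and $\omega_{1,2}$ replacing $\omega_{\frac{p+q}{2},q}$. Two small remarks. First, for the delicate $\{y_\lambda y\}=-2f$ relation the paper isolates the fermion--Heisenberg part via Lemma \ref{pomoc1} (computing $\bar y(n)\bar y$ for $\bar y=[\Psi(-\tfrac12)\nu(-1)+\tfrac{2k+1}{2}\Psi(-\tfrac32)]e^{-\frac1k(\mu-\nu)}$) and then adds the super-Virasoro contribution separately; this decomposition is valid for all $k$ including $-\tfrac32$ and organizes your steps (i)--(iii) cleanly. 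Second, for part (2) the paper's proof of Theorem \ref{exp-osp-adm}(2) gives the explicit relation $x(-1)y-\omega_{sug}^{sl_2}-\tfrac12 h(-2)=-\tfrac{p}{q}\,\omega_{p,\frac{p+q}{2}}$, and the critical-level analogue of this identity (with $\omega_{p,\frac{p+q}{2}}$ replaced by a multiple of $T$) is what solves your ``small linear system'' --- so $h(-1)^2$ is not needed. Your caveat about injectivity of $\overline\Phi$ is well taken and agrees with the paper's own Remark following Theorem \ref{homo-critical}.
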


 \begin{remark}
 %Let $\mathfrak Z(V)$ denotes the center of the vertex algebra $V$.
 In the same way as in \cite[Section 8.2]{KW04}  one can show that    for $k=-\tfrac{3}{2}$:
 $$\mathcal W^{k} (osp(1,2), f_{\theta}) \cong V^{ns} _{cri} .$$
 T. Arakawa proved in \cite{Ara-critical} that when $\g$ is a Lie algebra  and $f$ nilpotent element, then $$\mathfrak Z( \mathcal W^{-h^{\vee} } (\g, f) ) = \mathfrak Z (V^{-h^{\vee} } (\g) ). $$
 We believe  that  the results from \cite{Ara-critical}  hold for $\g=osp(1,2)$,  which  would prove the following:
 \begin{itemize}
 \item  $\mathfrak Z  (V^ {k} (osp(1,2)) ) \cong \mathfrak  Z( V^{ns} _{cri}) \cong M_T (0)$;
\item the homomorphism  $\overline \Phi $ from Theorem \ref{homo-critical} is injective.
\end{itemize}
   \end{remark}

 \section{ Proof of Theorem \ref{thm-reali-osp}}
 \label{realizacija-proof}

 We shall first prove an  important technical lemma.

\begin{lemma} \label{pomoc1}
Let $\bar y =  \left[   \Psi (-\frac{1}{2})  \nu (-1) + \frac{2k+1}{2}     \Psi (-\frac{3}{2})  \right] e^{-\tfrac{1}{k} (\mu - \nu)}$. We have:
\begin{itemize}
\item[(1)] $\bar y(2) \bar y =   - \frac{1}{4}  (2 k+ 1) ( 4 k+  5 )  e^{-\tfrac{2 }{k} (\mu - \nu)}, $
\item[(2)] $ \bar y(1) \bar y =    \frac{ (2k+1) (4 k + 5) }{4 k} (\mu(-1) - \nu (-1) )  e^{-\tfrac{2 }{k} (\mu - \nu)}   = -  \frac{ (2k+1) (4 k + 5) }{ 8  }  D   e^{-\tfrac{2 }{k} (\mu - \nu)},  $
\item[(3)] $\bar y(0) \bar y =\left( \frac{2k+1}{4} \Psi(-\tfrac{3}{2} ) \Psi (-\tfrac{1}{2}) + \nu(-1) ^2 + (k+1) \nu(-2)   -\frac{ (2k+1) (4k +5)}{4}  S_2(\frac{\nu-\mu}{k}) \right)  e^{-\tfrac{2 }{k} (\mu - \nu)},$
\end{itemize}
where $S_2(\gamma) = \frac{1}{2} ( \gamma(-1) ^2 + \gamma(-2))$.
\end{lemma}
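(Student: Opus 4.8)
The plan is to compute the operator product $Y(\bar y,z)\bar y$ directly inside the lattice vertex superalgebra $F\otimes M(1)\otimes{\C}[\tfrac12{\Z}c]$ (which contains $F\otimes\Pi^{1/2}(0)$) by the standard free-field contraction rules (cf. \cite{DL}), and then to read off the coefficients of $z^{-3},z^{-2},z^{-1}$, which are $\bar y(2)\bar y$, $\bar y(1)\bar y$, $\bar y(0)\bar y$. First I record the bilinear data used throughout: with $u:=-\tfrac1k(\mu-\nu)=\tfrac{\nu-\mu}{k}$ one has $\langle u,u\rangle=0$, $\langle\nu,u\rangle=-\tfrac12$, $\langle\mu,u\rangle=\tfrac12$, $\langle\nu,\nu\rangle=-\tfrac k2$, and $e^{2u}=e^{-\tfrac2k(\mu-\nu)}$. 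Note that $\bar y$ is an odd vector, and that as a field $Y(\bar y,z)=\,:\!\bigl(\Psi(z)\nu(z)+\tfrac{2k+1}{2}\partial\Psi(z)\bigr)Y(e^u,z)\!:$, so that $Y(\bar y,z)\bar y$ is obtained by summing over all contraction patterns between the two factors; the elementary contractions are $\Psi(z)\Psi(w)\sim\tfrac{1}{z-w}$, $\nu(z)\nu(w)\sim\tfrac{-k/2}{(z-w)^2}$, $\nu(z)Y(e^u,w)\sim\tfrac{-1/2}{z-w}Y(e^u,w)$ and $Y(e^u,z)Y(e^u,w)=\,:\!Y(e^u,z)Y(e^u,w)\!:$ (no pole, since $\langle u,u\rangle=0$), together with the Koszul signs produced by moving the odd fields past one another.

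For part (1), the coefficient of $z^{-3}$ receives contributions only from the maximal patterns: the $\partial\Psi$--$\partial\Psi$ double contraction (value $\tfrac{-2}{(z-w)^3}$), the $\Psi$--$\Psi$ contraction combined with a $\nu$--$\nu$ contraction, the $\Psi$--$\Psi$ contraction combined with the two $\nu$--$Y(e^u,\cdot)$ contractions, and the $\Psi$--$\partial\Psi$ contraction combined with one $\nu$--$Y(e^u,\cdot)$ contraction; in each case the surviving vertex-operator factor contributes $Y(e^u,z)Y(e^u,w)|_{z\to w}=e^{2u}$. Assembling these terms with the correct signs and the numerical values above collapses to $\bar y(2)\bar y=-\tfrac14(2k+1)(4k+5)\,e^{-\tfrac2k(\mu-\nu)}$. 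Part (2) then follows without further computation from skew-symmetry: since $\bar y$ is odd, $Y(\bar y,z)\bar y=-e^{zL(-1)}Y(\bar y,-z)\bar y$, and comparing coefficients of $z^{-2}$ gives $\bar y(1)\bar y=\tfrac12 L(-1)\bigl(\bar y(2)\bar y\bigr)$; using $L(-1)e^{2u}=2u(-1)e^{2u}=-\tfrac2k(\mu(-1)-\nu(-1))e^{2u}$ one recovers both stated forms of $\bar y(1)\bar y$ (with $D=L(-1)$).

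For part (3) one needs the coefficient of $z^{-1}$, which is the genuinely longer computation (it is not forced by skew-symmetry). Two new ingredients enter. First, the same contraction patterns as in (1) must now be expanded to subleading order in $z-w$, which produces the terms $\tfrac{2k+1}{4}\Psi(-\tfrac32)\Psi(-\tfrac12)$, $\nu(-1)^2$ and $(k+1)\nu(-2)$ (the last two assembling exactly as in the expression for $\overline\Phi(f)$). Second, the vertex-operator factor must be expanded as $Y(e^u,z)Y(e^u,w)=\sum_{m\ge0}S_m(u)(z-w)^m e^{2u}$, where $S_m(u)$ is the $m$-th Schur polynomial in the modes of $u$; the $m=2$ term $S_2(u)(z-w)^2e^{2u}$ with $S_2(u)=\tfrac12(u(-1)^2+u(-2))$, multiplied into the $(z-w)^{-3}$ polar part, produces the contribution $-\tfrac14(2k+1)(4k+5)\,S_2(u)\,e^{2u}$, and since $u=\tfrac{\nu-\mu}{k}$ this is the term $-\tfrac{(2k+1)(4k+5)}{4}S_2(\tfrac{\nu-\mu}{k})e^{-\tfrac2k(\mu-\nu)}$ in the statement. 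Summing all contributions gives (3).

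The main obstacle is purely bookkeeping inside part (3): one has to enumerate every contraction pattern, track the Koszul signs coming from the odd field $\Psi$, and expand the exponential vertex-operator factor to the precise order needed without discarding cross terms. I would organize the calculation by pole order and, within each pole order, by which of the two summands of $\bar y$ is taken from each copy, and I would cross-check the outputs of (1) and (2) against the skew-symmetry identities $\bar y(1)\bar y=\tfrac12L(-1)(\bar y(2)\bar y)$ and $0=\bar y(0)\bar y-L(-1)(\bar y(1)\bar y)+\tfrac12L(-1)^2(\bar y(2)\bar y)$ as consistency tests before finalizing (3).
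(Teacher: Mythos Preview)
Your approach is correct but differs from the paper's. The paper does not compute the OPE by direct Wick contraction; instead it introduces an auxiliary $N=1$ superconformal vector
\[
\overline\tau=\sqrt{-\tfrac{2}{k}}\bigl(\Psi(-\tfrac12)\nu(-1)+(k+1)\Psi(-\tfrac32)\bigr){\bf 1}
\]
and observes that $\bar y=\sqrt{-\tfrac{k}{2}}\,\overline\tau_{-1}b^{-1}$ with $b^{-1}=e^{-\frac{1}{k}(\mu-\nu)}$ (the shift from $(k+1)$ to $\tfrac{2k+1}{2}$ in the $\Psi(-\tfrac32)$-coefficient is exactly the correction produced by the $\nu$--$e^{u}$ contraction in $\overline\tau_{-1}b^{-1}$). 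It then applies the iterate formula $(\overline\tau_{-1}b^{-1})_p=\sum_{j\ge 0}\bigl(\overline\tau_{-1-j}b^{-1}_{p+j}+b^{-1}_{p-1-j}\overline\tau_{j}\bigr)$ and evaluates $\overline\tau_j\overline\tau_{-1}b^{-1}$ for $j=0,1,2$ using the known $N=1$ OPE of $\overline\tau$ with itself. What this buys is that all the fermionic bookkeeping is packaged into the single datum ``central charge $\overline c=\tfrac{3}{2k}(4(k+1)^2+k)$'', so the Koszul-sign tracking you flag as the main obstacle essentially disappears; the price is the extra step of recognising $\bar y$ as $\overline\tau_{-1}b^{-1}$. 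Your direct Wick computation is more elementary and self-contained, and your skew-symmetry shortcut $\bar y(1)\bar y=\tfrac12 L(-1)\bigl(\bar y(2)\bar y\bigr)$ for part~(2) is a genuine simplification over the paper, which recomputes~(2) from scratch.

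One small slip: your second ``consistency test'' is misstated. From skew-symmetry for an odd vector one gets, at $p=0$, only the tautological $0=-L(-1)\bar y(1)\bar y+\tfrac12 L(-1)^2\bar y(2)\bar y$ (which is just $L(-1)$ applied to your first identity); there is no independent relation involving $\bar y(0)\bar y$ itself, so skew-symmetry gives no check on~(3). This does not affect your argument, since you correctly note that~(3) must be computed directly.
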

\begin{proof}
 Let $\overline \tau = \sqrt{  \frac{-2}{k}}  \left( \Psi (-\frac{1}{2})  \nu (-1) +  (k+1)      \Psi (-\frac{3}{2})  \right){\bf 1},$ $b^{r }  =  e^{  \tfrac{r }{k} (\mu - \nu)} $. Then $\overline \tau$ generates a $N=1$ superconformal vertex algebra of central charge $\overline c =\frac{3 }{2k} (4 (k+1) ^2 + k)  $. We have
 $$\overline \omega = \frac{1}{2} \overline \tau  _0 \overline \tau  =-\frac{1}{ k} \left(  \nu(-1) ^2 + (k+1)  \nu(-2) \right)+ \omega_{fer}. $$

 $$ \bar y(p) = \sqrt{-\frac{k}{2}} (\tau_{-1}  b^{-})_p = \sqrt{-\frac{k}{2}}   \sum_{j=0}^{\infty} \tau_{-1-j} b^{-1} _{p+j} + b^{-1} _{-1-j +p} \tau_j. $$
 By  applying formulas
\bea  \tau _n b^{-1} & = &-\frac{1}{2}   \sqrt{  \frac{-2}{k}}   \delta_{n,0}    \Psi (-\frac{1}{2})  b^{-1} \qquad (n \ge 0), \nonumber \\
 \tau_j \tau_{-1}    b^{-1} &=& 0 \quad (j \ge 3), \nonumber \\
  \tau_2 \tau_{-1}    b^{-1} &=&\left( \frac {4 (k+1) ^2 + k}  {k} - \frac{2 k+3}{2 k}  \right)b^{-1} = \frac{ 8 (k+1) ^2 -3}{2k} b^{-1}\nonumber \\
\tau_1 \tau_{-1}    b^{-1} &=& \frac{2}{k} \nu(-1) b^{-1}  \nonumber \\
\tau_0 \tau_{-1}    b^{-1} &=& -\frac{2}{k} \left( \nu (-1) ^2  + k \nu(-2) \right)  b^{-1} +   \Psi(-\frac{3}{2}) \Psi(-\frac{1}{2}) b^{-1} \nonumber \\
&& -\frac{1}{k} \nu(-2) b^{-1}-\frac{2k+1}{2k}  \Psi(-\frac{3}{2}) \Psi(-\frac{1}{2}) b^{-1} \nonumber  \\
&=& -\frac{2}{k} \left( \nu (-1) ^2  + \frac{2 k+1}{2}  \nu(-2) \right)  b^{-1} -\frac{1}{2k}  \Psi(-\frac{3}{2}) \Psi(-\frac{1}{2}) b^{-1}  \nonumber
 \eea
%\bea
%a_0 a &=&\left(  \nu(-1) ^2 +\frac{2k+1}{2} \nu(-2)- \frac{k}{2} \Psi(-\frac{3}{2}) \Psi(-\frac{1}{2})  \right) {\bf 1}  \nonumber \\
%a_1 a &=& 0 ,   \nonumber  \\
%a_2 a & =& \left( -\frac{k}{2} -\frac{ (2k+1) ^2}{2}  \right)  {\bf 1} = -\frac{ (k+1) (4k +1)}{2} {\bf 1},  \nonumber
%
%\eea
 we get
\bea
\bar y(2) \bar y
%& = &-\frac{k}{2} \sum_{j=0}^{\infty} ( \tau _{-1-j} b^{-1} _{2+j} + b^{-1} _{1-j } \tau_j ) \tau_{-1} b ^{-1} \nonumber \\
&=&  -\frac{k}{2} \left(   b^{-1} _{-1} \tau_2 \tau _{-1} b^{-1}  +  b^{-1} _{0} \tau_1 \tau _{-1}  b^{-1}  +   b^{-1} _{1} \tau_0 \tau _{-1}  b^{-1}   \right) \nonumber \\
&=&   -\frac{k}{2} \left(  \frac{  4 (k+1) ^2 + k  }{k} -\frac{2 k+3}{2 k}  +\frac{1}{k} - \frac{ k+1}{k}  \right)  b^{-2}\nonumber \\
&=&  - \frac{1}{4}  (2 k+ 1) ( 4 k+  5 )  b^{-2}. \nonumber  \eea
\bea
%&& \nonumber \\
\bar y(1) \bar y & = &  -\frac{k}{2}  \left(   b^{-1} _{-2} \tau_2 \tau _{-1} b^{-1}  +  b^{-1} _{-1} \tau_1 \tau _{-1}  b^{-1}  +   b^{-1} _{0} \tau_0 \tau _{-1}  b^{-1}   \right) \nonumber \\
&=& -\frac{k}{2} ( -\frac{ 8 (k+1) ^2 -3}{2k^2 }  (\mu(-1)  -\nu(-1) )  b^{-2} \nonumber  \\
&& +\frac{2}{k} \nu(-1) b^{-2} - \frac{1}{k^2 } (\mu(-1)-\nu(-1)) b^{-2} \nonumber  \\
&& + \frac{k+1}{k^2} (\mu(-1) -\nu(-1)) b^{-2} -\frac{2}{k} \nu(-1) b^{-2}  ) \nonumber \\
&=& \frac{ ( 4k +5) (2k+1)} {4 k}  (\mu(-1) -\nu(-1)) b^{-2} = -  \frac{ (2k+1) (4 k + 5) }{ 8  }  D   e^{-\tfrac{2 }{k} (\mu - \nu)}.\nonumber \eea
\bea
\bar y(0) \bar y & = &  -\frac{k}{2}  \left(   b^{-1} _{-3} \tau_2 \tau _{-1} b^{-1}  +  b^{-1} _{-2} \tau_1 \tau _{-1}  b^{-1}  +   b^{-1} _{-1} \tau_0 \tau _{-1}  b^{-1} +     \tau_{-1} b^{-1} _{0}   \tau _{-1}  b^{-1}\right) \nonumber \\
&=&  -\frac{k}{2} ( \frac{ 8 (k+1) ^2 -3}{2k }  S_2(\frac{\nu-\mu}{k})  b^{-2} -\frac{2}{k^2} \nu(-1) (\mu(-1)-\nu(-1)) b^{-2} \nonumber \\
&& \quad +\frac{1}{k}  S_2(\frac{\nu-\mu}{k})  b^{-2}   -\frac{2}{k} \left( \nu (-1) ^2  + \frac{2 k+1}{2}  \nu(-2) \right)  b^{-2} -\frac{1}{2k}  \Psi(-\frac{3}{2}) \Psi(-\frac{1}{2}) b^{-2}  \nonumber \\
&& \quad -\frac{k+1}{k}  S_2(\frac{\nu-\mu}{k})  b^{-2}  + \frac{2}{k^2} \nu(-1)  (\mu(-1) -\nu(-1)) b^{-2} \nonumber \\
&& \quad -\frac{1}{k} \left( \nu(-2) + k  \Psi(-\frac{3}{2}) \Psi(-\frac{1}{2}) b^{-2}\right) ) \nonumber \\
&=& -\frac{ (2k+1) (4k +5)}{4}  S_2(\frac{\nu-\mu}{k})  b^{-2} +   \frac{2k+1}{4}  \Psi(-\frac{3}{2}) \Psi(-\frac{1}{2}) b^{-2}  \nonumber \\
&& + ( \nu(-1) ^2 + (k+1) \nu(-2)  ) b ^{-2} \nonumber
 \eea
The proof follows.
\end{proof}

\subsection{ Proof of Theorem \ref{thm-reali-osp} }
First we notice that
$
c_{p,q} = -\frac{3}{2} \frac{ (4k +5) (2k+1) }{2k +3}.
$

%Let us first see that $e,f, h$ generate a subalgebra isomorphic to   $V^k(sl(2))$.

Assume that  $k \ne -2$. Since   $$e,f,h \in V^{Vir} (d_{\frac{p+q}{2} , q}, 0) \otimes \Pi(0) \subset V^{ns} (c_{p, q}, 0) \otimes  F \otimes \Pi ^{1/2} (0), $$
then Proposition \ref{prop-sl2-1} implies  that vectors  $e,f, h$   generate a vertex subalgebra  of  $V^{ns} (c_{p, q}, 0) \otimes  F \otimes \Pi ^{1/2} (0)$ isomorphic to $V^k(sl(2))$.  In the case $k=-2$,  vector $\Omega_{p,q}$ generates a commutative vertex algebra isomorphic to $M_T(0)$,  and therefore Proposition \ref{prop-sl2--crit}  implies that $e,f,h$ generate a quotient of $V^{k}(sl(2))$.

Next we need to prove that for $n \ge 0$ the following relations hold:
\bea
&& h(n) x = \delta_{n,0} x, \ e(n) x = 0, \ x(n) f = \delta_{n,0} y  \label{osp-1} \\
&& h(n) y = -\delta_{n,0} y, \ e(n) y = -\delta_{n,0} x, \ f(n) y = 0 \label{osp-2} \\
&& x(n) x = 2 \delta_{n,0} e, \ y(n)y = -2 \delta_{n,0} y, \ x(0) y = h, \ x(1) y =   2 k {\bf 1}. \label{osp-3}
\eea

 Let us first prove that $x(n) f = \delta_{n,0} y$ for $n \ge 0$. Clearly $x(n) f = 0$ for $n \ge 2$. We have:
 \bea
 x(1) f& = &\sqrt{2} \left(\Psi(\frac{3}{2}) \Omega_{p,q}  + \frac{2k +1}{4} \Psi(-\frac{1}{2}) \right) e^{-\tfrac{1}{k} (\mu - \nu)}  \nonumber \\
 &=& \sqrt{2} \left(-\frac{2k+1}{4} \Psi(\frac{3}{2}) \Psi(-\frac{3}{2}) \Psi(-\frac{1}{2})   + \frac{2k +1}{4} \Psi(-\frac{1}{2}) \right) e^{-\tfrac{1}{k} (\mu - \nu)}  = 0, \nonumber  \\
 x(0) f & = &\sqrt{2} \left(\Psi(\frac{1}{2}) \Omega_{p,q}  + \frac{2k +1}{4} \Psi(-\frac{3}{2}) \right) e^{-\tfrac{1}{k} (\mu - \nu)}  \nonumber \\
 & & +  \sqrt{2}  \Psi(\frac{3}{2}) \Omega_{p,q}   \frac{1}{k} (\mu-\nu) (-1)   e^{-\tfrac{1}{k} (\mu - \nu)} \nonumber \\
 & & + \sqrt{2}  \frac{2k+1}{4}\Psi(-\frac{1}{2})   \frac{1}{k} (\mu-\nu) (-1)   e^{-\tfrac{1}{k} (\mu - \nu)}   + \sqrt{2} \Psi(-\frac{1}{2}) \nu(-1)  e^{-\tfrac{1}{k} (\mu - \nu)}  \nonumber \\
 &= & \sqrt{2}   \left[ - \frac{ \sqrt{-2k- 3} }{2 }    G (-3/2) +   \Psi (-\frac{1}{2})  \nu (-1) + \frac{2k+1}{2}   \Psi (-\frac{3}{2})  \right] e^{-\tfrac{1}{k} (\mu - \nu)}  = y. \nonumber
 \eea
 By using an easy calculation we get:
 \bea
 x(1) y & = &  2 (  \frac{2k+1}{2} -\frac{1}{2}) {\bf 1}  = 2k {\bf 1},  \nonumber \\
 x(0) y & = &  2k \frac{1}{k} (\mu-\nu) (-1) + 2\nu(-1) = 2\mu(-1) = h,  \nonumber  \\
 e(0) y & = & \delta_{n,0} x, \nonumber \\
 x(n) x &= &  2 \delta_{n,0} e . \nonumber
 \eea
Finally, we  will check relation $y(n) y = -2 \delta_{n,0}$.
Clearly, $y (n) y= 0$ for $n\ge 3$.
For the cases $n=0,1,2$ we need to use Lemma \ref{pomoc1}. We have:
\bea
y(2) y
% &=& 2  \bar y (2) \bar y-\frac{2k+3}{2} (G(-\frac{3}{2}) e^{-\tfrac{1}{k} (\mu - \nu)} )_{2} (G(-\frac{3}{2}) e^{-\tfrac{1}{k} (\mu - \nu)} ) \nonumber \\
&=&  ( - \frac{1}{2}  (2 k+ 1) ( 4 k+  5 )   - \frac{2k+3}{3} c_{p,q}  )  e^{-\tfrac{2 }{k} (\mu - \nu)}  = 0, \nonumber \\
y(1) y
%&=& 2 \bar y (1) \bar y  -\frac{2k+3}{2} (G(-\frac{3}{2}) e^{-\tfrac{1}{k} (\mu - \nu)} )_{1} (G(-\frac{3}{2}) e^{-\tfrac{1}{k} (\mu - \nu)} ) \nonumber \\
& = &  \left(  \frac{ (2k+1) (4 k + 5) }{2 k}    + \frac{2k+3}{3 k }    c_{p,q}   \right)   (\mu(-1) - \nu (-1) )  e^{-\tfrac{2 }{k} (\mu - \nu)}   = 0, \nonumber \\
y(0) y
%&=&  2  \bar y (0) \bar y-\frac{2k+3}{2} (G(-\frac{3}{2}) e^{-\tfrac{1}{k} (\mu - \nu)} )_{0} (G(-\frac{3}{2}) e^{-\tfrac{1}{k} (\mu - \nu)} ) \nonumber \\
&=&   \left(  -\frac{ (2k+1) (4k +5)}{4}  -\frac{2k+3} {3} c_{p,q}\right)  S_2(\frac{\nu-\mu}{k})   e^{-\tfrac{2 }{k} (\mu - \nu)} \nonumber \\
&& +   \left(   -(2k +3) \omega_{p,q} + ( 2k+1)\omega_{F} -i \sqrt{2k+3} G(-\frac{3}{2}) \Psi(-\frac{1}{2})  \right)   e^{-\tfrac{2 }{k} (\mu - \nu)}  \nonumber \\
&&  +   \left( 2(\nu (-1) ^2 + (k+1) \nu(-2))   \right )      e^{-\tfrac{2 }{k} (\mu - \nu)} \nonumber \\
&=&- 2  \left(  (k+2) \omega_{p,\frac{p+q}{2}} -   (\nu (-1) ^2 + (k+1) \nu(-2))    \right)   e^{-\tfrac{2 }{k} (\mu - \nu)}  = - 2 f. \nonumber
%&=& - 2 f. \nonumber
\eea
In this way we have checked relations (\ref{osp-1})-(\ref{osp-3}).
This finishes the proof of Theorem. \qed

 \section{Example: weight  and Whitaker modules for $k =-5/4$ }

% The realization of $V_{k} (osp(1,2))$ enables an explicit realization of its irreducible   modules.
  As we have seen in previous sections (see also \cite{ALZ-2016},  \cite{AR-2017}, \cite{A-2017}) for the analysis  of weight, Whittaker  and logarithmic modules, the explicit free-field  realization is very useful.

% Here we shall consider the simplest case $k=-\frac{5}{4}$.
 The realization of $V_{k} (osp(1,2))$ is simpler  in the cases when $L^{ns} (c_{p,q}, 0)$ is a $1$--dimensional vertex algebra, and therefore $ V_{k} (osp(1,2))$ can be realizaed on the vertex algebra $F \otimes  \Pi ^{1/2} (0)$. This happens only in the cases $k =-\frac{1}{2}$ and $k = -\frac{5}{4}$. In the case $k=-\frac{1}{2}$, $ V_{k} (osp(1,2))$  can be realized on  the tensor product of the Weyl vertex algebra $W$ with the fermionic vertex algebra $F$ of central chargce $c=1/2$. But this is essentially known in the literature, as a special case of the realization of $V_{-1/2} (osp(1, 2n))$  (cf. \cite{FF}).

 In this section we specialize  our realization to the case $k = -5/4$.  We get a realization of the  vertex algebra $V_k(osp(1,2)$, which was investigated by D. Ridout,   J. Snadden and S. Wood \cite{RSW} by using different methods.  It is  also important to notice that the  vertex algebra  $ V_{k} (osp(1,2))$ is a simple current extension of $V_k(sl(2))$:
 $$ V_{-\frac{5}{4} } (osp(1,2)) =  L _{A_1} (-\frac{5}{4} \Lambda_0) +  L _{A_1} (-\frac{9}{4} \Lambda_0  + \Lambda_1),$$
 which can be also proved from our realization.
Then $k + \frac{3}{2} = \frac{p}{2 q}$ for $p = 2$, $q = 4$. Since $c_{p,q} = 0$, we have that $L^{ns} (c_{p,q}, 0)$ is a $1$--dimensional vertex algebra.

 \begin{remark}
 Note that $k = -h^{\vee} / 6-1$ is a  collapsing level   for $\g=osp(1,2)$ \cite{AKMPP-JA}. In this case we have the realization inside the free field algebra  $F \otimes   \Pi ^{1/2} (0)$ without any $\mathcal W$-algebra.
 \end{remark}
 We have the following realization of $V_{k} (osp(2,1))$.

\begin{corollary}   \label{nivo54} Assume that   $k = -\frac{5}{4}$.
 \item[(1)] There exists  a non-trivial vertex superalgebra homomorphism
$$ \overline \Phi : V_{k} (osp(1,2)) \rightarrow   F \otimes   \Pi ^{1/2} (0) $$
such that
 \bea\
e & \mapsto & e^{\tfrac{2}{k} (\mu - \nu) }, \nonumber  \\
h & \mapsto & 2 \mu(-1), \nonumber \\
f & \mapsto & \left[    (k+2) \omega_{3,4}    -\nu(-1)^{2} -  (k+1) \nu(-2) \right] e^{-\tfrac{2}{k} (\mu - \nu)} \nonumber \\
x & \mapsto &   \sqrt{2}   \Psi (-\frac{1}{2})    e^{\tfrac{1}{k} (\mu - \nu) } \nonumber \\
y & \mapsto & \sqrt{2}   \left[    \Psi (-\frac{1}{2})  \nu (-1) + \frac{2k+1}{2}   \Psi (-\frac{3}{2})  \right] e^{-\tfrac{1}{k} (\mu - \nu)},   \quad    \nonumber
\eea
where $\omega_{3,4} = \frac{1}{2} \Psi (-\frac{3}{2}) \Psi (-\frac{1}{2}) {\bf 1}$.

\item[(2)] Assume that $ U$  (resp. $U^{tw}$) is any  untwisted (resp. $g$-twisted)  $\Pi^{1/2} (0)$--module. Then
\begin{itemize}
\item $ F \otimes U$  and $M ^{\pm} \otimes U^{tw}$ are untwisted $V_k (osp(1,2))$--modules.
\item $F \otimes U^{tw}$ and $M^{\pm} \otimes U$ are Ramond twisted  $V_k (osp(1,2))$--modules.
\end{itemize}
\end{corollary}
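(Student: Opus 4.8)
The plan is to obtain (1) by specializing Theorem \ref{exp-osp-adm}, and then to get (2) by viewing each of the four tensor products as a twisted module over the ambient vertex superalgebra $F\otimes\Pi^{1/2}(0)$ and restricting along $\overline\Phi$. For (1): with $k=-\tfrac54$ we have $k+\tfrac32=\tfrac14=\tfrac{p}{2q}$ for $p=2$, $q=4$, and $p,q\ge 2$, $(\tfrac{p-q}{2},q)=(-1,4)=1$, so Theorem \ref{exp-osp-adm} applies. Since $c_{2,4}=0$, the minimal $N=1$ vertex algebra $L^{ns}(c_{2,4},0)$ is one-dimensional, so the target $L^{ns}(c_{2,4},0)\otimes F\otimes\Pi^{1/2}(0)$ is canonically $F\otimes\Pi^{1/2}(0)$ and the odd Neveu--Schwarz field $G$ maps to $0$. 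As $\tfrac{p+q}{2}=3$, Proposition \ref{ns-vir-min} places $\omega_{\frac{p+q}{2},q}=\omega_{3,4}$ inside $L^{ns}(c_{2,4},0)\otimes F=F$, where, because $d_{3,4}=\tfrac12$ and $F^{even}=L^{Vir}(d_{3,4},0)$, it must be the conformal vector $\omega_F=\tfrac12\Psi(-\tfrac32)\Psi(-\tfrac12){\bf 1}$ of $F^{even}$. Substituting $G\mapsto 0$ and $\omega_{3,4}=\omega_F$ into the formulas of Theorem \ref{exp-osp-adm} then yields exactly the formulas in the statement; non-triviality is inherited from Theorem \ref{exp-osp-adm}, and since $V_k(osp(1,2))$ is simple the map $\overline\Phi$ is injective.

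For (2) I would argue via automorphisms rather than mode counts. Recall that $F$ carries the canonical parity automorphism $\sigma$ ($\sigma=1$ on $F^{even}$, $\sigma=-1$ on $F^{odd}$), with $\sigma$--twisted modules $M^{\pm}$, and that $\Pi^{1/2}(0)=\Pi(0)\oplus\Pi(0)e^{c/2}$ carries the order-two automorphism $g$ that is the identity on $\Pi(0)$. Since $\Pi(0)e^{c/2}=\Pi_{(0)}(\tfrac12)$ is an irreducible $\Pi(0)$--module (Proposition \ref{constr-pi0-mod}) and $g\ne\mathrm{Id}$, Schur's lemma forces $g$ to act on this coset as $-1$, i.e.\ $g(e^{\pm c/2})=-e^{\pm c/2}$. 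Now fix one of the tensor products $\mathcal M=N_F\otimes N_\Pi$ in the statement and write $\varepsilon,\delta\in\{0,1\}$ according to whether $N_F$, $N_\Pi$ are twisted; then $N_F$ is a $\sigma^{\varepsilon}$--twisted $F$--module, $N_\Pi$ a $g^{\delta}$--twisted $\Pi^{1/2}(0)$--module, so $\mathcal M$ is a $(\sigma^{\varepsilon}\otimes g^{\delta})$--twisted $F\otimes\Pi^{1/2}(0)$--module, which becomes a (possibly twisted) $V_k(osp(1,2))$--module via $\overline\Phi$. The key step is to check that $\sigma^{\varepsilon}\otimes g^{\delta}$ restricts along $\overline\Phi$ to $(\sigma_{osp})^{\varepsilon+\delta}$, where $\sigma_{osp}$ is the parity automorphism of $V_k(osp(1,2))$: indeed $\overline\Phi(e),\overline\Phi(h),\overline\Phi(f)$ lie in $\Pi(0)\otimes F^{even}$, fixed by both $g$ and $\sigma$, whereas $\overline\Phi(x)=\sqrt2\,\Psi(-\tfrac12)\,e^{c/2}$ and $\overline\Phi(y)=\sqrt2\,\bigl(\Psi(-\tfrac12)\nu(-1)+\tfrac{2k+1}{2}\Psi(-\tfrac32)\bigr)e^{-c/2}$ lie in $F^{odd}\otimes\Pi(0)e^{\mp c/2}$ and hence acquire the scalar $(-1)^{\varepsilon}$ from $\sigma$ (acting on $\Psi$) times $(-1)^{\delta}$ from $g$ (acting on $e^{\pm c/2}$). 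Consequently $\mathcal M$ is an untwisted $V_k(osp(1,2))$--module when $\varepsilon+\delta$ is even and a Ramond twisted one when $\varepsilon+\delta$ is odd; reading off $(\varepsilon,\delta)=(0,0)$ for $F\otimes U$, $(1,1)$ for $M^{\pm}\otimes U^{tw}$, $(0,1)$ for $F\otimes U^{tw}$, and $(1,0)$ for $M^{\pm}\otimes U$ gives the four assertions.

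The only genuinely delicate point I foresee is the twisting bookkeeping in (2): one must confirm that the notion of $g$--twisted $\Pi^{1/2}(0)$--module used in the $\mathcal A^{1/2}$--module construction recalled in Section \ref{pi0} really corresponds to twisting by the automorphism $g$ that negates $e^{\pm c/2}$ (equivalently, that on such modules the fields $Y(e^{\pm c/2},z)$ carry half-integer powers of $z$, and integer powers on the untwisted modules), so that the sign count above is legitimate. Once that is in place, everything reduces to a direct specialization of Theorems \ref{thm-reali-osp}--\ref{exp-osp-adm} together with Proposition \ref{ns-vir-min}.
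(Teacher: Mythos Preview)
Your proof is correct and, for part (1), matches the paper's approach exactly: the surrounding text explicitly says one specializes the general realization to $p=2$, $q=4$, notes $c_{2,4}=0$ so that $L^{ns}(c_{2,4},0)$ is one-dimensional, and then the formulas of Theorem~\ref{exp-osp-adm} collapse (with $G\mapsto 0$ and $\omega_{\frac{p+q}{2},q}=\omega_{3,4}=\omega_F$) to those in the statement. The paper does not actually write out a proof of part (2); your automorphism-restriction argument, computing how $\sigma^{\varepsilon}\otimes g^{\delta}$ acts on $\overline\Phi(e),\overline\Phi(h),\overline\Phi(f)\in F^{even}\otimes\Pi(0)$ versus $\overline\Phi(x),\overline\Phi(y)\in F^{odd}\otimes\Pi(0)e^{\pm c/2}$ and matching it to the parity automorphism of $V_k(osp(1,2))$, is the natural one and fills in what the paper leaves implicit. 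The caveat you flag at the end is exactly the right one to check, and it goes through: the $g$--twisted $\Pi^{1/2}(0)$--modules of Section~\ref{pi0} are precisely those on which $c(0)$ acts by an odd integer, which forces $Y(e^{\pm c/2},z)$ to involve half-integer powers of $z$, consistent with twisting by the order-two automorphism that negates $e^{\pm c/2}$.
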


A classification of  irreducible untwisted  and twisted  $V_{k} (osp(2,1))$--modules were obtained  \cite[Theorem 9]{RSW}  by using  Zhu's algebra approach. All representations can be constructed using our free-field realization. Maybe most interesting examples are relaxed highest weight   $V_{k} (osp(2,1))$--modules. We shall consider here only Neveu-Schwarz sector, i.e, non-twisted  $V_{k} (osp(2,1))$--modules.
%   ${\mathcal C}_{\Lambda,0}, \quad  \Lambda \in \frac{\C}{2 \Z} \setminus \{[ \pm \frac{1}{2} ]\}. $

%Consider first $F \otimes   \Pi ^{1/2} (0)$--module $F \otimes \Pi ^{ (1/2)}   _{-1} (\lambda)$.  As in Section  \ref{relaxed-realization} we define
%$$ E_{1,1}  ^{\lambda} = {\bf 1}  \otimes e^{- \mu + \lambda \frac{2}{k} (\mu - \nu)}, \quad  E_{2,1}  ^{\lambda} = \Psi (-\frac{1}{2}) {\bf 1}  \otimes e^{- \mu + \lambda \frac{2}{k} (\mu - %\nu)}. $$
%Then the action of $osp(1,2)$ is given by  $(i=1,2)$:
 %\bea
 %&& e(0) E_{i ,1}   ^{\lambda}  = E_{i ,1}  ^{\lambda +1},  \quad
 %h(0)  E_{i,1}   ^{\lambda} = (-k + 2 \lambda )E_{i ,1}  ^{\lambda}, \nonumber \\
 %&& f(0) E _{1,1} ^{\lambda}
  %= - \lambda (\lambda + \tfrac{1}{4}) E_{1,1} ^{\lambda-1} ,  \quad f(0) E _{2,1} ^{\lambda}   =  -  (\lambda +3/4) (\lambda- 1/2)  E_{2,1} ^{\lambda-1}    \nonumber \\
 %&&  x(0)  E _{1,1 } ^{\lambda}   =   \sqrt{2}  E_{1,2} ^{\lambda + \frac{1}{2} }, \  x(0)  E _{2,1 } ^{\lambda}   =   \sqrt{2}  E_{1,1} ^{\lambda + \frac{1}{2} }, \nonumber     \\
  %&& y(0) E _{1,1} ^{\lambda}   =    ..., \quad  y(0) E _{2,1} ^{\lambda} = .... .\nonumber
  %
 %\eea
% Moreover, we have
 %$$L_{sug} (n) E_{1,1}   ^{\lambda}   = -  \frac{5}{16} \delta_{n,0} E_{1,1}   ^{\lambda} , \quad  L_{sug} (n) E_{2,1}   ^{\lambda}   =  \frac{3}{16} \delta_{n,0} E_{2,1}   ^{\lambda} \quad (n %\ge 0). $$

Consider  the $\sigma \otimes g$--twisted module $F \otimes   \Pi ^{1/2} (0)$--module
$\mathcal F ^{\lambda} :=  M ^{\pm} \otimes \Pi ^{ (1/2)} _ {(-1)} (\lambda)$
 for  $\lambda \in {\C}$.
 Then $    \mathcal F ^{\lambda} $ is an untwisted $V_k (osp(1,2))$--module.
As in Section  \ref{relaxed-realization} we define
$ E_{1,2}  ^{\lambda} = {\bf 1} ^{\pm}  \otimes e^{- \mu + \lambda \frac{2}{k} (\mu - \nu)}$.
Then the action of $osp(1,2)$ is given by
 \bea
 e(0) E_{1,2}   ^{\lambda}  &=& E_{1,2}  ^{\lambda +1}, \nonumber \\
 h(0)  E_{1,2}   ^{\lambda} &=& (-k + 2 \lambda )E_{1,2}  ^{\lambda}, \nonumber \\
 f(0) E _{1,2} ^{\lambda}
  &=& \left( \frac{ 1 }{  16 } - (\lambda + \frac{ 1 }{8  } ) ^2 \right)   E_{1,2} ^{\lambda-1}  = ( \frac{3}{8} + \lambda )  (\frac{1}{8} - \lambda )   E_{1,2} ^{\lambda-1}  \nonumber \\
  x(0)  E _{1,2} ^{\lambda}  & =  & \pm E_{1,2} ^{\lambda + \frac{1}{2} } \nonumber     \\
  y(0) E _{1,2} ^{\lambda}  & =  &  \mp    ( \frac{3}{8} + \lambda ) E_{1,2} ^{\lambda  -\frac{1}{2} } .\nonumber
 \eea
 Moreover, we have
 $$L_{sug} (n) E_{1,2}   ^{\lambda}   = - \frac{1}{4} \delta_{n,0} E_{1,2}   ^{\lambda}  \quad (n \ge 0). $$

\begin{theorem} Assume that $\lambda \notin \frac{1}{8} +  \frac{1}{2}  {\Z}$.
Then $\mathcal F ^{\lambda}$ is an irreducible ${\Z}_{\ge 0}$ --graded $V_{k} (osp(1,2))$-module whose character is
\bea \mbox{ch} [\mathcal F ^{\lambda }]  (q,z)  &=& \mbox{Tr} _{ \mathcal F ^{\lambda}} q^ {L_{sug} (0) - c/24} z^ { h(0)}
= z^{ 2 \lambda -k }\frac{f_2(\tau) \delta(z)} {{ \sqrt 2} \eta(\tau) ^2},  \nonumber \eea
where
$f_2 (\tau)= \sqrt{2}  q^{ \frac{1}{24 } }   \prod_{n =1} ^{\infty} (1+ q^n)$.
(In the terminology of \cite{RSW}, $\mathcal F ^{\lambda}$ corresponds to ${\mathcal C}_{\Lambda,0}$ where $\Lambda = 2 \lambda + \frac{5}{4}$).
\end{theorem}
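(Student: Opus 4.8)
The plan is to leverage the free--field realization of Corollary \ref{nivo54}: for $k=-\tfrac54$ one has $\mathcal F^\lambda=M^{\pm}\otimes\Pi^{1/2}_{(-1)}(\lambda)$ as an untwisted $V_k(osp(1,2))$--module, and I would study it by restriction to the conformal subalgebra $V_k(sl(2))\subset V_k(osp(1,2))$.

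First I would identify the $V_k(sl(2))$--module structure of $\mathcal F^\lambda$. Because $F^{even}=L^{Vir}(d_{3,4},0)$, $\omega_{3,4}=\omega_F$, and $e,f,h$ lie in $F^{even}\otimes\Pi(0)$ (Corollary \ref{nivo54}), the restriction is computed inside $F^{even}\otimes\Pi(0)$; there $M^{\pm}\cong L^{Vir}(d_{3,4},\tfrac1{16})$, and from $\Pi^{1/2}(0)=\Pi(0)\oplus\Pi(0)e^{c/2}$ one gets $\Pi^{1/2}_{(-1)}(\lambda)\cong\Pi_{(-1)}(\lambda)\oplus\Pi_{(-1)}(\lambda+\tfrac12)$ as $\Pi(0)$--modules. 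Setting $\mathcal E^{\mu}:=L^{Vir}(d_{3,4},\tfrac1{16})\otimes\Pi_{(-1)}(\mu)$, this yields
$$\mathcal F^\lambda\big|_{V_k(sl(2))}\ \cong\ \mathcal E^{\lambda}\ \oplus\ \mathcal E^{\lambda+\frac12}.$$
Since the coset Virasoro vanishes ($c_{p,q}=0$), $\overline\Phi(\omega_{sug})=\omega_{3,4}+\omega_{\Pi(0)}$, so $L_{sug}(0)$ is the sum of the $F$--Virasoro zero mode and $\overline L(0)$, with spectrum $-\tfrac14+\Z_{\ge 0}$ and finite--dimensional joint $(L_{sug}(0),h(0))$--eigenspaces; this is the asserted $\Z_{\ge 0}$--graded structure, in agreement with $L_{sug}(n)E^{\lambda}_{1,2}=-\tfrac14\delta_{n,0}E^{\lambda}_{1,2}$.

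Next I would prove irreducibility. Each $\mathcal E^{\mu}$ is a relaxed highest weight module at the admissible non--integral level $k=-\tfrac54$; since $\mathcal S_{3,4}=\{0,\tfrac1{16},\tfrac12\}$ and $\tfrac12-\tfrac1{16}\notin\Z_{> 0}$, the hypothesis (\ref{uvjet-ired}) of Proposition \ref{maximal} holds for $\tfrac1{16}$, so $\mathcal E^{\mu}$ is irreducible once $\mu\notin\lambda^{\pm}_{1,2}+\Z$. From the explicit action $f(0)E^{\mu}_{1,2}=(\tfrac38+\mu)(\tfrac18-\mu)E^{\mu-1}_{1,2}$ one reads off $\lambda^{\pm}_{1,2}=\{-\tfrac38,\tfrac18\}$, so the excluded set is $(-\tfrac38+\Z)\cup(\tfrac18+\Z)=\tfrac18+\tfrac12\Z$; since $\lambda\notin\tfrac18+\tfrac12\Z$ the same holds for $\lambda+\tfrac12$, hence $\mathcal E^{\lambda}$ and $\mathcal E^{\lambda+\frac12}$ are both irreducible, and they are non--isomorphic (by (\ref{act-e}), $\mathcal E^{\mu}$ depends on $\mu$ only mod $\Z$, and $\lambda\not\equiv\lambda+\tfrac12$). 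Therefore the only $V_k(sl(2))$--submodules of $\mathcal F^\lambda$ are $0,\mathcal E^{\lambda},\mathcal E^{\lambda+\frac12},\mathcal F^\lambda$. A $V_k(osp(1,2))$--submodule is one of these, but neither $\mathcal E^{\lambda}$ nor $\mathcal E^{\lambda+\frac12}$ is stable under the odd modes: $x(0)E^{\mu}_{1,2}=\pm E^{\mu+\frac12}_{1,2}\ne 0$ carries the top component of one summand into the other. Thus $\mathcal F^\lambda$ is irreducible.

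Finally, the character factorizes: $h(0)$ acts only on the $\Pi^{1/2}$--factor, $L_{sug}(0)$ splits as above with $c_{sug}=-5=\tfrac12+(6k+2)$, so $\mbox{ch}[\mathcal F^\lambda](q,z)=\mbox{ch}[M^{\pm}](q)\cdot\mbox{ch}[\Pi^{1/2}_{(-1)}(\lambda)](q,z)$; inserting (\ref{weber}) and Proposition \ref{char-pi0}(2) gives $z^{2\lambda-k}\,f_2(\tau)\,\delta(z)/(\sqrt2\,\eta(\tau)^2)$ (equivalently, add $\mbox{ch}[\mathcal E^{\lambda}]+\mbox{ch}[\mathcal E^{\lambda+\frac12}]$ and use $(1+z)\delta(z^2)=\delta(z)$). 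I expect the only real difficulty to be the irreducibility of the relaxed blocks $\mathcal E^{\mu}$ together with the bookkeeping needed to check (\ref{uvjet-ired}) and to pin down $\lambda^{\pm}_{1,2}$; the transfer to $V_k(osp(1,2))$ is then the short ``odd gluing'' argument, and the character is a routine trace factorization.
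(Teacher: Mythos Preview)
Your proof is correct and follows essentially the same route as the paper: decompose $\mathcal F^\lambda$ over $V_k(sl(2))$ as $\mathcal E^{\lambda}\oplus\mathcal E^{\lambda+1/2}$, invoke Proposition~\ref{maximal} to get irreducibility of the two relaxed pieces, then pass to $V_k(osp(1,2))$ and read off the character from Proposition~\ref{char-pi0} and (\ref{weber}). The only cosmetic difference is in step three: the paper simply says ``this easily gives irreducibility since $V_{k}(osp(1,2))$ is a simple current extension of $V_k(sl(2))$'', whereas you spell out that argument directly by listing the four $V_k(sl(2))$--submodules and observing that $x(0)$ sends the top component of one summand into the other, so neither proper summand is $V_k(osp(1,2))$--stable. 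Your version is a bit more self-contained (and you also make explicit the check of hypothesis~(\ref{uvjet-ired}) and the identification $\{\lambda^\pm\}=\{-\tfrac38,\tfrac18\}$, which the paper leaves implicit), but the two arguments are the same in substance.
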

\begin{proof} Note first that $\mathcal F ^{\lambda}$ is ${\Z}_{\ge 0}$--graded  and that its lowest component is
$ \mathcal F ^{\lambda} (0) = \mbox{span}_{\C} \{ E _{1,2} ^{\lambda + i }, \ \  i \in \tfrac{1}{2} {\Z} \}$. The $osp(1,2)$--action obtained above implies that $ \mathcal F ^{\lambda} (0)$ is irreducible for  $\lambda \notin \frac{1}{8} +  \frac{1}{2}  {\Z}$. By using realization, we see that as $V_k(sl(2))$--module we have
$    \mathcal F ^{\lambda} = \mathcal E_{1,2} ^{\lambda} \oplus \mathcal E_{1,2} ^{\lambda + 1/2}$,
where
$ \mathcal E_{1,2} ^{r } = L^{Vir} (d_{3,4}, \frac{1}{16} ) \otimes \Pi_{-1} (r)$.
By Proposition  \ref{maximal},  $ \mathcal E_{1,2} ^{r } $ is irreducible for $r  \notin \frac{1}{8} +  \frac{1}{2}  {\Z}$. Therefore  $\mathcal F ^{\lambda}$ is a direct sum of two irreducible
 $V_k(sl(2))$--modules, which easily gives irreducibility result  since $V_{k} (osp(1,2))$ is a simple current extension of $V_k(sl(2))$.   The character formula follows directly  from the realization, character formula for $\Pi_{(-1)} ^{1/2} (\lambda)$  (cf. Proposition  \ref{char-pi0}) and   (\ref{weber}):
 \bea && \mbox{ch} [\mathcal F ^{\lambda }]  (q,z) = \mbox{ch}[M](q) \mbox{ch}[\Pi_{(-1)} ^{1/2} (\lambda)](q) =  z^{ 2 \lambda -k }\frac{f_2(\tau) \delta(z)} {{ \sqrt 2} \eta(\tau) ^2} \eea
\end{proof}

 We also have the following result on the irreducibility of some  Whittaker modules.

 \begin{corollary} We have:
 $ M ^{\pm} \otimes \Pi_{\lambda}$ is irreducible $V_{k} (osp(1,2))$--module.
 \end{corollary}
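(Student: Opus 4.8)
The goal is to show that the $V_k(osp(1,2))$--module $M^{\pm} \otimes \Pi_{\lambda}$ is irreducible, where $k = -5/4$, $M^{\pm}$ is one of the two irreducible $\sigma$--twisted $F$--modules and $\Pi_{\lambda}$ is the irreducible Whittaker $\Pi^{1/2}(0)$--module from Proposition \ref{constr-pi0-mod-whit}(2). The strategy mirrors the proof of irreducibility of $\mathcal F^{\lambda}$: first decompose the module as a module over the vertex subalgebra $V_k(sl(2))$, use the known structure of Whittaker $V_k(sl(2))$--modules, and then upgrade to irreducibility over $V_k(osp(1,2))$ using the simple current extension $V_{-5/4}(osp(1,2)) = L_{A_1}(-\tfrac{5}{4}\Lambda_0) \oplus L_{A_1}(-\tfrac{9}{4}\Lambda_0 + \Lambda_1)$.

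\textbf{Step 1: Decomposition over $V_k(sl(2))$.} First I would use the realization of Corollary \ref{nivo54}, under which $V_k(sl(2)) \subset F^{even} \otimes \Pi(0) = L^{Vir}(d_{3,4},0) \otimes \Pi(0)$. As a $\sigma$--twisted $F$--module, $M^{\pm} \cong L^{Vir}(d_{3,4}, \tfrac{1}{16})$ over $F^{even}$, so this is honest (untwisted) as an $F^{even}$--module. As a $\Pi^{1/2}(0)$--module, $\Pi_{\lambda}$ is $g$--twisted, hence restricts to an untwisted $\Pi(0)$--module; by Proposition \ref{constr-pi0-mod-whit} one has, as a vector space, $\Pi_{\lambda} = M(1) \otimes {\C}[d(0)]$ with $c(0)$ acting as $-\mbox{Id}$, and one checks that over the subalgebra $\Pi(0) \subset \Pi^{1/2}(0)$ it decomposes as $\Pi_{\lambda} \oplus (\text{its } e^{c/2}\text{--shift})$ exactly as in the computation $\mathcal F^{\lambda} = \mathcal E^{\lambda}_{1,2} \oplus \mathcal E^{\lambda+1/2}_{1,2}$. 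Consequently, as a $V_k(sl(2))$--module,
$$ M^{\pm} \otimes \Pi_{\lambda} \cong \bigl( L^{Vir}(d_{3,4}, \tfrac{1}{16}) \otimes \Pi_{\lambda} \bigr) \oplus \bigl( L^{Vir}(d_{3,4}, \tfrac{1}{16}) \otimes \Pi_{\lambda} e^{c/2} \bigr), $$
and each summand is a Whittaker $V_k(sl(2))$--module of the form appearing in Theorem \ref{whittaker-degenerate}, namely $Wh_{\widehat{sl(2)}}(\lambda', 0, k, \tfrac{1}{16} + k/4)$ for suitable Whittaker parameters $\lambda'$ (with $\tfrac{1}{16} = h^{1,2}_{3,4} \in \mathcal S_{3,4}$).

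\textbf{Step 2: Irreducibility of each $V_k(sl(2))$--summand.} By Theorem \ref{whittaker-degenerate} (together with the fact that $h^{1,2}_{3,4} = 1/16 \in \mathcal S_{p,q}$ for $(p,q) = (3,4)$), each summand $L^{Vir}(d_{3,4}, \tfrac{1}{16}) \otimes \Pi_{\lambda}$ is isomorphic to the \emph{simple} Whittaker module $Wh_{\widehat{sl(2)}}(\lambda', 0, k, \tfrac{1}{16} + k/4)$, hence irreducible as a $V_k(sl(2))$--module. (One must only check that the Whittaker vector $v_{1/16} \otimes w_{\lambda}$ of the correct form survives in $M^{\pm} \otimes \Pi_{\lambda}$; this is immediate from the action of $e(n), f(m)$ given in the realization and the definition of $w_{\lambda}$.) So $M^{\pm} \otimes \Pi_{\lambda}$ is a direct sum of two irreducible $V_k(sl(2))$--modules.

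\textbf{Step 3: Upgrading to $V_k(osp(1,2))$.} Now suppose $N \subseteq M^{\pm} \otimes \Pi_{\lambda}$ is a nonzero $V_k(osp(1,2))$--submodule. Since $V_k(osp(1,2)) \supset V_k(sl(2))$, $N$ is a $V_k(sl(2))$--submodule of the semisimple module from Step 2, so $N$ is a sum of some of the two irreducible components. If $N$ equals one component, say $N = L^{Vir}(d_{3,4},\tfrac{1}{16}) \otimes \Pi_{\lambda}$ (the $c$--even part under $c(0)$--grading modulo $2$), then applying the odd generator $x \mapsto \sqrt{2}\,\Psi(-\tfrac12) e^{\frac{1}{k}(\mu-\nu)}$ — which carries $e^{r\mu + \cdots}$ with $r$ even to one with $r$ odd (equivalently shifts the $\Pi(0)$--component to its $e^{c/2}$--twist) and acts injectively because $\Psi(-\tfrac12)$ and $e^{\frac1k(\mu-\nu)}_0$ both act injectively on the relevant spaces — produces vectors in the other component, forcing $N$ to be the whole module. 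Thus $M^{\pm} \otimes \Pi_{\lambda}$ is irreducible over $V_k(osp(1,2))$.

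\textbf{Main obstacle.} The technical heart is Step 3: one needs that the odd generator $x$ genuinely intertwines the two $V_k(sl(2))$--components and that it acts injectively (or at least nontrivially on every nonzero vector of a component). This follows from Lemma \ref{injektivnost}--type injectivity of $e^c_0$ on $\Pi_{(-1)}$--modules combined with the free action of the fermion $\Psi(-\tfrac12)$ on $M^{\pm}$; alternatively, and more cleanly, one can invoke the general principle that a simple vertex algebra extension $V \subset \mathcal V = V \oplus V^{(1)}$ by a simple current $V^{(1)}$ has the property that if $M = M^0 \oplus M^1$ is a $\mathcal V$--module with $M^i$ irreducible $V$--modules and $V^{(1)} \cdot M^i \not\subseteq M^i$, then $M$ is $\mathcal V$--irreducible. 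Since here $\mathcal V = V_{-5/4}(osp(1,2))$ is precisely such a simple current extension of $V = V_{-5/4}(sl(2))$ and the two $V$--components of $M^{\pm} \otimes \Pi_{\lambda}$ are swapped (up to the spectral-flow--type twist) by the action of the odd part, irreducibility follows. This is exactly the same argument used in the proof of the immediately preceding theorem on $\mathcal F^{\lambda}$, so the Corollary reduces to that pattern once Steps 1--2 identify the $V_k(sl(2))$--structure.
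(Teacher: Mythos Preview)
Your Step~1 contains a genuine error: the Whittaker module $\Pi_{\lambda}$ does \emph{not} split over $\Pi(0)$ into two pieces. In the weight case one has $\Pi^{1/2}_{(-1)}(\lambda)\cong \Pi_{(-1)}(\lambda)\oplus \Pi_{(-1)}(\lambda+\tfrac12)$ because the $h(0)$--eigenvalues give a $\tfrac12\mathbb Z$--grading that separates the two $\Pi(0)$--cosets. In the Whittaker case there is no such grading: by Proposition~\ref{constr-pi0-mod-whit}, the $g$--twisted $\Pi^{1/2}(0)$--module $\Pi_{\lambda}$ is the \emph{same} vector space $M(1)\otimes\mathbb C[d(0)]$ as the $\Pi(0)$--Whittaker module, with $e^{c/2}_0$ acting as the scalar $\sqrt{\lambda}$ on the cyclic vector (not as a shift). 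Its restriction to $\Pi(0)$ is therefore exactly the irreducible $\Pi(0)$--module $\Pi_{\lambda}$ of part~(1), not a direct sum of two copies.

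Consequently the proof is much shorter than you propose, and this is what the paper does: since $M^{\pm}\cong L^{Vir}(d_{3,4},\tfrac{1}{16})$ over $F^{even}$ and $\Pi_{\lambda}$ restricts to the irreducible Whittaker $\Pi(0)$--module, one has $M^{\pm}\otimes\Pi_{\lambda}\cong L^{Vir}(d_{3,4},\tfrac{1}{16})\otimes\Pi_{\lambda}$ as a $V_k(sl(2))$--module, which is already irreducible by Theorem~\ref{whittaker-degenerate}. Irreducibility over the larger algebra $V_k(osp(1,2))$ is then automatic, and your Step~3 (the simple-current argument) is unnecessary.
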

 \begin{proof}
 $ M ^{\pm} \otimes \Pi_{\lambda}$  is a $V_{k} (osp(1,2))$--module by Corollary \ref{nivo54} (2). The irreducibility follows from the fact that
  $M ^{\pm} \otimes \Pi_{\lambda}$  is, as  a $V_{k} (sl(2))$--module,  isomorphic to  the Whittaker module $L^{Vir} (d_{3,4}, \frac{1}{16} ) \otimes \Pi_{\lambda}$, which is irreducible.
 \end{proof}

\begin{remark}  \label{generalization-osp(1,2)} A generalization of modules constructed above is as follows. Let  $L^{\mathcal R} (c_, h) ^{\pm}$  be the  irreducible Ramond twisted modules for  the simple $N=1$ Neveu-Schwarz vertex algebra $L^{ns} (c, 0)$ (cf. \cite{AdM-sigma},  \cite{M-crelle}, \cite{FMRW}).
For an arbitrary admissible level $k$, we have  the following family of ${\Z}_{\ge 0}$--graded relaxed and Whittaker  $V_{k} (osp(1,2))$--modules:
$$ L^{\mathcal R} (c_{p,q}, h) ^{\pm} \otimes M^{\pm} \otimes \Pi_{(-1)} ^{ 1/2} (\lambda), \quad   L^{\mathcal R} (c_{p,q}, h) ^{\pm} \otimes M^{\pm} \otimes \Pi_{\lambda}. $$
The irreducibility of modules $ L^{\mathcal R} (c_{p,q}, h) ^{\pm} \otimes M^{\pm} \otimes \Pi_{-1} ^{ 1/2} (\lambda)$ can be proved by using character formulas for irreducible relaxed $\widehat{osp(1,2)}$--modules from \cite[Theorem 8.2]{KR}.
%where $L^{\mathcal R} (c_{p,q}, h) ^{\pm}$ are irreducible Ramond twisted modules for $L^{ns} (c_{p,q}, 0)$.

Let $ h  =  \Delta_{r,s} $ be  given by formula (8.11) in \cite{KR}. Using Proposition  \ref{char-pi0}, we see that the character of the module  $ L^{\mathcal R} (c_{p,q}, h) ^{\pm} \otimes M^{\pm} \otimes \Pi_{(-1)} ^{ 1/2} (\lambda)$
is given by
\bea &&  \mbox{ch} [L^{\mathcal R} (c_{p,q}, h) ^{\pm}](q)  \mbox{ch} [M^{\pm}](q) \mbox{ch}[ \Pi_{(-1)} ^{ 1/2} (\lambda)] (q,z) \nonumber \\ &=& \mbox{ch} [L^{\mathcal R} (c_{p,q}, h) ^{\pm}](q) f_2(\tau)  \frac{z^{ 2 \lambda -k }}{\sqrt{2} \eta(\tau) ^2 }   \delta (z ), \nonumber  \eea
which coincides with the character of the admissible relaxed $V_k(osp(1,2))$--module $^{\text{\tiny NS}}\mathcal E _{2 \lambda-k, q_{r,s}}$ in \cite[Theorem 8.2]{KR}.

\end{remark}


\begin{thebibliography}{FGST2}

\bibitem{A-1997} D. Adamovi\' c, \emph{Rationality of Neveu-Schwarz vertex operator superalgebras},
Internat. Math. Res. Notices  IMRN  \textbf{17} (1997), 865--874.

\bibitem{A-1998}D. Adamovi\' c, \emph{Representations of the N = 2 superconformal vertex algebra}, Internat. Math. Res. Notices IMRN \textbf{2} (1999), 61--79.

 \bibitem{A-2004}D. Adamovi\' c, \emph{Regularity of certain vertex operator superalgebras}, Contemp.
Math. 343 (2004), 1-16.

\bibitem{A-2005} D. Adamovi\' c,  \emph{A construction of admissible $A_1 ^{(1)}$--modules of level  $- 4/3$}, Journal of Pure and Applied Algebra 196 (2005) 119--134

\bibitem{A-2007} D. Adamovi\' c, \emph{Lie superalgebras and
irreducibility of certain $A_1^{(1)}$--modules at the critical
level}, Comm. Math. Phys. 270 (2007) 141-161

\bibitem{A-2014} D. Adamovi\' c, \emph{ A realization of certain modules for the $N=4$ superconformal   algebra and the affine Lie algebra $A_2 ^{(1)}$}, Transformation Groups, Vol. 21, No. 2 2016, pp. 299--327

\bibitem{A-2017} D. Adamovi\' c,   \emph{A note on the affine vertex algebra associated to ${\mathfrak gl}(1|1)$ at the critical level and its generalizations},  Rad Hrvat. Akad. Znan. Umjet. Mat. Znan. 21(532) (2017), 75--87, arXiv:1706.09143

\bibitem{A-new} D. Adamovi\' c, On Whittaker modules for $\widehat{ osp}(1, 2) $, in preparation


 \bibitem{ALZ-2016}  D.~Adamovi\'c, R.~L\" u, K. Zhao,  \emph{ Whittaker modules for the affine Lie algebra $A_1 ^{(1)}$},  Advances in Mathematics \textbf{289} (2016) 438--479

\bibitem{AdM-MRL}   D. Adamovi\' c and A. Milas,
\emph{Vertex operator algebras associated to the modular invariant
representations for $A_1^{(1)}$}, Math. Res. Lett. 2(1995) 563-575

\bibitem{AdM-supertriplet}   D. Adamovi\' c and A. Milas, \emph{The $N=1$ triplet vertex operator superalgebras}, Communications in Mathematical Physics 288 (2009) 225--270

\bibitem{AdM-sigma}   D. Adamovi\' c and A. Milas, \emph{The $N=1$ triplet vertex operator superalgebras : twisted sector}, Symmetry, Integrability and Geometry: Methods and Applications (SIGMA)
(2008) 087, 24 pp.



\bibitem{AdM-selecta} D. Adamovi\' c and A. Milas, \emph{ Lattice construction of logarithmic
modules for certain vertex algebras},  Selecta
Mathematica (N.S.), 15 (2009), no. 4, 535-561.

\bibitem{AdM-IMRN}  D. Adamovi\' c and  A. Milas \emph{ On W-Algebras Associated to $(2, p)$ Minimal Models and Their Representations},  International Mathematics Research Notices 2010 (2010) 20 : 3896-3934

\bibitem{AdM-2012}  D. Adamovi\' c and  A. Milas \emph{ An explicit realization of logarithmic modules for the vertex operator algebra $W_{p,p'}$},  Journal of Mathematical Physics 073511 (2012), 16 pages

\bibitem{AdM-2013}  D. Adamovi\'{c}, A. Milas, \emph{Vertex operator superalgebras and LCFT}, Journal of Physics A: Mathematical and Theoretical. 46 (2013), 49; 494005, Special Issue on Logarithmic conformal field theory

 \bibitem{AdM-2017}  D. Adamovi\'{c}, A. Milas, \emph{Some applications and constructions of intertwining operators in logarithmic conformal field theory}, Contemp. Math. 695 (2017) 15--27, arXiv:1605.05561.

\bibitem{AKMPP-JA} D.~Adamovi\'c, V.~G. Kac, P.~M\"oseneder Frajria, P.~Papi, O.~Per\v{s}e, \emph{Conformal embeddings of affine vertex algebras in minimal
$W$-algebras I: Structural results,} J. Algebra 500 (2018), 117--152.,  https://doi.org/10.1016/j.jalgebra.2016.12.005.

\bibitem{AKMPP-JJM} D.~Adamovi\'c, V.~G. Kac, P.~M\"oseneder Frajria, P.~Papi, O.~Per\v{s}e, \emph{ Conformal embeddings of affine vertex algebras in minimal
$W$-algebras II: decompositions}, {\it Japanese Journal of Mathematics}, September 2017, Volume 12, Issue 2, pp 261--315

 
 \bibitem{AdP} D. Adamovi\' c, V. Pedi\' c,  On fusion rules and intertwining operators  for the Weyl vertex algebra, to appear

\bibitem{AR-2017} D. Adamovi\' c and G. Radobolja,  \emph{Self-dual and logarithmic representations of the twisted Heisenberg--Virasoro algebra at level zero}, to appear in Commun. Contemp. Math.,  arXiv:1703.00531

 \bibitem{Ara-critical} T. Arakawa, \emph{$W$-algebras at the critical level}, Contemp. Math., 565, 1--14, 2012.

\bibitem{Ara1} T. Arakawa, \emph{Two-sided BGG resolutions of admissible representations},  Represent. Theory 18 (2014), 183-222.


\bibitem{Ara2} T. Arakawa, \emph{Rationality of admissible affine
vertex algebras in the category ${\mathcal O}$}, Duke Math. J, Volume 165, Number 1 (2016), 67--93,  arXiv:1207.4857.

\bibitem{AFR} T. Arakawa, V. Futorny, L. E. Ramirez,  \emph{Weight representations of admissible affine vertex algebras}, Comm. Math. Phys. 353 (2017), no.3, 1151--1178.

\bibitem{ACR} J. Auger, T. Creutzig and D. Ridout,  \emph{Modularity of logarithmic parafermion vertex algebras},  Lett. Math. Phys. 108 (2018), no. 12, 2543--2587,   arXiv:1704.05168

\bibitem{DBT}  S. Berman, C. Dong and S. Tan, \emph{Representations of a class of lattice type vertex algebras}, J. Pure Appl.
Algebra 176 (2002) 27--47.


\bibitem{CM} T. Creutzig and A. Milas, \emph{False theta functions and the Verlinde formula}, Adv. Math. 262 (2014), 520--545.

\bibitem{CG} T. Creutzig, T. Gannon, \emph{Logarithmic conformal field theory, log-modular
tensor categories and modular forms},    J. Phys. A 50 (2017), no. 40, 404004, 37 pp, arXiv:1605.04630

\bibitem{CHY} T. Creutzig,  Y. Z. Huang and J. Yang, \emph{ Braided tensor categories of admissible modules for affine Lie algebras},  Communications in Mathematical Physics, September 2018, Volume 362, Issue 3, pp 827--854

\bibitem{CL} T. Creutzig, A. Linshaw, \emph{Cosets of affine vertex algebras inside larger structures},   Journal of Algebra
Volume 517, 1 January 2019, Pages 396--438,
arXiv:1407.8512v4

 \bibitem{CR-2013} T. Creutzig, D. Ridout,  \emph{Modular Data and Verlinde Formulae for Fractional Level WZW Models II}, Nuclear Physics, B875:423-458, 2013, arXiv:1306.4388 [hep-th].

\bibitem{DL} C. Dong, J. Lepowsky, \emph{Generalized vertex algebras and relative vertex operators}, Birkh\" auser, Boston, 1993.

\bibitem{efren} E. Frenkel, \emph{Lectures on Wakimoto modules,
opers and the center at the critical level},  Adv.  Math 195 (2005)
297-404.

\bibitem{E}  C. Eicher, \emph{Relaxed Highest Weight Modules from $\mathcal D$-Modules on the Kashiwara Flag Scheme},  arXiv:1607.06342

\bibitem{ERSS}  I.P. Ennes, A.V. Ramallo, J. M. Sanchez de Santos,\emph{ On the free field realization of the   $osp(1,2)$  current algebra}, Phys.Lett. B389 (1996) 485--493, arXiv:hep-th/9606180

\bibitem{FF} A.J. Feingold, I.B. Frenkel
\emph{Classical affine algebras},
Adv. in Math., 56 (1985), pp. 117--172


 \bibitem{FFHST} J. Fjelstad, J. Fuchs, S. Hwang, A.M.
Semikhatov and I. Yu. Tipunin, \emph{Logarithmic conformal field theories
via logarithmic deformations}, {  Nuclear Phys.} B {\bf 633} (2002),

\bibitem{FST} B. L. Feigin, A. M. Semikhatov and I. Yu. Tipunin,  \emph{Equivalence between chain categories
of representations of affine $sl(2)$ and $N = 2$ superconformal algebras}, J. Math. Phys. 39
(1998), 3865--3905

\bibitem{FMRW}
O. Blondeau-Fournier, P.  Mathieu, D.  Ridout and  S. Wood,  \emph{Superconformal minimal models and admissible Jack polynomials}, Advances in Mathematics 314, pp. 71--123

\bibitem{FZ} I. B. Frenkel, Y. Zhu, \emph{Vertex operator algebras associated to representations of affine and Virasoro algebras}, Duke Math. J. 66, 12--168 (1992)

 \bibitem{Gab} M. Gaberdiel, \emph{Fusion rules and logarithmic representations of a WZW model at fractional level}, Nuclear Phys.
B 618 (2001) 407--436.


\bibitem{HLL} Y.-Z. Huang, J. Lepowsky, L. Zhang, \emph{Logarithmic tensor category theory for generalized modules for a conformal vertex algebra}, Parts I-VIII, arXiv:1012.4193, 1012.4196, 1012.4197, 1012.4198, 1012.4199, 1012.4202, 1110.1929, 1110.1931; Part I published in Conformal Field Theories and Tensor Categories, Springer, Berlin-Heidelberg, 2014. 169--248.

 \bibitem{KW04} V.~G. Kac, M.~Wakimoto,
\emph{Quantum reduction and representation theory of superconformal algebras}, Adv. Math., \textbf{185} (2004), pp. 400--458

 %\bibitem{KW17} V.~G. Kac, M.~Wakimoto, \emph{Representations of superconformal algebras and mock theta functions}



\bibitem{KR} K. Kawasetsu, D. Ridout,  \emph{Relaxed Highest-Weight Modules I: Rank $1$ Cases}, to appear in Communications in Mathematical Physics, arXiv:1803.01989

\bibitem{IK} K Iohara and Y Koga, \emph{ Representation theory of the Virasoro algebra. Springer Monographs in Mathematics}, Springer-Verlag, London,
2011.

\bibitem{La} M. Yu. Lashkevich,\emph{ Superconformal 2D minimal models and an unusual coset constructions},
Modern Physics Letter A (1993) 851--860, hep-th/9301093.

\bibitem{Li-1994} H. Li, \emph{Symmetric invariant bilinear forms on vertex operator algebras}, J. Pure Appl. Algebra 96 (1994), 279--297

\bibitem{Li} H. Li, \emph{The phyisical superselection principle in vertex operator algebra theory}, J. Algebra
196 (1997) 436--457.


\bibitem{LY} C. Lam, H. Yamauchi, \emph{$3$-dimensional Griess algebras and Miyamoto
involutions}, arXiv:1604.04470

\bibitem{LMRS} F. Lesage, P. Mathieu, J. Rasmussen, H. Saleur,  Logarithmic lift of the $su(2)_{-1/2}$ model,  Nucl. Phys .B 686:313-346, 2004, arXiv:hep-th/0311039

\bibitem{Mil} A. Milas, \emph{ Weak modules and logarithmic intertwining operators for vertex operator algebras}, in Recent Developments in Infinite-Dimensional Lie Algebras and Conformal Field Theory, ed.\ S. Berman, P. Fendley, Y.-Z. Huang, K. Misra, and B.
Parshall, Contemp. Math., Vol. 297, American Mathematical Society, Providence,
RI, 2002, 201--225.

\bibitem{M-crelle} A. Milas, \emph{Characters, supercharacters and Weber modular functions},
Journal fur die reine und angewandte Mathematik (Crelle's Journal), 608 (2007), 35--64

\bibitem{Miy} M. Miyamoto, \emph{Modular invariance of vertex operator algebra satisfying $C_2$--cofiniteness}, Duke Math.\ J. 122 (2004) 51-91.

\bibitem{Ridout} D. Ridout,	\emph{$sl(2) _{-1/2}$  and the Triplet Model}, Nucl. Phys. B 835: 314-342, 2010

\bibitem{Ridout11} D. Ridout, \emph{Fusion in Fractional Level $sl(2)$-Theories with $k=-1/2$},  Nucl. Phys. B 848:216--250, 2011

\bibitem{RSW}   D. Ridout, J. Snadden and S. Wood, \emph{An admissible level  $ \widehat{osp}(1,2)$--model: modular transformations and the Verlinde formula}, Lett. Math. Phys. 108 (2018), no. 11, 2363?-2423, arXiv:1705.04006

\bibitem{RW1} D .Ridout and S. Wood, \emph{ From Jack Polynomials to Minimal Model Spectra}, Journal of Physics, A48:045201, 2015,

\bibitem{RW2} D Ridout and S Wood, \emph{ Relaxed Singular Vectors, Jack Symmetric Functions and Fractional Level $sl(2)$ Models} Nuclear Physics, B894:621-664, 2015.

\bibitem{S-2017} R. Sato,  \emph{Modular invariant representations of  the $N=2$ superconformal algebra},  to appear in Internat. Math. Res. Notices, arXiv:1706.04882

\bibitem{Sem-1994} A. Semikhatov, \emph{The MFF Singular Vectors in Topological Conformal Theories, Modern Physics Letters A}, June 1994, Vol. 09, No. 20 : pp. 1867-1896

\bibitem{Sem-inv} A. Semikhatov, \emph{Inverting the Hamiltonian Reduction in String Theory}, arXiv:hep-th/9410109

\bibitem{TK} A. Tsuchiya and Y. Kanie, \emph{ Fock space representations of the Virasoro algebra -- Intertwining operators} Publ. Res. Inst. Math. Sci, 22:259--
327, 1986.

 \bibitem{Wak}  M. Wakimoto, \emph{Fock representations of affine Lie algebra $A_1 ^{(1)}$},  Comm. Math. Phys. 104 (1986)
605--609.

\bibitem{Wa} W. Wang, \emph{Rationality of Virasoro vertex operator algebras} Duke Mathematical Journal/International Mathematics Research Notices 71, no. 1 (1993): 97?-211





\end{thebibliography}
\end{document}